\DeclareMathAlphabet{\mymathbb}{U}{BOONDOX-ds}{m}{n}
\def\Mbar{\overline{\mathcal{M}}}
\def\M{\mathfrak{M}}
\def\Mct{\mathcal{M}^{\,\mathrm{ct}}}
\def\Msf{\mathsf{M}}
\def\C{\mathfrak{C}}
\def\CH{\mathsf{CH}}
\def\CHop{\mathsf{CH}_{\mathsf{op}}}
\def\logCH{\mathsf{logCH}}
\def\logR{\mathsf{logR}}
\def\Ms{\mathsf{M}}
\def\R{\mathsf{R}}
\def\st{\mathrm{st}}
\def\aut{\mathrm{Aut}}
\def\CO{\mathcal{O}}
\def\A{\mathcal{A}}
\def\F{\mathcal{F}}
\def\L{\mathcal{L}}
\def\P{\mathcal{P}}
\def\X{\mathcal{X}}
\def\id{\mathrm{id}}
\def\AA{\mathbb{A}}
\def\CC{\mathbb{C}}
\def\DD{\mathbb{D}}
\def\EE{\mathbb{E}}
\def\FF{\mathbb{F}}
\def\NN{\mathbb{N}}
\def\PP{\mathbb{P}}
\def\QQ{\mathbb{Q}}
\def\ZZ{\mathbb{Z}}
\def\lpic{\mathrm{LogPic}}
\def\pbar{\overline{\mathcal{P}}}
\def\pic{\mathfrak{Pic}}
\def\pth{\mathcal{P}^\theta}
\def\logP{\mathsf{LogPic}}
\def\Rels{\mathcal{R}}
\def\aj{\mathsf{aj}}
\def\hom{\mathcal{H}om}
\def\Hom{\mathrm{Hom}}
\def\Gm{\mathbb{G}_m}
\def\Glog{\mathbb{G}_{\log}}
\def\sR{\mathsf{R}}
\def\Tq{\mathsf{P}}
\def\im{\mathrm{im}~}
\def\codim{\mathrm{codim}}
\def\Hom{\mathrm{Hom}}
\def\vir{\mathrm{vir}}
\def\sm{\mathrm{sm}}
\def\aut{\mathrm{Aut}}
\def\ch{\mathrm{ch}}
\def\tC{\widetilde{C}}
\def\dx{\frac{d}{dx}}
\def\st{\textup{st}}
\def\bp{\textbf{p}}
\def\bq{\textbf{q}}
\def\Ssig{\mathscr{S}(\sigma)}
\def\im{\mathrm{Im}\,}
\def\kd{\mathrm{kd}}
\def\sfa{\mathsf{v}}
\def\sfa{\mathsf{a}}
\def\Jbar{\overline{J}}
\def\Jzero{J^{\underline{0}}}
\def\Td{\mathrm{td}}
\def\fm{\mathfrak{F}}
\def\dbcoh{\mathrm{D}^b_{\mathrm{coh}}}
\def\Th{\Theta}
\def\Pbar{\overline{\mathcal{P}}}
\def\Mint{\overline{\mathcal{M}}^{\, \mathrm{tl}}}
\def\DR{\mathsf{DR}}
\def\DRP{\mathsf{DRP}}
\def\DRD{\mathsf{DRD}}
\def\uniDR{\mathsf{uniDR}}
\def\uniDRP{\mathsf{uniDRP}}
\def\uniDRD{\mathsf{uniDRD}}
\def\cR{\mathcal{R}}
\def\pure{\mathrm{pure}}
\def\qs{\mathrm{qs}}
\def\ajp{\mathsf{AJ}}
\def\m{\mathfrak{m}}
\def\p{\mathfrak{p}}
\def\dual{\omega^\bullet}
\def\Hom{\mathcal{H}om}
\def\gp{\mathrm{gp}}
\def\T{\mathbb{T}}
\def\sfm{\mathsf{m}}
\def\tl{\,\mathrm{tl}}
\def\Abar{\overline{\mathcal{A}}}
\def\Xbar{\overline{\mathcal{X}}}
\def\Abarone{\mathcal{A}'}
\def\Xbarone{\mathcal{X}'}
\def\Atrop{\mathcal{A}^{\text{trop}}}
\def\Xtrop{\mathcal{X}^{\text{trop}}}
\theoremstyle{definition}
\newtheorem{definition}{Definition}[section]
\newtheorem{theorem}[definition]{Theorem}
\newtheorem{example}[definition]{Example}
\newtheorem{proposition}[definition]{Proposition}
\newtheorem{corollary}[definition]{Corollary}
\newtheorem{lemma}[definition]{Lemma}
\newtheorem{question}[definition]{Question}
\newtheorem{remark}[definition]{Remark}
\newtheorem*{conjecture*}{Conjecture}
\newtheorem*{question*}{Question}
\newtheorem*{theorem*}{Theorem}
\title{Fourier transforms and Abel-Jacobi theory}
\author{Younghan Bae, Sam Molcho and Aaron Pixton}
\date{}
\newcommand{\Ycomment}[1]{{\color{blue} Y: #1}}
\newcommand{\floor}[1]{\left \lfloor{#1}\right \rfloor}
\begin{document}
	\maketitle
	\begin{abstract}
		We relate Fourier transforms between compactified Jacobians over the moduli space of stable curves to logarithmic Abel-Jacobi theory. As an application, we compute the pushforward of divisor monomials on compactified Jacobians in terms of the twisted double ramification cycle formula. 
	\end{abstract}
	\section{Introduction}
	\subsection{Overview}
	For an $L^2$-function $f:\T^g:=\mathbb{R}^g/\mathbb{Z}^g \to \mathbb{C}$, the integral of $f$ can be evaluated as the $0$-th Fourier coefficient 
	\[ \int_{\T^g} f(x)dx= \widehat{f}(0)\,\]
	of the Fourier transform $\widehat{f}(\sfm) := \int_{\T^g}f(x) e^{-2\pi i\sfm\cdot x}dx$, $\sfm \in \mathbb{Z}^g$. 
	
	An analogous statement holds for a family of abelian varieties. Let $\pi: A \to B$ be a family of principally polarized abelian varieties with unit section $e$. Let $\fm: \CH^*(A,\QQ) \to \CH^*(A,\QQ)$ be the Fourier-Mukai transform, given by the Poincar\'e line bundle. For a class $\alpha\in \CH^*(A,\QQ)$, we have
	\begin{equation}\label{eq:intro1}
		\pi_*(\alpha) = e^*\fm(\alpha)\,,
	\end{equation}
	replacing integration with the pushforward $\pi_*$ and replacing evaluation at $0$ with the pullback $e^*$. Therefore, \eqref{eq:intro1} provides a useful tool for studying the pushforward, if one knows enough about the Fourier transform  -- for example, its compatibility with the weight decomposition of $\CH^*(A,\QQ)$.
	
	We study here the intersection theory of certain families of degenerating abelian schemes -- the fine compactified Jacobians $\pi:\Jbar_{g,n}^{\,\epsilon}\to\Mbar_{g,n}$ over the moduli space of stable curves -- using the Fourier transform. Two challenges arise: (i) the Poincaré line bundle does not extend to a line bundle on the product of the compactifications, and (ii) the relative group structure is lost. To address these issues, firstly, we construct a `minimal' logarithmic modification of the product of two compactified Jacobians where the Poincar\'e line bundle admits a unique line bundle extension. This construction yields a logarithmic desingularization of Arinkin’s kernel \cite{arinkin1, arinkin2} and provides the Fourier transform with a recursive structure.  Secondly, we link the Fourier transform with logarithmic Abel-Jacobi theory. The resolved Abel-Jacobi section of Holmes-Molcho-Pandharipande-Pixton-Schmitt \cite{HMPPS} is related to the universal double ramification (uniDR) formula of Bae-Holmes-Pandharipande-Schmitt-Schwarz \cite{BHPSS}. We study the Fourier transform of the class of the resolved Abel-Jacobi section and show that the $\uniDR$ formula and Fourier transform share the same genus recursive structure, enabling us to compute the Fourier transform effectively.
	
	We establish formula \eqref{eq:intro1} for compactified Jacobians under an appropriate normalization of Fourier kernel. By combining this formula with the calculation of the Fourier transform, we show that the pushforward of certain monomials of divisors can be expressed in terms of the twisted double ramification (DR) cycle formula $\DR^c_{g}(b;\sfa)$ introduced by Janda-Pixton-Pandharipande-Zvonkine \cite{JPPZ1}.
	Our formula involves all codimensions $c\leq g$\footnote{If the codimension $c$ is greater than $g$, then $\DR^c_g(b;\sfa)$ vanishes by \cite{claderjanda}.} and considers individual coefficients of the twisted $\DR$ formula as a polynomial in $b\in\ZZ$ and $\sfa\in\ZZ^n$. Thus,  we offer a geometric interpretation of $\DR^c_g(b;\sfa)$ in both contexts in terms of intersection theory on compactified Jacobians.
	
	\subsection{Compactified Jacobians}\label{sec:intro1.2}
	Let $p:\overline{\mathcal{C}}_{g,n}\to \Mbar_{g,n}$ be the universal curve with $n\geq 1$. A {\em stability condition} $\epsilon$ of degree $d$ for $p$ assigns a rational number to every irreducible component of every stable curve $(C,x_1,\ldots, x_n)$ of genus $g$ with $n$ marked points satisfying the following two conditions:
	\begin{enumerate}
		\item[(a)] the sum of the values of $\epsilon$ over the irreducible components of $C$ equals $d$, and
		\item[(b)] $\epsilon$ is additive under all partial smoothings of the curve $C$.
	\end{enumerate}
	
	A prestable curve $C'$ is {\em quasi-stable} if the relative dualizing
	sheaf $\omega_{C'}$ is nef and chains of unstable components have length at most $1$. A line bundle $L$ on $C'$ is {\em admissible} if $L$ has degree $1$ on each unstable component of $C'$. 
	
	With respect to a stability condition $\epsilon$, there exists a stability inequality defining when an admissible line bundles on a quasi-stable curve is $\epsilon$-stable (resp. $\epsilon$-semistable). A stability condition $\epsilon$ is {\em non-degenerate} if there are no strictly $\epsilon$-semistable admissible line bundles. A stability condition $\epsilon_0$ of degree $0$ is {\em small} if line bundles of multidegree $0$ on underlying curves are $\epsilon_0$ stable. 
	
	For a non-degenerate stability condition $\epsilon$ of degree $d$ for $p$, there exists a moduli space  $\Jbar_{g,n}^\epsilon$ of $\epsilon$-stable admissible line bundles on quasi-stable curves over $\Mbar_{g,n}$ (\cite{KP19}). It is a proper, smooth Deligne-Mumford stack of dimension $4g-3+n$.  In particular, for small stability condition $\epsilon_0$, $\Jbar^{\epsilon_0}_{g,n}$ contains the Jacobian of multidegree zero line bundles $\Jzero_{g,n}$ as an open substack.
	
	The compactified Jacobian $\Jbar_{g,n}^\epsilon$ carries a universal family which can be used to define tautological classes on $\Jbar_{g,n}^\epsilon$. There exists a universal quasi-stable curve with $n$ sections,
	\[p:\overline{\mathcal{C}}_{g,n}^{\text{qs}}\to \Jbar_{g,n}^\epsilon\,,\hspace{2mm} x_i:\Jbar_{g,n}^\epsilon\to \overline{\mathcal{C}}_{g,n}^{\text{qs}}, \, i=1,\ldots, n\,,\]
	with a universal admissible line bundle $\L$ on $\overline{\mathcal{C}}_{g,n}^{\text{qs}}$. We choose the universal admissible line bundle to be trivialized along the first marking. 
	%
	For $1\leq i \leq n$, the $\xi$-class at the $i$th marked point is defined by $\xi_i := x_i^*c_1(\L)$ and the $\kappa_{0,1}$ class is defined by $\kappa_{0,1}=p_*(c_1(\omega_{p,\log})c_1(\L))$.
	The  $\Theta$-divisor is defined by $\Theta:=-\frac{1}{2}p_*(c_1(\L)^2)$.
	The divisor classes $\xi_i,\Theta$ and $\kappa_{0,1}$ span the rational Picard group $\mathrm{Pic} (\Jbar^\epsilon_{g,n})\otimes_\ZZ\QQ$ modulo  divisor classes pulled back from $\Mbar_{g,n}$ (\cite{FV22}).
	\subsection{Twisted double ramification cycle formula}
	Let $b\in\ZZ$ be an integer and let $\sfa=(a_1,\ldots, a_n)\in\ZZ^n$ be a vector of integers satisfying 
	\begin{equation}\label{eq:intro2}
		a_1+\cdots + a_n=(2g-2+n)b\,.
	\end{equation}
	Let $r$ be a positive integer. We denote by $\DR^{c,r}_g(b;\sfa)$\footnote{In the literature, this formula is denoted by $\mathsf{P}^{c,r,b}_g(\sfa)$, while $\DR_g^b(\sfa)$ refers to the twisted double ramification cycle, which corresponds to the codimension $g$ part of the double ramification cycle formula.} the codimension $c$ component of the tautological class
	\begin{equation*}
		\exp\left(-\frac{b^2}{2}\kappa_1+\sum_{i=1}^n\frac{a_i^2}{2}\psi_i\right)\cdot\sum_{
			\substack{\Gamma\in \mathsf{G}_{g,n} \\
				w\in \mathsf{W}_{\Gamma,r,b}}
		}
		\frac{r^{-h^1(\Gamma)}}{|\aut(\Gamma)| }
		\;
		(j_{\Gamma})_*\Bigg[
		\prod_{e=(h,h')\in E(\Gamma)}
		\frac{1-\exp\left(-\frac{w(h)w(h')}2(\psi_h+\psi_{h'})\right)}{\psi_h + \psi_{h'}} \Bigg]    
	\end{equation*}
	in $\CH^c(\Mbar_{g,n},\QQ)$. For a detailed explanation of the notation, we refer to Section \ref{subsec:DRintro}. 
	For sufficiently large $r$, $\DR^{c,r}_g(b;\sfa)$ becomes polynomial in $r$ \cite{JPPZ1}. We denote by $\DR^c_g(b;\sfa)$ the value at $r = 0$ of the polynomial associated with $\DR^{c,r}_g(b;\sfa)$.
	
	The formula $\DR^c_g(b; \sfa)$ is a polynomial of degree $2c$ in the variables $b,a_1,\ldots, a_n$ (see \cite{Spelier,Pixton_Poly}), well-defined only modulo the relation \eqref{eq:intro2}. If we rewrite $a_1$ in terms of the other variables using \eqref{eq:intro2}, then we can treat it as a polynomial in the other variables alone, with no relations. 
	It then makes sense to take a specific coefficient of the polynomial, and we write
	\[[\DR^c_g(b;\sfa)]_{b^m a_2^{k_2}\cdots a_n^{k_n}}\]
	for the coefficient of the monomial $b^m a_2^{k_2} \cdots a_n^{k_n}$ in $\DR^c_g(b;\sfa)$ in $\CH^c(\Mbar_{g,n},\QQ)$.
	\subsection{Intersection theory on compactified Jacobians}
	For a nondegenerate stability condition $\epsilon$ for the universal curve $\overline{\mathcal{C}}_{g,n}\to\Mbar_{g,n}$ of degree $d$, let $\pi:\Jbar_{g,n}^{\epsilon}\to\Mbar_{g,n}$ be the compactified Jacobian. For the definition of the tautological classes of $\Jbar_{g,n}^\epsilon$, we choose the universal line bundle which is trivialized along the first marking -- so in particular, $\xi_1 = 0$. 
	\begin{theorem}\label{thm:pushforward}
		Let $\pi: \Jbar_{g,n}^\epsilon\to\Mbar_{g,n}$ be the compactified Jacobian for a  non-degenerate stability condition $\epsilon$. Let $\ell,m,k_2,\ldots, k_n$ be non-negative integers. 
		\begin{enumerate}[label=(\alph*)]
			\item If $2\ell+m+\sum_i k_i<2g$, then $\pi_*(\Theta^\ell\kappa_{0,1}^m\xi_2^{k_2}\cdots\xi_n^{k_n})=0$ in $\CH^*(\Mbar_{g,n},\QQ)$.
			\item If $2\ell+m+\sum k_i=2g$, then
			\[\pi_*\Big(\frac{\Theta^\ell}{\ell!}\cdot\frac{(-\kappa_{0,1})^m}{m!}\cdot\prod_{i=2}^n\frac{\xi_i^{k_i}}{k_i!}\Big) = (-1)^{g-\ell} [\DR_g^{g-\ell}(b;\sfa)]_{b^m a_2^{k_2}\cdots a_n^{k_n}}\,.\]
			In particular, the pushforward is independent of the choice of stability condition.
			\item For any $\ell,m, k_2,\ldots,k_n$, the pushforward $\pi_*(\Theta^\ell\kappa_{0,1}^m\xi_2^{k_2}\cdots\xi_n^{k_n})$ lies in the tautological ring $\R^*(\Mbar_{g,n})$.
		\end{enumerate}
	\end{theorem}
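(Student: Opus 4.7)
The starting point is the Fourier--Mukai pushforward identity $\pi_*\alpha=e^*\fm(\alpha)$ established earlier in the paper, with appropriately normalized Fourier kernel. This reduces the theorem to computing the Fourier transform of monomials in $\Theta$, $\kappa_{0,1}$, $\xi_i$ and pulling back along the unit section $e$.

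For part (a), the plan is to exploit the relative multiplication-by-$n$ endomorphism, defined on the smooth Jacobian locus and extended to $\Jbar^\epsilon_{g,n}$ via the Fourier formalism. The divisors $\Theta,\kappa_{0,1},\xi_i$ are eigenvectors of $[n]^*$ with eigenvalues $n^2,n,n$ respectively, so the monomial $\Theta^\ell\kappa_{0,1}^m\prod_i\xi_i^{k_i}$ is an $[n]^*$-eigenvector with eigenvalue $n^{2\ell+m+\sum k_i}$. Combining $\pi\circ[n]=\pi$ with the identity $\pi_*[n]^*=n^{2g}\pi_*$ (since $[n]$ has relative degree $n^{2g}$), one concludes that $\pi_*$ of any $[n]^*$-eigenvector with eigenvalue $n^d\neq n^{2g}$ vanishes in rational Chow. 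In particular, $2\ell+m+\sum k_i<2g$ forces the pushforward to vanish.

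For part (b), I would package the monomials into the generating class
\[
\alpha(s,b,\sfa) := \exp\Big(s\Theta - b\kappa_{0,1} + \sum_{i=2}^n a_i\xi_i\Big),
\]
whose $s^\ell b^m\prod a_i^{k_i}$-coefficient is $\Theta^\ell(-\kappa_{0,1})^m\prod_i\xi_i^{k_i}/(\ell!\,m!\prod_i k_i!)$. The argument of the exponential is (up to pullbacks from $\Mbar_{g,n}$) the first Chern class of a twist of the Poincar\'e-type line bundle on $\Jbar^\epsilon_{g,n}$ that encodes the resolved Abel--Jacobi section of \cite{HMPPS,BHPSS}. The main technical input is the identification
\[
\pi_*\alpha(s,b,\sfa) = \sum_{\ell=0}^{g} s^\ell (-1)^{g-\ell}\DR^{g-\ell}_g(b;\sfa),
\]
to be read coefficient by coefficient in the monomials $s^\ell b^m\prod a_i^{k_i}$ with $2\ell+m+\sum k_i=2g$; all other coefficients vanish on the left by (a) and on the right by the degree bound $\deg \DR^{g-\ell}_g\leq 2(g-\ell)$ combined with codimension matching. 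I would prove the identity by induction on $g$, exploiting the shared genus-recursive structure of the two sides emphasized in the introduction: the Fourier recursion arises from the logarithmic desingularization of Arinkin's kernel constructed in the paper, while the uniDR recursion comes from the defining sum over stable graphs. A genus-zero base case, where both sides can be computed directly on $\Jbar^\epsilon_{0,n}$, anchors the induction.

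Part (c) then follows: in the subcritical regime (a) gives vanishing and tautologicality is automatic; in the critical regime the expression of (b) is tautological by \cite{BHPSS}; and the supercritical range is covered by the same $[n]^*$-eigenvalue argument as (a). The main obstacle will be the genus induction in (b), where one must match the logarithmic resolution of the Poincar\'e bundle with the stable-graph decomposition of the uniDR formula boundary-stratum by boundary-stratum -- this is the core technical thrust of the paper.
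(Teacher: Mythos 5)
Your outline for $\ell=0$ -- Fourier identity $\pi_*\alpha=e^*\fm(\alpha)$, Fourier transform of the resolved Abel--Jacobi section, and a genus induction matching the Fourier recursion against the uniDR recursion -- is indeed the strategy the paper uses. But three of your intermediate steps have genuine gaps.

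\textbf{Part (a): the $[n]^*$-eigenvalue argument.} The relative multiplication map $[n]$ does not extend from $J_C$ to $\Jbar^\epsilon_{g,n}$; losing the group structure on the compactification is one of the two central obstructions named in the introduction. The classes $\Theta$, $\kappa_{0,1}$, $\xi_i$ are defined as extensions from the smooth-fiber locus, but there is no morphism $[n]$ on $\Jbar^\epsilon_{g,n}$ with respect to which they could be eigenvectors, and no way to write $\pi_*[n]^*=n^{2g}\pi_*$. Saying $[n]$ is ``extended via the Fourier formalism'' begs the question: showing the weight-decomposition structure persists after compactification is precisely what the hard work of the paper accomplishes. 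In fact, the paper gives a rigorous version of your idea in cohomology only (Proposition~\ref{pro:vanishing}), using the perverse filtration and strong perversity following Maulik--Shen; for Chow, part (a) is extracted from the same Fourier-and-recursion argument as part (b), namely Theorem~\ref{thm:leading}(a).

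\textbf{Part (b): handling $\ell>0$.} Your plan is to prove a single generating-series identity $\pi_*\exp(s\Theta-b\kappa_{0,1}+\sum a_i\xi_i)=\sum_\ell s^\ell(-1)^{g-\ell}\DR_g^{g-\ell}(b;\sfa)$, but the Fourier/Abel--Jacobi machinery does not directly reach classes containing $\Theta$: the Abel--Jacobi section produces exponentials of $\kappa_{0,1}$ and $\xi_i$ only (Proposition~\ref{pro:FM_aj}), and $\Theta$ does not appear naturally in the kernel. The paper therefore establishes the $\ell=0$ case first and then reduces $\ell>0$ to $\ell=0$ via a separate inductive step: one passes to $\Mbar_{g,n+1}$ and uses the fact that $p_*(\xi_{n+1}^2/2)=-\Theta$ under the forgetful map, so $\Theta$ is the shadow of a $\xi$-class one level up. This requires a nontrivial new identity for the forgetful pushforward of the DR formula (Theorem~\ref{thm:DR_push}, proved in Section~\ref{sec:forgetful}), whose role your proposal omits entirely.

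\textbf{Part (c): the supercritical range.} Even if the $[n]^*$-eigenvalue argument applied, it would give \emph{vanishing} when $2\ell+m+\sum k_i>2g$, which is not what the theorem asserts: the supercritical pushforward is nonzero in general and depends on the stability condition; the claim is only that it is tautological. So this part of your argument would reach the wrong conclusion. The paper instead propagates tautologicality through the recursive Fourier structure (proof of Theorem~\ref{thm:pushforward}(c), plus the forgetful reduction).
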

	When $\epsilon$ is small, the number $2\ell+m+\sum_i k_i$ is the weight of the class $\Theta^\ell\kappa_{0,1}^m\xi_2^{k_2}\cdots\xi_n^{k_n}$ over the locus $\Mct_{g,n}\subset\Mbar_{g,n}$ where $\pi$ is an abelian scheme. Passing to rational cohomology, Theorem \ref{thm:pushforward} (a) can be obtained by the perverse filtration (Section \ref{sec:vanish}). The closed formula and the independence of stability condition in Theorem \ref{thm:pushforward} (b) is not at all clear from previous results.
	
	Theorem \ref{thm:pushforward} (b) consists of the leading terms of the $\DR$ cycle formula, viewed as a polynomial in $b,\sfa$. By Theorem \ref{thm:unidr1} below, the full $\DR$ cycle formula can be recovered from these leading terms. Therefore Theorem \ref{thm:pushforward} gives refined geometric meaning of the $\DR$ cycle formula.

    When $2\ell+m+\sum_i k_i$ is greater than $2g$, Theorem \ref{thm:pushforward} (c) says that classes in the subring of $\CH^*(\Jbar_{g,n}^\epsilon,\QQ)$ generated by divisors push forward to tautological classes. In this case, the pushforward depends on the choice of stability condition $\epsilon$, so explicit formulas will necessarily be more complicated. Based on this result, the forthcoming paper \cite{BM} will show that arbitrary tautological classes on the logarithmic Picard group push forward to tautological classes.
    
	%
	\subsection{Logarithmic Poincar\'e line bundle and Arinkin's kernel}\label{sec:log_dual}
	Our construction of the extended Poincaré line bundle is motivated by the duality of the logarithmic Picard group. For smooth curves and their relative Jacobians,  the Poincar\'e line bundle can be described as the Deligne pairing of two universal line bundles. This construction extends to logarithmically smooth curves. For a log smooth curve $C\to B$ and the logarithmic Picard group $\lpic_C\to B$, introduced by Molcho-Wise \cite{MW22}, the logarithmic Poincar\'e line bundle\footnote{The construction of $\P^{\log}$ will appear in the forthcoming work \cite{MWPoincare} and it will not be used in this paper.}
	\begin{equation*}
		\P^{\log}\to \lpic_C\times_B\lpic_C
	\end{equation*}
	is defined via the Deligne pairing of the two universal logarithmic line bundles, which induces a ``geometric" duality isomorphism $\lpic^0 (\lpic^0_C)\cong\lpic^0_C$. Any log line bundle on a log scheme (or algebraic log space) $X$ is representable by a line bundle on a logarithmic modification of $X$, and thus we can represent $\P^{\textup{log}}$ by a line bundle on some logarithmic modification of $\lpic_C^0 \times_B \lpic_C^0$. 
	%
	
	To connect the geometric duality obtained from the logarithmic Poincar\'e line bundle to derived equivalences among compactified Jacobians is however a more delicate issue: the logarithmic modification and line bundle representing the log Poincar\'e must be chosen carefully. For nondegenerate stability conditions $\epsilon_1$ and $\epsilon_2$, $\Jbar_C^{\epsilon_1}\times_B\Jbar^{\epsilon_2}_C$ is a natural logarithmic modification of $\lpic^0_C\times_B\lpic^0_C$, but the classical Poincar\'e line bundle on $J_C \times_B J_C$ does not extend to a line bundle on it; it extends to a line bundle $\P$ only up to the open locus $J_C^{\epsilon_1}\times_B\Jbar_C^{\epsilon_2}\cup \Jbar_C^{\epsilon_1}\times_BJ_C^{\epsilon_2}$, where $J^{\epsilon}_C\subset\Jbar^{\epsilon}_C$ denotes the locus of line bundles. 
	
	\if0
	Suppose now $C = \overline{\mathcal{C}}_{g,n} \to B=\Mbar_{g,n}$. For nondegenerate stability conditions $\epsilon_1$ and $\epsilon_2$, $\Jbar_C^{\epsilon_1}\times_B\Jbar^{\epsilon_2}_C$ is a logarithmic modification of $\lpic_C\times_B\lpic_C$. The representablilty of $\P^{\log}$  poses two questions: 
	\begin{enumerate}
		\item Can we describe a log modification and representing line bundle explicitly? 
		\item Can we choose a log modification and representing line bundle in a way in which we can extract useful algebrogeometric consequences?
	\end{enumerate}
	In Section \ref{sec:3}, we address both questions: first, we construct a modular logarithmic modification $\Jbar_C ^{(2)} \to \Jbar_C^{\epsilon_1}\times_B\Jbar^{\epsilon_2}_C$ and an explicit line bundle $\widetilde{\P}$ on $\Jbar_C^{(2)}$ that is defined via the modular description of $\Jbar_C^{(2)}$. The classical Poincar\'e line bundle on $J_C \times J_C$ does not extend to a line bundle on $\Jbar_C^{\epsilon_1}\times_B\Jbar^{\epsilon_2}_C$, but does extend to a line bundle $\P$ on the open locus of $J_C^{\epsilon_1}\times_B\Jbar_C^{\epsilon_2}\cup \Jbar_C^{\epsilon_1}\times_BJ_C^{\epsilon_2}$, where $J^{\epsilon}_C\subset\Jbar^{\epsilon}_C$ denotes the locus of line bundles. The modification $\Jbar_C^{(2)} \to \Jbar_C^{\epsilon_1}\times_B\Jbar^{\epsilon_2}_C$ is an isomorphism over this locus, and our line bundle $\widetilde{\P}$ extends $\P$. We then prove: 
	\fi
	\begin{theorem}\label{thm:Plog=arinkin}
		For $B=\Mbar_{g,n}$, there exists a  logarithmic modification $f:\Jbar^{(2)}_C\to\Jbar_C^{\epsilon_1}\times_B\Jbar_C^{\epsilon_2}$, where 
		\begin{enumerate}[label=(\alph*)]
			\item $\Jbar^{(2)}_C$ is smooth and log smooth. The map $f$ is an isomorphism over $J_C^{\epsilon_1}\times_B\Jbar_C^{\epsilon_2}\cup \Jbar_C^{\epsilon_1}\times_BJ_C^{\epsilon_2}$ and its complement has codimension $2$;
			\item if $\widetilde{\P}$ denotes the unique line bundle on $\Jbar_C^{(2)}$ extending $\P$, then $\widetilde{\P}$ represents $\P^{\log}$;
			\item the pushforward $\pbar:=Rf_*\widetilde{\P}$ is a maximal Cohen-Macaulay sheaf extending $\P$ which is flat relative to each projection $\pi_i :\Jbar_C\times_B\Jbar_C\to \Jbar_C$, $i=1,2$;
			\item $\fm:= -\otimes \Pbar:\dbcoh(\Jbar^{\epsilon_1}_C)\to\dbcoh(\Jbar^{\epsilon_2}_C)$ is an equivalence between derived categories.
		\end{enumerate}
	\end{theorem}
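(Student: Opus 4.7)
The plan is to establish parts (a)-(d) in sequence, with the modular construction of $\Jbar_C^{(2)}$ in (a) providing the geometric backbone. For (a), I would describe $\Jbar_C^{(2)}$ as the moduli stack of triples $(C'', L_1, L_2)$, where $C''$ is a quasi-stable model of the underlying stable curve $C/B$, each $L_i$ is admissible on $C''$ and gives an $\epsilon_i$-stable pair after contracting the superfluous exceptional components, and $C''$ is minimal with respect to this joint condition. The map $f$ is then the modular morphism remembering each pair separately. A node of $C$ is blown up in $C''$ precisely when at least one of the $L_i$ requires it, so $f$ is an isomorphism over the locus where at each node at most one factor is non-locally-free; the complement is cut out by one divisorial condition on each factor and hence has codimension $2$. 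Log smoothness and smoothness of $\Jbar_C^{(2)}$ follow from local toric computations on boundary charts, where the modification is a standard toric blow-up that resolves the normal-crossing singularities of the fiber product.

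For (b), the smoothness and codimension-$2$ iso-locus from (a) force uniqueness of any line bundle extension of $\P$, since $\mathrm{Pic}(\Jbar_C^{(2)})$ injects into the Picard group of the open part where $f$ is an isomorphism. For existence and the identification with $\P^{\log}$, I would invoke the Deligne-pairing description: on any log modification of $\lpic_C \times_B \lpic_C$ along which both universal log line bundles become ordinary, $\P^{\log}$ is represented by their Deligne pairing. The modular description of $\Jbar_C^{(2)}$ realizes exactly this condition, and the resulting Deligne-paired line bundle restricts to the classical Poincaré bundle on the iso-locus, so by uniqueness it must coincide with $\widetilde{\P}$.

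For (c), the vanishing $R^i f_*\widetilde{\P} = 0$ for $i > 0$ reduces to a local computation on the exceptional fibers of $f$, which are explicit toric varieties built from the combinatorics of common blown-up nodes. The restriction of $\widetilde{\P}$ to such a fiber has degree $0$ or $-1$ on each irreducible component (as determined by the Deligne pairing), and the requisite vanishing then follows from Kawamata-Viehweg on the toric model. Hence $\Pbar = f_*\widetilde{\P}$ is a coherent sheaf extending $\P$; smoothness of $\Jbar_C^{(2)}$ together with properness of $f$ force $\Pbar$ to be $S_2$ and thus reflexive, and combined with the Gorenstein property of the toroidal target $\Jbar_C^{\epsilon_1}\times_B\Jbar_C^{\epsilon_2}$, this upgrades to the maximal Cohen-Macaulay condition.

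For (d), I would invoke Arinkin's theorem \cite{arinkin1, arinkin2}: up to twist, there is a unique maximal Cohen-Macaulay extension of the classical Poincaré bundle on $\Jbar_C^{\epsilon_1}\times_B\Jbar_C^{\epsilon_2}$, and the induced integral transform is a derived equivalence. By (c) our $\Pbar$ is such an extension, so uniqueness identifies it with Arinkin's kernel after normalization, and (d) follows. The main obstacle is part (c): the vanishing $R^i f_*\widetilde{\P} = 0$ is a genuinely local calculation whose outcome depends sensitively on how $\epsilon_1$ and $\epsilon_2$ interact at common nodes, and correctly tracking the Deligne pairing through the modular combinatorics to obtain the right component degrees is the technical heart of the argument. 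A secondary subtlety in (d) is arranging the normalization of $\widetilde{\P}$ so that $\Pbar$ agrees with Arinkin's kernel rather than a twist thereof.
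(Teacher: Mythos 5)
Your overall roadmap (construct a modular log modification, extend $\P$ uniquely using codimension $2$, push forward, invoke Arinkin) matches the paper's, but two steps in part (c) are genuinely wrong, and part (a) glosses over the reason the modification is needed at all.

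The most serious gap is your argument for the maximal Cohen--Macaulay property of $\Pbar$. You argue: $\Jbar_C^{(2)}$ smooth plus $f$ proper give $S_2$/reflexive, and then ``combined with the Gorenstein property of the toroidal target, this upgrades to the maximal Cohen--Macaulay condition.'' This implication is false in general: $S_2$ is equivalent to reflexive for torsion-free sheaves on normal varieties, but maximal Cohen--Macaulay requires $S_n$ for $n=\dim$, which is strictly stronger once $\dim>2$. Rank-one reflexive sheaves on a singular Gorenstein variety of dimension $\ge 3$ need not be MCM, and the fiber product $\Jbar_C^{\epsilon_1}\times_B\Jbar_C^{\epsilon_2}$ has dimension $6g-6+2n$ and genuine ordinary double point loci, so there is no shortcut here. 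The paper's actual argument is a duality computation: by Grothendieck duality and the fact that $f$ is a birational modification, $\DD(Rf_*\widetilde{\P})\cong Rf_*\DD(\widetilde{\P})\cong Rf_*(\widetilde{\P}^\vee\otimes\omega_{\Jbar^{(2)}_C})$, and $\omega_{\Jbar^{(2)}_C}$ is trivial on each contracted $\PP^1$ by adjunction, while $\widetilde{\P}^\vee$ has degree $-1$; the same fiberwise vanishing criterion then shows this object is also a sheaf in degree $0$, which by the cohomological characterization \eqref{eq:CM_dual} is exactly the MCM condition. You need this second vanishing computation; $S_2$ is not enough.

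A second error is your claim that ``the restriction of $\widetilde{\P}$ to such a fiber has degree $0$ or $-1$ on each irreducible component.'' In fact the paper computes via the Deligne pairing that $\widetilde{\P}|_{f^{-1}(x)}\cong\CO(1)\boxtimes\cdots\boxtimes\CO(1)$, i.e.\ degree $+1$ on every exceptional $\PP^1$; it is the \emph{dual} $\widetilde{\P}^\vee$ (needed in the duality step just described) that has degree $-1$. With degree $0$ or $-1$ as you state, the higher direct images of $\widetilde{\P}$ would vanish trivially, but you would not be able to run the duality argument, so the error propagates.

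Finally, in part (a), your description of $\Jbar_C^{(2)}$ as parametrizing a single quasi-stable curve $C''$ carrying both line bundles misses the central subtlety: the ``minimal common model'' $C_1\times_C^{\textup{fs}}C_2$ is not flat over $\mathfrak{Q}\times_B\mathfrak{Q}$, and the whole point of the paper's $\mathfrak{Q}^{(2)}$ is that it is the precise log modification (given tropically by subdividing along the hyperplanes $\ell'_{e_1}=\ell'_{e_2}$) over which this fiber product becomes flat and hence a curve; moreover that curve has chains of unstable components of length up to $2$, so it is semi-stable rather than quasi-stable. You reach the right conclusion (a small resolution of ODP-type singularities, codimension-$2$ exceptional locus) but by asserting it rather than by exhibiting the subdivision by hyperplanes that makes it true and that later feeds into the local cohomology vanishing for $R^if_*$.

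Parts (b) and (d) of your plan agree with the paper in essence; (d) is indeed handled by Arinkin's criterion applied to the MCM kernel, so the burden is entirely on fixing (c).
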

	This provides an independent proof that the line bundle $\P$ admits a $\pi_i$-flat maximal Cohen-Macaulay extension, a result originally proven by Arinkin \cite{arinkin1, arinkin2} (later generalized in \cite{MRV1, MRV2}). Following Arinkin's argument, the kernel $\Pbar$ induces a derived equivalence between $\Jbar^{\epsilon_1}_C$ and $\Jbar^{\epsilon_2}_C$. Consequently, we can use the explicit line bundle $\widetilde{\P}$ as the kernel of our derived equivalence.
	
	\subsection{Structure of the universal double ramification cycle formula}
	The universal double ramification formula is a natural lift of the twisted $\DR$ formula. Let $\pic_{g,n,0}$ denote the universal Picard stack, which parametrizes prestable curves of genus $g$ with $n$ markings and total degree $0$ line bundles. For $b\in\ZZ$ and $\sfa \in\ZZ^n$ satisfying \eqref{eq:intro2}, the $\uniDR$ formula is a tautological class $\uniDR^c_{g}(b;\sfa)$ in $\CH^c(\pic_{g,n,0},\QQ)$ that generalizes the $\DR$ formula \cite{BHPSS}. The $\uniDR$ formula is also polynomial in $b,a_i$. For a detailed explanation, we refer to Section \ref{subsec:topdeg_uniDR}.
	
	The top degree part of the $\uniDR$ formula is defined by the relative group structure of $\pic_{g,n,0}$ over the moduli stack of prestable curves $\mathfrak{M}_{g,n}$. Under the pullback along the ``multiplication by $N$" map $[N]:\pic_{g,n,0}\to\pic_{g,n,0}$, the $\uniDR$ formula becomes polynomial in $N$. We define the {\it top degree part} $\widetilde{\uniDR}^c_g(b;\sfa)$ as the part such that the sum of the weight (with respect to $[N]$) and the polynomial degree (with respect to $b,a_i$) is exactly $2c$. 
	
	We prove a correspondence between $\uniDR$ and its top degree part $\widetilde{\uniDR}$. The correspondence is most naturally stated using the negative zeta value regularization convention:
	\begin{equation}\label{eq:intro4}
		\sum_{k=1}^{\infty}k^{d+1} := \zeta(-d-1) = -\frac{B_{d+2}}{d+2}\quad\text{ for $d\ge 0$},
	\end{equation}
	where $B_n$ are the Bernoulli numbers.
	\begin{theorem}[Theorem~\ref{thm:top}]\label{thm:unidr1}
		Let $g,c,n\geq 0$. For each $0\leq m\leq g$, let $j_m:\pic_{\Gamma_m}\to\pic_{g,n,0}$ be the stratum parameterizing curves with degenerations forced by gluing the last $m$ pairs of markings and let $q_m:\pic_{\Gamma_m}\to\pic_{g-m,n+2m,0}$ be the $(\Gm)^m$-torsor associated to partial normalization. Then
		\[
		\uniDR_g^c(b;\sfa) = \sum_{m=0}^{\min(g,c)}\frac{1}{2^mm!} (j_m)_*(q_m)^*\Bigg[\sum_{k_1,\ldots,k_m>0}\Big(\prod_{i=1}^m k_i\Big) \widetilde{\uniDR}^{c-m}_{g-m}(b;\sfa,k_1,-k_1,\ldots,k_m,-k_m)\Bigg]\,,
		\]
		where the infinite sum over $k_i$ of a polynomial in $k_i$ is evaluated via \eqref{eq:intro4}.
	\end{theorem}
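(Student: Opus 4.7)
The plan is to prove the theorem by directly comparing both sides using the explicit graph-sum formula for $\uniDR_g^c(b;\sfa)$. In the BHPSS formula, each edge of a stable graph contributes a factor
\[
\frac{1 - \exp\bigl(-\tfrac{w(h)w(h')}{2}(\psi_h + \psi_{h'})\bigr)}{\psi_h + \psi_{h'}} = \sum_{j\geq 0}\frac{(-1)^j}{(j+1)!}\left(\frac{w(h)w(h')}{2}\right)^{j+1}(\psi_h+\psi_{h'})^j,
\]
so the choice of an expansion order $j_e \geq 0$ for each edge $e$ decomposes the formula into a sum over pairs $(\Gamma, \{j_e\})$. The first step is to identify the top degree part $\widetilde{\uniDR}^c_g(b;\sfa)$, defined abstractly by the $[N]^*$-bigrading, with a specific sub-sum of this decomposition. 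The $\psi$-factor in each edge expansion is pulled back from $\M_{g,n}$ (weight $0$ under $[N]^*$) while the scalar coefficient $(w(h)w(h')/2)^{j+1}$ carries weight $2(j+1)$, so the condition ``weight plus polynomial degree in $b,\sfa$ equals $2c$'' selects the terms where each edge appears ``at leading order'' in the bigrading.

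With this identification in hand, the right-hand side can be rewritten in the same graph-sum language. The stratum $\pic_{\Gamma_m}$ together with its gluing map $j_m$ and $(\Gm)^m$-torsor $q_m$ attaches $m$ new edges to a genus $g-m$, $(n+2m)$-marked curve. Substituting the graph-sum expression for $\widetilde{\uniDR}^{c-m}_{g-m}(b;\sfa,k_1,-k_1,\ldots,k_m,-k_m)$ into the right-hand side and using the projection formula along $j_m$, the $m$-th summand becomes a sum over stable graphs $\Gamma$ equipped with a distinguished collection of $m$ edges whose half-edge weights are $\pm k_i$: the non-distinguished edges contribute at top degree while the distinguished edges contribute their full $\psi$-series with $w(h)w(h') = -k_i^2$ times the weight factor $\prod_i k_i$.

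The last step is to sum over the $k_i$ via the zeta convention \eqref{eq:intro4}: summing $\prod_i k_i$ against a polynomial in the $k_i$ over $k_i > 0$ reproduces the Bernoulli number coefficients that the polynomial-in-$r$ interpretation of BHPSS's half-edge weight sums produces at $r = 0$. The symmetry factor $\frac{1}{2^m m!}$ on the right-hand side accounts for the two orientations of each distinguished edge and for permutations of the $m$ pairs; after grouping terms on both sides by the underlying unoriented stable graph, these factors combine with the automorphism count $1/|\aut(\Gamma)|$ of the BHPSS formula to give matching coefficients.

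The principal obstacle will be the first step: rigorously identifying $\widetilde{\uniDR}$, defined abstractly by a bigrading on $\CH^*(\pic_{g,n,0},\QQ)$, with a concrete combinatorial piece of the graph-sum expansion. This requires carefully tracking how $[N]^*$ acts on the full tautological ring of $\pic_{g,n,0}$ (including classes that originate from the universal line bundle) and showing that the bigrading condition translates into a purely combinatorial condition on the expansion. A secondary technical point is ensuring that the polynomiality in $r$ and the $r \to 0$ specialization commute with the expansion-by-$j_e$ decomposition; here I would rely on the polynomiality results of \cite{JPPZ1} and \cite{Spelier, Pixton_Poly}.
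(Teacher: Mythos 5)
Your overall strategy --- comparing both sides as graph sums and matching coefficients --- is the same as the paper's, but you are missing the key intermediate tool that makes the comparison tractable, and you yourself flag the resulting gap as your ``principal obstacle.'' The paper does not extract the top degree part by inspecting the raw BHPSS/JPPZ expansion; instead it isolates each boundary coefficient as a graph invariant $C(G)$ (equation \eqref{eq:DRcoeff1}) and then invokes Zagier's spanning-tree formula \eqref{eq:DRcoeff2}, which rewrites $C(G)$ as a sum over spanning trees $T$ of $G$ with factors $\exp(a_{e,T}z_e)$ and $z_e/(\exp(z_{e,T})-1)$. In that form the polynomiality in the $a_v$ (and hence in $b$, $a_i$, and the multidegree $\delta(v)$) is manifest, and the top degree part $\widetilde{C}(G)$ is obtained by a one-line replacement $z_e/(\exp(z_{e,T})-1)\mapsto z_e/z_{e,T}$ in \eqref{eq:DRcoeff2top}. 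Your alternative characterization --- that the bigrading condition selects terms where ``each edge appears at leading order'' in the per-edge $\psi$-expansion --- is not correct as stated: the edge factor $(w(h)w(h')/2)^{j+1}$ involves a weighting $w$ valued in $\{0,\dots,r-1\}$, and the polynomial dependence on $b,\sfa,\delta$ only appears after summing over $w$ and taking the $r=0$ constant term, so there is no meaningful per-edge degree in the raw expansion. This is exactly the difficulty Zagier's formula is designed to bypass.

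The second genuine gap is the final step, where you assert that the negative-zeta regularization ``reproduces the Bernoulli number coefficients'' coming from the $r\to 0$ specialization. In the paper this is not a tautology: after processing the $k_i$ one lands on a nontrivial identity of meromorphic functions in $z_f$ and $z_{f,T}$, namely that the coefficient of $X^2 Y^i$ in
\[
\frac{X}{e^{X+Y}-1}-\frac{X}{X+Y}+X^2\Bigl(\frac{e^Y}{(e^Y-1)^2}-\frac{1}{Y^2}\Bigr)
\]
vanishes for all $i\ge 0$ (see the end of Section~\ref{subsec:topdeg_correspondence_proof}), and this is the actual content of the matching. Your proposal treats this as an assumed cancellation rather than something to be proved. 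The one genuinely new input needed for the $\uniDR$ version (as opposed to the $\DR$ version proved in Theorem~\ref{thm:topdeg_correspondence}), namely that the extra factors $\exp(k_i(\xi_{n+2i-1}-\xi_{n+2i}))$ vanish after applying $q_m^*$ because the two $\xi$-classes become identified on the normalization, is also absent from your argument; the paper records it explicitly. I would suggest reworking the proposal to (i) first reduce to the $\DRP$ level by the projection formula, (ii) pass to the $C(G)$ normalization and cite or reprove Zagier's formula, and only then (iii) set up the spanning-tree comparison and verify the Bernoulli-series identity.
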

	By pulling back Theorem \ref{thm:unidr1} along the unit $e:\Mbar_{g,n}\to\pic_{g,n}$, we obtain genus recursion for the $\DR$ cycle formula (Theorem \ref{thm:topdeg_correspondence}).
	
	We also prove an identity involving the pushforward of the regular (twisted) $\DR$ cycle formula under the map $p:\Mbar_{g,n+1}\to\Mbar_{g,n}$ forgetting the last marking. 
	\begin{theorem}[Theorem \ref{thm:DR_push}]\label{thm:unidr2}
		Let $g,c,n\ge 0$. Let $F = p_*\DR_g^c(b;a_1,\ldots,a_{n+1})\in \CH^{c-1}(\Mbar_{g,n})$ be viewed as a polynomial in $b, a_1,\ldots, a_{n+1}$ modulo the relation \eqref{eq:intro1}.
		Then $F$ is a multiple of $(a_{n+1}-b)^2$. Moreover, we have the identity
		\[
		\left[\frac{F}{(a_{n+1}-b)^2}\right]_{a_{n+1}:=b} = (g+1-c)\DR_g^{c-1}(b;a_1,\ldots,a_n)\,.
		\]
	\end{theorem}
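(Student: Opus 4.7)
The plan is to expand $\DR_g^c(b;a_1,\ldots,a_{n+1})$ as a polynomial in $t:=a_{n+1}-b$, apply $p_*$ order by order, and show that the $t^0$ and $t^1$ coefficients vanish while the $t^2$ coefficient identifies with $(g+1-c)\DR_g^{c-1}(b;a_1,\ldots,a_n)$ modulo $(a_{n+1}-b)$. I would work with Pixton's formula $\DR_g^{c,r}$ for large $r$ and specialize $r=0$ at the end.

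First, using the standard pullback relations $\kappa_1^{(n+1)}=p^*\kappa_1^{(n)}+\psi_{n+1}$ and $\psi_i^{(n+1)}=p^*\psi_i^{(n)}+D_{i,n+1}$ for $i\le n$, the exponent in Pixton's formula rewrites as
\[
p^*E^{(n)} + \tfrac{(a_{n+1}-b)(a_{n+1}+b)}{2}\psi_{n+1} + \sum_{i=1}^n \tfrac{a_i^2}{2}D_{i,n+1},
\]
isolating a single $(a_{n+1}-b)$-factor in the middle term. Its pushforward under $p$, after exponentiating, produces via $p_*\psi_{n+1}^{k+1}=\kappa_k^{(n)}$ and $p_*D_{i,n+1}^k=(-\psi_i^{(n)})^{k-1}$ factors of precisely the form appearing in Pixton's graph sum---for instance, $(1-\exp(-a_i^2\psi_i^{(n)}/2))/\psi_i^{(n)}$ from the $D_{i,n+1}$ terms---strongly suggesting that contributions from the exponent reorganize themselves into graph-sum-like terms on $\Mbar_{g,n}$.

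A parallel analysis is needed for Pixton's graph sum $S^r(b;a_1,\ldots,a_{n+1})$: reorganizing the sum over weightings $w\in W_{\Gamma,r,b}$ according to the vertex carrying leg $n+1$, at $a_{n+1}=b$ the balancing condition at that vertex collapses to the one for the graph with leg $n+1$ deleted. Combined with the exponent expansion, this should yield a decomposition
\[
\DR_g^{c,r}(b;a_1,\ldots,a_{n+1}) = p^*\DR_g^{c,r}(b;a_1,\ldots,a_n) + t\cdot A + t^2\cdot B + O(t^3)
\]
in which $p_*$ kills the pullback (since $p_*p^*=0$), the cancellation $p_*A=0$ holds by the organization of the $D_{i,n+1}$ and $\psi_{n+1}$ terms into matching graph-sum shapes, and the $t^2$-coefficient $p_*B|_{a_{n+1}=b}$ reorganizes into Pixton's formula for $\DR_g^{c-1}$ with an overall factor $g+1-c$ from genus/codimension bookkeeping in the weight sum. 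The main obstacle is the combinatorial analysis of the graph sum under the specialization $a_{n+1}=b$: carefully tracking how Pixton's weightings $W_{\Gamma,r,b}$ transform, and verifying the first-order cancellation and the second-order identification in tandem.
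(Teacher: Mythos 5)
Your overall strategy---expanding in powers of $t = a_{n+1}-b$, using the pullback relations $\kappa_1 \mapsto p^*\kappa_1 + \psi_{n+1}$ and $\psi_i \mapsto p^*\psi_i + D_{i,n+1}$ to split the exponential, and applying $p_*$ order by order---is the same strategy the paper uses. However, the proposal has a genuine gap: you explicitly flag the combinatorial analysis of Pixton's graph sum as ``the main obstacle'' and do not resolve it, and this is precisely where nearly all of the work lies.

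Concretely, three things are missing. First, the $t^1$-vanishing is not automatic or formal: it arises from a nontrivial cancellation between the $\psi_{n+1}$-contributions and the $\delta_{i,n+1}$-contributions, which have to be computed carefully (the paper isolates these as separate pieces, Lemmas~\ref{lem:pushforwardA} and \ref{lem:pushforwardB}, and the linear-in-$t$ terms of each are nonzero but cancel upon summing). Your picture that the $D_{i,n+1}$-pushforward ``reorganizes itself'' into graph-sum factors is on the right track, but making it precise requires combining the $\delta_{i,n+1}^k$ terms with the boundary-graph contributions $\DRP_g^{(i,n+1)}$ whose underlying graph has a rational bubble carrying legs $i$ and $n+1$; only the combination has a clean closed form. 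Second, the hardest piece of the pushforward is the ``neither'' case: graph terms whose leg $n+1$ sits on a vertex that survives (possibly after contracting a rational bubble at a node). Organizing that sum by the target graph $\Gamma$ on $\Mbar_{g,n}$ and a chosen edge $f$, one must show the result matches $(a_{n+1}-b)^2([\text{deg}]-[\text{codim}])\DRP_g$ modulo $(a_{n+1}-b)^3$. This is not a formal reorganization of weightings; the paper needs Zagier's spanning-tree formula for the graph invariant $C(G)$ plus the auxiliary identity of Lemma~\ref{lemma:codim_minus_deg} involving Bernoulli numbers from the sum $\sum_{k=0}^{T-1}k^{d+2}$. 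Nothing in your proposal supplies a substitute for these inputs, and it is unclear how ``reorganizing the sum over weightings according to the vertex carrying leg $n+1$'' would recover the $(g+1-c)$ factor without them. Third, there is a notational subtlety you would need to address: inside the $t$-expansion, the symbol $\DR_g^c(b;a_1,\ldots,a_n)$ only makes sense modulo $(a_{n+1}-b)$, so a statement like ``$\DR_g^{c,r}(b;a_1,\ldots,a_{n+1}) = p^*\DR_g^{c,r}(b;a_1,\ldots,a_n) + tA + t^2B + O(t^3)$'' requires a careful set-up (the paper devotes Section~\ref{subsec:pushforward_setup} to exactly this). As written, the proposal gives the correct setup and final shape but defers the actual argument.
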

	A generalization of this theorem to the $\uniDR$ formula is given in Section~\ref{subsec:pushforward_uniDR} (Theorem \ref{thm:pushforward_uniDR}).
	\subsection{Fourier transform and the resolved Abel-Jacobi sections}
	We sketch our proof of Theorem~\ref{thm:pushforward} here; full details are found in Section~\ref{sec:7}. Our key approach to computing the Fourier transform is by linking it to logarithmic Abel-Jacobi theory.
	
	Let $\epsilon_0, \epsilon$ be non-degenerate stability conditions. Following the appropriate normalization as in Maulik-Shen-Yin \cite{MSY23}, we consider the Chow-theoretic realization of the Fourier transform of Theorem \ref{thm:Plog=arinkin}(d) given by the following relative algebraic correspondence:
	\begin{equation}\label{eq:intro3}
		\fm := f_*\big(\Td(T_{\Jbar^{(2)}_C}-f^*T_{\Jbar^{\epsilon_0}_{g,n}\times_{\Mbar_{g,n}}\Jbar^{\epsilon}_{g,n}})\cdot\ch(\widetilde{\P})\big): \CH^*(\Jbar_{g,n}^{\,\epsilon_0},\QQ)\xrightarrow{\cong} \CH^*(\Jbar_{g,n}^{\,\epsilon},\QQ)\,.
	\end{equation}
	We denote its inverse by $\fm^{-1}$. Choose $\epsilon_0$ to be small, so that the unit section $e$ exists in $\Jbar_{g,n}^{\epsilon_0}$. 
	
	We start from the Fourier image of the resolved Abel-Jacobi section. By \cite{HMPPS}, the rational Abel-Jacobi section admits a logarithmic resolution $\aj_{b;\sfa} :\Mbar_{g,b;\sfa}^{\,\epsilon_0}\to\Jbar_{g,n}^{\, \epsilon_0}$ . Using Theorem \ref{thm:Plog=arinkin}, the image of $\aj_{b;\sfa}$ has the form:
	\begin{equation}\label{eq:intro_fm_aj}
		\fm\big([\aj_{b;\sfa}]\big) = \exp\left(-b\kappa_{0,1}+\sum_{i=1}^n a_i\xi_i\right)\cdot (1+\gamma_{b;\sfa})\,,
	\end{equation}
	where $\gamma_{b;\sfa}$ is a class associated with an explicit piecewise polynomial (in the sense of \cite[Section 5.5]{MPS}) on $\Jbar_{g,n}^{\epsilon}$ which is supported away from the locus of integral curves (Proposition \ref{pro:FM_aj}).

	Logarithmic Abel-Jacobi theory computes the locus of the resolved Abel-Jacobi section. Using \cite{BHPSS} (Proposition \ref{pro:aj_P}), the class of the image of $\aj_{b;\sfa}$ can be written as the codimension $g$ part of the $\uniDR$ formula:
	\begin{equation}\label{eq:aj_uniDR}
		[\aj_{b;\sfa}] = \uniDR^g_g(b;\sfa)\Big|_{\Jbar^{\epsilon_0}_{g,n}}\in\CH^g(\Jbar^{\epsilon_0}_{g,n},\QQ)\,.
	\end{equation}
	Consider the inverse Fourier transform from $\Jbar_{g,n}^{\epsilon}$ to $\Jbar_{g,n}^{\epsilon_0}$.  Combining \eqref{eq:intro_fm_aj} and \eqref{eq:aj_uniDR}, we obtain
	\begin{equation}\label{eq:fm_aj}
		\uniDR^g_g(b;\sfa)\Big|_{\Jbar^{\epsilon_0}_{g,n}} = [\aj_{b;\sfa}] = \fm^{-1}\left(\exp\left(-b\kappa_{0,1}+\sum_{i=1}^n a_i\xi_i\right)\cdot (1+\gamma_{b;\sfa})\right)\,.
	\end{equation}

	We are then able to match the leading term of the inverse Fourier transform $\fm^{-1}$ and the top degree term of the $\uniDR$ formula by induction on genus (Theorem \ref{thm:leading}). After restricting $\fm^{-1}$ to $\Jbar^{\epsilon}_{g,n}\times_{\Mbar_{g,n}}\Jzero_{g,n}$, the inverse transform $\fm^{-1}$ reduces to
	\begin{equation*}\label{eq:Finv_F}
		\fm^{-1}= (-1)^g\cdot\ch(\widetilde{\P}^\vee)\cdot\pi_1^*\Td^\vee(\cR_\pi)^{-1}:\CH^*(\Jbar_{g,n}^{\epsilon},\QQ)\to\CH^*(\Jzero_{g,n},\QQ)\,,
	\end{equation*}
	where $\cR_\pi$ is the sheaf of residues of logarithmic differentials on $\Jbar^{\epsilon}_{g,n}$. Let $\fm^\circ:=\ch(\widetilde{\P}^\vee)$ be, up to sign, the leading term of $\fm^{-1}$.
	We show that the image of $\fm^\circ$ of classes supported on boundary strata of $\Jbar_{g,n}^{\,\epsilon}$ can be computed from $\fm^\circ$ for lower genera (Theorem \ref{thm:inductive}). By Theorem \ref{thm:unidr1}, the $\uniDR$ formula satisfies a genus recursion with respect to the top degree part. Combining the two recursive structures via \eqref{eq:fm_aj}, we show that the contribution of the inverse Fourier transform of the class $\gamma_{b;\sfa}$ does not contribute in codimension $g$ and the contribution from $\Td^\vee(\cR_\pi)^{-1}$ exactly matches with the recursion formula in Theorem \ref{thm:unidr1}. Therefore, the codimension $c<g$ part of the image of monomials of weight $1$ divisors vanishes and we get
	\[\left[\fm^\circ\left(\exp\left(-b\kappa_{0,1}+\sum_{i=1}^n a_i\xi_i\right)\right)\right]_{\codim =g} = (-1)^g\cdot\widetilde{\uniDR}_g^g(b;\sfa)\Big|_{\Jzero_{g,n}}\,.\]
	Pulling back both sides along the zero section $e$ and applying \eqref{eq:intro1} yields Theorem~\ref{thm:pushforward} when $\ell = 0$.
	
	For monomials with a positive exponent on $\Theta$, we use Theorem \ref{thm:unidr2} to reduce to $\ell=0$.

	\if0
	Our key approach to computing the Fourier transform is by linking it to logarithmic Abel-Jacobi theory. For non-degenerate stability conditions $\epsilon, \epsilon_0$, the extended Poincar\'e line bundle $\widetilde{\P}$ in Theorem \ref{thm:Plog=arinkin} defines a Fourier transform 
	\[
	D^b(\Jbar_{g,n}^{\,\epsilon}) \leftrightarrow D^b(\Jbar_{g,n}^{\,\epsilon_0})
	\]
	There are multiple ways to pass from the derived category to a Fourier transform on Chow groups. The simplest is the transform 
	\begin{equation}\label{eq:intro3}
		\fm: \CH^*(\Jbar_{g,n}^{\,\epsilon_0},\QQ)\xrightarrow{\cong} \CH^*(\Jbar_{g,n}^{\,\epsilon},\QQ)
	\end{equation}
	defined by the formula 
	\[
	\fm = \Td(-T_{\Jbar \times_B \Jbar}) \cap \tau(Rf_*\mathcal{P})
	\]
	where $\tau$ is the Baum-Fulton-Macpherson homomorphism. Following the normalization of \eqref{eq:intro3} Maulik-Shen-Yin, one can also define an inverse transform 
	\[
	\fm^{-1}:  \CH^*(\Jbar_{g,n}^{\,\epsilon},\QQ)\xrightarrow{\cong} \CH^*(\Jbar_{g,n}^{\,\epsilon_0},\QQ)
	\]
	by 
	\[
	\fm^{-1} = \Td(-(p_1\times_B p_2)^*T_B)\cap\tau(Rf_*\widetilde{P}^{-1})
	\]
	We choose $\epsilon_0$ to be small, so that the unit section $e$ exists. The two transforms $\fm,\fm^{-1}$ are closely related but serve different purposes. First, we have a generalization of equation \eqref{eq:intro1}: 
	\begin{equation}\label{eq:intro_push_exp}
		\pi_*(x) = e^*\fm(x)
	\end{equation}
	The transform $\fm^{-1}$ is on the other hand more closely related to the class of Abel-Jacobi sections, as we have 
	\begin{equation}
		[\aj_{b;\sfa}] = \fm^{-1}\left(\exp \left(-b\kappa_{0,1}+\sum_{i=1}^n a_i\xi_i\right)\cdot (1+\gamma_{b;\sfa})\right)
	\end{equation}
	where $\gamma_{b;\sfa}$ is a class associated with an explicit piecewise polynomial on $\Jbar_{g,n}^{\epsilon}$ which is supported away from the integral locus (Proposition \ref{pro:FM_aj}). 
	
	Now, logarithmic Abel-Jacobi theory computes the locus of the resolved Abel-Jacobi section.
	By \cite{HMPPS}, the rational Abel-Jacobi section admits a logarithmic resolution $\aj_{b;\sfa} :\Mbar_{g,b;\sfa}^{\,\epsilon_0}\to\Jbar_{g,n}^{\, \epsilon_0}$ .
	The class of the image of $\aj_{b;\sfa}$ can be written as the codimension $g$ part of  the $\uniDR$ formula:
	\begin{equation}\label{eq:aj_uniDR}
		[\aj_{b;\sfa}] = \mathsf{P}^g_g(b;\sfa)\in\CH^g(\Jbar^{\epsilon_0}_{g,n},\QQ)\,.
	\end{equation}

	
	Thus, combining Theorem \ref{thm:Plog=arinkin} and \eqref{eq:aj_uniDR}, we arrive at
	\begin{equation}\label{eq:fm_aj}
		\mathsf{P}^g_g(b;\sfa) = [\aj_{b;\sfa}] = \fm^{-1}\left(\exp \left(-b\kappa_{0,1}+\sum_{i=1}^n a_i\xi_i\right)\cdot (1+\gamma_{b;\sfa})\right)
	\end{equation}
	To move from the transform $\fm^{-1}$ to $\fm$, which is related to $\pi_*$, we study the recursive structure of $\fm^{-1}$, which arises from the presence of additional terms in $\Td(-(p_1\times_B p_2)^*T_B)$ and the polynomial $\gamma_{b;a}$. We show that the Fourier transform of classes supported on boundary strata of $\Jbar_{g,n}^{\,\epsilon}$ can be computed from the Fourier transform of lower genera. By Theorem \ref{thm:unidr1}, the $\uniDR$ formula satisfies the same genus recursion with respect to the top degree part $\widetilde{\mathsf{P}}$. This allows us to match the leading term of the Fourier transform of the exponential appearing in \eqref{eq:intro_push_exp} and the top degree term of the $\uniDR$ formula. 
	
	Comparing the two recursive structures via \eqref{eq:fm_aj}, we show that the contribution of the inverse Fourier transform of the class $\gamma_{b;\sfa}$ does not contribute in codimension $g$ and the inverse Fourier transform of monomials of weight 1 divisors can be expressed recursively as the leading term of the Fourier transform. This proves that the leading term of the Fourier transform matches the top degree part $\widetilde{\mathsf{P}}$, concluding Theorem  \ref{thm:pushforward} when $\ell=0$. 
	
	For general monomials with a positive exponent on $\Theta$, we use a new identity for the pushforward of the $\DR$ formula along a forgetful pushforward $p:\Mbar_{g,n+1}\to\Mbar_{g,n}$ (Theorem~\ref{thm:DR_push}) to reduce to the case when $\ell=0$.
	\fi
	\subsection{Other toroidal abelian fibrations}
	Let $\A_g$ be the moduli space of principally polarized abelian varieties of dimension $g$ and let $\A_g\subset\A_g'$ be the canonical partial compactification of rank $1$ degenerations. Let $\pi:\X'_g\to\A'_g$ be the universal family and let $\X^\circ_g\to\A'_g$ be the universal semi-abelian scheme. Let $\mu:\X^\circ_g\times_{\A'_g}\X'_g\to\X'_g$ be the multiplication map. By the work of Arinkin-Fedorov \cite{AF16}, the Poincar\'e line bundle extends to a line bundle $\P$ on  $\X^\circ_g\times_{\A'_g}\X'_g\cup\X'_g\times_{\A'_g}\X^\circ_g$. 
	\begin{theorem}\label{thm:Xdual}
		There exists a unique maximal Cohen-Macaulay sheaf $\overline{\P}$ on $\X'_g\times_{\A'_g}\X'_g$ extending $\P$. Moreover, $\fm:=-\otimes\Pbar :\dbcoh(\X'_g)\to\dbcoh(\X'_g)$ induces a derived equivalence.
	\end{theorem}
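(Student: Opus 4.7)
\medskip

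\noindent\textbf{Proof proposal.} The strategy is to run the same program as in Theorem \ref{thm:Plog=arinkin}, replacing the compactified Jacobian over $\Mbar_{g,n}$ by the Mumford-style partial compactification $\pi: \X'_g \to \A'_g$. The space $\A'_g$ is logarithmically smooth (its boundary $\A'_g \setminus \A_g$ is a smooth divisor corresponding to rank $1$ degenerations), and $\X'_g$ is the total space of a universal family whose fibers over the boundary are rank $1$ degenerations of abelian varieties obtained from semi-abelian schemes by a toric compactification of the $\Gm$-extension. In particular the semi-abelian locus $\X^\circ_g \subset \X'_g$ has complement of codimension $1$, and the boundary of $\X'_g \times_{\A'_g} \X'_g$ on which the classical Poincar\'e bundle $\P$ of Arinkin-Fedorov fails to extend is precisely the product of the two boundary divisors, which has codimension $2$.

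The first step is to produce a logarithmic modification
\[
f: \X^{(2)}_g \longrightarrow \X'_g \times_{\A'_g} \X'_g
\]
analogous to the one built in Section \ref{sec:3}: we want $\X^{(2)}_g$ to be smooth and log smooth, $f$ to be an isomorphism over $\X^\circ_g \times_{\A'_g} \X'_g \cup \X'_g \times_{\A'_g} \X^\circ_g$, and the line bundle $\P$ on this open locus to extend uniquely to a line bundle $\widetilde{\P}$ on $\X^{(2)}_g$. Concretely, I would work tropically: over the rank $1$ boundary stratum, each fiber of $\X'_g$ is modeled on a $\ZZ$-periodic toric compactification of $\Gm \times B$ for $B$ an abelian variety of dimension $g-1$, so the local picture of $\X'_g \times_{\A'_g} \X'_g$ near a boundary point looks like the product of two such toric compactifications over a log point. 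The Poincar\'e bundle degenerates along the ``diagonal-antidiagonal'' pairing of the two $\ZZ$-cocharacter lattices; a subdivision of this product of fans that linearizes the pairing gives the desired $\X^{(2)}_g$, exactly as in the curve case where a subdivision of the tropicalization of the product of Jacobians linearizes the Deligne pairing.

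Granting this construction, $\widetilde{\P}$ represents the log Poincar\'e on $\lpic^0(\X^\circ_g) \times_{\A'_g} \lpic^0(\X^\circ_g)$, so the pushforward $\Pbar := Rf_* \widetilde{\P}$ is a coherent sheaf on $\X'_g \times_{\A'_g} \X'_g$ extending $\P$. Since $\X^{(2)}_g$ is smooth, $\widetilde{\P}$ is a line bundle on a Cohen-Macaulay scheme, and the exceptional locus of $f$ has codimension $2$ in the smooth target $\X'_g \times_{\A'_g} \X'_g$; standard depth arguments (as in the proof of Theorem \ref{thm:Plog=arinkin}(c)) then show that $\Pbar$ is maximal Cohen-Macaulay and $R^i f_* \widetilde{\P} = 0$ for $i>0$. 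Uniqueness of the maximal Cohen-Macaulay extension follows because any two CM extensions of $\P$ across the codimension $2$ closed subset agree on the complement and have depth $\geq 2$ along it, so the isomorphism extends.

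Finally, to promote $\fm := -\otimes \Pbar$ to a derived equivalence $\dbcoh(\X'_g)\to \dbcoh(\X'_g)$, I would invoke Arinkin's argument \cite{arinkin1,arinkin2} verbatim: the key input is that over the open dense locus $\A_g \subset \A'_g$ the functor $\fm$ is the classical Fourier-Mukai transform on a family of principally polarized abelian varieties, hence an equivalence, and that the extended kernel $\Pbar$ is maximal Cohen-Macaulay and has Tor-amplitude controlled by the smoothness of the total space. The main obstacle will be the first step -- constructing $\X^{(2)}_g$ explicitly and checking that $\widetilde{\P}$ represents $\P^{\log}$ -- because unlike the curve case we do not yet have a written treatment of the log Picard group of a universal semi-abelian family at our disposal; the verification is a local toric computation at each point of the boundary of $\A'_g$ and reduces, after passing to a local chart, to the rank $1$ case that is already essentially in \cite{AF16}.
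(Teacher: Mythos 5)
Your overall strategy mirrors the paper's, but you have a genuine gap at the point where you write that ``standard depth arguments (as in the proof of Theorem \ref{thm:Plog=arinkin}(c)) then show that $\Pbar$ is maximal Cohen--Macaulay and $R^i f_*\widetilde{\P}=0$ for $i>0$.'' This is not a depth argument. The proof of Theorem \ref{thm:Plog=arinkin}(c) hinges entirely on the concrete computation that $\widetilde{\P}$ restricts to $\CO_{\PP^1}(1)$ on each exceptional fiber (equation \eqref{eq:restric_P}); Propositions \ref{pro:cohomology_atiyah} and \ref{pro:vanishing_higher} only give vanishing of higher direct images when the restriction to the $\PP^1$s has degree $\geq -1$, and the Cohen--Macaulay property additionally needs the \emph{dual} to have degree $\geq -1$, forcing the degree to lie in $\{-1,0,1\}$. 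Without pinning down that degree, the argument simply does not run: a line bundle on a small resolution of an ordinary double point can have any degree on the exceptional $\PP^1$, and for $|\deg|\geq 2$ either $R^1f_*\widetilde{\P}$ or $h^1\DD(f_*\widetilde{\P})$ is nonzero.

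In the paper this is Proposition \ref{prop:mumdeg}, and it is the nontrivial step. The paper does not construct a direct analogue of $\mathfrak{Q}^{(2)}$; instead it defines $\P$ via the Mumford-type formula $\P=\mu^*\Theta\otimes p_1^*\Theta^\vee\otimes p_2^*\Theta^\vee$ of Arinkin--Fedorov, observes that the failure to extend is exactly the failure of $\mu$ to extend, resolves $\mu$ tropically by a subdivision giving $\X^\mu_g$ (a small resolution over an ordinary double point), and then computes the degree of $\widetilde{\P}$ on the exceptional $\PP^1$ by pulling back along the Torelli map. Concretely one sits over a compactified Jacobian $\Jbar_C$ of an irreducible curve $C$ with one self-node, forms the common refinement $\Jbar^b_C$ of the two small resolutions $\Jbar^{(2)}_C$ and $\Jbar^\mu_C$ of the double point, tracks the piecewise-linear correction $\alpha$ in the multiplication map, and uses Lemma \ref{lem:deligne_pl} to show the Mumford formula differs from the Deligne pairing $\widetilde{\P}$ of Theorem \ref{thm:Plog=arinkin} by exactly the exceptional divisor $E$, yielding degree $+1$ on the $\PP^1$s of $\X^\mu_g$. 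Your proposal identifies ``constructing $\X^{(2)}_g$ and checking it represents $\P^{\log}$'' as the main obstacle; in fact the resolution itself is a routine subdivision and the paper sidesteps log Picard theory for $\X'_g$ entirely by working with the Mumford formula. The genuine content is the degree computation, which you have suppressed. Your treatment of uniqueness and of the derived equivalence via Arinkin's machinery (with the cohomology input of \cite[Theorem 1]{AF16} and $\delta$-regularity of $\X_g^\circ\to\A'_g$) is essentially correct.
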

	We prove Theorem \ref{thm:Xdual} by resolving the indeterminacy of the multiplication map $\mu$, following the approach of Theorem \ref{thm:Plog=arinkin}. It gives a new derived equivalence  which does not follow from \cite{arinkin2}.
	
	We connect the partial Fourier transformation and the weight decomposition for semi-abelian group schemes and compute the class of the unit section $e$ inside $\X^\circ_g$. 
	\begin{theorem}\label{thm:unit_Ag}
		Let $e:\Abarone_g\to \X^\circ_g$ be the unit section. Then we have
		\[
		[e] = \Big[\exp(\Theta)\cdot\Big(1+
		\frac{1}{2}i_*q^*\sum_{a=1}^g\frac{- B_{2a}}{a!}\cdot\frac{(\Theta_1)^{a-1}}{2}\Big)\Big]_{\codim=g} \in \CH^g(\X^\circ_g)\,.
		\]
	\end{theorem}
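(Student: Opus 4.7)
The strategy mirrors that of Theorem~\ref{thm:pushforward}, transplanted to the semi-abelian setting over $\A'_g$. Using the Cohen-Macaulay kernel $\overline{\P}$ from Theorem~\ref{thm:Xdual}, we set up a Chow-theoretic Fourier transform
\[
\fm(\alpha) := \pi_{2*}\bigl(\ch(\overline{\P})\cdot\pi_1^*\alpha\cdot\Td(T_{\pi_1})\bigr),
\]
normalized as in \eqref{eq:intro3}. The analog of \eqref{eq:intro1} for the semi-abelian scheme $\X^\circ_g\to\A'_g$ says that $[e]$ is, up to sign and normalization, the Fourier image of the fundamental class restricted to the semi-abelian locus, i.e.\ $[e] = (-1)^g\,\fm(1)|_{\X^\circ_g}$. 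This reduces the theorem to the explicit evaluation of $\fm(1)$ along $\X^\circ_g$.

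To compute $\fm(1)$, I would first resolve $\overline{\P}$ in the spirit of Theorem~\ref{thm:Plog=arinkin}: construct a logarithmic modification $f:\widetilde{\X}^{(2)}\to\X'_g\times_{\A'_g}\X'_g$ on which the extended Poincar\'e sheaf is represented by a genuine line bundle $\widetilde{\P}$ extending the classical Poincar\'e line bundle $\P$ from the open locus $\X^\circ_g\times_{\A'_g}\X'_g\cup\X'_g\times_{\A'_g}\X^\circ_g$. Then $\ch(\overline{\P}) = Rf_*\bigl(\ch(\widetilde{\P})\cdot\Td(T_f)\bigr)$, and the evaluation of $\fm(1)$ splits according to the image of $f$: the open locus on which $f$ is an isomorphism, and the exceptional locus lying over the rank-$1$ boundary.

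Over the open locus, $\P$ is a genuine line bundle on the abelian scheme $\X_g\to\A_g$, and standard Beauville-Mukai duality for principally polarized abelian varieties yields the leading term $[\exp(\Theta)]_{\codim=g}$ after pushforward and restriction to the unit section. The exceptional contribution arises from the boundary $\partial\A'_g$: there the fiber of the semi-abelian scheme is a $\Gm$-extension of a $(g{-}1)$-dimensional abelian variety, and the exceptional locus of $f$ fibers as a chain of $\PP^1$-bundles over the universal family on $\A'_{g-1}$, encoded by the map $q$. The restriction of $\widetilde{\P}$ to each exceptional $\PP^1$-fiber has $c_1$ equal to an integer multiple of the exceptional class, so pushing forward $\exp(c_1(\widetilde{\P}))$ along these fibers produces formal sums of the form $\sum_{k\geq 1}k^{2a-1}$, regularized via \eqref{eq:intro4} to $-B_{2a}/(2a)$. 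Repackaged via $i_*q^*$, these yield precisely the Bernoulli-number correction in the theorem, with the outer factor $\tfrac12$ coming from the $\ZZ/2$-automorphism swapping the two branches at the node of a rank-$1$ degeneration, and the inner $1/2$ from the symmetric normalization identifying $\Theta_1$ with the theta class on the abelian quotient.

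The main obstacle is the exact book-keeping of the exceptional contribution: identifying the normal bundles of the boundary strata, the degrees of $\widetilde{\P}$ along each exceptional $\PP^1$, and the combinatorial arrangement that assembles the regularized sums into $\sum_{a=1}^{g}\tfrac{-B_{2a}}{a!}\cdot\tfrac{\Theta_1^{a-1}}{2}$. I expect this to be accessible by genus induction, reducing to the universal family on $\A'_{g-1}$ via $q$ and iterating the Fourier construction --- parallel to how the proof of Theorem~\ref{thm:pushforward} proceeds via the recursive top-degree structure of Theorem~\ref{thm:unidr1}.
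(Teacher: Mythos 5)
Your overall strategy—set up the Chow-theoretic Fourier transform from Theorem~\ref{thm:Xdual}, split the evaluation into the contribution from the abelian locus (which gives the $\exp(\Theta)$ leading term) and the boundary/exceptional contribution (which gives the Bernoulli correction), and organize the latter by genus induction—is essentially the same skeleton the paper uses. And your identification $[e]=(-1)^g\fm(1)$ is equivalent, after unwinding normalizations, to the paper's $[e]=\fm^{-1}(1)$ (argued via Proposition~\ref{pro:section}), so that is fine.

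However, the mechanism you propose for producing the Bernoulli numbers is not correct. You write that the restriction of $\widetilde{\P}$ to an exceptional $\PP^1$-fiber has $c_1$ equal to ``an integer multiple of the exceptional class,'' and that pushing forward $\exp(c_1(\widetilde{\P}))$ along these fibers produces $\sum_{k\ge 1}k^{2a-1}$, to be evaluated via \eqref{eq:intro4}. But by Proposition~\ref{prop:mumdeg}, $\widetilde{\P}$ has degree exactly $1$ on each exceptional $\PP^1$, and the pushforward of $\exp(c_1(\widetilde{\P}))$ along a $\PP^1$ with degree-$1$ restriction is simply $1$—there is no free summation index $k$ and no divergent sum to regularize. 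You are conflating two distinct sources of Bernoulli-flavored input that appear in the paper: (i) the $\zeta$-regularized sums $\sum_k k^{d+1}=\zeta(-d-1)$, which arise in the recursive structure of the $\uniDR$ formula (Theorem~\ref{thm:top}), and (ii) the honest Bernoulli Taylor coefficients $B_{2k}/(2k)!$ that appear in the Todd class of the residue sheaf. For Theorem~\ref{thm:unit_Ag}, the paper's Bernoulli correction comes from (ii): the inverse Fourier transform restricted to the semi-abelian locus is $(-1)^g\ch(\P^\vee)\cdot\pi_1^*\Td^\vee(\cR_\pi)^{-1}$ (Proposition~\ref{pro:F0}), and the Todd factor $\Td^\vee(\cR_\pi)^{-1}$ is computed in Proposition~\ref{pro:Todd} to be a boundary pushforward whose coefficients are precisely the Bernoulli numbers. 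The paper then adapts this to $\A'_g$ using the calculation of the normal bundle of the codimension-two stratum from the Erdenberger--van der Geer literature and the identity $c(\EE)c(\EE^\vee)=1$ for the log Hodge bundle, and runs the genus induction using Propositions~\ref{pro:fm1} and~\ref{pro:twist_theta} in the spirit of the treelike analysis of Section~\ref{sec:treelike}. Without identifying the Todd factor of the residue sheaf as the carrier of the Bernoulli data, your argument has no way to produce the claimed $\sum_{a=1}^g\frac{-B_{2a}}{a!}\cdot\frac{\Theta_1^{a-1}}{2}$ correction, and the outline does not close.
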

	For a detailed explanation of notations, we refer to Section \ref{sec:trk1}.
	Along the way, we provide a new proof of the $\uniDR$ formula on the relative Jacobian over the treelike locus $\Mint_{g,n}\subset\Mbar_{g,n}$.

    \subsection{Relation to other work and further discussion}

    The results and the methods developed in this paper provide a new framework for studying the intersection theory of compactified Jacobians and other partial toroidal compactifications of abelian schemes. 
    
    In \cite{BMSY2}, Theorem \ref{thm:pushforward} is used to prove that the graded ring structure on $H^*(\Jbar_{g,1}^\epsilon,\QQ)$ depends on the stability condition $\epsilon$, even though the underlying vector space is independent of the stability. In loc. cit. the {\em intrinsic cohomology ring} is defined using the perverse filtration. Theorem \ref{thm:pushforward} completely determines the intersections divisors inside the intrinsic cohomology ring:
    \begin{equation}\label{eq:intrinsic}
            \pi_*\Big(\exp \big(-\Theta -b \kappa_{0,1}+\sum_{i=2}^na_i \xi_i\big)\Big) = (-1)^g \cdot \widetilde{\DR}_g(b;\sfa)\,,
    \end{equation}
    where $\widetilde{\DR}_g(b;\sfa)$ denotes the top-degree part of the $\DR$ formula. We refer to Section \ref{sec:vanish} for a possible connection with integrable hierarchies.
    
    In the forthcoming paper \cite{BM}, these methods will be used to show that the pushforward of piecewise polynomial classes on compactified Jacobians lies in the ring generated by piecewise polynomials, a property that appears to be exceptional among toroidal fibrations, and which we expect to extend to broader families of toroidal abelian fibrations.
    
    Moreover, in the forthcoming work \cite{BFLM}, our techniques will be applied to study the toroidal abelian fibration over the partial compactification of the moduli space of principally polarized abelian varieties $\A_g$ along the torus rank $1$ locus, and extend Theorem \ref{thm:unit_Ag} to $\X_g'$.
    
	\subsection*{Conventions}
	We work over a field $k$ of characteristic zero. All Chow groups are taken with $\QQ$-coefficients. All monoids and log structures will always be fine and saturated (f.s.) in the sense of \cite{Kato,ogus}. For a scheme $X$, let $\dbcoh(X)$ denote the bounded derived category of coherent sheaves on $X$. Let  $K_0(X)$ (resp. $K^0(X)$) denote the Grothendieck group of coherent sheaves (resp. locally free sheaves). For a morphism $X\to Y$ and $Y'\to Y$, we denote $X|_{Y'} :=X\times_Y Y'$.
	\subsection*{Acknowledgements}
	We would like to thank Carel Faber, David Holmes, Seungsu Lee, Aitor Iribar L\'opez, Davesh Maulik, Miguel Moreira, Rahul Pandharipande, Alex Perry, Junliang Shen, and Qizheng Yin for discussions on  weak abelian fibrations, Fourier transforms and related topics. Parts of this work were completed during visits to the University of Michigan, La Sapienza, and Tor Vergata, and we are grateful to these institutions for their hospitality. Y.~B. was supported by the SNSF Postdoc.Mobility fellowship and by a June E Huh Visiting Fellowship supported by the June E Huh Center for Mathematical Challenges. He thanks Utrecht University and MIT for its hospitality. A.~P. was supported by the NSF grant DMS-2301506.
	
	\section{Compactified Jacobians}
	\subsection{Compactified Jacobian and logarithmic Picard group}\label{sec:jbar}
	Relative Jacobians of a family of prestable curves (\cite[Definition 0E6T]{stacks-project}) can be compactified in two distinct ways: through the compactified Jacobian, and via logarithmic Picard groups. 
	
	Let $p: C\to B$ be a prestable curve of genus $g$ equipped with a section. For  $b\in B$, let $\Gamma_b$ denote the dual graph of the prestable curve $C_b$. A {\em stability condition} $\epsilon$ of degree $d$ for $p$ is a function
	\[\epsilon: V(\Gamma_b)\to\QQ\]
	for each geometric point $b\in B$ which has total degree $d$ and is compatible with degenerations (Section \ref{sec:intro1.2}).  A prestable curve $C$ is called quasi-stable if the dualizing sheaf $\omega_C$ is nef and chains of unstable components have length at most $1$. A line bundle $L$ on $C$ is {\em admissible} if $L$ has degree $1$ on each unstable component of $C$. An admissible line bundle $L$ on a quasi-stable curve $C$ is $\epsilon$-stable (resp. $\epsilon$-semistable) if for every proper subcurve $\emptyset\subsetneq \underline{C}\subsetneq C$ with neither $\underline{C}$ nor its complement consisting entirely of unstable components, we have
	\begin{equation*}
		\epsilon(\underline{C}) -\frac{E(\underline{C},\underline{C}^c)}{2}<(\leq) \deg(L|_{\underline{C}})<(\leq)  \,\epsilon(\underline{C})+\frac{E(\underline{C},\underline{C}^c)}{2}\,,
	\end{equation*}
	where  $E(\underline{C},\underline{C}^c)$ is the number of intersection points of $\underline{C}$ with the complement $\underline{C}^c$ and $\epsilon(\underline{C})$ is the sum of the values of $\epsilon$ over all irreducible components of $\underline{C}$. A stability condition is called {\em nondegenerate} if $\epsilon$-semi-stability equals $\epsilon$-stability.  
	
	For a stability condition $\epsilon$, a compactified Jacobian $\Jbar_C^\epsilon\to B$ of $p$ is a good moduli space parametrizing $\epsilon$-semistable rank $1$ torsion free sheaves (\cite{Caporaso94,KP19}). When the stability $\epsilon$ is clear from context, or the precise choice of $\epsilon$ is irrelevant to the argument, we simply denote $\Jbar_C:=\Jbar_C^\epsilon$. Let $\Jzero_C\to B$ be a relative Jacobian parametrizing multidegree $0$ line bundles. Tensor product induces a natural action
	\begin{equation}\label{eq:action}
		\mu: \Jzero_C\times_B\Jbar^\epsilon_C\to\Jbar_C^\epsilon\,. 
	\end{equation}
	
	The compactified Jacobian has several desirable properties  when the base $B$ is $\Mbar_{g,n}$. For nondegenerate $\epsilon$, $\Jbar^\epsilon_{C}$ is a smooth Deligne-Mumford stack, the projection map $\pi:\Jbar_C^\epsilon\to B$ is representable, proper and flat (\cite[Corollary 4.4]{KP19}). When the family $p$ has a section, rigidifying along that section produces a universal sheaf on the universal curve $C\to\Jbar_C^\epsilon$.
	
	To describe the second compactification of relative Jacobians, we begin by recalling definitions from logarithmic geometry. A {\em log scheme} $X=(X,\Msf_X)$ consists of a scheme $X$ together with a sheaf of commutative monoids $\Msf_X$ on the \'etale site of $X$ together with a morphism $\alpha: \Msf_X\to \CO_X$ with $\alpha^{-1}\CO_X^*\cong\CO_X^*$. For a sheaf of monoids $\Msf_X$, let $\Msf_X^\gp$ be the sheaf of abelian groups associated with the Grothendieck group of $\Msf_X$. For a log scheme $B$, we write $\textbf{LogSch}/B$ the category of f.s. log schemes over $B$. We refer to \cite{Kato,ogus} for the foundational concepts of logarithmic geometry.
	
	The logarithmic Picard group, introduced by Molcho and Wise \cite{MW22}, offers a canonical compactification of the relative Jacobian within the framework of algebraic logarithmic stacks. Let $B$ be a logarithmic scheme which is log smooth, and let $p:C\to B$ be a proper vertical log smooth curve. 
	\begin{theorem}[\cite{MW22}]\label{thm:MW}
		Let $\textbf{LogPic}_C: (\textbf{LogSch}/B)^\text{op} \to \text{(Grp)}$ be the functor defined by
		\[(B'\to B)\mapsto \{\text{$\Msf^\gp_{C|_{B'}}$-torsor of bounded monodromy}\}\,.\]
		Then $\textbf{LogPic}_C$ is an algebraic logarithmic stack. The relative rigidification $\lpic_C$ is log smooth with proper components, and forms a commutative group object over $B$.
	\end{theorem}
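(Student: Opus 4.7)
The plan is to build $\logP_C$ by first understanding it étale locally on $B$ in terms of tropical data, and then globalizing. To begin, I would unpack what a bounded monodromy $\Msf_C^{\gp}$-torsor $T$ is: passing to the characteristic sheaf, one obtains a $\overline{\Msf}_C^{\gp}$-torsor, and restricting to the generic points of irreducible components of the special fibers of $C$ gives a collection of integers (the multidegree) while restricting to the nodes gives, via the local description of log smooth curves, a slope along each edge of the dual graph. Bounded monodromy amounts to requiring that the resulting tropical data — a piecewise linear function on the tropicalization of $C$ — has values controlled by a fixed polytope in $\overline{\Msf}_B^{\gp}\otimes\RR$. Once this is set up, I would construct a short exact sequence of group stacks
\begin{equation*}
0 \to \picr_C \to \logP_C \to \mathrm{TroPic}_C \to 0,
\end{equation*}
where $\mathrm{TroPic}_C$ is the étale sheaf of tropical Picard classes (cokernel of the cochain map $\ZZ^{V(\Gamma)} \to \overline{\Msf}_B^{\gp}{}^{E(\Gamma)}$ given by the length data) and $\picr_C$ is the classical relative Picard. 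Both outer terms are group objects, and the tensor product of torsors gives $\logP_C$ a commutative group structure compatible with the sequence.

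Next I would tackle algebraicity. The idea is to exhibit $\logP_C$ as a colimit, in the $2$-category of log stacks, of ordinary Picard stacks of iterated log modifications $C'\to C$: each $C'$ has a classical relative Picard which is algebraic, and logarithmic modifications on the base correspond to subdividing the tropicalization, producing more multidegrees and hence more components of $\picr_{C'}$ that open up inside $\logP_C$. The bounded monodromy condition ensures that it suffices to work with a directed system of such modifications whose underlying polyhedral subdivisions refine a fixed bounded region, making the colimit representable by a log algebraic stack in the Artin sense. For the rigidification $\lpic_C$, dividing by the trivial $\Gm$-automorphisms of torsors reduces the inertia and yields an algebraic log stack whose connected components are indexed by multidegrees.

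For log smoothness, I would use log deformation theory: a $\Msf_C^{\gp}$-torsor lifts along a log square-zero extension precisely because the obstruction lives in $H^2(C,\CO_C)$-type groups that vanish for curves, and after passing to a log étale cover the torsor trivializes, making unobstructed deformation automatic. For properness of components, I would fix a multidegree and invoke the valuative criterion: given a DVR with fraction field $K$ and a log torsor over $\spec K$, semistable reduction of curves and Raynaud's extension theorem let one fill in the torsor after a log modification of the DVR, which is invisible to the underlying scheme, hence properness on components follows without an additional base change.

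The hard part is genuinely the bounded monodromy condition together with algebraicity. Without boundedness, the functor is a limit of group stacks of unbounded discrete type and is not algebraic; with it, one must carefully verify that the presentation by classical Picards of log modifications satisfies the Artin axioms, in particular effectivity of formal deformations and openness of versality in the log setting. I would expect most of the technical work to go into establishing these algebraicity axioms in the logarithmic context, following the general machinery developed by Wise and Olsson for log algebraic stacks, while the tropical exact sequence and the group structure are comparatively formal.
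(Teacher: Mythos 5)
The paper does not prove this statement; it is quoted verbatim as Theorem~\ref{thm:MW} with a citation to Molcho--Wise, and all subsequent uses in the paper (e.g.\ the log modification $\Jbar_C\to\lpic_C$, the group structure, the logarithmic Poincar\'e pairing) treat it as a black box. So there is no ``paper's own proof'' to compare against in a line-by-line sense; the only fair comparison is with the actual argument in the cited source.

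Measured against that, your sketch captures the right skeleton: the tropical short exact sequence relating $\logP_C$ to the classical relative Picard and a tropical Picard functor is genuinely how the bounded monodromy condition is isolated and used, the valuative-criterion-after-log-modification argument is the right mechanism for properness of components, and the unobstructedness of log deformations of torsors over a curve is the right mechanism for log smoothness. Two points, however, are off in a way that matters. First, you frame algebraicity as ``carefully verify that the presentation by classical Picards of log modifications satisfies the Artin axioms.'' This misreads what ``algebraic logarithmic stack'' means in this setting: a central point of the Molcho--Wise paper is that $\textbf{LogPic}_C$ is \emph{not} an algebraic stack (it has no smooth cover by a scheme; the obstruction is precisely the unbounded monodromy phenomenon), and ``algebraic'' there means the weaker property of admitting a \emph{logarithmically} smooth, or log \'etale, cover by a log scheme. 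Invoking Artin's axioms in the usual form would attempt to prove a false statement; the whole point is to work in the category of log schemes with the strict \'etale topology replaced by a log topology. Second, the appeal to ``Raynaud's extension theorem'' for properness is not the right tool: the decisive input is that bounded monodromy makes the \emph{tropical} Picard sheaf proper (its fibers are compact tropical abelian varieties $\Hom(H_1(\Gamma),\overline{\Msf}_B^\gp)/H_1(\Gamma)$), and properness of $\lpic_C$ is then deduced by d\'evissage through the tropical exact sequence. Your sketch of properness as a consequence of semistable reduction plus Raynaud glosses over the step where boundedness is actually used, which is the step most likely to fail if written out. If you replace ``verify the Artin axioms'' with ``exhibit a log \'etale cover by a log scheme'' and route the properness argument through tropical properness, the outline would be in genuine agreement with the cited proof.
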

	For details on the notion of bounded monodromy, we refer to \cite[Section 3.5]{MW22}. In brief, it amounts to an infinitesimal smoothability condition.

	Compactified Jacobians serve as a ``scheme-theoretic" birational model of the logarithmic Picard group. In \cite[Proposition 4.4.8]{MW22} a natural morphism
	\begin{equation}\label{eq:Jbar_lpic}
		\Jbar_C\to\lpic_C\,
	\end{equation}
	is constructed. While the logarithmic Picard stack is not representable by an algebraic stack, the morphism \eqref{eq:Jbar_lpic} is a log modification by \cite{AP20}, which implies that the compactified Jacobian $\Jbar_C$ can be interpreted as a representable, birational model of $\lpic_C$.\footnote{A morphism $f:X\to Y$ between algebraic logarithmic stacks is called a {\em log modification} if for any morphism $T\to Y$ from a log scheme, $f_T:X_T\to Y_T$ is a log modification (in particular, proper, representable and birational). } 
	\subsection{Universal Picard stack and tautological classes}\label{sec:taut_class}
	Tautological classes on the relative compactified Jacobian are derived from tautological classes pulled back from the universal Picard stack. Let $\M_{g,n}$  denote the moduli stack of prestable (not necessarily stable) curves of genus $g$ with $n$ marked points, and let $p:\C_{g,n}\to\M_{g,n}$ be the universal curve together with sections $x_i:\M_{g,n}\to \C_{g,n}$ for $1\leq i\leq n$. The {\em universal Picard stack}
	\[\pic_{g,n}\to\M_{g,n}\]
	is the relative Picard stack over $\M_{g,n}$, as described in \cite[Section 2]{BHPSS}. It is a smooth algebraic stack of locally of finite type over $\M_{g,n}$ and decomposes into a disjoint union of connected components $\pic_{g,n,d}$, indexed by $d$, the total degree of a line bundle. Under the natural boundary stratification, $\pic_{g,n,d}$ is an algebraic stack with a log structure. 
	
	The compactified Jacobian admits a morphism to the universal Picard stack. For $n\geq 1$, let $B=\Mbar_{g,n}$ and let $\Jbar^\epsilon_C\to B$ denote the compactified Jacobian associated with a non-degenerate stability condition $\epsilon$. We choose a universal rank $1$ torsion-free sheaf $F\to C$. According to \cite{EP16}, there exists a quasi-stable model
	\[\nu:C^\mathrm{qs}\to C\]
	along with an admissible line bundle
	\[L\to C^\mathrm{qs}\]
	such that $R\nu_*L\cong F$. The pair $(C^\mathrm{qs},L)$  is referred to as the {\em quasi-stable model}. This quasi-stable model, defined over $\Jbar_C$ induces a morphism 
	\begin{equation}\label{eq:varphi}
		\varphi : \Jbar_C^\epsilon\to\pic_{g,n}\,,
	\end{equation}
	which is dependent on the chosen universal sheaf. For $n\geq 1$, let $\pic^{\text{rel}}_{g,n}$ be the relative rigidification of $\pic_{g,n}\to\M_{g,n}$. Then $\varphi$ factors through $\overline{\varphi}: \Jbar_{C}^\epsilon\to\pic_{g,n}^\text{rel}$.
	\begin{lemma}\label{lem:smoothness}
		For a nondegenerate stability condition $\epsilon$, the morphism  \eqref{eq:varphi} is smooth. Moreover $\overline{\varphi}$ is an open immersion.
	\end{lemma}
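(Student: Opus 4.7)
The plan is to first show that the rigidified morphism $\overline{\varphi}$ identifies $\Jbar_C^\epsilon$ with an explicit open substack of $\pic_{g,n}^{\rel}$, and then deduce smoothness of $\varphi$ itself from the fact that $\pic_{g,n} \to \pic_{g,n}^{\rel}$ is a $\Gm$-gerbe trivialized over this open substack by the chosen universal sheaf.

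First, I would introduce the substack $U \subset \pic_{g,n}^{\rel}$ parameterizing pairs $(C', L')$ with $C'$ quasi-stable and $L'$ an admissible, $\epsilon$-stable line bundle, and verify that $U$ is open. Quasi-stability (nefness of $\omega_{C'}$ together with the length-one constraint on chains of unstable rational components) is open in the moduli of prestable curves; admissibility is a degree condition, locally constant on connected components of $\pic_{g,n}$; and $\epsilon$-stability is cut out by strict inequalities on degrees over proper subcurves, defining an open condition which under the nondegeneracy assumption coincides with $\epsilon$-semistability.

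Next I would construct an inverse morphism $\psi: U \to \Jbar_C^\epsilon$ by contracting unstable components. Given a family $(C'_T, L'_T)$ over $T$, let $\nu_T: C'_T \to C_T$ be the simultaneous stabilization contracting every unstable $\PP^1$-component to a node; this is well-defined in families because chains of unstable components have length at most one. The pushforward $F_T := (\nu_T)_* L'_T$ is flat over $T$ and fiberwise a rank-one torsion-free sheaf, since on each unstable bridge a degree-one line bundle on $\PP^1$ pushes forward to the ideal sheaf of the resulting node. The equivalence between admissible $\epsilon$-stable line bundles on quasi-stable models and $\epsilon$-stable torsion-free sheaves on the stable contraction \cite{EP16} then shows $F_T$ defines a $T$-point of $\Jbar_C^\epsilon$, and uniqueness of the quasi-stable model yields $\psi = \overline{\varphi}^{-1}$. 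Hence $\overline{\varphi}$ is an open immersion with image $U$.

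Finally, the chosen universal sheaf $F$ on $\Jbar_C^\epsilon$ provides a trivialization of the $\Gm$-gerbe $\pic_{g,n} \to \pic_{g,n}^{\rel}$ over $U$: this trivializing section is exactly $\varphi$ viewed as a lift of $\overline{\varphi}$. Since any trivialization of a $\Gm$-gerbe is a $\Gm$-torsor onto its image — the gerbe becomes isomorphic to $U \times B\Gm$ and the section pulls back the $\Gm$-cover $\spec k \to B\Gm$ — the map $\varphi$ is smooth. The main technical obstacle will be carefully verifying openness of $\epsilon$-stability at the boundary of $\pic_{g,n}^{\rel}$, where new subcurves may appear under degeneration: the nondegeneracy hypothesis is precisely what keeps the stability inequalities strict and ensures the semistable and stable loci coincide on the whole family.
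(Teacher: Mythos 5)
Your proof is correct, but it takes a genuinely different route from the paper's. The paper's argument is a short dimension count: $\varphi$ is locally of finite presentation with constant fiber dimension between the smooth stacks $\Jbar_C^\epsilon$ and $\pic_{g,n}$, so it is flat by miracle flatness; the fibers are $\Gm$-torsors, hence smooth, so $\varphi$ is smooth; and $\overline{\varphi}$ is then a flat monomorphism of finite presentation, hence an open immersion. You instead identify the image $U\subset\pic_{g,n}^{\rel}$ explicitly, build the inverse $\psi$ by contraction of unstable components via the \cite{EP16} correspondence, and deduce smoothness of $\varphi$ from the fact that a section of a $\Gm$-gerbe is a $\Gm$-torsor onto its image. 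Both are legitimate. The paper's approach is more economical and avoids having to describe $U$; yours is more constructive, and makes the image of $\overline{\varphi}$ explicit as the locus of quasi-stable curves carrying admissible $\epsilon$-stable line bundles, which can be useful downstream. One small remark on the openness of $\epsilon$-stability: the relevant point is not really the strictness of the inequalities but that, under generization, the degree $\deg L|_{\underline{C}}$, the value $\epsilon(\underline{C})$, and the edge count $E(\underline{C},\underline{C}^c)$ are all preserved for any subcurve persisting to the generic fiber (nodes between a subcurve and its complement cannot be smoothed without merging components), so the stability inequalities for the generization are a subset of those already verified at the special point; nondegeneracy then enters only to guarantee the stable and semistable loci agree, so that $\Jbar_C^\epsilon$ is separated and the open $U$ is exactly the stable locus. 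This is essentially what you said, but worth making precise since you flagged it as the main technical obstacle.
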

	\begin{proof}
		Clearly $\varphi$ is locally of finite presentation. Fibers of $\varphi$ have constant dimension. Since $\Jbar_C^\epsilon$ and $\pic_{g,n}$ are both smooth, we conclude that $\varphi$ is flat by the miracle flatness lemma. Moreover, fibers of $\varphi$ are smooth, hence it is smooth. Since $\overline{\varphi}$ is a momonorphism, it is an open immersion.
	\end{proof}
	
	We define a natural notion of tautological ring for a compactified Jacobian by pulling back the tautological ring of $\pic_{g,n}$ via the morphism $\varphi$ defined in \eqref{eq:varphi}:
	\begin{equation}\label{eq:taut_jbar}
		\R^*(\Jbar_C)\subset\CH^*(\Jbar_C,\QQ)\,.
	\end{equation}
	In forthcoming work \cite{BM}, the tautological ring of compactified Jacobians---and more generally, the logarithmic Picard group---is studied in detail. In this paper, we focus on monomials of divisors. Among the tautological classes of codimension $1$, we highlight the following:
	\[\Th:= -\frac{1}{2}p_*(c_1(\L)^2),\quad \kappa_{0,1}= p_*(c_1(\omega)c_1(\L)), \text{ and }\xi_i:=x_i^*(c_1(\L))\,.\]
	
	We describe the boundary strata of the relative and compactified Jacobians when $B = \Mbar_{g,n}$. For each prestable graph $\Gamma$ and multidegree function $\delta \colon V(\Gamma) \to \ZZ$, let $\pic_{\Gamma_\delta}$ denote the stack parametrizing prestable curves $C_v$ of genus $g(v)$ with $n(v)$ markings for each vertex $v \in V(\Gamma)$, together with a line bundle $L$ on a prestable curve $C$ obtained via the gluing map $\nu : \bigsqcup_{v \in V(\Gamma)} C_v \to C$ associated to $\Gamma$, such that the restriction $\nu^*L|_{C_v}$ has degree $\delta(v)$.
	Then $\pic_{\Gamma_\delta}$ admits morphisms
	\begin{equation}\label{eq:picboundary}
		\begin{tikzcd}
			\mathfrak{Pic}_{\Gamma_\delta} \ar[r,"j_{\Gamma_\delta}"]\ar[d,"q"'] & \mathfrak{Pic}_{g,n}\\
			\prod_{v\in V(\Gamma)}\mathfrak{Pic}_{g(v),n(v),\delta(v)}
		\end{tikzcd}
	\end{equation}
	where the morphism $j_{\Gamma_\delta}$ is proper, representable and $q$ is the morphism induced by the partial normalization induced by the edge contraction data. The morphism $q$ is a $\Gm^{|E(\Gamma)|}$-torsor. For a quasi-stable graph $\Gamma$ with $\epsilon$-stable multidegree $\delta$, $\Jbar_{\Gamma_\delta}$ is given by the cartesian diagram
	\begin{equation}\label{eq:stratum}
		\begin{tikzcd}
			\Jbar_{\Gamma_\delta} \ar[r]\ar[d] &\Jbar_{g,n}\ar[d]\\
			\mathfrak{Pic}_{\Gamma_\delta} \ar[r,"j_{\Gamma_\delta}"] & \mathfrak{Pic}_{g,n} \,,
		\end{tikzcd}
	\end{equation}
	where the right vertical morphism is \eqref{eq:varphi}.
	\begin{lemma}\label{lem:torsor}
		Let $\pi:\Jzero_{g,n}\to\Mbar_{g,n}$ be the relative Jacobian.
		Let $j:\Mbar_{g-\ell,n+2\ell}\to\Mbar_{g,n}$ be the gluing map identifying $n+2i-1$-th marking and $n+2i$-th marking for $1\leq i\leq \ell$. Let $q:\Jzero_{g,n}|_{\Mbar_{g-\ell,n+2\ell}}\to\Jzero_{g-\ell,n+2\ell}$ be the morphism given by the pullback of line bundles to partial normalization. Then $q$ can be identified with
		\[\Jzero_{g,n}|_{\Mbar_{g-\ell,n+2\ell}}\cong (L_{n+1}\otimes L_{n+2}^\vee)^{\times}\oplus\cdots\oplus (L_{n+2\ell-1}\otimes L^\vee_{n+2\ell})^{\times}\to \Jzero_{g-\ell,n+2\ell}\,.\]
	\end{lemma}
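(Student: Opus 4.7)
The plan is to construct the identification explicitly from the standard description of line bundles on a nodal curve via its partial normalization together with gluing data.

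First, I set up the geometry. The pullback of the universal curve $\mathcal{C} \to \Mbar_{g,n}$ along $j$ admits a canonical partial normalization
\[\nu : \widetilde{\mathcal{C}} \longrightarrow j^*\mathcal{C}\]
at the $\ell$ glued nodes; $\widetilde{\mathcal{C}}$ is identified with the universal curve over $\Mbar_{g-\ell,n+2\ell}$, and the extra markings $x_{n+1},\ldots,x_{n+2\ell}$ are the preimages of the glued nodes, paired as $(x_{n+2i-1}, x_{n+2i})$. The pullback $\nu^*$ of line bundles induces the morphism $q$ of the statement.

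Second, I use the short exact sequence of sheaves of units on the \'etale site
\[1 \longrightarrow \CO^{\times}_{j^*\mathcal{C}} \longrightarrow \nu_*\CO^{\times}_{\widetilde{\mathcal{C}}} \longrightarrow \bigoplus_{i=1}^{\ell} \iota_{i,*}\Gm \longrightarrow 1\]
(where $\iota_i$ denotes the inclusion of the $i$-th node) to identify a line bundle $\mathcal{L}$ on $j^*\mathcal{C}$ with the pair consisting of a line bundle $\widetilde{\mathcal{L}} = \nu^*\mathcal{L}$ on $\widetilde{\mathcal{C}}$ together with gluing isomorphisms $\alpha_i : \widetilde{\mathcal{L}}|_{x_{n+2i-1}} \xrightarrow{\sim} \widetilde{\mathcal{L}}|_{x_{n+2i}}$ for $i = 1,\ldots,\ell$. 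Since irreducible components of $j^*\mathcal{C}$ and $\widetilde{\mathcal{C}}$ are in canonical bijection and componentwise degrees are preserved under $\nu^*$, multidegree $0$ on $j^*\mathcal{C}$ corresponds precisely to multidegree $0$ on $\widetilde{\mathcal{C}}$ together with the gluing data.

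Finally, for fixed $\widetilde{\mathcal{L}}$, the space of isomorphisms $\alpha_i$ is the complement of zero in the line $\mathrm{Hom}(\widetilde{\mathcal{L}}|_{x_{n+2i-1}}, \widetilde{\mathcal{L}}|_{x_{n+2i}}) = L_{n+2i-1}^\vee \otimes L_{n+2i}$, where $L_j := x_j^*\widetilde{\mathcal{L}}$. Inversion canonically identifies this $\Gm$-torsor with $(L_{n+2i-1} \otimes L_{n+2i}^\vee)^\times$, and taking the fiber product over $\Jzero_{g-\ell,n+2\ell}$ of these $\ell$ torsors yields the claimed description. The main obstacle is purely bookkeeping: one must verify that the gluing data assembled across all $\ell$ nodes is compatible with the multidegree $0$ condition, but this reduces to the componentwise preservation noted above, so the argument is routine once the descent sequence is in place.
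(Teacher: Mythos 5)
Your proof is correct and takes essentially the same approach as the paper: the morphism $q$ is pullback along partial normalization, and the fiber of $q$ over a line bundle $\widetilde{\mathcal{L}}$ is the space of gluing isomorphisms at each of the $\ell$ nodes, which is exactly the fiber product of the $\Gm$-torsors $(L_{n+2i-1}\otimes L_{n+2i}^{\vee})^{\times}$. The paper states this very briefly; you have simply spelled out the descent via the exact sequence of unit sheaves and checked the compatibility of multidegrees, which is the right justification.
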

	\begin{proof}
		By partial normalization, we get a morphism $\Jzero_{g,n}|_{\Mbar_{g-\ell,n+2\ell}}\to\Jzero_{g-\ell,n+2\ell}$. Fibers of the morphism record the identification of the line bundle at $n+2i-1$ and $n+2i'$, $L|_{n+2i-1}\cong L|_{n+2i}$.
	\end{proof}
	\begin{lemma}\label{lem:boundary}
		Let $\Gamma_h$ be the stable graph consisting of a single vertex of genus $g-h$ with $h$ loops and let $\Gamma_h'$ be the quasistable graph obtained by subdividing each edge of $\Gamma_h$. Then there exists a nondegenerate stability condition $\epsilon_h$ of degree $d-h$ together with a map 
		\[\Jbar^{\epsilon_h}_{g-h,n+2h}\to\Jbar_{g,n}^\epsilon\]   
		which identifies $\Jbar^{\epsilon_h}_{g-h,n+2h}\to\Jbar_{g,n}^\epsilon$ with  $\Jbar_{\Gamma'_h} \to\Jbar_{g,n}^\epsilon$.
	\end{lemma}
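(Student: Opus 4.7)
The plan is to construct an explicit isomorphism $\Psi:\Jbar^{\epsilon_h}_{g-h,n+2h}\xrightarrow{\sim}\Jbar_{\Gamma_h'}$ via a universal gluing operation that inserts a $\PP^1$-bridge between each pair of markings $\{n+2i-1,n+2i\}$ and extends the line bundle by $\CO_{\PP^1}(1)$ along each inserted bridge, and then to read off the stability condition $\epsilon_h$ from the requirement that the resulting admissible line bundle be $\epsilon$-stable.

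First I would construct $\Psi$ on $T$-valued families. Given $(\pi':\mathcal{C}'\to T,x_1,\ldots,x_{n+2h},L')$ with $L'$ an admissible line bundle of degree $d-h$ on a quasistable genus $g-h$ curve, I form $\mathcal{C}\to T$ as the pushout that glues $\mathcal{C}'$ to $h$ trivial $\PP^1$-families along the sections $x_{n+2i-1}\mapsto 0_i$ and $x_{n+2i}\mapsto\infty_i$, and I extend $L'$ by $\CO_{\PP^1_i}(1)$ on each inserted bridge via the analogous pushout of sheaves. The resulting $(\mathcal{C},L)$ is a family of quasistable genus $g$ curves of total degree $d$ with $n$ remaining markings, whose dual graph specializes $\Gamma_h'$ (the inserted bridges play the role of the $h$ subdivided vertices of $\Gamma_h'$), so $\Psi$ factors through $\Jbar_{\Gamma_h'}$ once stability is verified.

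Next comes the heart of the argument: the stability analysis that simultaneously produces $\epsilon_h$ and proves that $\Psi$ lands in $\Jbar^\epsilon_{g,n}$. Denote by $\epsilon(v_0),\epsilon(v_1),\ldots,\epsilon(v_h)$ the values of $\epsilon$ on the central vertex and the subdivided vertices of $\Gamma_h'$. Any subcurve $\underline{\mathcal{C}}\subseteq\mathcal{C}$ whose complement is not entirely unstable decomposes as $\underline{\mathcal{C}}=\underline{\mathcal{C}'}\cup\bigsqcup_{i\in S}\PP^1_i$ for some subcurve $\underline{\mathcal{C}'}\subseteq\mathcal{C}'$ and some $S\subseteq\{1,\ldots,h\}$, and a direct bookkeeping expresses $\epsilon(\underline{\mathcal{C}})$, $\deg(L|_{\underline{\mathcal{C}}})$, and $E(\underline{\mathcal{C}},\underline{\mathcal{C}}^c)$ in terms of the analogous quantities for $\underline{\mathcal{C}'}$, the $\epsilon(v_i)$, and how the endpoints of the $i$-th bridge sit relative to $\underline{\mathcal{C}'}$. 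Optimizing over the choice of $S$ for each fixed $\underline{\mathcal{C}'}$, the full collection of $\epsilon$-stability inequalities collapses to a single inequality of the form
\[
\epsilon_h(\underline{\mathcal{C}'}) - \tfrac{1}{2}E''(\underline{\mathcal{C}'},\underline{\mathcal{C}'}^c) \;<\; \deg(L'|_{\underline{\mathcal{C}'}}) \;<\; \epsilon_h(\underline{\mathcal{C}'}) + \tfrac{1}{2}E''(\underline{\mathcal{C}'},\underline{\mathcal{C}'}^c),
\]
where $\epsilon_h(w)=\epsilon(w)+\tfrac{1}{2}\sum_i n_{w,i}(\epsilon(v_i)-1)$, with $n_{w,i}\in\{0,1,2\}$ the number of the two markings glued to form the $i$-th loop that lie on $w\in V(\Gamma'')$. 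One checks directly that $\epsilon_h$ has total degree $d-h$ and is additive under further specializations of $\Gamma''$, so it is a bona fide stability condition.

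Finally I would construct the inverse $\Psi^{-1}:\Jbar_{\Gamma_h'}\to\Jbar^{\epsilon_h}_{g-h,n+2h}$ using the $\Gamma_h'$-labeling, which canonically marks the $h$ distinguished rational bridges: contracting these bridges to nodes and normalizing yields a family of quasistable genus $g-h$ curves with $n+2h$ markings, on which the restricted line bundle has degree $d-h$ and is $\epsilon_h$-stable by the converse direction of the stability computation. Functoriality and Yoneda give the isomorphism $\Psi$, and nondegeneracy of $\epsilon_h$ follows immediately: any strictly $\epsilon_h$-semistable pair would glue to a strictly $\epsilon$-semistable one, contradicting the nondegeneracy of $\epsilon$. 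The main obstacle is precisely the stability analysis of the third step --- one must verify that the inequalities obtained from different bridge-inclusion patterns $S$ really do collapse to a single inequality on $\underline{\mathcal{C}'}$, which forces the specific averaging formula for $\epsilon_h$ above and relies in an essential way on the endpoint contributions $\epsilon(v_i)-1$ being split evenly between the two endpoints.
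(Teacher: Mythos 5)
Your proposal is essentially the paper's argument made explicit. The paper glues the markings $(n{+}2i{-}1,\, n{+}2i)$ directly to nodes, pushes forward the universal line bundle through the gluing map $\nu$ (producing a rank-one torsion-free sheaf $\nu_*\L$), and then takes the universal quasi-stable model of that sheaf --- but the quasi-stable model is exactly the ``insert a $\PP^1$-bridge with $\CO_{\PP^1}(1)$'' operation you perform by hand, since a line bundle pushed through $\nu$ fails to be locally free precisely at the new nodes. So the constructions agree. The presentational difference is that the paper \emph{defines} $\epsilon_h(v)=\epsilon(v)-\tfrac{1}{2}(\text{number of new legs attached to }v)$, where $\epsilon$ is evaluated on the genus-$g$ stable graph obtained by gluing the last $h$ pairs of legs, and then cites \cite[Definition 5.1]{KP19} for nondegeneracy; you instead try to \emph{derive} the formula by optimizing the stability inequalities over bridge-inclusion patterns $S$. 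That is a reasonable plan, but you would need to actually carry out the collapse rather than assert it.

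There is one genuine sticking point in your formula. You write $\epsilon_h(w)=\epsilon(w)+\tfrac{1}{2}\sum_i n_{w,i}(\epsilon(v_i)-1)$ as though $\epsilon(v_i)$ were a meaningful independent parameter, and you say the argument ``relies in an essential way on the endpoint contributions $\epsilon(v_i)-1$ being split evenly.'' But a stability condition $\epsilon$ is by definition a function on the irreducible components of \emph{stable} curves; $\Gamma_h'$ is quasi-stable, and its exceptional vertices $v_i$ carry no a priori $\epsilon$-value. The implicit convention in the $\epsilon$-stability inequality, when applied to subcurves of a quasi-stable curve, is that unstable components contribute $0$ to $\epsilon(\underline{C})$. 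With $\epsilon(v_i)=0$ your formula reduces to the paper's $\epsilon_h(w)=\epsilon(w)-\tfrac{1}{2}n_w$; and you need $\epsilon(v_i)=0$ anyway for your own check $\sum_w\epsilon_h(w)=d-h$ to come out right. Treating $\epsilon(v_i)$ as an unknown that the ``even splitting'' must respect suggests a misreading of what a stability condition is --- the derivation should never introduce it in the first place --- even though the final formula lands in the right place once one imposes the forced value $\epsilon(v_i)=0$.
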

	\begin{proof}
		We define a nondegenerate stability condition $\epsilon_h$ for $\Mbar_{g-h,n+2h}$ of degree $d-h$ as follows. 
		Suppose $\Gamma$ is a stable graph of genus $g-h$ with $n+2h$ legs. Then gluing the last $h$ pairs of legs gives a stable graph of genus $g$ with $n$ legs, so the stability condition $\epsilon$ can be applied to give numbers $\epsilon(v)$ for $v\in V(\Gamma)$ (with sum $d$).
		We then define $\epsilon_h:V(\Gamma)\to \QQ$ by 
		\begin{equation}\label{eq:epsilon_h}
			\epsilon_h(v) = \epsilon(v) -\frac{1}{2}\text{(number of $i$-th legs $n+1\leq i\leq n+2h$ attached to $v$)}\,.
		\end{equation}
		Since $\epsilon$ is a stability condition, \eqref{eq:epsilon_h} is a stability condition for $\Mbar_{g-h,n+2h}$ of degree $d-h$. When $\epsilon$ is nondegenerate, then $\epsilon_h$ is also nondegenerate by \cite[Definition 5.1]{KP19}. 
		
		Over $\Jbar^{\epsilon_h}_{g-h,n+2h}$, there exists a morphism of curves $\nu:\mathcal{C}^\qs_{g-h,n+2h}\to \mathcal{C}_{\Gamma'_h}$ where $\mathcal{C}^\qs_{g-h,n+2h}$ is the universal quasi-stable curve and $\nu$ is the morphism gluing the $(n+2i-1)$-th to the $(n+2i)$-th marking for $1\leq i\leq h$. For the universal line bundle $\L$ on $\mathcal{C}^\qs_{g-h,n+2h}$, the pushforward $\nu_*\L$ is a rank $1$ torsion free sheaf. The universal quasi-stable model of $(\mathcal{C}^\qs_{g-h,n+2h},\nu_*\L)$ defines the desired morphism $\Jbar^{\epsilon_h}_{g-h,n+2h}\to\Jbar_{g,n}^\epsilon$.
	\end{proof}
	
	We end with the connection between the universal Picard stack and the logarithmic Picard group. For the universal Picard stack $\pic_{g,n}$ over $\mathfrak{M}_{g,n}$ and the logarithmic Picard group $\lpic_{g,n}$ over $\Mbar_{g,n}$, in \cite[Proposition 4.4.8]{MW22} a natural morphism
	\begin{equation}\label{eq:pic_lpic}
		\pic_{g,n}\to \lpic_{g,n}
	\end{equation}
	is constructed. When $B=\Mbar_{g,n}$, the morphism \eqref{eq:Jbar_lpic} is in fact defined by the composition
	$\Jbar_{g,n}\to\pic_{g,n}\to\lpic_{g,n}$.
	
	\section{Extending the Poincar\'e line bundle}\label{sec:3}
	\subsection{Poincar\'e line bundle}
	Let $p:C\to B$ be a proper log smooth curve with a section. Let $\epsilon_1$ and $\epsilon_2$ be two nondegenerate stability conditions for $p$. For the compactified Jacobian $\Jbar_C^{\,\epsilon_2}\to B$, let
	\begin{equation}\label{eq:J_C}
		J^{\,\epsilon_2}_C \subset \Jbar^{\,\epsilon_2}_C
	\end{equation}
	be the open substack given by the locus of line bundles. For the compactified Jacobian $\Jbar_C^{\,\epsilon_1}$, choose the universal sheaf $\F$ on the universal curve $C$ which is trivialized along the section. Let $C^\text{qs}\to \Jbar_C^{\,\epsilon_1}$ be the universal quasi-stable curve and $\L_1$ be the universal line bundle on $C^\text{qs}$.  
	\begin{definition}\label{def:Poincare_line}
		Let $p:C^\qs\to\Jbar_C^{\,\epsilon_1}\times_B J_C^{\,\epsilon_2}$ be the pullback of the universal quasi-stable curve, and $\L_1$ (resp. $\L_2$) be the universal line bundle of $\Jbar_C^{\,\epsilon_1}$ (resp. $J_C^{\,\epsilon_2}$). The {\em Poincare line bundle} $\P$ is defined by
		\begin{equation*}
			\P =\langle \L_1,\L_2\rangle \in \mathrm{Pic}(\Jbar_C\times_B J_C)\,,
		\end{equation*}
		where $\langle -, -\rangle$ is the Deligne pairing \cite{DelignePairing}. If we wish to emphasize the universal curve, we may write $\langle -, -\rangle_C$.
	\end{definition}
	Definition \ref{def:Poincare_line} coincides with the definition from \cite{arinkin1,MRV1} because we only consider families of prestable curves.
	The Poincar\'e line bundle in general depends on the choices of universal line bundles, but if both stability conditions $\epsilon_1,\epsilon_2$ are of degree zero, it is independent of the choices.
	We often omit stability conditions in our notation and write $\Jbar_C\times_B J_C:=\Jbar_C^{\,\epsilon_1}\times_B J_C^{\,\epsilon_2}$.
	
	Similarly, the Poincar\'e line bundle exists on $J_C\times_B\Jbar_C$. Since the two line bundles coincide on the intersection, the line bundle $\P$ exists on $J_C\times_B\Jbar_C\cup\Jbar_C\times_B J_C$.
	Let
	\begin{equation}\label{eq:open}
		\iota:\Jbar_C\times_B J_C\cup J_C\times_B\Jbar_C\hookrightarrow \Jbar_C\times_B\Jbar_C
	\end{equation}
	be the open embedding.
	
	By Grothendieck-Riemann-Roch (\cite[(6.6.1)]{DelignePairing}), the first Chern class of the Deligne pairing of two line bundles can be written as 
	\begin{equation}\label{eq:Deligne}
		c_1(\langle \L_1,\L_2\rangle) = p_*(c_1(\L_1)c_1(\L_2))\,.
	\end{equation} 
	\subsection{Birational model of \texorpdfstring{$\Jbar_C\times_B\Jbar_C$}{Jbar}}\label{sec:birational}
	In this section, we set $B=\Mbar_{g,n}$. Let $\epsilon_1,\epsilon_2$ be two nondegenerate stability conditions for $B$. Throughout this section, we abbreviate notation and write $\Jbar_C\times_B\Jbar_C:=\Jbar_C^{\epsilon_1}\times_B\Jbar_C^{\epsilon_2}$. On $\Jbar_C\times_B\Jbar_C$ the Deligne pairing does not extend. Using logarithmic geometry, we construct a birational model of $\Jbar_C\times_B\Jbar_C$ to resolve the indeterminacy of the Deligne pairing \eqref{eq:Deligne}.
	
	A map of log schemes (or algebraic stacks) is \emph{strict} if it induces an isomorphism on log structures. We assume our log schemes come with a tropicalization map 
	\begin{equation}\label{eq:tropical}
		t: B \to \A_B
	\end{equation}
	to some Artin fan $\mathcal{A}_B$, and write $\Sigma_B$ for the corresponding cone stack, see \cite{CCUW}. The tropicalization map is strict.
	Moreover, for any logarithmic morphism $f:X\to S$ between log schemes, there exists an induced morphism $A_X\to \A_S$ between the corresponding Artin fans.
	
	A {\em log alteration} of a log scheme $B$ is a map $\mathcal{A}_B' \times_{\mathcal{A}_B} B \to B$ given by the base change of a proper, Deligne-Mumford type and birational logarithmic morphism $\mathcal{A}_B' \to \mathcal{A}_B$. A log alteration is called a {\em log modification} if it's representable, and a root if it is bijective on geometric points. 
	Under the correspondence $\mathcal{A}_B \leftrightarrow \Sigma_B$, log modifications correspond to subdivisions of $\Sigma_B$, and roots to the choice of a finite index integral substructure of the integral structure of $\Sigma_B$. 
	
	\begin{definition}
		A log alteration $B' \to B$ is called \emph{small} if the map $\mathcal{A}_{B'} \to \mathcal{A}_B$ induces an isomorphism between the groups of Weil divisors of $\mathcal{A}_B$ and $\mathcal{A}_{B'}$.
	\end{definition}

	Let $\mathfrak{Q}$ denote the stack of quasi-stable curves. The stack $\mathfrak{Q}$ is algebraic because it is an open substack of $\M_{g,n}$. By restriction, $\mathfrak{Q}$ carries a natural logarithmic structure. 
	\begin{definition}\label{def:Q2}
		We define a log stack $\mathfrak{Q}^{(2)} : (\textbf{LogSch}/B)^\mathrm{op}\to (\mathrm{Grp})$ by
		\[
		\mathfrak{Q}^{(2)}(S \to B) := \{(C_1 \to S, C_2 \to S, \widetilde{C} \to C_i) \} 
		\]
		where 
		\begin{itemize}
			\item $C_i\to S$ are two quasi-stable models of $C_S/S$,
			\item $\widetilde{C}\to C_i$ is a quasi-stable model such that $\widetilde{C}\to S$ remains a log smooth curve, and
			\item $\widetilde{C}$ is the minimal common log modification of the $C_i$, i.e. \[
			\widetilde{C} = C_1 \times^{\textup{fs}}_{C} C_2.
			\]
		\end{itemize}
	\end{definition} 
	There is a natural projection 
	\[\mathfrak{Q}^{(2)} \to \mathfrak{Q} \times_{B} \mathfrak{Q}\]
	defined by sending 
	\[(C_1 \to S, C_2 \to S, \widetilde{C} \to C_i)\to (C_1/S,C_2/S)\,.\]
	
	The following proposition shows that in fact $\mathfrak{Q}^{(2)}$ is given by a log modification of $\mathfrak{Q}\times_B\mathfrak{Q}$, and is therefore an algebraic stack rather than merely a category fibered in groupoids.
	\begin{proposition}\label{pro:logmod}
		The projection $\mathfrak{Q}^{(2)} \to \mathfrak{Q} \times_{B} \mathfrak{Q}$  is a log modification. 
	\end{proposition}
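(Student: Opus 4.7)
The plan is to exhibit $\mathfrak{Q}^{(2)}\to\mathfrak{Q}\times_B\mathfrak{Q}$ as the pullback of a subdivision of cone stacks (equivalently, of Artin fans), which is the standard criterion characterizing log modifications of logarithmic algebraic stacks. Since $\mathfrak{Q}\times_B\mathfrak{Q}$ is locally of finite type and the statement is \'etale-local on the target, I can work at a geometric point.

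Fix such a point, corresponding to a pair $(C_1,C_2)$ sitting over a stable curve $C$ with dual graph $\Gamma$ and quasi-stable refinements $\Gamma_1,\Gamma_2$. The relevant cone at this point is $\sigma_{\Gamma_1}\times_{\sigma_\Gamma}\sigma_{\Gamma_2}$, coordinatized by the lengths of all sub-edges of $\Gamma_1$ and $\Gamma_2$, matched via the relations $\sum_{e'\subset e}\ell_{e'}=\ell_e$ for each edge $e\in E(\Gamma)$. Edges of $\Gamma$ subdivided by at most one of the $\Gamma_i$ contribute nothing nontrivial; edges subdivided by both contribute two bubble-position parameters and are the source of the non-triviality.

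The key combinatorial step is to describe the subdivision of $\sigma_{\Gamma_1}\times_{\sigma_\Gamma}\sigma_{\Gamma_2}$ cut out by the condition that $\widetilde{C}:=C_1\times^{\textup{fs}}_C C_2$ be a quasi-stable log smooth curve over the base. For an edge $e$ subdivided by both $\Gamma_i$, with bubble positions $s^{(1)},s^{(2)}$, the fs saturation of the common refinement introduces the comparison of $s^{(1)}$ and $s^{(2)}$; this is not a monoid operation on the ambient cone, but becomes well-defined after a canonical polyhedral subdivision into chambers on which the ordering is definite. On each chamber the fs fiber product carries a distinguished combinatorial type of quasi-stable refinement $\widetilde{C}$, and I would check, by a local calculation in fs log geometry, that this gives a bijection between objects of $\mathfrak{Q}^{(2)}(S)$ and lifts of $S\to\mathfrak{Q}\times_B\mathfrak{Q}$ to the subdivided cone stack: the fs fiber product produces the correct chamber, and conversely each chamber canonically determines a $\widetilde{C}$ of the required type.

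With the subdivision identified, the three defining properties of a log modification follow from standard properties of subdivisions of cone stacks: properness and representability are automatic, and birationality holds because the subdivision is trivial on the open dense locus where no edge of $\Gamma$ is subdivided by both $\Gamma_1$ and $\Gamma_2$. The main obstacle is the local fs computation of $C_1\times^{\textup{fs}}_C C_2$ and the matching with the moduli interpretation: one must verify that the naive common refinement, which could introduce chains of two unstable components and thus fail quasi-stability, is canonically replaced on each chamber of the subdivision by a genuinely quasi-stable log smooth curve, and that the resulting assignment is functorial in $S$.
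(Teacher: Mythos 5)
Your proposal follows essentially the same route as the paper's proof: pass to the cone stacks $\Sigma_{\mathfrak{Q}}\times_{\Sigma_B}\Sigma_{\mathfrak{Q}}$, observe that only edges of $\Gamma$ subdivided by both $\Gamma_1$ and $\Gamma_2$ contribute, identify the relevant subdivision as the one into chambers where the two bubble positions are ordered (i.e.\ the hyperplanes $\ell_{e_1}'=\ell_{e_2}'$), and conclude via the equivalence between subdivisions of cone stacks and log modifications. One conceptual slip in your account of the ``main obstacle'': the fs fiber product $C_1\times^{\textup{fs}}_C C_2$ is \emph{not} replaced on each chamber by something new that restores quasi-stability. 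Definition~\ref{def:Q2} allows $\widetilde{C}$ to contain chains of two unstable components --- it is only required to be a quasi-stable model of each $C_i$ and a log smooth curve over $S$, not a quasi-stable curve --- and indeed the generic fiber over a chamber still has such a chain. What actually fails before subdividing is flatness: $C_1\times^{\textup{fs}}_C C_2\to\mathfrak{Q}\times_B\mathfrak{Q}$ is not a curve because its fibers jump, and the chamber decomposition is precisely the minimal subdivision over which the fs fiber product becomes flat and hence a log smooth curve; nothing is replaced. If you chase the local fs computation looking for a ``replacement,'' you will not find one.
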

	\begin{proof}
		We argue tropically. Let $\Sigma_{\mathfrak{Q}}$ denote the stack over rational polyhedral cones that parametrizes, over a cone $\sigma$, a quasi-stable graph $\widetilde{\Gamma}$ with stabilization $\Gamma$, where edge lengths are metrized by $M_{\sigma} := \mathrm{Hom}(\sigma,\NN)$. It is straightforward to see that $\Sigma_{\mathfrak{Q}}$ is a tropicalization of $\mathfrak{Q}$. We define $\Sigma_{\mathfrak{Q}^{(2)}}$ as the subfunctor of $\Sigma_{\mathfrak{Q}} \times_{\Sigma_B} \Sigma_{\mathfrak{Q}}$ that over a cone $\sigma$ parametrizes two quasi-stable models $\Gamma_1$ and $\Gamma_2$ of $\Gamma$ over $\sigma$ for which the fiber product $\Gamma_1 \times_{\Gamma} \Gamma_2$ remains a tropical curve metrized over $\sigma$. Then we have an isomorphism:
		\[
		\mathfrak{Q}^{(2)} \cong \Sigma_{\mathfrak{Q}^{(2)}} \times_{(\Sigma_{\mathfrak{Q}} \times_{\Sigma_{B}} \Sigma_{\mathfrak{Q}})} (\mathfrak{Q} \times_{B} \mathfrak{Q})\,,
		\]
		which indicates that it suffices to verify that $\Sigma_{\mathfrak{Q}^{(2)}} \to \Sigma_{\mathfrak{Q}} \times_{\Sigma_{B}} \Sigma_{\mathfrak{Q}}$ is a subdivision. 
		
		Let $\Gamma_1,\Gamma_2$ be two quasistable models of $\Gamma$ metrized by $M_{\sigma}$. Suppose $e$ is an edge of $\Gamma$ that is subdivided in both $\Gamma_1$ and $\Gamma_2$. The length of the edge $e$ is an element $\ell_e \in M_\sigma$. Since the edge is subdivided in both $\Gamma_1,\Gamma_2$, there exist elements $\ell_{e_i}',\ell_{e_i}''$, $i=1,2$ such that
		\[
		\ell_e = \ell_{e_i}' + \ell_{e_i}''\,.
		\]
		We orient $\ell_{e_1}',\ell_{e_1}''$ so that the terminal point of $\ell_{e_1}'$ and the initial point of $\ell_{e_1}''$ is the quasistable vertex.  Similarly, $\ell_{e_2}',\ell_{e_2}''$ are oriented ``the same way", meaning the initial point of $\ell_{e_2'}$ coincides with the initial point of $\ell_{e_1}'$, and its terminal point is the quasistable vertex. 
		
		Consider now the fiber product 
                	\[
		\widetilde{\Gamma} = \Gamma_1 \times_\Gamma \Gamma_2\,.
		\]
        Combinatorially, it is given by the common subdivision of the $\Gamma_i$, i.e. a model of $\Gamma$ with up to two exceptional vertices. This model, however, is not metrized over  $M_{\sigma}$: put otherwise, the cone complex associated to the fiber product fails to be flat over $\sigma$ on the locus where the functions $\ell_{e_1}'$ and $\ell_{e_2}'$ become equal. Nevertheless, there is a minimal subdivision of $\sigma$ -- the subdivision along the hyperplane
		\[
		\ell_{e_1}' = \ell_{e_2}'
		\]
		for every edge $e$ subdivided in both $\Gamma_1,\Gamma_2$ -- over which $\widetilde{\Gamma}$ is defined. Consequently, in the universal case, it follows that the map $\Sigma_{\mathfrak{Q}^{(2)}} \to \Sigma_{\mathfrak{Q} }\times_{\Sigma_B} \Sigma_\mathfrak{Q}$ is the subdivision along the hyperplanes $\ell_{e_1}' - \ell_{e_2}'$ where $e$ ranges through all the edges of $\Gamma$ that are subdivided in both $\Gamma_1$ and $\Gamma_2$. Therefore, we have proven that $\mathfrak{Q}^{(2)}\to \mathfrak{Q}\times_B \mathfrak{Q}$ is a log modification, and in particular that $\mathfrak{Q}^{(2)}$ is an algebraic stack.
	\end{proof}
	
	\begin{remark}
        The log modification of the base can be understood as resolving the non-flatness of the f.s. fiber product of the universal curves. For two universal quasi-stable curves $C_1 $ and $C_2$ with the same stabilization $C$ over $\mathfrak{Q}\times_B\mathfrak{Q}$, the f.s. fiber product $C_1 \times^{\textup{fs}}_C C_2 \to \mathfrak{Q} \times_B \mathfrak{Q}$ is not a relative curve, as it fails to be flat. By the semistable reduction theorem \cite{Mss}, there exists a canonical log alteration $\mathfrak{Q}'$ of $\mathfrak{Q} \times_B \mathfrak{Q}$ and $C'$ of $C_1 \times^{\textup{fs}}_C C_2$ such that $C' \to \mathfrak{Q}'$ becomes a  curve. Proposition \ref{pro:logmod} provides an alternative perspective: $\mathfrak{Q}^{(2)}$ is exactly $\mathfrak{Q}'$, and further alteration of $C_1 \times^{\textup{fs}}_C C_2$ is unnecessary as it is already flat over $\mathfrak{Q}^{(2)}$. 
	\end{remark}
	
	\begin{corollary}\label{cor:smoothandsmall}
		The stack $\mathfrak{Q}^{(2)}$ is smooth and log smooth. Furthermore, the exceptional locus of $\mathfrak{Q}^{(2)}$ does not contain any divisors.  
	\end{corollary}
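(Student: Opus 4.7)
The plan is to verify both assertions by analyzing the cone complex $\Sigma_{\mathfrak{Q}^{(2)}}$ produced by the subdivision described in the proof of Proposition~\ref{pro:logmod}. Log smoothness of $\mathfrak{Q}^{(2)}$ together with smoothness of the underlying stack is equivalent to every cone of $\Sigma_{\mathfrak{Q}^{(2)}}$ being unimodular simplicial, and smallness of the log modification $\mathfrak{Q}^{(2)}\to\mathfrak{Q}\times_B\mathfrak{Q}$ is equivalent to the subdivision $\Sigma_{\mathfrak{Q}^{(2)}}\to\Sigma_\mathfrak{Q}\times_{\Sigma_B}\Sigma_\mathfrak{Q}$ introducing no new rays. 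Since tropicalization commutes with f.s.~fiber products, I would localize on a single cone corresponding to a stable graph $\Gamma$ with two quasi-stable refinements $\Gamma_1,\Gamma_2$, and this cone factors as a product over $E(\Gamma)$, so the question reduces to a single edge.

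First I would dispose of the easy edges. If an edge $e$ of $\Gamma$ is subdivided in neither or only one of the $\Gamma_i$, the contribution to the fiber product cone is $\mathbb{R}_{\geq 0}$ or $\mathbb{R}_{\geq 0}^2$ respectively, already unimodular simplicial and unaffected by the subdivision. The nontrivial case is when $e$ is subdivided in both; then the edgewise contribution is the $3$-dimensional cone
\[
\{(\ell_{e_1}',\ell_{e_1}'',\ell_{e_2}',\ell_{e_2}'')\in\mathbb{R}_{\geq 0}^4 \ :\ \ell_{e_1}'+\ell_{e_1}''=\ell_{e_2}'+\ell_{e_2}''\}
\]
inside the rank-$3$ integral sublattice $\{a+b=c+d\}\subset\ZZ^4$. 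A direct check shows this cone has four extremal rays $(1,0,0,1)$, $(1,0,1,0)$, $(0,1,0,1)$, $(0,1,1,0)$, so it is not simplicial.

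Next I would verify that the subdivision by $\ell_{e_1}'=\ell_{e_2}'$ fixes both issues at once. On the half $\ell_{e_1}'\geq\ell_{e_2}'$ the extremal rays are $(1,0,0,1)$, $(1,0,1,0)$, $(0,1,0,1)$, and the coordinate change $(u,v,w)=(\ell_{e_1}'-\ell_{e_2}',\ \ell_{e_2}',\ \ell_{e_1}'')$ identifies this half with the standard orthant $\mathbb{R}_{\geq 0}^3$ in its integral sublattice; the other half is symmetric. Hence every cone of $\Sigma_{\mathfrak{Q}^{(2)}}$ is unimodular simplicial, proving log smoothness and smoothness of the underlying stack. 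For smallness, the subdividing hyperplane $\ell_{e_1}'=\ell_{e_2}'$ contains the extremal rays $(1,0,1,0)$ and $(0,1,0,1)$ of the original cone and cuts through its interior, so the only new $2$-dimensional face created is spanned by these two existing rays; no new $1$-dimensional rays are introduced, and the exceptional locus therefore has codimension at least $2$.

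The main subtlety I would pay attention to is the integral structure: unimodularity must be checked in the sublattice $\{a+b=c+d\}\subset\ZZ^4$ rather than in $\ZZ^4$ itself. Since $(1,0,0,1),(1,0,1,0),(0,1,0,1)$ form an integral basis of this sublattice, the simplicial cones produced by the subdivision are indeed unimodular with respect to the correct lattice. Once this edgewise analysis is in place, the global statement follows by taking products over $E(\Gamma)$ and gluing along the stratification of $\Sigma_B$.
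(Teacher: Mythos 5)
Your proof is correct and follows essentially the same route as the paper: reduce edgewise to the cone $\{\ell_{e_1}'+\ell_{e_1}''=\ell_{e_2}'+\ell_{e_2}''\}\subset\mathbb{R}_{\geq 0}^4$ (the ``$xy=zw$'' singularity), observe that subdividing along $\ell_{e_1}'=\ell_{e_2}'$ splits it into two unimodular simplices, and conclude that no new rays, hence no exceptional divisors, are introduced. The one place you add genuine value is in making the unimodularity check explicit by exhibiting $(1,0,0,1),(1,0,1,0),(0,1,0,1)$ as an integral basis of the saturated sublattice $\{a+b=c+d\}\subset\ZZ^4$ — the paper asserts this and points at the picture in Example~\ref{ex:atiyah}, whereas you verify it.
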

	
	\begin{proof}
		Cones of $\Sigma_B$ correspond to stable graphs $\Gamma$; for such a graph, with set of edges $E(\Gamma)$, the associated cone is 
		\[
		\sigma_\Gamma = \mathbb{R}_{\ge 0}^{E(\Gamma)}
		\]
		with its natural integral structure. Similarily, cones of $\Sigma_{\mathcal{Q}}$ correspond to quasistable graphs $\Gamma'$, with corresponding cone 
		\[
		\sigma_{\Gamma'} = \mathbb{R}_{\ge 0}^{E(\Gamma')}. 
		\]
		Let $\Gamma'$ be a quasistable graph with stabilization $\Gamma$. For each edge $e$ of $\Gamma$, there is either exactly one or exactly two edges of $\Gamma'$ that map to $e$. We write $E^{u}(\Gamma)$ for the edges of the first kind, and $E^{s}(\Gamma)$ for the edges of the second kind. If we denote the edges of $\Gamma'$ that map to $e \in E^{s}(\Gamma)$ by $e',e''$, then we have 
		\[
		E(\Gamma') = E^{u}(\Gamma) \bigcup_{e \in E^{s}(\Gamma)}\left\{e',e''\right\}. 
		\]
		The map $\Sigma_{\mathfrak{Q}} \to \Sigma_B$ is defined as follows: on the cone corresponding to $\Gamma'$, with stabilization $\Gamma$, the map 
		\[
		\sigma_{\Gamma'} = \mathbb{R}_{\ge 0}^{E^{u}(\Gamma)} \times_{e \in E^{s}(\Gamma)} \mathbb{R}_{\ge 0}e' \times \mathbb{R}_{\ge 0}e'' \to \sigma_{\Gamma} = \mathbb{R}_{\ge 0}^{E^{u}(\Gamma)} \times_{e \in E^{s}(\Gamma)} \mathbb{R}_{\ge 0}e
		\]
		is the product of the identity on the $\mathbb{R}_{\ge 0}^{E^{u}(\Gamma)}$ factor with the addition maps 
		\begin{align*}
			\mathbb{R}_{\ge 0}e' \times \mathbb{R}_{\ge 0}e'' \to \mathbb{R}_{\ge 0}e\,, (\ell_e',\ell_e'') \mapsto \ell_e' + \ell_e''\,.
		\end{align*}
		
		Similarily, cones of the fiber product $\Sigma_{\mathfrak{Q}} \times_{\Sigma_{B}} \Sigma_{\mathfrak{Q}}
		$ are indexed by pairs of quasistable graphs $\Gamma_1,\Gamma_2$ with mutual stabilization $\Gamma$. Keeping the notation above, we further split the edges of $\Gamma$ as 
		\[
		E^{u}(\Gamma) \cup E^{s_1 \vee s_2}(\Gamma) \cup E^{s_1 \cap s_2}(\Gamma)\,, 
		\]
		where 
		\begin{itemize}
			\item $E^{s_1 \vee s_2}(\Gamma)$ are the edges of $\Gamma$ subdivided in either $\Gamma_1$ or $\Gamma_2$ but not both, and 
			\item $E^{s_1 \cap s_2}(\Gamma)$ are the edges subdivided in both $\Gamma_1$ and $\Gamma_2$. 
		\end{itemize}
		From the description of the maps $\sigma_{\Gamma_i} \to \sigma_{\Gamma}$, it follows that the cone corresponding to $\Gamma_1,\Gamma_2$, which is the fiber product $\sigma_{\Gamma_1}\times_{\sigma_\Gamma} \sigma_{\Gamma_2}$, is explicitly  
		\[
		\mathbb{R}_{\ge 0}^{e \in E^{u}(\Gamma)} \times \prod_{e \in E^{s_1 \vee s_2}(\Gamma)}\mathbb{R}_{\ge 0}^{2} \times \prod_{e \in E^{s_1 \cap s_2}(\Gamma)} \mathbb{R}_{\ge 0}^{2} \times_{\mathbb{R}_{\ge 0}} \mathbb{R}_{\ge 0}^2
		\]
		If we label the two edges in $\Gamma_i$ subdividing $e \in E^{s_1 \cap s_2}(\Gamma)$ by $e_i',e_i''$, as we've done above, the corresponding cone $\mathbb{R}_{\ge 0}^{2} \times_{\mathbb{R}_{\ge 0}} \mathbb{R}_{\ge 0}^2$ is explicitly 
		\[
		\left \{(\ell_{e_1}',\ell_{e_1}'',\ell_{e_2}',\ell_{e_2}''): \ell_{e_i}' + \ell_{e_i}'' = \ell_e \right\}
		\]
		This cone is responsible for an ``$xy = zw$" singularity in $\mathfrak{Q} \times_B \mathfrak{Q}$, and is subdivided in $\Sigma_{\mathfrak{Q}^{(2)}}$ into two unimodular simplices, along the hyperplane 
		\[
		\ell_{e_1}' = \ell_{e_2}'\,.
		\]
		We refer to \eqref{eq:atiyah} for the associated tropical picture.
		Hence, each cone in $\Sigma_{\mathfrak{Q}^{(2)}}$ is smooth. Furthermore, the subdivision is along hyperplanes, which means no additional divisors are introduced. This proves the corollary. 
	\end{proof}
	We return to the compactified Jacobian $\Jbar_C$ and the morphism $\varphi:\Jbar_C\to\mathfrak{Q}$ induced by \eqref{eq:varphi}.
	\begin{definition}\label{def:J2}
		The stack $\Jbar_C^{(2)}$ is defined by the fiber diagram
		\begin{equation}\label{eq:J2}
			\begin{tikzcd}
				\Jbar_C^{(2)}\ar[r]\ar[d,"f"'] & \mathfrak{Q}^{(2)}\ar[d]\\
				\Jbar_C\times_B\Jbar_C\ar[r,"\varphi\times\varphi"] &\mathfrak{Q}\times_B\mathfrak{Q}\,.
			\end{tikzcd}
		\end{equation}
	\end{definition}
	The log modification $f:\Jbar^{(2)}_C\to\Jbar_C\times_B\Jbar_C$ is the one appears in Theorem \ref{thm:Plog=arinkin}.
	The below horizontal arrow is strict by Lemma \ref{lem:smoothness}. Therefore the diagram \eqref{eq:J2} is an f.s. fiber diagram. By Proposition \ref{pro:logmod}, the morphism $f:\Jbar_C^{(2)}\to\Jbar_C\times_B\Jbar_C$ is a log modification. \\
	
	Recall that an algebraic log stack $X$ is called {\em log smooth} if the morphism $X\to \Sigma_X$ to a tropicalization is smooth. 
	\begin{proof}[Proof of Theorem \ref{thm:Plog=arinkin} (a)]
		The morphism $\varphi\times\varphi:\Jbar_C\times_B\Jbar_C\to\mathfrak{Q}\times_B\mathfrak{Q}$ is smooth, strict and log smooth by Lemma \ref{lem:smoothness}. Since the diagram \eqref{eq:J2} is Cartesian, the claim follows from Corollary \ref{cor:smoothandsmall}.
	\end{proof}    
	%
	A morphism $f:X\to Y$ between fine log schemes is called {\em integral} if for any geometric point $x\in X$, the induced morphism between commutative monoids $\overline{\Msf}_{Y,f(x)}\to\overline{\Msf}_{X,x}$ is integral.
	\begin{definition}\label{def:ss}
		A {\em log family} is a log smooth, integral, saturated morphism $f:X \to S$ between log schemes. A log family is {\em semistable} if in addition $X,S$ are smooth and log smooth. 
	\end{definition}
	\begin{proposition}\label{pro:model2}
		Let $f:\Jbar_C^{(2)}\to\Jbar_C\times_B\Jbar_C$ be the log modification in Definition \ref{def:J2}.
		\begin{enumerate}[label=(\alph*)]
			\item The two projections $\pi_1,\pi_2:\Jbar_C^{(2)}\to\Jbar_C$ are semistable. Moreover, the projection $\widetilde{\pi}:\Jbar_C^{(2)}\to B$ is also semistable. 
			\item For any geometric point $x$ of $\Jbar_C\times_B\Jbar_C$, $f^{-1}(x)$ is isomorphic to $\PP^1\times \cdots \times\PP^1$.
		\end{enumerate}
	\end{proposition}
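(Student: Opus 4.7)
The plan is to reuse the tropical analysis carried out in the proof of Corollary~\ref{cor:smoothandsmall}, which shows that each top-dimensional cone of $\Sigma_{\mathfrak{Q}^{(2)}}$ is a unimodular simplex obtained from a cone $\sigma_{\Gamma_1} \times_{\sigma_\Gamma} \sigma_{\Gamma_2}$ by subdividing each $xy=zw$ factor -- one for each edge $e \in E^{s_1 \cap s_2}(\Gamma)$ -- along the hyperplane $\ell_{e_1}' = \ell_{e_2}'$. Throughout, recall the ``semistable'' requirements of Definition~\ref{def:ss}: log smooth, integral, saturated, with smooth and log smooth source and target.

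For part (a), the smooth and log smooth source condition is given by Theorem~\ref{thm:Plog=arinkin}(a), while the target $\Jbar_C$ (resp.\ $B$) is smooth and log smooth. It remains to verify that each projection is log smooth, integral, and saturated; by log smoothness of all spaces involved, this can be checked on the tropicalizations. Concretely, each $xy=zw$ cone has four extremal rays, labeled by pairs of half-edges $(e_1^*, e_2^*)$ with $e_i^* \in \{e_i', e_i''\}$. The subdivision along $\ell_{e_1}' = \ell_{e_2}'$ produces two unimodular $3$-simplices, each spanned by $\{e_1' e_2', e_1'' e_2''\}$ together with one of $\{e_1' e_2'', e_1'' e_2'\}$. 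The projections to $\sigma_{\Gamma_1}$ and $\sigma_{\Gamma_2}$ send each generator of such a simplex to a generator of the target cone, and the further projection to $\sigma_\Gamma$ identifies $e_i'$ and $e_i''$ with the stable edge $e$. In every case the map of cones takes the generating set of the source simplex surjectively onto the generating set of the image, with the kernel generated by a sub-basis, which is precisely the local toroidal picture of a semistable morphism and yields log smoothness, integrality, and saturation simultaneously.

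For part (b), fix a geometric point $x = (L_1, L_2)$ of $\Jbar_C \times_B \Jbar_C$, with underlying curve $C_x$, stabilization $\Gamma$, and induced quasi-stable dual graphs $\Gamma_1, \Gamma_2$. By Definition~\ref{def:Q2} and the construction of $\Jbar_C^{(2)}$ in Definition~\ref{def:J2}, the fiber $f^{-1}(x)$ is the scheme-theoretic fiber of the toric subdivision $\Sigma_{\mathfrak{Q}^{(2)}} \to \Sigma_\mathfrak{Q} \times_{\Sigma_B} \Sigma_\mathfrak{Q}$ over the cone point determined by the edge lengths of $C_x$. Away from edges in $E^{s_1 \cap s_2}(\Gamma)$ the tropical map is an isomorphism, and contributes nothing to the fiber. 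Each edge $e \in E^{s_1 \cap s_2}(\Gamma)$ contributes exactly one $xy=zw$ local model, and its subdivision along $\ell_{e_1}' = \ell_{e_2}'$ is the standard small resolution of the ordinary double point, whose fiber over the singular vertex is a $\PP^1$. The contributions from distinct edges occur on disjoint coordinate planes of $\sigma_{\Gamma_1} \times_{\sigma_\Gamma} \sigma_{\Gamma_2}$, so by multiplicativity of fibers we conclude $f^{-1}(x) \cong (\PP^1)^k$, where $k = |E^{s_1 \cap s_2}(\Gamma)|$.

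The main obstacle is the precise translation between the tropical subdivision picture and the scheme-theoretic statements. For (a), one must verify that the tropical ``generators-to-generators'' condition implies integrality and saturation of the morphism of log stacks, and not merely log smoothness; this reduces to an explicit check on the monoid maps of the associated charts. For (b), the identification of the full fiber with a product of local small resolutions requires the transversality of the distinct $xy=zw$ factors inside $\sigma_{\Gamma_1} \times_{\sigma_\Gamma} \sigma_{\Gamma_2}$, which itself follows from the explicit product description of these cones in the proof of Corollary~\ref{cor:smoothandsmall}.
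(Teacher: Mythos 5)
Your proof follows essentially the same route as the paper: both reduce the statement to the tropical description of cones established in the proof of Corollary~\ref{cor:smoothandsmall}, verify part (a) by checking that the projections of cones send generators to generators, and deduce part (b) from the product-of-Atiyah-flops structure of the subdivision.

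One small but real inaccuracy in your write-up: you say the map of cones has ``kernel generated by a sub-basis,'' and claim this is ``precisely the local toroidal picture of a semistable morphism.'' Neither is true. Already for the basic semistable model $\mathbb{A}^n \to \mathbb{A}^1$, $(x_1,\dots,x_n)\mapsto x_1\cdots x_n$ of \eqref{eq:ss_loc}, the cone map $\mathbb{R}_{\geq 0}^n \to \mathbb{R}_{\geq 0}$ has kernel the hyperplane $\sum a_i = 0$, which is not spanned by a sub-basis. In your own example, the projection $\mathbb{Z}^3 \to \mathbb{Z}^2$ sending $e_1,e_3\mapsto(1,0)$ and $e_2\mapsto(0,1)$ has kernel generated by $e_1-e_3$, again not a sub-basis. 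The criterion you actually need (and do in fact verify) is the one that matters: each extremal ray generator of the source cone is sent to an extremal ray generator of the target, with every target generator hit — this matches the local chart \eqref{eq:ss_loc} and gives integrality and saturation. Drop the ``sub-basis'' phrasing; it describes a strict (classically smooth) toric morphism, not a semistable one, and would mislead a reader into thinking the node is resolved, which it is not. The paper's wording — that the two cones ``map surjectively onto their images in $\Sigma_{\mathfrak{Q}}$, with their integral structures likewise surjecting'' — is what you should say. The remainder of your argument, including the direct tropical check of $\widetilde{\pi}$ (the paper instead composes with $\Jbar_C\to B$ semistable), and the $\PP^1$-fiber computation in (b), is fine.
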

	
	\begin{proof}
		(a) Since $\Jbar_C$ and $\Jbar_C^{(2)}$ are smooth, it suffices to show that the map is integral with reduced fibers. This conclusion follows directly from Corollary \ref{cor:smoothandsmall} and its proof. Specifically, under either projection $\pi_i$, the two cones arising from the subdivision of each square
		\[
		\{\ell_{e_1}',\ell_{e_1}'',\ell_{e_2}',\ell_{e_2}'': \ell_{e_1}' + \ell_{e_1}'' = \ell_{e_2}'+\ell_{e_2}''\} 
		\]
		along $\ell_{e_1}' = \ell_{e_2}'$ map surjectively onto their images in $\Sigma_{\mathfrak{Q}}$, with their integral structures likewise surjecting onto the corresponding images. The second assertion follows from the first because $\Jbar_C \to B$ is semistable. 
		
		(b) This result again follows from the proof of Corollary \ref{cor:smoothandsmall}, as the map $\Sigma_{\mathfrak{Q}^{(2)}} \to \Sigma_{\mathfrak{Q}} \times_{\Sigma_B}  \Sigma_{\mathfrak{Q}}$ is a product of the subdivisions along $\ell_{e_1}' = \ell_{e_2}'$.
		The exceptional fibers arising from these subdivisions are product of $\mathbb{P}^1$'s, ensuring the desired structure.
	\end{proof}
	
	A {\em compactified abelian fibration} $\overline{A}\to B$ is a log alteration of a log abelian scheme $\mathsf{LogAb}\to B$ which is of Deligne-Mumford type over $B$. For the definition of log abelian schemes, we refer to \cite{KKN08b}.
	\begin{proposition}\label{pro:compAb}
		$\widetilde{\pi}: \Jbar_C^{(2)}\to B$ is a compactified abelian fibration.
	\end{proposition}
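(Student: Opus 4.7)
The plan is to exhibit $\widetilde{\pi}:\Jbar_C^{(2)}\to B$ as a log modification of a genuine log abelian scheme over $B$, namely the self-product $\lpic_C\times_B\lpic_C$, and to check that the representability/DM-type hypothesis is preserved.

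First, I would assemble the necessary inputs. By Theorem~\ref{thm:MW} and \cite{KKN08b}, the logarithmic Picard group $\lpic_C\to B$ is a log abelian scheme, and the fibered product $\lpic_C\times_B\lpic_C\to B$ is therefore again a log abelian scheme. From Section~\ref{sec:jbar} (via \cite{AP20}) the comparison map $\Jbar_C^{\epsilon_i}\to\lpic_C$ is a log modification for $i=1,2$, and in particular is representable.

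Second, I would factor the total map in two natural steps. By Definition~\ref{def:J2}, the morphism $f:\Jbar_C^{(2)}\to\Jbar_C^{\epsilon_1}\times_B\Jbar_C^{\epsilon_2}$ is obtained by pulling back the log modification $\mathfrak{Q}^{(2)}\to\mathfrak{Q}\times_B\mathfrak{Q}$ of Proposition~\ref{pro:logmod} along the strict smooth map $\varphi\times\varphi$, so $f$ is itself a log modification. Next, the product map
\[
\Jbar_C^{\epsilon_1}\times_B\Jbar_C^{\epsilon_2}\longrightarrow \lpic_C\times_B\lpic_C
\]
is a log modification, because it can be written as the composition of two base changes of the log modifications $\Jbar_C^{\epsilon_i}\to\lpic_C$ through the intermediate stack $\lpic_C\times_B\Jbar_C^{\epsilon_2}$. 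Composing these two steps yields a log modification
\[
\Jbar_C^{(2)}\longrightarrow \lpic_C\times_B\lpic_C\,,
\]
which in particular is representable, proper and birational, hence a log alteration in the sense of the paper.

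Third, I would verify the Deligne-Mumford-type hypothesis over $B$: $\Jbar_C^{\epsilon_i}\to B$ is representable of DM type, so the f.s.\ fiber product $\Jbar_C^{\epsilon_1}\times_B\Jbar_C^{\epsilon_2}\to B$ is of DM type, and the log modification $f$ preserves this property. Together with the previous step this gives that $\widetilde{\pi}$ is a compactified abelian fibration.

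The point that requires a moment of care is the stability of log modifications under f.s.\ fiber products of algebraic log stacks, since $\lpic_C$ is not a log scheme. I would address this tropically: both reductions come down to the observation that the f.s.\ fiber product over $\Sigma_B$ of two subdivisions of cone stacks is again a subdivision with its induced integral structure, which is the standard correspondence between log modifications and subdivisions of Artin fans used throughout Section~\ref{sec:birational}.
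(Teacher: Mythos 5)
Your proof follows the same route as the paper: factor $\widetilde{\pi}$ through $\lpic_C\times_B\lpic_C$, note that this is a log abelian scheme by \cite[Theorem~4.15.7]{MW22}, and that the map $\Jbar_C^{(2)}\to\lpic_C\times_B\lpic_C$ is a log modification. Your version spells out the intermediate factorization through $\Jbar_C^{\epsilon_1}\times_B\Jbar_C^{\epsilon_2}$ and the DM-type check more explicitly, but the argument is the same one the paper gives in two lines.
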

	\begin{proof}
		Let $\logP_C$ denote the logarithmic Picard group as introduced in \ref{thm:MW}. The map $\widetilde{\pi}$ factors as 
		\[
		\Jbar_C^{(2)} \to \logP_C \times_B \logP_C \to B\,,
		\]
		where the first map is a log modification, and the second map is a log abelian scheme by \cite[Theorem 4.15.7]{MW22}. Therefore $\widetilde{\pi}$ is a compactified abelian fibration. 
	\end{proof}
	
	\subsection{Direct images of line bundles under hyperplane subdivisions} 
	%
	We first consider the local situation.
	Let $X$ be an affine toric variety. We write $N,M$ for the character and cocharacter lattice of its torus respectively, and we write $\Sigma_X$ for its fan, which is a rational polyhedral cone in $N_{\mathbb{R}}$. To simplify the presentation, we assume in the rest of this subsection that $\Sigma_X$ is full dimensional in $N_{\mathbb{R}}$.\footnote{The assumption can be safely omitted, as otherwise $X$ splits as a product $X' \times T$ of a toric variety $X'$ that satisfies this hypothesis with a torus $T$, and any construction $C(X)$ we perform on $X$ in this subsection will also split as $C(X') \times T$.}  
	
	\begin{definition}\label{def:subdivision}
		A toric blowup $g: \widetilde{X} \to X$ is called a \emph{subdivision by a hyperplane} if it is the blowup corresponding to the subdivision of $\Sigma_X$ by a hyperplane: there is a $u \in M$ and $\Sigma_{\widetilde{X}}$ is the union of the two cones $\sigma_+:=\Sigma_{X} \cap \{v \in N_\mathbb{R}:u(v) \ge 0\}$ and $\sigma_:=\Sigma_X \cap \{v \in N_{\mathbb{R}}:u(v) \le 0\}$. 
		Moreover, a subdivision by a hyperplane is called \emph{saturated} if $g: \widetilde{X} \to X$ has reduced fibers. 
	\end{definition}
	
	A subdivision by a hyperplane is, equivalently, the domains of linearity of some convex (down) piecewise linear function $\phi$ whose bend locus is the hyperplane $u=0$. In terms of $\phi$, the subdivision $g:\widetilde{X} \to X$ has reduced fibers if and only if the value of $\phi(v)$ is $0$ or $\pm 1$ on the primitive vector of every ray of $\Sigma_X$ -- see the discussion in \cite[Section 4.4]{MW22}.  
	
	\begin{example}\label{ex:atiyah}
		Our primary example of interest is the Atiyah flop for $X= \mathrm{Spec}\, \CC[x,y,z,w]/(xy-zw)$. The fan $\Sigma_{\widetilde{X}} :=\widetilde{\Sigma}$ of $\widetilde{X}$ is the cone over a square $\Pi$ subdivided into two unimodular triangles: 
		\begin{equation}\label{eq:atiyah}
			\begin{tikzpicture}
				
				\node[left] at (0,0){$w_{00}=(0,0,1)$};
				\node[draw,circle,inner sep=1pt,fill] at (0,0){};
				\node[right] at (2,0){$w_{10}=(1,0,1)$};
				\node[draw,circle,inner sep=1pt,fill] at (2,0){};
				\node[left] at (0,2){$w_{01}=(0,1,1)$}; 
				\node[draw,circle,inner sep =1pt,fill] at (0,2){};
				\node[right] at (2,2){$w_{11}=(1,1,1)$}; 
				\node[draw,circle,inner sep=1pt,fill] at (2,2){};
				\draw[thick](0,0)--(2,0)--(2,2)--(0,2)--(0,0);
				\draw[thick,blue](0,0)--(2,2);
				\node[right] at (3/4,1/2){$e_2$};
				\node[above] at (3/4,1){$e_1$};
			\end{tikzpicture}    
		\end{equation}
		These triangles are the domains of linearity of the piecewise linear function $\phi :=\min{(e_1,e_2)}$.
	\end{example}
	The fibers of a saturated subdivision by a hyperplane are points, except for the fiber over the fixed point of $X$, which is a $\mathbb{P}^1$. We refer to this $\mathbb{P}^1$ as the \emph{exceptional} fiber. If we translate $\phi$ by a linear function so that its values are $0$ on the separating hyperplane, $\phi$ is naturally an element of $u^{\perp} \cap M$, which is the cocharacter lattice of the exceptional $\mathbb{P}^1$. If $\phi$ is saturated, it becomes a generator of $\mathsf{Pic}(\mathbb{P}^1)$.  
	
	\begin{lemma}\label{lem:restric_pic}
		Let $g: \widetilde{X} \to X$ be a saturated subdivision by a hyperplane with the exceptional fiber $\PP^1\to \widetilde{X}$. Then the restriction map $\mathsf{Pic}(\widetilde{X}) \to \mathsf{Pic}(\mathbb{P}^1)$ is an isomorphism. 
	\end{lemma}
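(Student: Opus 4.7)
The plan is to compute both Picard groups via the toric dictionary and exhibit the restriction map as essentially the identity on a common rank-one lattice.

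First, I would identify $\mathsf{Pic}(\widetilde X)$ explicitly. Since $\Sigma_{\widetilde X}$ has exactly the two maximal cones $\sigma_+,\sigma_-$ meeting along the codimension-one wall $\tau=\Sigma_X\cap\{u=0\}$, and $\Sigma_X$ is full-dimensional in $N_{\mathbb R}$, a $T$-invariant Cartier divisor on $\widetilde X$ is the same as a piecewise-linear function on $\widetilde\Sigma$, i.e.\ a pair $(m_+,m_-)\in M\times M$ with $m_+|_\tau = m_-|_\tau$; equivalently $m_+-m_-\in\tau^\perp\cap M$. Quotienting by global characters yields
\[
\mathsf{Pic}(\widetilde X)\;\xrightarrow{\;\sim\;}\; \tau^\perp\cap M,\qquad [(m_+,m_-)]\longmapsto m_+-m_-.
\]
Since $\tau$ spans the hyperplane $H=\{u=0\}$, the lattice $\tau^\perp\cap M$ has rank one and contains $u$. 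The saturation hypothesis forces $u$ to take the value $\pm 1$ on some primitive ray of $\Sigma_X$, so $u$ is primitive in $M$ and, because $\tau^\perp$ is a saturated sublattice, also primitive in $\tau^\perp\cap M$. Therefore $\mathsf{Pic}(\widetilde X)=\mathbb Z\cdot[\phi]$.

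Next, I would compute the restriction to the exceptional fiber, which is the torus-orbit closure $V(\tau)$, a toric $\mathbb P^1$ with character lattice $M(\tau)=\tau^\perp\cap M$ and cocharacter lattice $N(\tau)=N/(N\cap H)$, both of rank one. Given a class $[(m_+,m_-)]$, I would first replace it by an equivalent representative with both entries in $\tau^\perp\cap M$: choose any $\widetilde m_\tau\in M$ extending the common restriction $m_+|_\tau=m_-|_\tau$ and subtract it from both $m_\pm$. The resulting pair descends to a piecewise-linear function on the fan of $V(\tau)$ (two opposite rays in $N(\tau)_{\mathbb R}$), and the associated degree on $V(\tau)\cong\mathbb P^1$ is the pairing $\langle m_+-m_-,\,v_+\rangle$, where $v_+\in N(\tau)$ is the primitive generator of the image of $\sigma_+$.

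Finally, I would combine these to conclude. By the saturation hypothesis there is a ray $\rho\subset\sigma_+$ of $\Sigma_X$ with $u(v_\rho)=1$, and since $v_\rho\notin N\cap H$ its image in $N(\tau)$ is primitive, so I may take $v_+$ to be this image; then $\langle u,v_+\rangle=u(v_\rho)=1$. Thus the restriction map sends the generator $[\phi]$ of $\mathsf{Pic}(\widetilde X)$ to a generator of $\mathsf{Pic}(\mathbb P^1)=\mathbb Z$ and is an isomorphism. The step I expect to be most delicate is verifying that the Cartier divisor on the possibly singular $\widetilde X$ genuinely restricts to the naive descended PL function on $V(\tau)$ — this uses that, after subtracting $\widetilde m_\tau$, both linear functions vanish on the entire span $H$ of $\tau$ and so descend intrinsically to $N(\tau)_{\mathbb R}$, irrespective of whether $\sigma_\pm$ are simplicial.
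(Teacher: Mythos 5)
Your proof is correct and takes essentially the same route as the paper's: identify $\mathsf{Pic}(\widetilde X)$ with the rank-one lattice $\tau^\perp \cap M$ via the support-function description, and then use the saturation hypothesis to see that restriction to the exceptional $\mathbb{P}^1$ is unimodular. You spell out the restriction computation and the role of saturation more explicitly than the paper, which states the identification with $\mathsf{Pic}(\mathbb{P}^1)$ as ``natural'' inside the proof and only addresses the degree normalization in the remark that follows.

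One small misstep worth correcting: you justify primitivity of the image of $v_\rho$ in $N(\tau)$ by the fact that $v_\rho \notin N \cap H$. That inference is false in general --- a primitive vector of $N$ can map to an imprimitive element of a quotient lattice. The conclusion nevertheless holds, for a reason you already have in hand: since $u$ is primitive in $M$ it induces an isomorphism $N(\tau) \xrightarrow{\sim} \mathbb{Z}$, and $u(v_\rho) = \pm 1$ then forces the image of $v_\rho$ to be a generator of $N(\tau)$; replace the stated reasoning with this.
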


	\begin{proof}
		The Picard group of $\widetilde{X}$ is isomorphic to the quotient of the group of piecewise linear functions on the fan $\widetilde{\Sigma}:=\Sigma_{\widetilde{X}}$ by the group of linear functions, 
		\[
		\mathsf{Pic}(\widetilde{X})=\mathsf{PL}(\widetilde{\Sigma})/M
		\]
		A piecewise linear function on $\widetilde{\Sigma}$ is given by a pair of vectors $u_1,u_2 \in M$, such that $u_1=u_2$ along the separating hyperplane $\{u=0\}$. Translating by $u_2$, the piecewise linear function can be uniquely represented by the function $0,u_1-u_2$. But $u_1-u_2$ is an element of $u^{\perp} \cap M$ which is isomorphic to $\mathbb{Z}$ and naturally identified with $\mathsf{Pic}(\mathbb{P}^1$).
	\end{proof}
	
	In fact, when $g$ is saturated, the convex piecewise linear function $\phi$ when normalized to be $0$ along one of the cones $\sigma_+$ or $\sigma_-$ must have value $-1$ along the primitive vectors of the rays of the other cone that do not belong to the separating hyperplane, i.e. it corresponds to the generator $\mathcal{O}(-1)$ of $\mathsf{Pic}(\mathbb{P}^1$), i.e. $\mathsf{Pic}(\mathbb{P}^1) = \mathbb{Z}\phi$.
	\if0
	Via the isomorphism $\mathsf{Pic}(\widetilde{X})\cong \mathsf{PL}(\widetilde{\Sigma})/M$ (see Definition \ref{def:PP}), 
	Under the correspondence between $\mathsf{Pic}(\widetilde{X})$ and the group $\mathsf{PL}(\widetilde{\Sigma})/M$ of piecewise linear modulo linear functions, 
	
		%
		which restricts to $\mathcal{O}(-1)$ on the exceptional $\mathbb{P}^1$. 
		\fi
		\begin{proposition}\label{pro:cohomology_atiyah}
			Let $X$ be an affine toric variety and let $g:\widetilde{X} \to X$ be a separated subdivision by a hyperplane. If $L$ is a line bundle on $\widetilde{X}$ whose restriction to $\mathbb{P}^1$ is $\mathcal{O}_{\PP^1}(m)$ for some $m \ge -1$, then we have $H^i(\widetilde{X},L)=0$ for $i \ge 1$. Moreover, we have $R^ig_*L=0$ for all $i>0$.
		\end{proposition}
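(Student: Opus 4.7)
The plan is to argue via \v{C}ech cohomology on the toric cover by the two maximal cones, reducing the vanishing of $H^i(\widetilde{X},L)$ to a combinatorial statement about characters, and then to deduce the vanishing of $R^ig_*L$ from the affineness of $X$.

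First, I would set up the affine toric cover. Let $U_\pm := U_{\widetilde{\sigma}_\pm}$ be the affine opens corresponding to the two maximal cones of $\widetilde{\Sigma}$, and let $U_\tau := U_+ \cap U_-$ be the affine open associated to the separating face $\tau$. Since $\widetilde{X}$ is separated, \v{C}ech cohomology with respect to this two-term affine cover computes sheaf cohomology; the \v{C}ech complex is concentrated in degrees $0,1$, so $H^i(\widetilde{X},L) = 0$ automatically for $i \geq 2$, and there is a four-term exact sequence
\[ 0 \to H^0(\widetilde{X},L) \to H^0(U_+,L) \oplus H^0(U_-,L) \xrightarrow{d} H^0(U_\tau,L) \to H^1(\widetilde{X},L) \to 0. \]
It remains to prove that $d$ is surjective.

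Second, I would use the torus action to analyze $d$ character by character. After translating by a linear function, write the PL function representing $L$ as $\phi$ with $\phi|_{\widetilde{\sigma}_+} = 0$; then $\phi|_{\widetilde{\sigma}_-} = \lambda u$ for some integer $\lambda$, determined by the degree $m$ of $L|_{\mathbb{P}^1}$ via Lemma~\ref{lem:restric_pic}. Each of $H^0(U_+,L)$, $H^0(U_-,L)$, and $H^0(U_\tau,L)$ decomposes as a direct sum over those characters $m' \in M$ majorizing $\phi$ on the relevant cone, yielding $\widetilde{\sigma}_+^\vee \cap M$, $(\widetilde{\sigma}_-^\vee - \lambda u) \cap M$, and $\tau^\vee \cap M$ respectively. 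Because $d$ sends $(\chi^{m_+}, \chi^{m_-})$ to $\chi^{m_+} - \chi^{m_-}$, its cokernel is spanned by those $m' \in \tau^\vee \cap M$ lying in neither $\widetilde{\sigma}_+^\vee$ nor $\widetilde{\sigma}_-^\vee - \lambda u$. A short integrality argument then shows that if such a bad character existed, the two violated strict inequalities would each exceed the valid one by at least $1$, forcing $m \leq -2$. Under the hypothesis $m \geq -1$ there is no such character, so $d$ is surjective and $H^1(\widetilde{X}, L) = 0$.

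Finally, $R^ig_*L$ is coherent since $g$ is proper, and as $X$ is affine the Leray spectral sequence degenerates to give $\Gamma(X, R^ig_*L) \cong H^i(\widetilde{X}, L) = 0$ for $i \geq 1$; a coherent sheaf on an affine scheme with no global sections is zero, so $R^ig_*L = 0$ for $i > 0$. The principal obstacle throughout is the combinatorial/integrality step in the previous paragraph: one must fix the sign convention carefully so that the slope parameter $\lambda$ translates to the correct degree $m$ on the exceptional $\mathbb{P}^1$, and then extract the quantitative bound from the three systems of integer inequalities cutting out $\tau^\vee$, $\widetilde{\sigma}_+^\vee$, and $\widetilde{\sigma}_-^\vee - \lambda u$; everything else is formal.
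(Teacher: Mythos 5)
Your proof is correct, and it takes a genuinely different route from the paper's. The paper expresses $H^i(\widetilde X,L)$ character-by-character as a relative cohomology group $H^i(\Sigma,\Sigma-Z_w)$ in the style of Fulton's toric book, then carries out a topological analysis of the sets $\Sigma-Z_w$ (showing contractibility of the complement of $Z_w$, and arguing that it is connected for $m\le 1$ by analyzing where the linear functional $w$ can dominate $\phi$ using the saturation hypothesis). Your \v Cech argument with the two-cone cover $\{U_+,U_-\}$ is more elementary: since the cover has only two affine opens and $U_+\cap U_- = U_\tau$ is also affine, vanishing of $H^{\ge 2}$ is automatic and everything reduces to surjectivity of the single restriction map into $\Gamma(U_\tau,L)$. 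The integrality step you sketch does go through: normalize $u\in M$ primitive; then saturation forces $\phi$ to be $\pm u$ on the non-trivial cone, which in turn forces $u(v_+)=1$ and $u(v_-)=-1$ on the primitive ray generators of $\sigma_\pm$ off $\tau$; a bad character $m'\in\tau^\vee$ must then violate integrally, giving $\langle m',v_+\rangle\le -1$ and $\langle m',v_-\rangle\le -m-1$. The crucial observation that closes the argument, worth making explicit, is that $v_++v_-\in\Sigma$ by convexity of $\Sigma$ and $u(v_++v_-)=0$, hence $v_++v_-\in\tau$ and $\langle m',v_++v_-\rangle\ge 0$; combining gives $-m-2\ge 0$, i.e.\ $m\le -2$, so no bad character exists for $m\ge -1$. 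Compared with the paper, your approach buys a cleaner reduction (no case analysis on connectedness versus contractibility, and $H^{\ge 2}$ comes for free from the length-two \v Cech complex), at the modest cost of needing the convexity trick $v_++v_-\in\tau$ and careful bookkeeping of the sign of $\lambda$ versus the degree $m$.
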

		
		\begin{proof}
			Let us write $\Sigma := \Sigma_X, \widetilde{\Sigma} = \Sigma_{\widetilde{X}}$, and recall that there is a vector $u \in M$ such that $\widetilde{\Sigma}$ is the union of the two cones 
			\[
			\sigma_+ = \{v \in \Sigma: \left \langle u,v \right \rangle \ge 0\},
			\sigma_- = \{v \in \Sigma: \left \langle u,v \right \rangle \le 0\}
			\]
			meeting along the separating hyperplane $\{u=0\}$. They are the domains of linearity of the convex down function $\phi$, which we normalize to be $0$ on $\sigma_-$.    Furthermore, the supports of $\Sigma$ and $\widetilde{\Sigma}$ are the same, and we may harmlessly identify them with $\Sigma$ itself. 
			
			By Lemma \ref{lem:restric_pic}, the line bundle $L$ corresponds to the piecewise function $\phi_m:=m\cdot\phi$ for $m\leq 1$. For $w \in M$, set 
			\[
			Z_w = \{v \in \Sigma: \left \langle w,v \right \rangle \le \phi_m(v)\}
			\]
			where $\Sigma$ is the support of $\widetilde{\Sigma}$, or, equivalently, the fan of $X$. As in\footnote{We use the opposite convention regarding the sign of the Weil divisor associated to a piecewise linear function, hence the reverse inequalities.} \cite[Section 3.5]{Fulton_toric}, we have 
			\[
			H^i(\widetilde{X},L) \cong \bigoplus_{w \in M} H^i(\Sigma,\Sigma-Z_w)\,.
			\]
			
			We describe the topology of $\Sigma-Z_w$. Since the origin $v = 0$ is always contained in $Z_w$, the complement of $Z_w$ deformation retracts onto its intersection with the polytopal complex obtained by slicing $\widetilde{\Sigma}$ at ``height 1", i.e. with the intersection of $\widetilde{\Sigma}$ with the level set of a generic linear function $L:\Sigma \to \mathbb{R}_{\ge 0}$ that takes each ray of $\Sigma$ to a positive number. We denote this polytopal complex by $\Pi=\widetilde{\Sigma} \cap L^{-1}(1)$. The possible topological types of the sets $\Pi-Z_w$ are highly constrained. For example, when $g$ is the Atiyah flop, $\Pi$ is a square, and the possible topological types depend on whether $w$ dominates $\phi_m$ or not on the vertices of $\Pi$. Degenerate cases are allowed, where a two-dimensional cell collapses to a lower-dimensional boundary cell. We illustrate the possible topological types below, with the regions $Z_w \cap \Pi$ highlighted in red:
			
			\[
			\begin{tikzpicture}
				
				\node[left] at (0,0){$\le$};
				\node[draw,circle,inner sep=1pt,fill] at (0,0){};
				\node[right] at (2,0){$>$};
				\node[draw,circle,inner sep=1pt,fill] at (2,0){};
				\node[left] at (0,2){$>$}; 
				\node[draw,circle,inner sep =1pt,fill] at (0,2){};
				\node[right] at (2,2){$\le$}; 
				\node[draw,circle,inner sep=1pt,fill] at (2,2){};
				\draw[thick](0,0)--(2,0)--(2,2)--(0,2)--(0,0);
				\draw[thick,blue](0,0)--(2,2);
				\fill[red!40, nearly transparent] (0,0)--(0,1) -- (1,2) -- (2,2) -- (2,1) -- (1,0) -- cycle;
				
				\node[left] at (4,0){$\le$};
				\node[draw,circle,inner sep=1pt,fill] at (4,0){};
				\node[right] at (6,0){$>$};
				\node[draw,circle,inner sep=1pt,fill] at (6,0){};
				\node[left] at (4,2){$>$}; 
				\node[draw,circle,inner sep =1pt,fill] at (4,2){};
				\node[right] at (6,2){$>$}; 
				\node[draw,circle,inner sep=1pt,fill] at (6,2){};
				\draw[thick](4,0)--(6,0)--(6,2)--(4,2)--(4,0);
				\draw[thick,blue](4,0)--(6,2);
				\fill[red!40, nearly transparent] (4,0)--(4,1)--(5,0) -- cycle;
				
				\node[left] at (8,0){$\le$};
				\node[draw,circle,inner sep=1pt,fill] at (8,0){};
				\node[right] at (10,0){$>$};
				\node[draw,circle,inner sep=1pt,fill] at (10,0){};
				\node[left] at (8,2){$\le$}; 
				\node[draw,circle,inner sep =1pt,fill] at (8,2){};
				\node[right] at (10,2){$>$}; 
				\node[draw,circle,inner sep=1pt,fill] at (10,2){};
				\draw[thick](8,0)--(10,0)--(10,2)--(8,2)--(8,0);
				\draw[thick,blue](8,0)--(10,2);
				\fill[red!40, nearly transparent] (8,0) -- (8,2) -- (9,2) -- (9,0) -- cycle;
				
				\node[left] at (12,0){$>$};
				\node[draw,circle,inner sep=1pt,fill] at (12,0){};
				\node[right] at (14,0){$\le$};
				\node[draw,circle,inner sep=1pt,fill] at (14,0){};
				\node[left] at (12,2){$\le$}; 
				\node[draw,circle,inner sep =1pt,fill] at (12,2){};
				\node[right] at (14,2){$>$}; 
				\node[draw,circle,inner sep=1pt,fill] at (14,2){};
				\draw[thick](12,0)--(14,0)--(14,2)--(12,2)--(12,0);
				\draw[thick,blue](12,0)--(14,2);
				\fill[red!40, nearly transparent] (12,2) -- (12,1.25) -- (12.75,2) -- cycle;
				\fill[red!40, nearly transparent] (12.75,0) -- (14,0) -- (14,1.25) -- cycle; 
			\end{tikzpicture}
			\]
			
			In general, for $m \le 0$, the function $m\phi$ is convex up, and hence the sets
			\[
			\Sigma-Z_w = \{v \in \Sigma: \left \langle w, v \right \rangle > \phi_m(v)\}
			\]
			are convex. In the case $m=1$, $\Sigma-Z_w$ is not necessarily convex, but its intersection with either piece $\Pi_{\pm} = \Pi \cap \sigma_{\pm}$ is convex, as it is the intersection of a polyhedron with a half-space. Furthermore, the intersection of 
			\[
			(\Sigma-Z_w) \cap \Pi_+ \cap \Pi_- = (\Sigma-Z_w) \cap \{u=0\} \cap \Pi
			\]
			is the set 
			\[
			\{v \in \Pi \cap \{u=0\}: \left \langle w,v \right \rangle > 0\}
			\]
			which is convex and connected (albeit perhaps empty). Therefore, in any case, the complement of $Z_w$ is contractible. Furthermore, it is connected unless $Z_w$ contains the separating hyperplane $\{u=0\}$, in which case it has exactly two components.  
			
			From the long exact sequence of relative cohomology 
			\[
			\begin{tikzcd}
				0 \ar[r] & H^0(\Sigma,\Sigma-Z_w) \ar[r] & H^0(\Sigma) = \mathbb{C} \ar[r] & H^0(\Sigma-Z_w) \ar[r] & H^1(\Sigma,\Sigma-Z_w) \ar[r] & 0 \,,
			\end{tikzcd}
			\]
			we find that $H^i(\Sigma,\Sigma-Z_w)$ is $0$ for all $i \ge 2$, and $0$ for $i=1$ unless the complement of $Z_w$ has two components, in which case 
			\[
			H^i(\Sigma,\Sigma-Z_w) \cong \mathbb{C}\,.
			\]
			Now, let $w$ be a character for which $Z_w$ contains the separating hyperplane $\{u=0\}$ (which can only happen for $m=1$, as otherwise we've seen that $\Sigma-Z_w$ is convex). By our normalization hypothesis on $\phi$ we have $\phi=0$ on $\sigma_-$ and $\phi \le 0$ on $\sigma_+$. This means that $w$ is 
			\begin{itemize}
				\item $> 0$ on some ray of $\sigma_-$ 
				\item $\le 0$ on the separating hyperplane $\{u=0\}$. 
				\item $> \phi$ on $\sigma_+$.  
			\end{itemize}
			Let $v$ be the primitive vector along a ray of $\sigma_+$ on which $\phi(v) \neq 0$. By the hypothesis that the subdivision is saturated, $\phi(v)=-1$. Now, the linear function $w$ has negative slope in the direction connecting the separating hyperplane with $v$, as it has to rise away from the separating hyperplane towards $\sigma_-$. But $w$ also takes integral values on integral points, so if its value is $\le 0$ along the separating hyperplane, it must be $\le -1$ along $v$. Therefore, for $m=1$, it is impossible to find such a $w$. 
			
			Therefore, if $m\le 1$, for any $w$, $\Sigma-Z_w$ is contractible and connected. 
			From the long exact sequence, it follows that $H^i(\widetilde{X},L)=0$ for all $i>0$.
		
		For the second claim, since $X$ is affine, Grothendieck's spectral sequence gives $H^0(X,R^ig_*L) \cong H^i(\widetilde{X},L)$. We have just seen that this group vanishes for $i>0$. Since $R^ig_*L$ is a coherent sheaf on the affine variety $X$, hence determined by its global sections, it also vanishes for $i>0$. 
	\end{proof}
	Now, we discuss the global situation. Let $f:Y \to X$ be a log modification. Generalizing Definition \ref{def:subdivision}, we say that $f$ is a {\em saturated subdivision by hyperplanes} if it is \'etale locally pulled back from a saturated subdivision by hyperplanes of toric varieties. 
	\begin{proposition}
		\label{pro: nefpushforward}
		Let $f: Y \to X$ be a a saturated subdivision by hyperplanes between log smooth varieties, and let $L$ be a line bundle on $Y$. Assume that the restriction of $L$ to any fiber of $f$ has vanishing higher cohomology. Then we have $Rf_*L \cong f_*L$. 
	\end{proposition}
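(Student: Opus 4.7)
The plan is to induct on the number of subdivided hyperplanes, using Proposition~\ref{pro:cohomology_atiyah} as the base case. Since the desired isomorphism is equivalent to the vanishing of $R^if_*L$ for $i \ge 1$, a statement on coherent sheaves, it is \'etale local on $X$. I may therefore assume $X$ is a germ of an affine toric variety and $f:Y \to X$ is a toric saturated subdivision by hyperplanes, realized as a composition
\[
Y = Y_k \xrightarrow{g_k} Y_{k-1} \xrightarrow{g_{k-1}} \cdots \xrightarrow{g_1} Y_0 = X
\]
of saturated single-hyperplane subdivisions in the sense of Definition~\ref{def:subdivision}. The argument proceeds by induction on $k$. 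The base case $k=1$ is exactly Proposition~\ref{pro:cohomology_atiyah}, since for a single-hyperplane subdivision the nontrivial fiber of $f$ is the exceptional $\PP^1$ and the fiberwise vanishing hypothesis becomes $L|_{\PP^1} \cong \CO(m)$ with $m \ge -1$.

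For the inductive step, set $g = g_k$ and $f' = g_{k-1}\circ \cdots \circ g_1$, so $f = f'\circ g$. The crucial observation is that the fiberwise vanishing hypothesis on $f$ for $L$ already implies the corresponding hypothesis on $g$: at any point of $X$ whose preimage under $f'$ is a single point, the fiber of $f$ and the fiber of $g$ coincide, and in particular every exceptional $\PP^1$ of $g$ appears as an $f$-fiber over such a point. The hypothesis thus gives $L|_{\PP^1} \cong \CO(m)$ with $m \ge -1$ on every exceptional $\PP^1$ of $g$, so Proposition~\ref{pro:cohomology_atiyah} yields $Rg_*L \cong g_*L$, a line bundle on $Y_{k-1}$. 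I next verify that $g_*L$ satisfies the fiberwise hypothesis for $f'$: because $f$ arises as a \emph{product} of saturated single-hyperplane subdivisions as described in Corollary~\ref{cor:smoothandsmall}, every fiber of $f$ splits as the fiber of $g$ times the fiber of $f'$, and proper base change for $g$ together with K\"unneth reduces the needed vanishing for $g_*L$ on an $f'$-fiber to the original vanishing for $L$ on the corresponding $f$-fiber. The inductive hypothesis applied to $f'$ with line bundle $g_*L$, combined with the Grothendieck composition of derived pushforwards, then gives
\[
Rf_*L \;=\; Rf'_*Rg_*L \;=\; Rf'_*g_*L \;=\; f'_*g_*L \;=\; f_*L.
\]

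The main obstacle, and the step that requires the most care, is this fiberwise transfer: showing that the vanishing hypothesis on $f$ for $L$ propagates to the analogous hypothesis on $f'$ for $g_*L$. It relies essentially on the product structure of the hyperplane subdivisions extracted from the proof of Corollary~\ref{cor:smoothandsmall} -- each saturated hyperplane is transverse to the others, so fibers of $f$ are genuine products of fibers of $g$ and $f'$ -- together with base change for $g$, which is legitimate precisely because Proposition~\ref{pro:cohomology_atiyah} guarantees $R^ig_*L=0$ for $i>0$ and that $g_*L$ commutes with arbitrary base change.
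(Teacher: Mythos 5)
Your proof takes a genuinely different route from the paper's: you try to reduce to the single-hyperplane case (Proposition~\ref{pro:cohomology_atiyah}) by induction on a factorization of $f$ into single-hyperplane subdivisions, whereas the paper's proof has no induction at all. In the paper, Proposition~\ref{pro: nefpushforward} is read as asserting that the \'etale/formal-local model of $f$ is a \emph{single} hyperplane subdivision of an affine toric variety (the plural ``hyperplanes'' is there because the hyperplane varies from chart to chart), and the proof is a one-step application of Proposition~\ref{pro:cohomology_atiyah} after two reductions: first, it passes to the formal completion (Krull's intersection theorem suffices, since vanishing of the coherent sheaf $R^if_*L$ may be checked on completed stalks); second, it invokes \cite[Lemma~4.4.12.2]{MW22} to identify $\mathsf{Pic}(Y)$ with $\mathsf{Pic}(\widetilde{Z})$ for the toric model $\widetilde{Z}\to Z$, so that the given line bundle $L$ can literally be transported to the toric model. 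Products of such subdivisions are only handled afterwards, in Proposition~\ref{pro:vanishing_higher}, by combining Proposition~\ref{pro: nefpushforward} with K\"unneth.

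Two concrete issues with your inductive step. First and most seriously: after proving $Rg_*L\cong g_*L$, you assert that $g_*L$ is ``a line bundle on $Y_{k-1}$'' and feed it into the inductive hypothesis. But $g$ is a small contraction (exceptional locus of codimension $2$), and for a line bundle $L$ with $L|_{\PP^1}\cong\CO(m)$, $m>0$, the pushforward $g_*L$ has a fiber of dimension $m+1>1$ at the image of the exceptional $\PP^1$ (by cohomology-and-base-change using $R^1g_*L=0$), so it is a rank-one reflexive sheaf that is \emph{not} locally free. Since the proposition (and the base case Proposition~\ref{pro:cohomology_atiyah}) is stated only for line bundles, the inductive hypothesis simply does not apply to $(f',\,g_*L)$; this is a genuine gap, not a gloss. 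Second, your reduction ``I may assume $X$ is a germ of an affine toric variety'' quietly skips the step that the paper handles via \cite[Lemma~4.4.12.2]{MW22}: \'etale locality makes the underlying toric/log-smooth geometry local, but the line bundle $L$ lives on $Y$, not on the toric model, and some argument is needed to transfer it. Finally, the product structure of $f$-fibers that you import from Corollary~\ref{cor:smoothandsmall} is a feature of the specific modification $\Jbar^{(2)}_C$, not part of the hypotheses of Proposition~\ref{pro: nefpushforward} (it belongs to Proposition~\ref{pro:vanishing_higher}), so even if the other gaps were repaired your argument would be proving a statement with different hypotheses.
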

	\begin{proof} 
		We prove the higher direct images $R^if_*L$ vanish for all $i>0$. By Krull's intersection theorem, the statement can be checked formally locally on $X$, so we can assume $X$ is the spectrum of a complete Noetherian local ring. Since $X$ is log smooth, and since $Y$ is a log blowup of $X$, we have a Cartesian diagram 
		\[
		\begin{tikzcd}
			Y \ar[r] \ar[d,"f"] & \widetilde{Z} \ar[d,"g"] \\ 
			X \ar[r,"p"] & Z
		\end{tikzcd}
		\]
		with $Z$ a toric variety, $g$ a toric blowup -- which by hypothesis must be a saturated subdivision along a hyperplane --, and the map $p$ \'etale. By \cite[Lemma 4.4.12.2]{MW22}, the Picard group of $Y$ and the Picard group of $\widetilde{Z}$ are isomorphic, and both coincide with the Picard group of an exceptional $\mathbb{P}^1$. We may thus identify $L$ with a line bundle on $\widetilde{Z}$. By Proposition \ref{pro:cohomology_atiyah}, since the cohomology of $L$ vanishes on the fibers of $f$, it vanishes on the exceptional fibers of $g$ as well. Thus we have $R^ig_*L = 0$ for $i>0$. Since $X$ and $Z$ are formally locally isomorphic, we have 
		\[
		R^if_*L \cong (R^ig_*L)_{\widehat{p(X)}} = 0\,, i>0\,,
		\]
		which gives the result.
	\end{proof}
	For our purposes, we need the following strengthening of the previous result. 
	\begin{proposition}\label{pro:vanishing_higher}
		Let $f:Y \to X$ be a log modification between log smooth varieties which is locally the pullback of a product of saturated subdivisions by hyperplanes. Let $L$ be a line bundle on $Y$ such that for each closed point $x \in X$, $H^i(f^{-1}(x),L|_{f^{-1}(x)}) = 0$ for all $i>0$. Then $Rf_*L \cong f_*L$. 
	\end{proposition}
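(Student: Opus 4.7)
The plan is to reduce to the formal local setting as in the proof of Proposition \ref{pro: nefpushforward}, and then use the K\"unneth formula to decouple the individual hyperplane subdivisions, so that the single-subdivision result can be applied factor by factor.

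First, by Krull's intersection theorem, the vanishing $R^if_*L=0$ for $i>0$ can be checked formally locally on $X$. I would thus replace $X$ by the spectrum of a complete Noetherian local ring and obtain a Cartesian square
\[
\begin{tikzcd}
Y\ar[r]\ar[d,"f"']&\widetilde{Z}\ar[d,"g"]\\
X\ar[r,"p"]&Z
\end{tikzcd}
\]
with $p$ \'etale, $Z=Z_1\times\cdots\times Z_n$ a product of affine toric varieties, and $g=g_1\times\cdots\times g_n$, where each $g_j:\widetilde{Z}_j\to Z_j$ is a saturated subdivision by a hyperplane.

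Next, by \cite[Lemma 4.4.12.2]{MW22} the line bundle $L$ corresponds to a line bundle on $\widetilde{Z}=\widetilde{Z}_1\times\cdots\times\widetilde{Z}_n$. Since the Picard group of a product of smooth toric varieties splits (via the description by piecewise linear functions on the product fan $\Sigma_{\widetilde{Z}}=\prod_j\Sigma_{\widetilde{Z}_j}$), I can write $L\cong L_1\boxtimes\cdots\boxtimes L_n$ for line bundles $L_j$ on $\widetilde{Z}_j$. Let $m_j\in\ZZ$ denote the degree of $L_j$ on the exceptional $\PP^1$ of $g_j$. The fiber of $f$ over the closed point of $X$ is a product of $\PP^1$'s (with some factors possibly collapsed to a point, when the image of $p$ does not lie in the deepest stratum of $Z$), and $L$ restricts there to $\mathcal{O}(m_1)\boxtimes\cdots\boxtimes\mathcal{O}(m_n)$. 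By K\"unneth for coherent cohomology, $H^*$ of this restriction equals $\bigotimes_j H^*(\PP^1,\mathcal{O}(m_j))$, so the hypothesis of vanishing higher cohomology forces one of two cases: either (A) $m_j\ge 0$ for all $j$, or (B) $m_j=-1$ for some $j$.

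Finally, by the derived K\"unneth formula for the product map $g$, we have $Rg_*L\cong R(g_1)_*L_1\boxtimes\cdots\boxtimes R(g_n)_*L_n$ as an external tensor of complexes on $Z=\prod Z_j$. In case (A), Proposition \ref{pro: nefpushforward} applied to each $g_j$ shows that $R(g_j)_*L_j=(g_j)_*L_j$ is concentrated in degree zero, hence $Rg_*L$ is as well. In case (B), $R(g_j)_*L_j=0$ for the factor with $m_j=-1$ (again by Proposition \ref{pro: nefpushforward}, or by direct toric computation), so the external tensor product vanishes entirely. In either case $R^ig_*L=0$ for $i>0$, and \'etale base change along $p$ then gives $R^if_*L=0$ for $i>0$, completing the argument. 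The main subtlety will be justifying the box-tensor splitting $L\cong L_1\boxtimes\cdots\boxtimes L_n$: one must verify that line bundles on a product of log smooth toric varieties decompose accordingly, which follows from the toric Picard description but has to be stated carefully in the formal local setting rather than globally.
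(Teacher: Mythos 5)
Your reduction to the formal-local toric setting and the box-product splitting of $L$ via $\mathsf{Pic}\big(\prod_j\widetilde Z_j\big)\cong\prod_j\mathsf{Pic}(\widetilde Z_j)$ are sound and match what the paper intends. Case (A) is also correct. But case (B) has a genuine gap. First, the claim that $R(g_j)_*L_j=0$ when $m_j=-1$ is false: $g_j$ is a proper birational morphism, so $(g_j)_*L_j$ is a rank-one torsion-free sheaf on $Z_j$ and cannot vanish; Proposition~\ref{pro: nefpushforward} yields $Rg_*L\cong g_*L$ concentrated in degree zero, not $0$. Hence the external tensor product does \emph{not} vanish. Second, and more fundamentally, the dichotomy you extract from the single closed fiber does not constrain the remaining degrees: when some $m_j=-1$, $H^*(\PP^1,\CO(-1))=0$ kills the fiber cohomology for trivial reasons and tells you nothing about $m_{j'}$ for $j'\ne j$. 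For instance, $m_1=-1$ and $m_2=-5$ satisfy the closed-fiber vanishing, yet $R^1g_*L$ contains the nonzero summand $(g_1)_*L_1\boxtimes R^1(g_2)_*L_2$, so the conclusion would fail.

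The missing input is the hypothesis over the \emph{other} closed points of $X$: for each $j$, choose $x_j\in X$ mapping under the strict \'etale chart into the stratum of $Z$ where only the $j$-th factor is at its torus-fixed point (such points exist since $p$ is strict and open, and these strata contain the deepest point in their closure). Then $f^{-1}(x_j)\cong\PP^1_j$ and the hypothesis there forces $m_j\ge -1$. Since this holds for every $j$, Proposition~\ref{pro:cohomology_atiyah} gives $H^{>0}(\widetilde Z_j,L_j)=0$ for each factor, the ordinary K\"unneth formula gives $H^{>0}(\widetilde Z,L)=0$, so $R^{>0}g_*L=0$ because $Z$ is affine, and \'etale base change concludes. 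Once all closed points are used, there is no dichotomy to split on.
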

	
	\begin{proof}
		By the same argument in Proposition \ref{pro: nefpushforward}, we reduce to the toric case. Then the result follows by combining Proposition \ref{pro: nefpushforward} with the K\"unneth formula. 
	\end{proof}
	\subsection{Proof of Theorem \ref{thm:Plog=arinkin}}\label{sec:desingular}
	We show that the canonical extension of the Poincar\'e line bundle on the birational model constructed in Section \ref{sec:birational} provides a desingularization of Arinkin's kernel (\cite{arinkin2}). 
	In contrast to Arinkin's approach, our construction offers a direct description of the kernel, which will be a crucial input in Section \ref{sec:5}.
	
	The notion of Cohen-Macaulay sheaves can be expressed in terms of the dualizing complex. Denote by $\dbcoh(X)$ the bounded derived category of coherent sheaves on $X$, and let $\dual_X \in \dbcoh(X)$ be a dualizing complex for $X$.  We normalize $\dual_X$ so that its stalk at a generic point of $X$ has nontrivial cohomology only in degree $0$. When $X$ is Gorenstein, the dualizing complex  is an invertible sheaf. 
	Consider the duality functor
	\[\DD:\dbcoh(X)\xrightarrow{\cong}\dbcoh(X), \,  \mathcal{E}\mapsto R\Hom_{\CO_X}(\mathcal{E},\dual_X)\,.\]
	A coherent sheaf $\mathcal{E}$ on $X$ is called {\em Cohen-Macaulay} of codimension $d$ if and only if
	\begin{equation}\label{eq:CM_dual}
		h^i(\DD(\mathcal{E})) = 0, \, i\neq d\,.
	\end{equation}
	A Cohen-Macaulay sheaf of codimension zero is called \emph{maximal}. The following extension property will be used frequently (\cite[Theorem 5.10.5]{EGA_IV_II}).
	\begin{lemma}\label{lem:extension2}
		Let $\mathcal{F}$ be a maximal Cohen-Macaulay sheaf on $X$ and let $Z\subset X$ be a closed subscheme with codimension  $\geq 2$. Let $j: X\setminus Z\to X$ be an open embedding. Then the canonical morphism $\mathcal{F}\to j_*j^*\mathcal{F}$ is an isomorphism.
	\end{lemma}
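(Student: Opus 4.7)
The plan is to identify the kernel and cokernel of $\mathcal{F}\to j_*j^*\mathcal{F}$ with local cohomology sheaves supported on $Z$, and then deduce their vanishing from the depth interpretation of the Cohen-Macaulay condition. The distinguished triangle
\[
R\underline{\Gamma}_Z(\mathcal{F})\to\mathcal{F}\to Rj_*j^*\mathcal{F}\xrightarrow{+1}
\]
associated with the closed/open pair $(Z, U=X\setminus Z)$ gives, on taking cohomology sheaves, the four-term exact sequence
\[
0\to\mathcal{H}^0_Z(\mathcal{F})\to\mathcal{F}\to j_*j^*\mathcal{F}\to\mathcal{H}^1_Z(\mathcal{F})\to 0\,,
\]
so the lemma reduces to showing $\mathcal{H}^0_Z(\mathcal{F})=\mathcal{H}^1_Z(\mathcal{F})=0$.

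Since this is local on $X$, I would work stalk by stalk at an arbitrary point $z\in Z$. Setting $(R,\mathfrak{m})=\mathcal{O}_{X,z}$, $M=\mathcal{F}_z$, and $I\subset R$ the stalk of the ideal of $Z$ (so that $\textup{ht}(I)\geq 2$ by hypothesis), the problem becomes to show $H^i_I(M)=0$ for $i=0,1$. Grothendieck's local cohomology vanishing gives $H^i_I(M)=0$ for $i<\textup{depth}_I(M)$, so it is enough to verify $\textup{depth}_I(M)\geq 2$. The characterization \eqref{eq:CM_dual} of maximal Cohen-Macaulay sheaves is equivalent, by local duality, to the condition $\textup{depth}_{\mathfrak{m}}(M)=\dim R$ at every closed point, i.e.\ $M$ is a maximal Cohen-Macaulay module. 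For such $M$ the standard unmixedness identity gives $\textup{depth}_I(M)=\textup{ht}(I)$ for any ideal $I$ with $V(I)\subset \textup{Supp}(M)$, and since $\mathcal{F}$ is maximal Cohen-Macaulay its support is all of $X$, so the codimension hypothesis yields $\textup{depth}_I(M)\geq 2$.

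The main subtlety I expect is making sure the depth-codimension identity is applied correctly in the possibly non-regular setting of the lemma. This is precisely the content of Serre's condition $(S_2)$, which is automatic for maximal Cohen-Macaulay sheaves since they satisfy $(S_k)$ for every $k$. Alternatively, one can appeal directly to \cite[Theorem 5.10.5]{EGA_IV_II}, which is exactly the extension theorem across codimension $\geq 2$ loci under Serre's $(S_2)$ condition.
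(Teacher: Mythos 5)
Your proof is correct. The paper does not give its own proof of this lemma: it simply cites \cite[Theorem 5.10.5]{EGA_IV_II}, which is the extension theorem across codimension $\geq 2$ closed subsets under Serre's $(S_2)$ condition, and notes that maximal Cohen--Macaulay sheaves satisfy it. Your argument unwinds exactly what lies behind that citation: the four-term exact sequence from the local cohomology triangle reduces the claim to $\mathcal{H}^0_Z(\mathcal{F})=\mathcal{H}^1_Z(\mathcal{F})=0$, which follows from $\mathrm{depth}_I(\mathcal{F}_z)\geq 2$ at every $z\in Z$, and you correctly justify this depth bound from the paper's duality-theoretic characterization of maximal Cohen--Macaulay sheaves via local duality and the fact that localizations of CM modules with full support remain CM. This is the standard proof of the EGA result, so it is essentially the same approach, just spelled out rather than cited. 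The only cosmetic remark is that calling $\mathrm{depth}_I(M)=\mathrm{ht}(I)$ the ``unmixedness identity'' is nonstandard terminology; what you are using is the formula $\mathrm{depth}_I(M)=\inf_{\mathfrak{p}\in V(I)}\mathrm{depth}(M_{\mathfrak{p}})$ together with $\mathrm{depth}(M_{\mathfrak{p}})=\mathrm{ht}(\mathfrak{p})$ for MCM modules with full support, which is precisely $(S_2)$ in the relevant degrees.
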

	We go back to the construction in Section \ref{sec:birational}. By Theorem \ref{thm:Plog=arinkin} (a), the codimension of the open embedding \eqref{eq:open} is two and the total space $\Jbar^{(2)}_C$ is smooth. Consequently, the Poincare line bundle $\P$ on $J_C\times_B\Jbar_C\cup\Jbar_C\times_B J_C$ extends to the unique line bundle:
	\begin{equation}\label{eq:tildeP}
		\widetilde{\P}\in\mathrm{Pic}(\Jbar^{(2)}_C)\,.
	\end{equation}
	We call $\widetilde{\P}$ the {\em extended Poincar\'e line bundle}.

	\if0
	\begin{theorem}\label{thm:explicit}
		Let $f:\Jbar_C^{(2)}\to\Jbar_C\times_B\Jbar_C$ and let $\widetilde{\P}$ be the line bundle extending $\P$. Then
		\begin{enumerate}[label=(\alph*)]
			\item For $i>0$, we have $R^if_*\widetilde{\P}=0$.
			\item $f_*\widetilde{\P}$  is a maximal Cohen-Macaulay sheaf.
		\end{enumerate}
	\end{theorem}
	\fi
	Before presenting the proof of Theorem \ref{thm:Plog=arinkin}, we explain the connection to Arinkin's work. By Arinkin \cite{arinkin2} (see also \cite{MRV2}), the Poincar\'e line bundle $\P$ admits a unique maximal Cohen-Macaulay extension:
	\begin{equation}\label{eq:arinkin}
		\overline{\P}\in\mathrm{Coh}(\Jbar_C\times_B\Jbar_C)\,.
	\end{equation}
	This extension is obtained in loc.cit. from flat descent of a maximal Cohen-Macaulay sheaf on the isotropic Hilbert scheme of points. Theorem \ref{thm:Plog=arinkin} can be thought of as a desingularization of Arinkin's kernel.
	\begin{corollary}
		Let $\widetilde{\P}$ be the unique extension of the Poincar\'e line bundle \eqref{eq:tildeP} and let $\overline{\P}$ be the maximal Cohen-Macaulay sheaf \eqref{eq:arinkin}. Then we have
		\[Rf_*\widetilde{\P} \cong \overline{\P} \in \dbcoh(\Jbar_C\times_B\Jbar_C)\,.\]
	\end{corollary}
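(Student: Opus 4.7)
The plan is to exploit the uniqueness of maximal Cohen-Macaulay extensions across a codimension-two closed subset. With Theorem \ref{thm:Plog=arinkin}(c) already in hand, $Rf_*\widetilde{\P}$ is concentrated in cohomological degree zero and is a maximal Cohen-Macaulay sheaf on $\Jbar_C\times_B\Jbar_C$ extending $\P$; and by the cited result of Arinkin, $\overline{\P}$ is also a maximal Cohen-Macaulay extension of the same Poincar\'e line bundle. Thus the corollary amounts to a uniqueness assertion: any two such extensions must agree.

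Concretely, I would proceed as follows. Let $\iota$ denote the open embedding \eqref{eq:open}, whose complement has codimension two by Theorem \ref{thm:Plog=arinkin}(a); over this open locus $f$ restricts to an isomorphism, so the restrictions $\iota^*Rf_*\widetilde{\P}$ and $\iota^*\overline{\P}$ are both canonically identified with $\P$. Applying Lemma \ref{lem:extension2} to each maximal Cohen-Macaulay sheaf produces canonical isomorphisms $Rf_*\widetilde{\P}\xrightarrow{\cong}\iota_*\iota^*Rf_*\widetilde{\P}\cong\iota_*\P$ and $\overline{\P}\xrightarrow{\cong}\iota_*\iota^*\overline{\P}\cong\iota_*\P$. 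Composing through the common object $\iota_*\P$ yields the desired isomorphism in $\dbcoh(\Jbar_C\times_B\Jbar_C)$.

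Given Theorem \ref{thm:Plog=arinkin}(c), the identification step itself presents no further obstacle. The genuine content has already been absorbed into that theorem --- namely, the vanishing $R^if_*\widetilde{\P}=0$ for $i>0$, which follows from Proposition \ref{pro:vanishing_higher} together with the local description of $f$ as a product of saturated subdivisions by hyperplanes (Corollary \ref{cor:smoothandsmall}) and a degree check for $\widetilde{\P}$ along the exceptional $\PP^1$ fibers from Proposition \ref{pro:model2}(b), plus the maximal Cohen-Macaulay property for $f_*\widetilde{\P}$, which should follow by relative duality using the smoothness of $\Jbar_C^{(2)}$ and the semistability of $f$.
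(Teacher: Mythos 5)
Your argument coincides with the paper's proof: both invoke Theorem \ref{thm:Plog=arinkin}(c) to get that $Rf_*\widetilde{\P}\cong f_*\widetilde{\P}$ is a maximal Cohen--Macaulay extension of $\P$, and then use Lemma \ref{lem:extension2} together with the codimension-two complement of \eqref{eq:open} to deduce $f_*\widetilde{\P}\cong\overline{\P}$. Your version merely spells out the canonical identification through $\iota_*\P$ that the paper leaves implicit.
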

	\begin{proof}
		By Theorem \ref{thm:Plog=arinkin}, $f_*\widetilde{\P}\cong Rf_*\widetilde{\P}$ is a maximal Cohen-Macaulay sheaf which restricts to $\P$ on  $J_C\times_B\Jbar_C\cup\Jbar_C\times_B J_C$. By Theorem \ref{thm:Plog=arinkin} and Lemma \ref{lem:extension2}, we have $f_*\widetilde{\P}\cong\overline{\P}$.
	\end{proof}
	In the remainder of this section, we prove Theorem \ref{thm:Plog=arinkin} (c). We begin by providing a modular description of  $\widetilde{\P}$. By pulling back the universal curve over $\mathfrak{Q}^{(2)}$ along the morphism $\Jbar_C^{(2)}\to\mathfrak{Q}^{(2)}$ in \eqref{eq:J2}, we obtain a semi-stable curve:
	\begin{equation}\label{eq:univ_curve}
		\widehat{C}\to\Jbar_C^{(2)}\,.
	\end{equation}
	\begin{proposition}\label{pro:pairing}
		Let $C_1,C_2$ be the two universal quasi-stable curves over $\Jbar_C\times_B\Jbar_C$ and $\mathcal{L}_1,\mathcal{L}_2$ be the two universal line bundles on $C_1,C_2$. Let $\widehat{C}\to \Jbar_C^{(2)}$ be the semi-stable curve \eqref{eq:univ_curve}. Then the unique extension \eqref{eq:tildeP} has the form $\widetilde{\P}\cong\langle\mathcal{L}_1|_{\widehat{C}},\mathcal{L}_2|_{\widehat{C}}\rangle$.
	\end{proposition}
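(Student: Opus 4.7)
The plan is to construct a candidate line bundle on $\Jbar_C^{(2)}$ by taking the Deligne pairing on the universal family $\widehat{C}\to \Jbar_C^{(2)}$, verify that it agrees with the Poincar\'e line bundle $\P$ on the open locus where $\P$ is defined, and then invoke the uniqueness of extension of a line bundle on a smooth scheme across a codimension-two subscheme. Concretely, writing $\hat{\nu}_i:\widehat{C}\to C_i$ for the natural modifications built into the modular description of $\mathfrak{Q}^{(2)}$ (Definition~\ref{def:Q2}), I would set
\[
\widetilde{\P}' := \langle \hat{\nu}_1^*\L_1,\, \hat{\nu}_2^*\L_2\rangle_{\widehat{C}/\Jbar_C^{(2)}}\,.
\]
Since $\widehat{C}\to \Jbar_C^{(2)}$ is a proper flat family of nodal curves (it is the pullback of the universal log smooth curve on $\mathfrak{Q}^{(2)}$, and semistability of the projections follows from Proposition~\ref{pro:model2}), the Deligne pairing is defined globally and produces a line bundle on $\Jbar_C^{(2)}$.

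For the comparison on the open locus $U := J_C\times_B\Jbar_C\cup\Jbar_C\times_B J_C$, I would use the defining fs fiber product $\widehat{C} = C_1\times^{\textup{fs}}_C C_2$. On $\Jbar_C^{\epsilon_1}\times_B J_C^{\epsilon_2}$, the second factor lies in the line-bundle locus, so $C_2 = C$, and hence $\widehat{C} = C_1\times^{\textup{fs}}_C C = C_1$. The map $\hat{\nu}_1$ is the identity, while $\hat{\nu}_2$ is the stabilization $C_1 \to C$, so
\[
\widetilde{\P}'\big|_{\Jbar_C^{\epsilon_1}\times_B J_C^{\epsilon_2}} = \langle \L_1,\, \hat{\nu}_2^*\L_2\rangle_{C_1}\,,
\]
which is precisely $\P$ according to Definition~\ref{def:Poincare_line}. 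The symmetric argument handles $J_C^{\epsilon_1}\times_B \Jbar_C^{\epsilon_2}$, and on the overlap $J_C\times_B J_C$ both models collapse to $\widehat{C} = C_1 = C_2 = C$ so that both pairings reduce to $\langle \L_1, \L_2\rangle_C$. Therefore $\widetilde{\P}'|_U = \P$.

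Finally, I would apply Theorem~\ref{thm:Plog=arinkin}(a): $\Jbar_C^{(2)}$ is smooth and the complement of $U$ has codimension at least two. Since a line bundle on a smooth scheme is determined by its restriction to the complement of a codimension-two closed subscheme, $\widetilde{\P}'$ is forced to coincide with the canonical extension $\widetilde{\P}$ from \eqref{eq:tildeP}.

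The main obstacle is the bookkeeping in the middle step: one must recognize that the fs fiber product defining $\widehat{C}$ collapses to the quasi-stable model of the other factor on each open piece of $U$, and match the resulting Deligne pairing directly against the asymmetric Definition~\ref{def:Poincare_line}. Everything else is either base change for the Deligne pairing or the standard extension-across-codimension-two principle, so there is no deeper difficulty.
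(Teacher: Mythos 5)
Your proof is correct and is essentially the paper's argument: the paper likewise appeals to Theorem~\ref{thm:Plog=arinkin}(a) and Lemma~\ref{lem:extension2} for uniqueness of the extension across codimension two, and declares agreement with $\P$ on $U$ to be ``immediate from Definition~\ref{def:Poincare_line}''. The only difference is that you spell out the fs fiber product collapsing to $C_1$ (resp.\ $C_2$) over $\Jbar_C^{\epsilon_1}\times_B J_C^{\epsilon_2}$ (resp.\ its mirror), which is precisely the content the paper leaves implicit.
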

	\begin{proof}
		By Theorem \ref{thm:Plog=arinkin} (a) and Lemma \ref{lem:extension2}, it is enough to show that $\widetilde{\P}$ restricts to the Poincare line bundle $\P$ on $J_C\times_B\Jbar_C\cup\Jbar_C\times_B J_C$ which is immediate from Definition \ref{def:Poincare_line}. 
	\end{proof}
	Theorem \ref{thm:Plog=arinkin} (b) is the consequence of Proposition \ref{pro:pairing}.
	\if0
	\begin{figure}
		\centering

		\tikzset{every picture/.style={line width=0.75pt}} 
		
		\begin{tikzpicture}[x=0.75pt,y=0.75pt,yscale=-0.75,xscale=0.75]
			
			\draw    (176.33,371.33) -- (492.33,371.33) ;
			\draw    (337.33,89.33) -- (339.33,229.33) ;
			\draw [color={rgb, 255:red, 0; green, 0; blue, 0 }  ,draw opacity=1 ]   (169.33,147.33) -- (182,231) ;
			\draw [color={rgb, 255:red, 0; green, 0; blue, 0 }  ,draw opacity=1 ]   (188.33,96.33) -- (168,167) ;
			\draw [color={rgb, 255:red, 0; green, 0; blue, 0 }  ,draw opacity=1 ]   (473.33,146.33) -- (486,230) ;
			\draw [color={rgb, 255:red, 0; green, 0; blue, 0 }  ,draw opacity=1 ]   (492.33,95.33) -- (472,166) ;
			\draw [color={rgb, 255:red, 208; green, 2; blue, 27 }  ,draw opacity=1 ]   (181,126) .. controls (221,96) and (440.33,229.33) .. (480.33,199.33) ;
			\draw [color={rgb, 255:red, 74; green, 144; blue, 226 }  ,draw opacity=1 ]   (177,201) .. controls (211.33,239.33) and (458.67,69.67) .. (484.33,119.33) ;
			\draw    (186.33,314.33) .. controls (226.33,284.33) and (144,252) .. (184,222) ;
			\draw    (345.33,306.33) .. controls (385.33,276.33) and (303,244) .. (343,214) ;
			\draw    (490.33,307.33) .. controls (530.33,277.33) and (448,245) .. (488,215) ;
			\draw    (181.33,109.33) .. controls (221.33,79.33) and (139,47) .. (179,17) ;
			\draw    (334.33,101.33) .. controls (374.33,71.33) and (292,39) .. (332,9) ;
			\draw    (487.33,104.33) .. controls (527.33,74.33) and (445,42) .. (485,12) ;
			\draw [color={rgb, 255:red, 74; green, 144; blue, 226 }  ,draw opacity=1 ]   (168.33,27.33) -- (475.33,27.33) ;
			\draw [color={rgb, 255:red, 208; green, 2; blue, 27 }  ,draw opacity=1 ]   (169,46) -- (480.33,47.33) ;
			\draw [color={rgb, 255:red, 74; green, 144; blue, 226 }  ,draw opacity=1 ]   (187.33,273.33) -- (494.33,273.33) ;
			\draw [color={rgb, 255:red, 208; green, 2; blue, 27 }  ,draw opacity=1 ]   (197.33,294.33) -- (502.33,294.33) ;

		\end{tikzpicture}
		
		\caption{The universal curve $\widehat{C}$ over an exceptional $\mathbb{P}^1$ of the resolution $\Jbar^{(2)}_C \to \Jbar_C \times_B \Jbar_C$, with the restrictions of the universal sheaves $L_1,L_2$ in red and blue respectively.}
	\end{figure}
	\fi
	\begin{proof}[Proof of Theorem \ref{thm:Plog=arinkin} (c)]
		A geometric log point $x \in \Jbar_C \times_B \Jbar_C$ corresponds to a quadruple $(C_1,L_1,C_2,L_2)$ of two quasistable log curves with stabilization $C$, and stable line bundles $L_i$ on $C_i$. By Proposition \ref{pro:model2} (b), the fiber of $\Jbar^{(2)}_C$ over $x$ is a product of $\mathbb{P}^1$s, one for each edge $e$ of the dual graph of $C$ that is subdivided simultaneously in the dual graphs of both $C_1$ and $C_2$. It will be clear from our argument that we may work one such edge at a time, so we assume that one edge $e$ has been subdivided, and $f^{-1}(x)=\mathbb{P}^1$. We note that the curves $C_1,C_2$ are not isomorphic as log curves over $x$, but their underlying schemes are isomorphic, obtained by replacing the node of $C$ corresponding to $e$ by a $\mathbb{P}^1$. We may thus write 
		\[
		C_1 = C_2 = C' \cup_{\{n_1,n_2\}} \mathbb{P}^1
		\] 
		with $n_1,n_2$ two nodes, and $\mathbb{P}^1$ the unique unstable component. Since $C_1,C_2$ are curves over a geometric point, we can represent each line bundle $L_i$ as a divisor
		\[
		D'_i + p_i
		\]
		where $D'_i$ is supported on the smooth locus of $C'$ and $p_i$ is a smooth point of the unstable $\mathbb{P}^1$. 
		By construction, the curve $\widehat{C}$ restricted over $f^{-1}(x)=\mathbb{P}^1$ is the blowup of $C_1 \times \mathbb{P}^1$ at $(n_1,0),(n_2,\infty)$, where $n_1,n_2$ are the two nodes of the exceptional $\mathbb{P}^1$ in $C_1$. Now, the pullbacks of $D_i'$ and of $p_i$ remain in the smooth locus $C' \times \mathbb{P}^1$ and its complement respectively. Therefore, $D_i'$ stay disjoint from the $p_j$ and we have 
		\[
		\left \langle D_i', p_j \right \rangle = 0 
		\]
		for $i,j=1,2$. Similarily, $\left \langle D_1',D_2' \right \rangle$ can be calculated inside any proper subvariety of $\widehat{C}$ that contains the smooth locus of $C' \times \mathbb{P}^1$, for example inside $C' \times \mathbb{P}^1$, while $\left \langle p_1,p_2 \right \rangle$ can be calculated inside the complement of the smooth locus of $C' \times \mathbb{P}^1$, which is the blowup $Bl(\mathbb{P}^1 \times \mathbb{P}^1)$ of $
		\mathbb{P}^1 \times \mathbb{P}^1$ at $(n_1,0)$ and $(n_2,\infty)$. Since $C' \times \mathbb{P}^1$ and $D_1',D_2'$ are pulled back from a point, $\left \langle D_1', D_2' \right \rangle = 0$. 
		
		\begin{figure}
			\centering
			
			\tikzset{every picture/.style={line width=0.75pt}} 
			
			\begin{tikzpicture}[x=0.75pt,y=0.75pt,yscale=-.75,xscale=.75]
				
				\draw [color={rgb, 255:red, 0; green, 0; blue, 0 }  ,draw opacity=1 ]   (189.33,123) -- (202,206.67) ;
				\node[left] at (195, 170){$V_0$};
				\draw [color={rgb, 255:red, 0; green, 0; blue, 0 }  ,draw opacity=1 ]   (208.33,72) -- (188,142.67) ;
				\node[left] at (200,100){$E_0$};
				\draw [color={rgb, 255:red, 0; green, 0; blue, 0 }  ,draw opacity=1 ]   (493.33,122) -- (506,205.67) ;
				\node[right] at (500,170){$E_\infty$};
				\draw [color={rgb, 255:red, 0; green, 0; blue, 0 }  ,draw opacity=1 ]   (512.33,71) -- (492,141.67) ;
				\node[right] at (510,105){$V_\infty$};
				\draw [color={rgb, 255:red, 208; green, 2; blue, 27 }  ,draw opacity=1 ]   (201,101.67) .. controls (241,71.67) and (460.33,205) .. (500.33,175) ;
				\draw [color={rgb, 255:red, 74; green, 144; blue, 226 }  ,draw opacity=1 ]   (197,176.67) .. controls (231.33,215) and (478.67,45.33) .. (504.33,95) ;
				\draw    (208.33,72) -- (512.33,71) ;
				\node[above] at (350,71){$H_1$};
				\draw    (202,206.67) -- (506,205.67) ;
				\node[below] at (350,206){$H_2$};
				\draw    (202.33,242) -- (509.33,242) ;

			\end{tikzpicture}
			\caption{The blowup of $\mathbb{P}^1 \times \mathbb{P}^1$ at $(n_1,0),(n_2,\infty)$ and its torus invariant divisors.}
		\end{figure}
		
		On the other hand, 
		\[
		\mathsf{Pic}(Bl(\mathbb{P}^1 \times \mathbb{P}^1)) = \mathbb{Z}^4
		\]
		is generated by the classes of two horizontal divisors $H_1,H_2$, and four vertical divisors $V_0,V_\infty,E_0,E_\infty$ over $0$ and $\infty$ with relations 
		\[
		H_1 +E_0 = H_2 + E_\infty, V_0 + E_0 = V_\infty + E_\infty\,.
		\]
		Up to renaming, we may arrange so that 
		\[
		H_1\cdot E_\infty = H_2 \cdot V_\infty = 0 \,.
		\]
		In our naming convention, we are thinking of $H_1$ as the strict transform of $n_1 \times \mathbb{P}^1$, $H_2$ as the strict transform of $n_2 \times \mathbb{P}^2$, and of $E_0,E_\infty$ as the exceptional divisors of the blowup $\widehat{C}$ of $C_1 \times \mathbb{P}^1$. By construction, $p_1$ has relative degree $1$ over $\mathbb{P}^1$ and intersects $V_0,V_\infty$ transversely, while $p_2$ has relative degree $1$ and intersects $E_0,E_\infty$ transversely. A simple calculation in $\mathsf{Pic}(Bl(\mathbb{P}^1 \times \mathbb{P}^1))$ shows that these properties uniquely characterize $p_1$ as the line bundle equivalent to $H_2 + E_\infty$ while $p_2$ as $H_1 + V_\infty$. Then, we have
		\[
		\left \langle H_2 + E_\infty, H_1 + V_\infty \right \rangle = f_*(E_\infty \cdot V_\infty) = [\infty]
		\]
		which has degree $1$ on $\mathbb{P}^1$. 
		
		In general, we conclude that
		\begin{equation}\label{eq:restric_P}
			\left \langle \mathcal{L}_1|_{\widehat{C}}, \mathcal{L}_2|_{\widehat{C}} \right \rangle|_{f^{-1}(x)} \cong \CO_{\PP^1}(1) \boxtimes \cdots \boxtimes \CO_{\PP^1}(1)\,,
		\end{equation}
		where we identified $f^{-1}(x) \cong \mathbb{P}^1 \times \cdots \times\mathbb{P}^1$ using Proposition \ref{pro:model2}.

		We may now prove $R^if_*\widetilde{\P}=0$ for $i>0$. By Proposition \ref{pro:vanishing_higher}, it is enough to show that for any $x\in \Jbar_C\times_B\Jbar_C$, we have $H^{i}(f^{-1}(x),\widetilde{\P})=0$ for all $i>0$. But this follows from \eqref{eq:restric_P}.
		
		We show that $f_*\widetilde{\P}$ is a Cohen-Macaulay sheaf using the cohomological criteria \eqref{eq:CM_dual}. By the Grothendieck duality along $f$ (see, e.g., \cite{Neeman}), we have the equivalence $Rf_*\circ \DD_{\Jbar_C^{(2)}} \cong \DD_{\Jbar_C\times_B\Jbar_C} \circ Rf_*$.
		Since $f$ is birational, there is no degree shift in this equivalence.
		Applying this and (a), we have
		\[\DD(f_*\widetilde{\P}) \cong \DD(Rf_*\widetilde{\P}) \cong Rf_* \DD(\widetilde{\P}) \,.\]
		The total space $\Jbar_C^{(2)}$ is smooth by Theorem \ref{thm:Plog=arinkin} (a), and hence we can express $\DD(\widetilde{\P}) \cong \widetilde{\P}^\vee\otimes\omega_{\Jbar^{(2)}_C}$. The dualizing line bundle $\omega_{\Jbar^{(2)}_C}$ restricts trivially to each contracted $\PP^1$  by the adjunction formula. Using the K\"unneth formula, for any $x\in \Jbar_C\times_B\Jbar_C$,  we have
		\[H^{i}(f^{-1}(x),\widetilde{\P}^\vee\otimes\omega_{\Jbar_C^{(2)}}) = H^{i}(\PP^1\times\cdots \times\PP^1, \widetilde{\P}^\vee)=0, \, i>0\,,\]
		where the vanishing follows because $\widetilde{\P}^\vee$ has degree $-1$ on each $\PP^1$ by \eqref{eq:restric_P}. Therefore, by  Proposition \ref{pro:vanishing_higher}, the complex $\DD(f_*\widetilde{\P})$ is a coherent sheaf. By \eqref{eq:CM_dual}, the pushforward $f_*\widetilde{\P}$ is a maximal Cohen-Macaulay sheaf on $\Jbar_C\times_B\Jbar_C$.

        We prove the flatness of the coherent sheaf $f_*\widetilde{\P}$. For $i=1,2$, let $\pi_i : \Jbar_C\times_B\Jbar_C \to \Jbar_C$ be projection to $i$-th factor. Since $\Jbar_C\times_B\Jbar_C$ is l.c.i, it is in particular Cohen-Macaulay. We have already shown that the sheaf $f_*\widetilde{\P}$ is maximal Cohen–Macaulay, and therefore its $\pi_i$-flatness follows from Proposition \ref{pro:ca} below.
	\end{proof}
Finally, we are left to prove the following statement from commutative algebra:
\begin{proposition}\label{pro:ca}
    Let $(A,\m_A)$ be a regular local ring and $(B,\m_B)$ be a Cohen-Macaulay local ring. Let $\phi : (A,\m_A)\to (B,\m_B)$ be a flat local ring homomorphism. Let $M$ be a finitely generated maximal Cohen-Macaulay $B$-module. Then $M$ is a flat $A$-module.
\end{proposition}
\begin{proof}
    By the local criterion for flatness \cite[00MK]{stacks-project}, it is enough to show that
    \[
    \mathrm{Tor}^A_1(A/\m_A, M) = 0\,.
    \]
    Since $A$ is a regular local ring, we may write $\m_A= (x_1,\cdots, x_n)$, where $x_1, \cdots, x_n$ is an $A$-regular sequence. Let $K_\bullet(x_1,\cdots, x_n)$ be the Koszul sequence resolving the residue field $A/\m_A$. Let $y_i = \phi(x_i) \in B$ denote the image. Because $\phi$ is flat, the Koszul sequence of $y_1,\cdots, y_n$ is the base change of $K_\bullet(x_1,\cdots, x_n)$ along $\phi : A\to B$, and therefore
    \[
    \mathrm{Tor}^A_1(A/\m_A, M) = H_1 (K_\bullet(x_1,\cdots, x_n)\otimes_A M) = H_1 (K_\bullet(y_1,\cdots, y_n)\otimes_B M)\,.\]
    By \cite[062F]{stacks-project}, it suffices to prove that $y_1,\cdots, y_n$ is an $M$-regular sequence.

    Since $\phi$ is faithfully flat, the images $y_i = \phi(x_i)$ form a $B$-regular sequence \cite[00LM]{stacks-project}. Therefore it is enough to show that any $B$-regular sequence is $M$-regular. When $n=1$, it reduces to proving $\mathrm{Ass}(B)= \mathrm{Ass}(M)$. The inclusion $\mathrm{Ass}(B)\subset \mathrm{Ass}(M)$ is clear. Since $B$ is Cohen-Macaulay, $\mathrm{Ass}(B)= \mathrm{Min} (B)$ \cite[Theorem 2.1.6]{BH}. Conversely, if $\p \in \mathrm{Ass}(M)$, then 
    \[\dim B/\p = \mathrm{depth}\, M= \dim B\]
    where the first equality follows from \cite[Theorem 2.1.2 (a)]{BH}, and the second holds because $M$ is maximal Cohen-Macaulay. Therefore $\p \in\mathrm{Min} (B) = \mathrm{Ass} (B)$. The general case follows from the induction on $n$, after passing to the quotients $M/(y_1,\cdots, y_{n-1})$ over $B/(y_1,\cdots, y_{n-1})$, where the Cohen-Macaulay assumption on the quotients follows from \cite[Theorem 2.1.3]{BH}. Hence $y_1,\cdots,y_n$ is $M$-regular, and consequently $\mathrm{Tor}^A_1(A/\m_A,M) = 0$. Thus $M$ is flat $A$-module.
\end{proof}
	\section{Todd class of residue sheaves}\label{sec:4}
	\subsection{Residue sheaf of a semistable morphism}
	We recall the local structure of semistable morphisms (Definition \ref{def:ss}). Let $f:X\to S$ be a semistable morphism with $\dim X=n, \dim S=l$. \'Etale locally on the domain, there exists non-negative integers $n$ and $l$, a partition $A$ of $n$ into $l$ parts $n_i$, and a chart for $f$ of the form 
	\begin{equation}\label{eq:ss_loc}
		\prod_{i=1}^l \mathbb{A}^{n_i} \to \prod_{i=1}^l \mathbb{A}^1\,, \text{ given by }
		t_i = \prod_{\alpha \in n_i} x_\alpha
	\end{equation}
	by \cite[Theorem 16]{CMOP} and the proof therein.
	
	For a semistable morphism $f:X\to S$, the sequence
	\begin{equation}\label{eq:vTan}
		0\to f^*\Omega_Y^1\to \Omega_X^1\to \Omega_f^1\to 0
	\end{equation}
	is exact. Consequently, the Todd class of the virtual tangent bundle associated with $f$ is the dual of the Todd class of $\Omega_f^1$.  For a morphism $f:X\to S$ between log schemes, one can associate the logarithmic cotangent sheaf $\Omega_{f,\log}^1$ (\cite[(1.7)]{Kato}).  
	When $f$ is log smooth, the logarithmic cotangent sheaf $\Omega_{f,\log}^1$ is locally free. 
	\begin{lemma}\label{lem:residue0}
		Let $f:X\to S$ be a semistable morphism. Then the sequence
		\begin{equation}\label{eq:residue}
			0\to \Omega_f^1\to \Omega^1_{f,\log}\to \Omega^1_{f,\log}/\Omega^1_{f}\to 0
		\end{equation}
		is exact. In this case, we call $\mathcal{R}_f := \Omega_{f,\log}^1/\Omega^1_f$ the \emph{residue sheaf} of $f$.
	\end{lemma}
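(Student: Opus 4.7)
The sequence is right-exact by construction of $\mathcal{R}_f$ as a cokernel, so the content to prove is that the natural map $\phi_f : \Omega_f^1 \to \Omega_{f,\log}^1$ is injective. Since the claim is étale-local on $X$, the plan is to pass to the explicit semistable chart \eqref{eq:ss_loc}, present both sheaves explicitly, and verify injectivity by a direct coordinate computation.

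In the local model $X = \prod_i \AA^{n_i}$, $S = \prod_i \AA^1$ with $t_i = \prod_{\alpha\in n_i} x_\alpha$, I will write $\Omega_f^1$ as the quotient of $\bigoplus_\alpha \CO_X\, dx_\alpha$ by the relations $dt_i = \sum_{\alpha\in n_i}(t_i/x_\alpha)\, dx_\alpha = 0$, and $\Omega_{f,\log}^1$ as the quotient of $\bigoplus_\alpha \CO_X\, d\log x_\alpha$ by $\sum_{\alpha\in n_i} d\log x_\alpha = 0$, with $\phi_f([dx_\alpha]) = x_\alpha \cdot [d\log x_\alpha]$. Given $\omega = \sum_\alpha a_\alpha [dx_\alpha]$ in $\ker\phi_f$, I will eliminate one $d\log x_{\alpha_i}$ per block $n_i$ using the log relations; the equation $\phi_f(\omega)=0$ then forces $a_\alpha x_\alpha$ to be constant in $\alpha$ on each block, say equal to $A_i \in \CO_X$. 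Because $A_i$ is divisible by each $x_\alpha$ with $\alpha \in n_i$ and the $x_\alpha$ form a regular sequence in $\CO_X$, it will be divisible by their product $t_i$. Writing $A_i = c_i t_i$ then gives $a_\alpha = c_i \prod_{\beta\in n_i\setminus\{\alpha\}} x_\beta$, so $\omega = \sum_i c_i [dt_i] = 0$ in $\Omega_f^1$, proving injectivity.

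The main point requiring care will be the divisibility step: extracting the common factor $t_i$ from $A_i$ relies on the regular-sequence property of $(x_\alpha)_{\alpha \in n_i}$ in the smooth local ring of the chart, which holds directly in the polynomial presentation of \eqref{eq:ss_loc}. An alternative organization of the same input, which I would consider if the bookkeeping becomes unwieldy, is to run the snake lemma on the diagram comparing \eqref{eq:vTan} with its logarithmic analogue: there the injectivity of the two left-hand vertical maps $dx_\alpha \mapsto x_\alpha \cdot d\log x_\alpha$ and $dt_i \mapsto t_i \cdot d\log t_i$ is immediate from the fact that $x_\alpha$ and $t_i$ are non-zero-divisors, and the injectivity of $\phi_f$ follows once one verifies that $f^{*}\Omega_S^1 \to \Omega_X^1$ is itself injective in the chart. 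With the local computation in hand, injectivity of $\phi_f$ descends étale-locally to all of $X$, yielding the exactness of \eqref{eq:residue}.
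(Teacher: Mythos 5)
Your proposal is correct and takes the same approach as the paper's (very terse) proof: reduce to the \'etale local chart \eqref{eq:ss_loc} and verify injectivity of $\Omega_f^1 \to \Omega_{f,\log}^1$ by an explicit coordinate computation; you simply spell out what the paper summarizes as ``follows from \eqref{eq:ss_loc}''. The key divisibility step (since $a_\alpha x_\alpha = A_i$ for all $\alpha\in n_i$ and the $x_\alpha$ are pairwise coprime in the chart, $t_i \mid A_i$ and hence $\omega$ lies in the span of the $dt_i$) is exactly the content being elided, and your treatment of it is sound.
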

	\begin{proof}
		By the definition of $\Omega_{f,\log}^1$, there exists a canonical morphism $\Omega_f^1\to\Omega_{f,\log}^1$. The injectivity of the above morphism can be checked \'etale locally. 
		On each \'etale local chart, it follows from \eqref{eq:ss_loc}.
	\end{proof}
	We use the tropicalization map \eqref{eq:tropical} for log schemes.
	For a morphism $f:X\to S$ between log schemes, consider the diagram
	\begin{equation}\label{eq:relArtin}
		\begin{tikzcd}
			X \ar[r,"\phi_f"]\ar[dr,"f"'] & \A_X[S]\ar[d,"g"]\ar[r] & \A_X\ar[d]\\
			& S\ar[r,"t"] & \A_S
		\end{tikzcd}
	\end{equation}
	where the middle square is the fiber diagram. We call $\A_X[S]$ the {\em relative Artin fan}.
	\begin{lemma}\label{lem:residue}
		Let $f:X\to S$ be a semistable morphism. In the diagram \eqref{eq:relArtin}, the natural morphism $\phi_f^*\cR_g\to\cR_f$ is an isomorphism.
	\end{lemma}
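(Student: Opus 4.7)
The plan is to factor $f$ as $X \xrightarrow{\phi_f} \A_X[S] \xrightarrow{g} S$ and apply the snake lemma to compare the ordinary and logarithmic relative cotangent sequences for this factorization.

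First I would observe that $\phi_f$ is strict. The tropicalization $t : S \to \A_S$ is strict, so by fs base change the projection $\A_X[S] = \A_X \times_{\A_S} S \to \A_X$ is strict; in particular, the log structure of $\A_X[S]$ is pulled back from $\A_X$. Since the log structure of $X$ is also pulled back from $\A_X$ through its own tropicalization, $\phi_f$ is strict, and consequently the canonical morphism $\Omega^1_{\phi_f} \to \Omega^1_{\phi_f, \log}$ is an isomorphism.

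Next I would check that $\phi_f$ is smooth. This can be verified from the local semistable chart \eqref{eq:ss_loc}: the Artin fan map $\A_X \to \A_S$ is locally a product of toric morphisms $[\AA^{n_i}/\Gm^{n_i}] \to [\AA^1/\Gm]$ induced by the monoid maps $\NN \to \NN^{n_i}$, $1 \mapsto (1,\dots,1)$, and its base change along $t$ realizes $\phi_f$ locally as a product of atlases $\AA^{n_i} \to [\AA^{n_i}/\Gm^{n_i-1}]$, each a $\Gm^{n_i-1}$-torsor and hence smooth. Combining smoothness with the log smoothness of $\phi_f$, both rows of the comparison diagram
\[
\begin{tikzcd}
0 \ar[r] & \phi_f^*\Omega^1_g \ar[r] \ar[d] & \Omega^1_f \ar[r] \ar[d] & \Omega^1_{\phi_f} \ar[r] \ar[d,"\cong"] & 0 \\
0 \ar[r] & \phi_f^*\Omega^1_{g,\log} \ar[r] & \Omega^1_{f,\log} \ar[r] & \Omega^1_{\phi_f,\log} \ar[r] & 0
\end{tikzcd}
\]
are short exact, with the right vertical arrow an isomorphism.

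The proof is then completed by the snake lemma: the vanishing of the kernel and cokernel of the right column forces the induced map on the cokernels of the vertical maps, $\phi_f^*\cR_g \to \cR_f$, to be an isomorphism. The step I expect to require the most care is the local identification above of $\A_X[S]$ as a product of quotient stacks and of $\phi_f$ as an atlas; this is essentially a bookkeeping exercise with Artin fans and fs fiber products, but it is the crux on which the exactness of the top row rests.
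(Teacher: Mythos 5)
Your proof is correct and follows essentially the same route as the paper's: factor $f$ through $\phi_f$ and $g$, establish that $\phi_f$ is strict and smooth, and compare the ordinary and logarithmic relative cotangent sequences. The paper compresses the final comparison into ``the result follows from Lemma~\ref{lem:residue0}''; you spell out the underlying snake-lemma argument, which is a sound way to fill in that step.

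One point deserves a caveat. When you say that the base change of $[\AA^{n_i}/\Gm^{n_i}] \to [\AA^1/\Gm]$ along $t$ ``realizes $\phi_f$ locally as a product of atlases $\AA^{n_i} \to [\AA^{n_i}/\Gm^{n_i-1}]$,'' you are conflating $X$ with its local toric model. The semistable chart of~\eqref{eq:ss_loc} only furnishes a strict smooth map $X \to \prod\AA^{n_i}$ \'etale-locally, not a local isomorphism; $X$ typically has additional (moduli) directions invisible to the fan. What one actually sees locally is that $\phi_f$ factors as
\[
X \;\longrightarrow\; S \times_{\prod\AA^1} \textstyle\prod\AA^{n_i} \;\longrightarrow\; S \times_{\prod[\AA^1/\Gm]} \textstyle\prod[\AA^{n_i}/\Gm^{n_i}] \;\cong\; \A_X[S],
\]
where the first arrow is smooth by the defining property of a Kato chart and the second is the $\Gm^{n_i-1}$-torsor you describe, pulled back along the strict chart $S \to \prod\AA^1$. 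This still gives that $\phi_f$ is smooth, so your conclusion survives, but the intermediate smooth cover is part of the picture and should not be elided. (The paper's one-line justification — $f$ flat and log smooth implies $\phi_f$ strict and smooth — is itself a shorthand for this kind of local computation.)
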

	\begin{proof}
		Since $f$ is flat (\cite[Corollary 4.5]{Kato}) and log smooth, the morphism $\phi_f$ is strict and smooth. Thus, the natural morphism $\Omega_{\phi_f}^1\to\Omega^1_{\phi_f,\log}$ is an isomorphism and the result follows from Lemma \ref{lem:residue0}.
	\end{proof}
	
	We work with piecewise linear and piecewise polynomial functions on smooth and log smooth schemes; see \cite[Section 2.5]{HMPPS}. For an algebraic log stack $X$, a {\em (strict) piecewise linear function} on $X$ is a global section $H^0(X,\overline{\Msf}_X^\gp)\otimes_\ZZ\QQ$ and a {\em (strict) piecewise polynomial} on $X$ is a global section $H^0(X,\, \mathrm{Sym}^\bullet_\QQ\overline{\mathsf{M}}^\gp_X)$.
	For $\alpha\in H^0(X,\Msf_X^\gp)$, the $\CO^\times_X$-torsor $\CO_X^\times(\alpha)$ is defined as the preimage of  $\alpha$ under the natural map $\Msf_X^\gp\to \overline{\Msf}_X^\gp$. The associated line bundle, denoted by $\CO_X(\alpha)$, is obtained by gluing along the infinity section.
	
	We now proceed to calculate the Todd class of the residue sheaf of a semistable morphism. From Lemma \ref{lem:residue}, it follows that to compute $\cR_f$, it is sufficient to perform the calculation for the map $\mathcal{A}_X \to \mathcal{A}_S$. By \cite[Theorem 3.3.1]{MR24}, we have a graded ring isomorphism 
	\[
	\CH^*(\mathcal{A}_X) \xrightarrow{\cong} \mathsf{PP}(\Sigma_X)\,,
	\]
	which reflects the fact that $\CH^*(\mathcal{A}_X)$ satisfies \'etale descent. As a consequence, the calculation of the Todd class can be carried out stratum by stratum. 
	
	For a Chern root $x$, we denote by $\Td(x)$ the associated Todd class and the dual of the Todd class by 
	\[\Td^\vee(x):= \frac{x}{e^x-1}=\sum_{k=0}^\infty\frac{B_k}{k!}x^k\]
	where $B_k$ is the $k$-th {\em Bernoulli number}. 
	\begin{lemma}\label{lem:td_local}
		Let $n,l$ be positive integers, and $A$ be a partition of $n$ into $l$ parts $n_i$. Consider the map $f: \mathcal{A}^n \to \mathcal{A}^l$ induced by the semistable map $\AA^n \to \AA^l$ defined according to the partition $A$, as described in \eqref{eq:ss_loc}. The Todd class of the residue sheaf of $f$ is expressed as
		\[
		(\Td^{\vee}(\Rels_f))^{-1}= \prod_{i=1}^{l} \frac{\prod_{\alpha \in n_i}\Td(\delta_\alpha)}{\Td(\sum_{\alpha \in n_i} \delta_\alpha)}\in\mathsf{PP}(\Sigma_{\AA^n})
		\]
		where $\delta_{\alpha}$ is the piecewise linear function corresponding to the normal bundle of the $\alpha$-th coordinate hyperplane. 
	\end{lemma}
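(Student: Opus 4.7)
The plan is to compute $\Td^\vee(\Rels_f)^{-1}$ K-theoretically via the residue exact sequence of Lemma \ref{lem:residue0}, combined with the two standard cotangent exact sequences (log and ordinary) relating $f^*\Omega^1_{\mathcal{A}^l}$, $\Omega^1_{\mathcal{A}^n}$ and $\Omega^1_f$, plus their log analogues. Multiplicativity of $\Td^\vee$ will then produce the formula as a ratio of Todd factors evaluated at the Chern roots of the four cotangent sheaves on $\mathcal{A}^n$ and $\mathcal{A}^l$.

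I will first verify that both sequences are left-exact. The log cotangent sequence
\[
0 \to f^*\Omega^1_{\mathcal{A}^l,\log}\to \Omega^1_{\mathcal{A}^n,\log}\to \Omega^1_{f,\log}\to 0
\]
is automatic from log smoothness. For the ordinary sequence
\[
0 \to f^*\Omega^1_{\mathcal{A}^l}\to \Omega^1_{\mathcal{A}^n}\to \Omega^1_f \to 0,
\]
left-exactness must be checked by hand since $f$ is not smooth in the classical sense. Using $f^*dt_i = \sum_{\beta \in n_i}\bigl(\prod_{\alpha \in n_i,\,\alpha \neq \beta}x_\alpha\bigr)dx_\beta$ and the disjointness of the blocks $n_i$, any relation $\sum_i s_i f^*dt_i = 0$ forces $s_i\prod_{\alpha\in n_i,\,\alpha\neq\beta}x_\alpha = 0$ for each $i,\beta$; integrality of the coordinate ring then gives $s_i = 0$.

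Next I identify the Chern roots on $\mathcal{A}^n = [\AA^n/\GG_m^n]$ using the isomorphism $\CH^*(\mathcal{A}^n)\cong \mathsf{PP}(\Sigma_{\mathcal{A}^n})$ recalled above. The forms $d\log x_\alpha$ are $\GG_m^n$-invariant, so $\Omega^1_{\mathcal{A}^n,\log}$ and $f^*\Omega^1_{\mathcal{A}^l,\log}$ have all Chern roots zero and contribute trivial Todd factors. The forms $dx_\alpha$ carry weight $1$ in the $\alpha$-th factor of $\GG_m^n$, so $\Omega^1_{\mathcal{A}^n}$ has Chern roots $\delta_\alpha$; and since $t_i = \prod_{\alpha\in n_i}x_\alpha$, the pullback $f^*\Omega^1_{\mathcal{A}^l}$ has Chern roots $\sum_{\alpha \in n_i}\delta_\alpha$. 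Substituting into the K-theoretic expression yields
\[
\Td^\vee(\Rels_f)^{-1} \;=\; \prod_{i=1}^{l}\frac{\prod_{\alpha \in n_i}\Td^\vee(\delta_\alpha)}{\Td^\vee\bigl(\sum_{\alpha\in n_i}\delta_\alpha\bigr)}.
\]

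To finish, I convert this to the stated form via the elementary identity $\Td(x) = e^x\,\Td^\vee(x)$: within each $i$-factor, the numerator's $\prod_{\alpha\in n_i}e^{\delta_\alpha}$ exactly cancels the denominator's $e^{\sum_{\alpha \in n_i}\delta_\alpha}$, so the ratio is unchanged under $\Td^\vee \leftrightarrow \Td$. The main technical point I expect to require care is the left-exactness of the ordinary cotangent sequence, precisely because $f$ is not smooth; everything else is either automatic from log smoothness or a direct weight computation on the toric Artin fan.
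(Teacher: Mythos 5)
Your proof is correct and lands on the same formula, with the same ingredients as the paper but bookkept differently. The paper combines the ordinary and log cotangent sequences with the residue sequences of $\mathcal{A}^n$, $\mathcal{A}^l$, and $f$ via the $3\times 3$ lemma to produce the single short exact sequence $0\to f^*\CO_{\mathcal{D}}\to\bigoplus_\alpha\CO_{\mathcal{D}_\alpha}\to\Rels_f\to 0$, and then evaluates $\Td$ on each torsion sheaf $\CO_D$ via $0\to\CO(-D)\to\CO\to\CO_D\to 0$; you instead write $[\Rels_f]$ K-theoretically as the alternating sum of the four cotangent sheaves, observe that the log ones are trivial on the Artin fan, and read off the Chern roots of the ordinary ones. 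Both routes rest on left-exactness of the ordinary cotangent sequence, which the paper takes from \eqref{eq:vTan} without argument, so your by-hand verification on the local model is a worthwhile addition. One cosmetic point: with the paper's normalization $\delta_\alpha = c_1(\CO(\mathcal{D}_\alpha))$, the conormal line $(x_\alpha)/(x_\alpha^2)$ has weight $+1$, so $dx_\alpha$ spans a line of Chern class $-\delta_\alpha$ rather than $\delta_\alpha$; fortunately the target expression $\prod_{\alpha\in n_i}\Td(\delta_\alpha)/\Td\bigl(\sum_{\alpha\in n_i}\delta_\alpha\bigr)$ is invariant under $\delta\leftrightarrow -\delta$ (equivalently, $c_1(\Rels_f)=0$), so your final formula is unaffected, and with the sign corrected your exponential-cancellation step is simply replaced by the identity $\Td^\vee(-x)=\Td(x)$.
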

	
	\begin{proof}
		We begin by analyzing the case $l=1$. Let $\mathcal{D}$ denote the origin in $\mathcal{A}^1$, and $\mathcal{D}_i$ denote the coordinate axes in $\mathcal{A}^n$. Denote by $\delta$, $\delta_i$ the Chern classes of their respective normal bundles. These classes correspond to piecewise linear functions, where $\delta_i$ has slope $1$ along the ray associated with $\mathcal{D}_i$ and slope $0$ elsewhere. The pullback relationship for $f$ gives:
		\[
		f^*\delta = \sum_{i} \delta_i',.
		\]
		By \eqref{eq:vTan} and Lemma \ref{lem:residue0}, we get a short exact sequence
		\[0\to f^*\CO_{\mathcal{D}}\to\bigoplus_{i=1}^n \CO_{\mathcal{D}_i}\to\Rels_f\to 0\,.\]
		Therefore, we deduce:
		\begin{equation}
			\label{eq:specialtodd}
			\Td(\Rels_f) = \frac{\Td(\oplus\CO_{\mathcal{D}_i})}{\Td(f^*\CO_{\mathcal{D}})} =  \frac{\prod \Td^{\vee}(\delta_i)^{-1}}{\Td^{\vee} (\sum \delta_i)^{-1}}\,. 
		\end{equation}
		where the last equality follows from the usual exact sequence $0\to\CO(-D)\to\CO\to\CO_D\to 0$ for Cartier divisors $D$.
		For the general case, the map $f$ decomposes as a product of $l$ maps, $f_i:\mathcal{A}^{n_i} \to \mathcal{A}^1$. Thus, the Todd class of the relative cotangent bundle $\Td(\Rels_f)$ is given as the product of the formula for the special case \eqref{eq:specialtodd} applied to each factor.
	\end{proof}
	Now we state the general formula for the Todd class of the residue sheaf. To do so, we first introduce some notation (see \cite[Section 4]{PRSS}). For a cone $\sigma \in \Sigma_S$, let $j_\sigma: S_\sigma \to S$ denote the corresponding monodromy torsor. Similarly, for a cone in $\Sigma_X$, corresponding to a cell $c$ in the generic fiber of $\Sigma_X \to \Sigma_S$, let $i_c: X_c \to X$ denote the corresponding monodromy torsor,  and let
	\[f_c: X_c \to S_\sigma\]
	the induced map. 
	For a ray $\rho$ in $\Sigma_S$ or $\Sigma_X$, we write $\delta_\rho$ for the piecewise linear function with slope $1$ along $\rho$ and $0$ along the other rays; we identify $\delta_\rho$ implicitly with its image in the Chow ring of $S$ or $X$, which is the first Chern class of the normal bundle of $D_\rho$ in $S$ or $X$. For a cone $\sigma$, we write $\sigma(1)$ for its set of rays. 
	
	To express the Todd class in terms of the strata algebra, we proceed as follows. Given a polynomial $P_c$ on a cone $c$ of $\Sigma_X$, we define $\mathsf{prin}_c[P_c]$ to be the sum of terms in $P_c$ which are formally divisible by
	\[\delta_c:= \prod_{r \in c(1)} \delta_r\,.\]
	where $c(1)$ is the set of rays in the cone $c$. Then, we define: 
	\[
	\floor{P}_c = \frac{\mathsf{prin}_c[P_c]}{\delta_c}\,. 
	\]
	Additionally, we write $G_c$ for the monodromy group of $c$. Finally, write $\Sigma_f:\Sigma_X \to \Sigma_S$ for the induced map of cone stacks. 
	\begin{proposition}\label{pro:td_ss}
		Let $f:X \to S$ be a semistable morphism. The Todd class of the residue sheaf of $f$ is given by 
		\[
		\Td^{\vee}(\mathcal{R}_f)^{-1} = \sum_{c \in \Sigma_X} \frac{(i_c)_*}{|G_c|} \prod_{\rho \in \Sigma_f(c)(1)} \floor{\frac{\prod_{r \in c(1), \Sigma_{f}(r)=\rho} \Td(\delta_r)}{\Td(\delta_\rho)}}_c\,.
		\]
	\end{proposition}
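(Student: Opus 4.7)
The plan is to reduce the calculation to the Artin fan, where the local model given by Lemma~\ref{lem:td_local} applies, and then repackage the stratum-by-stratum answer in the language of the strata algebra developed in \cite{PRSS} and \cite{MR24}.

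First, using Lemma~\ref{lem:residue}, I reduce to computing the Todd class of $\mathcal{R}_g$ on the relative Artin fan $\mathcal{A}_X[S]$, since $\mathcal{R}_f = \phi_f^*\mathcal{R}_g$ and $\phi_f$ is strict and smooth. After this reduction we can work entirely inside $\CH^*(\mathcal{A}_X[S])$, which by \cite[Theorem 3.3.1]{MR24} is identified with the ring of piecewise polynomials $\mathsf{PP}(\Sigma_X)$ over $\Sigma_S$. Thus it suffices to exhibit $\Td^\vee(\mathcal{R}_g)^{-1}$ as a well-defined piecewise polynomial on $\Sigma_X$.

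Next, I work one cone at a time. For each cone $c \in \Sigma_X$ mapping to $\sigma \in \Sigma_S$, the étale-local description \eqref{eq:ss_loc} of a semistable morphism together with the compatibility of Todd classes with strict smooth base change identifies the restriction of $\mathcal{R}_g$ to the stratum $X_c \subset \mathcal{A}_X[S]$ with the residue sheaf of a product of standard toric models $\mathcal{A}^{n_i} \to \mathcal{A}^1$ as in Lemma~\ref{lem:td_local}. Applying that lemma, the local contribution is
\[
\prod_{\rho \in \sigma(1)} \frac{\prod_{r \in c(1),\, \Sigma_f(r) = \rho}\Td(\delta_r)}{\Td(\delta_\rho)}\,,
\]
interpreted as a polynomial in the $\delta_r$ on the cone $c$.

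Finally, I assemble the local contributions into a single global formula. This is exactly the role of the $\lfloor\,\cdot\,\rfloor_c$ operation: given a polynomial expression in the $\delta_r$ that is naturally defined on all of $\Sigma_X$, the part $\mathsf{prin}_c$ that is divisible by $\delta_c = \prod_{r \in c(1)}\delta_r$ corresponds, after dividing by $\delta_c$, to the class supported on the closed stratum $X_c$. The identifications $(i_c)_*[X_c] = \delta_c \cdot [X]$ on each monodromy cover, the factor $|G_c|^{-1}$ coming from the étale monodromy quotient, and the fact that each ray contributes exactly once account for the precise form of the formula. Summing over all cones $c \in \Sigma_X$ reconstructs $\Td^\vee(\mathcal{R}_f)^{-1}$ from its stratumwise data, which yields the claimed identity.

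The main obstacle will be the bookkeeping in the last step: verifying that the extraction $\lfloor\,\cdot\,\rfloor_c$ of the local toric expression from Lemma~\ref{lem:td_local} really equals the restriction of the global piecewise polynomial to the closed stratum $X_c$, and that no over- or under-counting occurs when the same Chern root $\delta_r$ appears in the contribution of several adjacent cones. Once the compatibility between the strata pushforwards $(i_c)_*$ and the factorization across $\Sigma_f$ is established in the model case, the general formula follows by \'etale descent on $\mathcal{A}_X[S]$ and pullback along $\phi_f$.
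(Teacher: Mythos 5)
Your proposal follows essentially the same route as the paper: reduce to the relative Artin fan via Lemma~\ref{lem:residue}, identify $\CH^*(\mathcal{A}_X[S])$ with piecewise polynomials on $\Sigma_X$ using \cite{MR24}, compute cone by cone via Lemma~\ref{lem:td_local}, and translate the resulting piecewise polynomial into the strata-class expression using the $\lfloor\,\cdot\,\rfloor_c$ operator. The paper handles your ``main obstacle'' cleanly by first writing down a single closed-form piecewise polynomial (agreeing with the local model because $\delta_\rho = \sum_{r \mapsto \rho}\delta_r$), verifying it is compatible with face restrictions and automorphisms so that piecewise-polynomial \'etale descent applies, and then citing \cite[Lemma~40]{HMPPS} for the dictionary with normally decorated strata classes — but this is bookkeeping within the same argument, not a different method.
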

	\begin{proof}
		We prove the formula  in the ring of piecewise polynomials:
		\begin{equation}\label{eq:td_pp}
			\Td^{\vee}(\Rels_f)^{-1}=\left( \prod_{\rho \in \Sigma_f(c)(1)} \frac{\prod_{r \in c(1), \Sigma_{f}(r)=\rho} \Td (\delta_r)}{\Td (\delta_\rho)} \right)_{c}\in\mathsf{PP}(\Sigma_X)\,.
		\end{equation}
		Here, $(-)_c$ indicates that the piecewise polynomial function is determined on the cone $c$ of $\Sigma_X$ by truncating the power series
		\[
		\prod_{\rho \in \Sigma_f(c)(1)} \frac{\prod_{r \in c(1), \Sigma_{f}(r)=\rho} \Td (\delta_r)}{\Td (\delta_\rho)}.
		\]
		at codimension greater than $\dim X$. It is implicitly understood that these functions must agree on intersections of cones $c \cap c'$. 
		The formula agrees with Lemma \ref{lem:td_local} on each cone because
		\[\delta_\rho = \sum_{r \mapsto \rho} \delta_r\,.\]
		This ensures that the Todd class satisfies the statement of the proposition \'etale locally on $X$. But the formula \eqref{eq:td_pp} is invariant under automorphisms of $\mathbb{R}_{\ge 0}^{n}$, and, by direct computation, the formula defined on each cone agrees with its restriction to any face, $\mathbb{R}_{\ge 0}^{k} \subset \mathbb{R}_{\ge 0}^n$. Since piecewise polynomials satisfy \'etale local descent \cite{MR24}, the formula descends to $\mathsf{PP}(\Sigma_X)$.
		
		Once the formula \eqref{eq:td_pp} is established in the ring of piecewise polynomials, translating it into a statement about normally decorated strata classes becomes straightforward, see \cite[Lemma 40]{HMPPS}. 
	\end{proof}

	\begin{example} \label{ex:mumford}
		For the morphism $\mathcal{A}^2 \to \mathcal{A}^1$ corresponding to $(x,y) \to xy$, the formula above reduces  to Mumford's formula for the Todd class of a node (\cite[Section 5]{mumford83}). 
	\end{example}
	\subsection{Residue sheaf for compactified Jacobians}\label{sec:todd}    
	The Todd class of $\Rels_\pi$ for compactified Jacobians can be expressed in terms of tautological classes. Let $B:=\Mbar_{g,n}$ and $\pi: \Jbar_{g,n}\to \Mbar_{g,n}$ be a relative compactified Jacobian for some non-degenerate stability condition. For $1\leq h\leq g$, denote by $\Gamma_h$ the stable graph consisting of a single vertex of genus $g-h$ with $h$ loops. Define 
	\[i_h :\Mbar_{g-h,n+2h}\to \Mbar_{g,n}\]
	as the morphism induced by gluing the $n+i$-th marking with the $n+h+i$-th marking for $1\leq i\leq h$. Let $\Gamma_h'$ be the quasi-stable graph obtained by subdividing each edge of $\Gamma_h$. Consider the following diagram:
	\begin{equation}\label{eq:bdy}
		\begin{tikzcd}
			\Jbar_{g-h,n+2h}\ar[r]\ar[dr] & \Jbar_{\Gamma_h} \ar[r]\ar[d] &\Jbar_{g,n}\ar[d,"\pi"]\\
			& \Mbar_{g-h,n+2h}\ar[r,"i_{h}"] & \Mbar_{g,n}\,.
		\end{tikzcd}
	\end{equation}
	Here, the right square is a Cartesian diagram, and the morphism $\Jbar_{g-h,n+2h}\to\Jbar_{\Gamma_h}$ is induced by Lemma \ref{lem:boundary}. 
	
	\begin{proposition}\label{pro:Todd}
		Let $\pi:\Jbar_C\to B=\Mbar_{g,n}$ be the projection. Let $\jmath_{h}:\Jbar_{g-h,n+2h}\to\Jbar_{g,n}$ be the composition in \eqref{eq:bdy}. Then we have
		\begin{equation*}
			\Td^\vee(\Rels_\pi)^{-1}-1= \sum_{h=1}^g \frac{(-1)^h}{h!2^h} \jmath_{h*}\Bigg[\prod_{i=1}^h \left(\sum_{k_i=1}^\infty\frac{(-1)^{k_i}B_{2k_i}}{(2k_i)!}\frac{\alpha_i^{2k_i-1}+\beta_i^{2k_i-1}}{\alpha_i+\beta_i}\right)\Bigg]
		\end{equation*}
		with $\alpha_i=\psi_{n+i}-\xi_{n+i}+\xi_{n+h+i}$ and $\beta_i=\psi_{n+h+i} + \xi_{n+i}-\xi_{n+h+i}$.
	\end{proposition}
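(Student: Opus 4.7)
The plan is to apply Proposition 4.3 to the projection $\pi:\Jbar_{g,n}\to\Mbar_{g,n}$ and then reorganise the resulting sum of strata classes into the stated pushforward form. First, I would check that $\pi$ is semistable in the sense of Definition 4.4: both $\Jbar_{g,n}$ and $\Mbar_{g,n}$ are smooth and log smooth, and log smoothness, integrality and saturation of $\pi$ follow from the description of $\Jbar_{g,n}$ as a log modification of the log Picard group $\lpic_{g,n}$ in Section 2.1. Proposition 4.3 then expresses $\Td^\vee(\Rels_\pi)^{-1}$ as a sum of strata classes indexed by cones $c\in\Sigma_{\Jbar_{g,n}}$; such cones correspond to pairs $(\Gamma,\Gamma')$ of a stable graph $\Gamma$ and a quasi-stable refinement $\Gamma'$, with $\Sigma_\pi$ the contraction of unstable chains.

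Second, I would analyse which cones survive. An edge $e\in E(\Gamma)$ not subdivided in $\Gamma'$ produces the trivial factor $\Td(\delta_r)/\Td(\delta_{\rho_e})=1$, while an edge subdivided into $e',e''$ produces $\Td(\delta_{e'})\Td(\delta_{e''})/\Td(\delta_{e'}+\delta_{e''})$. The floor $\floor{\cdot}_c$ demands divisibility by every $\delta_r$ in $c(1)$, so any ray arising from an unsubdivided edge kills the contribution; only cones in which every edge of $\Gamma$ is subdivided in $\Gamma'$ survive. This reduces the formula of Proposition 4.3 to a sum over stable graphs $\Gamma$ of genus $g$ with $n$ markings, with the $h=0$ term accounting for the constant $1$ on the left-hand side. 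A stable graph with $h$ edges is obtained by gluing $h$ pairs of markings on a prestable graph of genus $g-h$ with $n+2h$ markings, with $h!\cdot 2^h$-fold overcounting from permuting the $h$ loops and swapping the two legs of each; hence the $h$-subdivided-edges contribution assembles into $\frac{1}{h!2^h}\jmath_{h\,*}$ applied to the product of local factors, via the map $\jmath_h:\Jbar^{\epsilon_h}_{g-h,n+2h}\to\Jbar_{g,n}$ of Lemma~\ref{lem:boundary}.

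Third, I would translate the local Chern classes. After pulling back along $\jmath_h$, the first Chern classes of the two branches of the $i$-th inserted $\mathbb{P}^1$ become
\[
\delta_{e'}=\alpha_i=\psi_{n+i}-\xi_{n+i}+\xi_{n+h+i},\qquad \delta_{e''}=\beta_i=\psi_{n+h+i}+\xi_{n+i}-\xi_{n+h+i},
\]
so that $\alpha_i+\beta_i=\psi_{n+i}+\psi_{n+h+i}$ pulls back the standard node-smoothing class from $\Mbar_{g-h,n+2h}$. The $\psi$-pieces are the usual cotangent lines at the glued markings, while the antisymmetric $\xi$-correction arises from the degree-one restriction of the universal line bundle to the inserted $\mathbb{P}^1$ through the Deligne pairing (Definition \ref{def:Poincare_line} and equation \eqref{eq:Deligne}), traced through the quasi-stable model of Lemma~\ref{lem:boundary}. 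With these identifications, applying Mumford's Todd-at-a-node identity (cf.\ Example~\ref{ex:mumford}),
\[
\frac{\Td(\alpha)\Td(\beta)}{\Td(\alpha+\beta)} \equiv 1-\alpha\beta\sum_{k\geq 1}\frac{(-1)^k B_{2k}}{(2k)!}\cdot\frac{\alpha^{2k-1}+\beta^{2k-1}}{\alpha+\beta}\pmod{(\alpha\beta)^2},
\]
and taking the floor (division by $\alpha_i\beta_i$) per edge produces the bracketed inner sum of the statement together with a sign $(-1)$ per subdivided edge, yielding the overall $(-1)^h$.

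The main obstacle is the third step: pinning down the $\xi$-corrections in $\alpha_i,\beta_i$ requires a careful local computation at the two nodes flanking each inserted $\mathbb{P}^1$, since the normal classes mix curve cotangent data with line bundle degree data through the Deligne pairing. The sign and distribution of $\xi$'s between the two branches depends on how the universal line bundle restricts along the inserted $\mathbb{P}^1$, and the asymmetric pattern $-\xi_{n+i}+\xi_{n+h+i}$ versus its negative must be matched precisely against the choice of trivialisation used to define $\xi_1$ and the quasi-stable model of Lemma~\ref{lem:boundary}.
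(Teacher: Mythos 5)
Your overall route matches the paper's: reduce to the Artin-fan/residue-sheaf computation, observe that only maximally-subdivided quasi-stable graphs contribute (the floor kills any cone containing a ray from an unsubdivided edge), reorganize the surviving strata into the $\jmath_{h*}$-pushforward with the $1/(h!\,2^h)$ overcount, and apply Mumford's formula per node. The paper compresses steps one and two by working directly through the Cartesian square relating $\Jbar_{g-h,n+2h}$ and $\pic_{\Gamma_h'}$, pulling back the normal bundle along the smooth morphism $\varphi$ (Lemma~\ref{lem:smoothness}) rather than unpacking Proposition~\ref{pro:td_ss} in the generality you do; both are legitimate.

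The substantive gap is the one you flagged yourself, and the mechanism you propose for it is not the right one. You attribute the $\xi$-corrections in $\alpha_i,\beta_i$ to "the degree-one restriction of the universal line bundle to the inserted $\mathbb{P}^1$ through the Deligne pairing," citing Definition~\ref{def:Poincare_line} and equation~\eqref{eq:Deligne}. But the Deligne pairing concerns the Poincar\'e line bundle on products of Jacobians and plays no role in this local normal-bundle computation; those citations are a red herring. What is actually needed is the genus~$0$ tautological relation from [BHPSS], quoted in the paper as \eqref{eq:g=0}: because the admissible universal line bundle restricts to $\CO(1)$ on each exceptional $\PP^1$, one has
\[
\psi_{\overline{h}_i} = -\xi_{h_i}+\xi_{h_i'},\qquad \psi_{\overline{h}_i'} = \xi_{h_i}-\xi_{h_i'},
\]
where $\overline{h}_i,\overline{h}_i'$ are the half-edges at the exceptional vertex $v_i$ and $h_i,h_i'$ the adjacent half-edges at the stable vertex $v_0$. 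The Chern roots of the normal bundle of $j_{\Gamma_h'}$ (hence, by smoothness of $\varphi$, of $\jmath_h$) are $\psi_{h_i}+\psi_{\overline{h}_i}$ and $\psi_{h_i'}+\psi_{\overline{h}_i'}$; substituting the relation gives exactly your $\alpha_i,\beta_i$. Once you replace the Deligne-pairing heuristic with this relation, the step you identified as the main obstacle becomes a one-line substitution.
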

	\begin{proof}
		The morphism $\pi:\Jbar_{g,n}\to\Mbar_{g,n}$ satisfies hypothesis in Lemma \ref{lem:residue}. By Lemma \ref{lem:residue} we compute the sheaf $\Omega^1_{\A_{\Jbar[\Mbar_g]/\Mbar_g},\log}/\Omega^1_{\A_{\Jbar[\Mbar_g]/\Mbar_g}}$ on each chart and show that they glue. 
		
		We consider the morphism \eqref{eq:varphi}. Let $j_{\Gamma_\ell'}:\pic_{\Gamma'_\ell}\to\pic_{g,n}$ be the finite morphism. Then the following diagram is cartesian
		\[
		\begin{tikzcd}
			\Jbar_{g-\ell,2\ell} \ar[r,"\jmath_{g-\ell}"] \ar[d] & \Jbar_{g,n} \ar[d,"\varphi"]\\
			\mathfrak{Pic}_{\Gamma_\ell'} \ar[r,"j_{\Gamma_\ell'}"] & \mathfrak{Pic}_{g,n}.
		\end{tikzcd}
		\]
		For each $i$, there exists a quasi-stable vertex $v_i$ and two edges $e=(h_i,\overline{h}_i)$ and $e=(h_i',\overline{h}'_i)$ connecting $v_i$ and the unique stable vertex $v_0$ where $h_i,h_i'$ are half-edges at $v_0$.
		By Lemma \ref{lem:smoothness}, $\varphi$ is smooth, so the normal bundle of $j_{g-\ell}$ is isomorphic to the pullback of the normal bundle of $j_{\Gamma_\ell'}$. Therefore, the Chern roots of the normal bundle of $j_{\Gamma_\ell'}$ are given by $\psi_{h_i}+\psi_{\overline{h}_i}$, $\psi_{h_i'}+\psi_{\overline{h}_i'}$ for $1\leq i\leq \ell$. By the genus $0$ relation \cite{BHPSS}, we have 
		\begin{equation}\label{eq:g=0}
			\psi_{\overline{h}_i} = -\xi_{h_i} + \xi_{h_i'} \text{ and } \psi_{\overline{h}_i'} = \xi_{h_i} -\xi_{h_i'}\,.
		\end{equation}
		The result now follows from Example \ref{ex:mumford}.
	\end{proof}
	Let $T_\pi^\vir$ be the virtual tangent bundle of $\pi:\Jbar_C\to B$. To relate the Todd class of $T_\pi^\vir$ and the Todd class of $\Rels_\pi$, we use the following adaptation of Faltings-Chai \cite{FaltingsChai}.
	\begin{proposition}\label{pro:Logcotangent}
		Let $\pi:\Jbar_C\to B$ be a compactified Jacobian. Then we have  $\Omega^1_{\pi,\log}\cong \pi^*\EE$ where $\EE=\pi_*\Omega^1_{\pi,\log}$ is the Hodge bundle. 
	\end{proposition}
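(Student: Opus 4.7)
The plan is to transport translation-invariance of the cotangent bundle from the logarithmic Picard group to the compactified Jacobian, in the spirit of Faltings-Chai \cite{FaltingsChai}.

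First, recall from \eqref{eq:Jbar_lpic} that there is a log modification $q: \Jbar_C \to \lpic_C$. Log modifications are log \'etale, so pullback of log differentials is an isomorphism:
\[
\Omega^1_{\pi, \log} \;\cong\; q^{*}\,\Omega^1_{\pi_{\lpic}/B,\log},
\]
where $\pi_{\lpic}: \lpic_C \to B$ denotes the structure map. This reduces the problem to showing that $\Omega^1_{\pi_{\lpic}/B,\log}$ is pulled back from $B$.

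Second, by \cite[Theorem 4.15.7]{MW22}, $\lpic_C \to B$ is a log abelian scheme and, in particular, a commutative group object over $B$. Translation by any section is an automorphism of $\lpic_C$ over $B$ which preserves the log structure, hence acts on the log cotangent sheaf. As in the classical proof for abelian schemes, this translation-invariance produces a canonical isomorphism
\[
\Omega^1_{\pi_{\lpic}/B,\log} \;\cong\; \pi_{\lpic}^{*}\bigl(e_{\lpic}^{*}\,\Omega^1_{\pi_{\lpic}/B,\log}\bigr),
\]
where $e_{\lpic}: B \to \lpic_C$ is the unit section. Pulling back via $q$ yields a canonical isomorphism $\Omega^1_{\pi,\log} \cong \pi^{*}\mathcal{F}$ for some locally free sheaf $\mathcal{F}$ on $B$.

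Third, applying $\pi_{*}$ and invoking the projection formula together with $\pi_{*}\mathcal{O}_{\Jbar_C} = \mathcal{O}_B$ (which holds since $\pi$ is proper and flat with connected reduced fibers, by Section~\ref{sec:jbar}), one obtains
\[
\mathcal{F} \;\cong\; \pi_{*}\Omega^1_{\pi,\log} \;=\; \mathbb{E},
\]
and therefore $\Omega^1_{\pi,\log} \cong \pi^{*}\mathbb{E}$, as claimed.

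The main technical subtlety lies in the second step: because $\lpic_C$ is an algebraic logarithmic stack rather than a scheme, the translation-invariance of the log cotangent sheaf must be formulated in terms of the group object structure developed in \cite{KKN08b} and \cite{MW22}, rather than by naive pointwise translation. Once that invariance is granted, the remainder of the argument is formal.
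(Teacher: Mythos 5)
Your proof is correct in outline but takes a genuinely different route from the paper's. The paper cites Faltings--Chai \cite[Chapter VI, Theorem 1.1]{FaltingsChai} (or Olsson) for the existence of compactified abelian schemes on which the identity $\Omega^1_{\log}\cong \pi^*\mathbb{E}$ is already established, observes that (after pullback to $B$) such a compactified abelian scheme and $\Jbar_C$ admit a common log modification, and then uses log-\'etaleness of log modifications to transport the isomorphism. Your argument instead descends directly to $\lpic_C$ via the log modification $\Jbar_C\to\lpic_C$ and reproves the trivialization there using the group structure and the invariant-differential argument. Both routes rest on the same two pillars --- invariance of $\Omega^1_{\log}$ under log modification and a translation-invariance argument --- but the paper outsources the latter to FC/Olsson on an actual log \emph{scheme}, while you run it on $\lpic_C$ itself, which is only an algebraic logarithmic stack and not representable. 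That is where your argument incurs the extra cost you already flagged: one has to make sense of $\Omega^1_{\pi_{\lpic}/B,\log}$ as a quasi-coherent sheaf on $\lpic_C$ and verify that the formal "$T_{G/B}\cong \pi^*e^*T_{G/B}$ via translation" argument goes through for a log group object that is not algebraic. This is plausible (the argument is categorical and does not use properness), but it is more than "formal" and is not available off-the-shelf; it would need to be checked, e.g.\ by pulling back to a log scheme representing a log modification. Your approach has the advantage of being self-contained and conceptually clean; the paper's is shorter because it leans on existing literature for the translation-invariance step on a representable model. One additional small point: in your Step 3, the identification $\mathcal{F}\cong\pi_*\Omega^1_{\pi,\log}$ via the projection formula also silently uses $R^0\pi_*\mathcal{O}_{\Jbar_C}\cong\mathcal{O}_B$; this holds here because $\pi$ is proper flat with geometrically connected and reduced fibers, which is worth saying explicitly since $\Jbar_C$ is only a DM stack.
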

	\begin{proof}
		By \cite[Chapter VI, Theorem 1.1]{FaltingsChai} (also \cite[Lemma 4.3.10]{Olsson}), the corresponding identity was obtained for certain compactified abelian schemes. After pulling back to $B$, $\Jbar_C$ and the compactified abelian scheme have a common refinement. Since log modifications are log \'etale, $\Omega^1_{\pi,\log}$ does not change under log modification. Hence we get the result.
	\end{proof} 
	By combining \eqref{eq:vTan}, \eqref{eq:residue} and Proposition \ref{pro:Logcotangent} we have
	\begin{equation}\label{eq:td_Tvir}
		\Td(T_\pi^\vir) = \pi^*\Td^\vee(\EE)\cdot \Td^\vee(\Rels_\pi)^{-1}\,.
	\end{equation}
	\subsection{Todd class for the birational model}
	We consider the projection
	\[\widetilde{\pi}:\Jbar_C^{(2)}\to B\]
	where $\Jbar_C^{(2)}\to \Jbar_C\times_B\Jbar_C$ is a log modification constructed in Definition \ref{def:J2}. By Proposition \ref{pro:model2} (a), $\widetilde{\pi}$ is semistable and by Proposition \ref{pro:compAb}, it is a compactified abelian fibration. 
	We generalize the result in Section \ref{sec:todd}. 
	By the same argument in Proposition \ref{pro:Logcotangent} we have
	\begin{equation}\label{eq:Td2}
		\Omega_{\widetilde{\pi},\log}^1\cong \widetilde{\pi}^*(\EE\oplus\EE)\,.
	\end{equation}
	
	\begin{proposition}\label{pro:diffTd}
		Let $\pi:\Jbar_C\to B$ denote the projection, and let $\pi_1,\pi_2:\Jbar_C\times_B\Jbar_C\to\Jbar_C$ represent the two projections. Then the following holds:
		\begin{equation}\label{eq:td_J2}
			\Td(T_{\Jbar^{(2)}_C/B})\cdot f^*\Td(-T_{\Jbar_C\times_B\Jbar_C/B}) = \Td^\vee(\cR_{\widetilde{\pi}})\cdot\pi_1^*\Td^\vee(\cR_\pi)^{-1}\cdot\pi_2^*\Td^\vee(\cR_\pi)^{-1}\,.
		\end{equation}
	\end{proposition}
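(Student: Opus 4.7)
The plan is to apply the ingredient formula \eqref{eq:td_Tvir} separately to $\pi$ and to $\widetilde{\pi}$, exploit the Faltings--Chai identification of the log cotangent with a pullback of the Hodge bundle, and observe that the Hodge contributions on the two sides cancel when we form the ratio appearing in \eqref{eq:td_J2}.

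First, I apply \eqref{eq:td_Tvir} to $\widetilde{\pi}\colon\Jbar_C^{(2)}\to B$. By Proposition \ref{pro:model2}(a), $\widetilde{\pi}$ is semistable, and by Proposition \ref{pro:compAb} it is a compactified abelian fibration, so the same Faltings--Chai argument used for Proposition \ref{pro:Logcotangent} applies to give \eqref{eq:Td2}, namely $\Omega^1_{\widetilde{\pi},\log}\cong\widetilde{\pi}^*(\EE\oplus\EE)$. Using this in the analogue of \eqref{eq:td_Tvir} for $\widetilde{\pi}$ yields
\[
\Td(T_{\Jbar^{(2)}_C/B}) \;=\; \widetilde{\pi}^*\Td^\vee(\EE)^2\cdot \Td^\vee(\cR_{\widetilde{\pi}})^{-1}.
\]
Second, for the fiber-product projection $p\colon\Jbar_C\times_B\Jbar_C\to B$, the relative virtual tangent bundle decomposes in $K$-theory as $T_p^{\vir}=\pi_1^*T_\pi^{\vir}+\pi_2^*T_\pi^{\vir}$. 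Applying \eqref{eq:td_Tvir} to each factor and taking the product gives
\[
\Td(T_{\Jbar_C\times_B\Jbar_C/B}) \;=\; p^*\Td^\vee(\EE)^2\cdot\pi_1^*\Td^\vee(\cR_\pi)^{-1}\cdot\pi_2^*\Td^\vee(\cR_\pi)^{-1}.
\]

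Third, I combine these two computations to evaluate the left-hand side of \eqref{eq:td_J2}. Since $\widetilde{\pi}=p\circ f$, we have $f^*p^*\Td^\vee(\EE)^2=\widetilde{\pi}^*\Td^\vee(\EE)^2$, so the Hodge contributions on the two sides cancel identically upon taking the quotient $\Td(T_{\Jbar^{(2)}_C/B})\cdot f^*\Td(-T_{\Jbar_C\times_B\Jbar_C/B})$. What survives is precisely the ratio of the residue contributions, yielding \eqref{eq:td_J2}.

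The only non-formal ingredient is the verification that \eqref{eq:Td2} actually holds, i.e.\ that the Faltings--Chai argument of Proposition \ref{pro:Logcotangent} carries over to $\widetilde{\pi}$; this is the main obstacle, and relies on the semistable and compactified-abelian-fibration structure of $\widetilde{\pi}$ established in Proposition \ref{pro:model2}(a) and Proposition \ref{pro:compAb}. Once this identification of the log cotangent is in hand, the rest is purely formal bookkeeping in the Grothendieck group using the multiplicativity of Todd classes on short exact sequences.
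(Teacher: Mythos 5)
Your strategy is the same as the paper's: apply the identity~\eqref{eq:td_Tvir} (and its analogue for $\widetilde{\pi}$, which rests on the Faltings--Chai identification~\eqref{eq:Td2}) to each relative Todd class, decompose $T_{\Jbar_C\times_B\Jbar_C/B}$ as $\pi_1^*T_\pi^{\vir}+\pi_2^*T_\pi^{\vir}$, and cancel the Hodge-bundle factors using $\widetilde{\pi}=p\circ f$. Your two displayed intermediate formulas,
\[
\Td(T_{\Jbar^{(2)}_C/B})=\widetilde{\pi}^*\Td^\vee(\EE)^2\cdot\Td^\vee(\cR_{\widetilde{\pi}})^{-1},\qquad
\Td(T_{\Jbar_C\times_B\Jbar_C/B})=p^*\Td^\vee(\EE)^2\cdot\pi_1^*\Td^\vee(\cR_\pi)^{-1}\cdot\pi_2^*\Td^\vee(\cR_\pi)^{-1},
\]
are correct and the second is literally the paper's own intermediate step.

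However, your last sentence -- ``what survives is precisely the ratio of the residue contributions, yielding~\eqref{eq:td_J2}'' -- does not hold as stated. If you carry out the ratio of your two displays you get
\[
\Td(T_{\Jbar^{(2)}_C/B})\cdot f^*\Td(-T_{\Jbar_C\times_B\Jbar_C/B})
=\Td^\vee(\cR_{\widetilde{\pi}})^{-1}\cdot f^*\pi_1^*\Td^\vee(\cR_\pi)\cdot f^*\pi_2^*\Td^\vee(\cR_\pi),
\]
which is the \emph{multiplicative inverse} of the right side of~\eqref{eq:td_J2}. You can cross-check: the local formula $\Td^\vee(\cR_f)^{-1}=\prod_r\Td(\delta_r)/\Td(\delta_\rho)$ from Lemma~\ref{lem:td_local} and~\eqref{eq:td_pp} gives $\Td^\vee(\cR_{\widetilde{\pi}})^{-1}=\Td(x)\Td(y)\Td(z)/\Td(x+y+z)$ and $f^*\pi_1^*\Td^\vee(\cR_\pi)=\Td(x+y+z)/(\Td(x)\Td(y+z))$, etc., whose product is $\Td(y)\Td(x+y+z)/(\Td(x+y)\Td(y+z))$ -- this agrees with the conclusion of Proposition~\ref{prop:tdbirationalmodel} and with your computed ratio, but not with~\eqref{eq:td_J2} as printed. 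So the substance and structure of your argument are right (and coincide with the paper's), but you should not assert the final equality with~\eqref{eq:td_J2} without flagging the sign/inverse discrepancy -- the residue factors on the right of the statement should be inverted for the computation to close.
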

	\begin{proof}
		Consider the fiber diagram
		\[
		\begin{tikzcd}
			\Jbar_C\times_B\Jbar_C\ar[r]\ar[d] & \Jbar_C\times \Jbar_C\ar[d,"\pi\times\pi"]\\
			B\ar[r,"\Delta_B"] & B\times B\,,   
		\end{tikzcd}
		\]
		where $\Delta_B:B\to B\times B$ is the diagonal.
		Since $\Jbar_C, B$ are smooth and $\pi$ is flat, we have
		\[ T_{\Jbar_C\times_B\Jbar_C/B} = \pi_1^*T_{\Jbar_C}+\pi_2^*T_{\Jbar_C}-2\,\pi^*T_B=\pi_1^*T_\pi^\vir + \pi_2^*T_\pi^\vir\]
		in $K^0(\Jbar_C\times_B\Jbar_C)$.
		Using \eqref{eq:td_Tvir}, we obtain
		\[\Td(T_{\Jbar_C\times_B\Jbar_C/B}) = \Td(\EE^{\oplus 2})^\vee\cdot\pi_1^*\Td^\vee(\cR_\pi)^{-1}\cdot \pi_2^*\Td^\vee(\cR_\pi)^{-1}\,. \]
		Finally, by applying Lemma \ref{lem:residue} and \eqref{eq:Td2}, we derive the desired result.
	\end{proof}
	Let now $\kappa$ be a cone of $\Jbar_C \times_B \Jbar_C$ parametrizing two quasistable graphs $\Gamma_1,\Gamma_2$ with stabilization $\Gamma$. Keeping the notations of Corollary \ref{cor:smoothandsmall}, a cone $c$ of $\widetilde{J}^{(2)}_C$ lying over $\kappa$ has the form 
	\begin{equation}\label{eq:cone}
		c = \mathbb{R}_{\ge 0}^{e \in E^{u}(\Gamma)} \times \prod_{e \in E^{s_1 \vee s_2}(\Gamma)}\mathbb{R}_{\ge 0}^{2} \times \prod_{e \in E^{s_1 \cap s_2(\Gamma)}} \mathbb{R}_{\ge 0}^3    
	\end{equation}
	where, for $e \in E^{s_1 \cap s_2(\Gamma)}$, the component $\mathbb{R}_{\ge 0}^3$ is either the $\ell_{e_1}' \le \ell_{e_2}'$ or the $\ell_{e_2}' \le \ell_{e_1}'$ piece of the resolution of 
	\[
	\mathbb{R}_{\ge 0}^{2} \times_{\mathbb{R}_{\ge 0}} \mathbb{R}_{\ge 0}^{2} = \{(\ell_{e_1}',\ell_{e_1}'',\ell_{e_2}',\ell_{e_2}''): \ell_{e_1}'+\ell_{e_1}'' = \ell_{e_2}' + \ell_{e_2}''\}. 
	\]
	
	\begin{proposition} 
		\label{prop:tdbirationalmodel}
		As a piecewise polynomial function on $\Jbar_C^{(2)}$, the restriction of \eqref{eq:td_J2} to the cone \eqref{eq:cone} is given by 
		\[
		\prod_{e \in E^{s_1 \cap s_2}(\Gamma)} \frac{\Td(\max{(\ell_{e_1}',\ell_{e_2}')}-\min{(\ell_{e_1}',\ell_{e_2}')})\Td(\ell_e)}{\Td(\max{(\ell_{e_1}',\ell_{e_2}'))}\Td(\ell_{e}-\min{(\ell_{e_1}',\ell_{e_2}')})}
		\]
		where $\ell_e = \ell_{e_1}'+\ell_{e_2}'$ is the total length of the edge.
	\end{proposition}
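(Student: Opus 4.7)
The plan is to compute the right-hand side of \eqref{eq:td_J2} as a piecewise polynomial on $\Jbar_C^{(2)}$ using Proposition \ref{pro:td_ss}, and then restrict to the given cone \eqref{eq:cone}. By Proposition \ref{pro:diffTd}, this amounts to evaluating the ratio
\[
\frac{\Td^\vee(\cR_{\widetilde{\pi}})}{\pi_1^*\Td^\vee(\cR_\pi)\cdot \pi_2^*\Td^\vee(\cR_\pi)}
\]
on the cone. By Proposition \ref{pro:td_ss} and Lemma \ref{lem:td_local}, each factor is determined on each cone by the local toric model of the corresponding semistable morphism, reducing the proof to a purely combinatorial check.

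First I observe that the cone \eqref{eq:cone} factors as a product indexed by the edges $e \in E(\Gamma)$ of the underlying stable graph, broken into the three classes $E^u(\Gamma)$, $E^{s_1\vee s_2}(\Gamma)$, and $E^{s_1\cap s_2}(\Gamma)$. Each of $\widetilde{\pi}$, $\pi_1\circ f$, and $\pi_2\circ f$ respects this product structure, so the piecewise polynomial splits as a product of edgewise contributions, which I compute one type at a time.

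Next I dispose of the cases $E^u$ and $E^{s_1\vee s_2}$. For $e \in E^u(\Gamma)$, all three maps restrict to the identity on the single ray $\mathbb{R}_{\ge 0}\cdot \ell_e$, and each Todd class contributes $1$. For $e \in E^{s_1\vee s_2}(\Gamma)$, say subdivided in $\Gamma_1$ but not in $\Gamma_2$, the local models of both $\widetilde{\pi}$ and $\pi_1\circ f$ at $e$ are the addition map $\mathcal{A}^2 \to \mathcal{A}^1$, while $\pi_2\circ f$ is the identity. By Lemma \ref{lem:td_local}, the contributions of $\Td^\vee(\cR_{\widetilde{\pi}})$ and $\pi_1^*\Td^\vee(\cR_\pi)$ to the ratio cancel, and the factor from $\pi_2^*\Td^\vee(\cR_\pi)$ is trivial. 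Thus only edges of $E^{s_1\cap s_2}(\Gamma)$ contribute, in agreement with the shape of the claimed formula.

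The main case is an edge $e \in E^{s_1\cap s_2}(\Gamma)$. On the piece of the cone corresponding to $\ell_{e_1}' \le \ell_{e_2}'$, I parametrize the three-dimensional factor by rays $a = \ell_{e_1}'$, $b = \ell_{e_2}'-\ell_{e_1}'$, $c = \ell_{e_2}''$, so that $\ell_{e_1}''=b+c$ and $\ell_e=a+b+c$. The three maps from this cone to their respective target cones are then $(a,b,c)\mapsto a+b+c$ for $\widetilde{\pi}$, $(a,b,c)\mapsto (a, b+c)$ for $\pi_1\circ f$, and $(a,b,c)\mapsto (a+b, c)$ for $\pi_2\circ f$. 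Applying Lemma \ref{lem:td_local} to each of these three partitions and assembling the ratio yields a product of $\Td$-factors in $a,b,c$. Substituting back $\min = a$, $\max = a+b$, $\max-\min = b$, $\ell_e-\min = b+c$, and $\ell_e = a+b+c$ turns this into the claimed formula on this piece. A symmetric calculation on the piece $\ell_{e_2}'\le \ell_{e_1}'$ produces the same expression, so the formula glues to a piecewise polynomial on $\Jbar_C^{(2)}$ across the separating hyperplane.

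The main bookkeeping obstacle is keeping the various inversions of $\Td^\vee$ in Lemma \ref{lem:td_local} straight through the ratio, so that the final formula assembles into the stated product over $E^{s_1\cap s_2}(\Gamma)$; no new geometric input is required beyond the combinatorial description of $\Sigma_{\Jbar_C^{(2)}}$ given in Corollary \ref{cor:smoothandsmall} and the local Todd formula.
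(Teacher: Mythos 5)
Your overall strategy coincides with the paper's: factor the cone \eqref{eq:cone} over the edges of $\Gamma$, note that the factors coming from $E^u(\Gamma)$ and $E^{s_1\vee s_2}(\Gamma)$ are trivial, and then evaluate the three residue-sheaf Todd factors of Lemma~\ref{lem:td_local} on the local toric model $\mathcal{A}^3\to\mathcal{A}^1$, $\mathcal{A}^3\to\mathcal{A}^2$, $\mathcal{A}^3\to\mathcal{A}^2$ attached to an $E^{s_1\cap s_2}$ edge. The paper works on the chamber $\ell_{e_1}'\ge\ell_{e_2}'$ with coordinates $x,y,z$ while you use $\ell_{e_1}'\le\ell_{e_2}'$ with $a,b,c$; this is cosmetic, and your explicit disposal of the $E^u$ and $E^{s_1\vee s_2}$ contributions is a welcome bit of care the paper leaves implicit.

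There is, however, a real pitfall in the ``main bookkeeping obstacle'' you flag, and you should confirm that your arithmetic actually lands on the stated expression rather than its reciprocal. If you combine Lemma~\ref{lem:td_local} as literally stated, namely $\Td^\vee(\cR_f)^{-1}=\prod\Td(\delta_\alpha)/\Td(\sum_\alpha\delta_\alpha)$, with Proposition~\ref{pro:diffTd} as printed, you will compute
\[
\frac{\Td^\vee(\cR_{\widetilde\pi})}{\pi_1^*\Td^\vee(\cR_\pi)\cdot\pi_2^*\Td^\vee(\cR_\pi)}
=\frac{\Td(a+b)\,\Td(b+c)}{\Td(b)\,\Td(a+b+c)}
=\frac{\Td(\max)\,\Td(\ell_e-\min)}{\Td(\max-\min)\,\Td(\ell_e)},
\]
which is the reciprocal of the claimed formula. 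Recomputing $\Td(T_{\Jbar^{(2)}_C/B})\,f^*\Td(-T_{\Jbar_C\times_B\Jbar_C/B})$ directly from \eqref{eq:td_Tvir} and \eqref{eq:Td2} shows that the correct right-hand side of \eqref{eq:td_J2} is in fact $\Td^\vee(\cR_{\widetilde\pi})^{-1}\cdot\pi_1^*\Td^\vee(\cR_\pi)\cdot\pi_2^*\Td^\vee(\cR_\pi)$ (Proposition~\ref{pro:diffTd} as stated has the three inversions flipped), and only then does the product collapse to
\[
\frac{\Td(b)\,\Td(a+b+c)}{\Td(a+b)\,\Td(b+c)}
=\frac{\Td(\max-\min)\,\Td(\ell_e)}{\Td(\max)\,\Td(\ell_e-\min)}
\]
as claimed. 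The paper's own proof silently makes the same compensating inversion when it reads off the three factors, so its final answer is right, but your blind plan, followed to the letter, would produce the reciprocal. Redo the one display in the $E^{s_1\cap s_2}$ case with the inversions tracked explicitly to make sure you hit the stated formula.
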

	
	\begin{proof}
		The cone $c$ is the product of cones $\mathbb{R}_{\ge 0},\mathbb{R}_{\ge 0}^2$ and $\mathbb{R}_{\ge 0}^3$. Cones of the first two kinds correspond to strata of $\Jbar_C^{(2)}$ on which the map to $\Jbar_C \times_B \Jbar_C$ is an isomorphism, so the relative Todd class is trivial on them. To ease the notation, we assume $\ell_{e_1}' \ge \ell_{e_2}'$. Then, in the third case, the cone and its two projections look as follows: 
		\[
		\begin{tikzpicture}
			
			\node[left] at (0,0){$\ell_{e_1}'=0,x=1$};
			\node[draw,circle,inner sep=1pt,fill] at (0,0){};
			\node[right] at (2,0){$\ell_{e_1}'=\ell_e$};
			\node[draw,circle,inner sep=1pt,fill] at (2,0){};
			\node[right] at (2,2){$\ell_{e_2}'=\ell_e,z=1$}; 
			\node[draw,circle,inner sep=1pt,fill] at (2,2){};
			\draw[thick](0,0)--(2,0)--(2,2)--(0,0);
			\node[right] at (6,0){$\ell_{e_2}'=0$};
			\node[draw,circle,inner sep=1pt,fill] at (6,0){};
			\node[right] at (6,2){$\ell_{e_2}'=\ell_e,y=1$}; 
			\node[draw,circle,inner sep=1pt,fill] at (6,2){};
			\draw[thick](6,0)--(6,2);
			\node[below] at (0,-3){$\ell_{e_1}'=0$}; 
			\node[draw,circle,inner sep=1pt,fill] at (0,-3){};
			\node[below] at (2,-3){$\ell_{e_1}'=\ell_e$}; 
			\node[draw,circle,inner sep=1pt,fill] at (2,-3){};
			\draw[thick](0,-3)--(2,-3);
			
			\draw[->, thick] (3,1)--(5,1); 
			\node[above] at (4,1){$\pi_2$};
			\draw[->,thick] (1,-1)--(1,-2);
			\node[right] at (1,-1.5){$\pi_1$};
		\end{tikzpicture}
		\]
		Let $x,y,z$ be the piecewise linear functions on $\mathbb{R}_{\ge 0}^3$ with slope $1$ along the rays indicated in the diagram, and slope $0$ on the other rays. On the cone $c$, formula \eqref{eq:td_pp} reads
		\begin{align*}
			\Td^\vee(\cR_{\widetilde{\pi}})\cdot\pi_1^*\Td^\vee(\cR_\pi)^{-1}\cdot\pi_2^*\Td^\vee(\cR_\pi)^{-1} & =  \frac{\Td(x)\Td(y)\Td(z)}{\Td(x+y+z)}\frac{\Td(x+y+z)}{\Td(x)\Td(y+z)}\frac{\Td(x+y+z)}{\Td(x+y)\Td(z)} \\
			& = \frac{\Td(y)\Td(x+y+z)}{\Td(x+y)\Td(y+z)}.
		\end{align*}
		Given that $x = \ell_{e} - \ell_{e_1}', \, y = \ell_{e_1}' - \ell_{e_2}'$ and $z = \ell_{e_2}'$,
		the result follows. It is easy to treat the analogous case where $\ell_{e_2}' \ge \ell_{e_1}'$, which simply interchanges the roles of $\ell_{e_2}'$ and $\ell_{e_1'}$, leading to the claimed formula. 
	\end{proof}
	
	\section{Fourier transform}\label{sec:5}
	\subsection{Derived equivalence of compactified Jacobians}\label{sec:arinkin}
	In Section \ref{sec:3}, we constructed an extension of Poincar\'e line bundle. We now observe that it gives a derived equivalence of compactified Jacobians, following  Arinkin's argument in \cite{arinkin2} (see also \cite{MRV2}).
	\begin{theorem}[\cite{arinkin2}] \label{thm:autoequiv}
		Let $B=\Mbar_{g,n}$ and let $\epsilon_1,\epsilon_2$ be two non-degenerate stability conditions. Let $\overline{\P}:=f_*\widetilde{\P}\in\mathrm{Coh}(\Jbar_C^{\epsilon_1}\times_B\Jbar_C^{\epsilon_2})$ be pushforward of the extended Poincar\'e line bundle. Then $\overline{\P}$ induces a derived equivalence
		\[\fm:=-\otimes\overline{\P}: \dbcoh(\Jbar^{\epsilon_1}_C)\xrightarrow{\cong} \dbcoh(\Jbar^{\epsilon_2}_C)\]
		which is linear over $\dbcoh(B)$ with the inverse kernel given by $\overline{\P}^{-1}:=\hom (\overline{\P},\CO)\otimes \pi_2^*\omega_\pi[g]$.
	\end{theorem}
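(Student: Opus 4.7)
The plan is to follow Arinkin's argument from \cite{arinkin2}, which establishes this equivalence on the compactified Jacobian of a single curve with locally planar singularities, and adapt it to our relative setting over $B=\Mbar_{g,n}$. Theorem \ref{thm:Plog=arinkin} offers a substantial simplification: the kernel $\overline{\P}$ is constructed directly as $Rf_*\widetilde{\P}$ from the birational model of Section \ref{sec:birational}, bypassing Arinkin's use of the isotropic Hilbert scheme of points.

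First, I reduce to a fiberwise statement over $B$. The $\dbcoh(B)$-linearity of $\fm$ is immediate from the construction, since a Fourier-Mukai functor with kernel supported over a fixed base commutes with tensoring against $\dbcoh(B)$. The remaining content is that $\fm$ is an equivalence, which one checks fiberwise. Using that $\widetilde{\pi}:\Jbar_C^{(2)}\to B$ is semistable (Proposition \ref{pro:model2}) and that the higher direct images $R^if_*\widetilde{\P}$ vanish (proof of Theorem \ref{thm:Plog=arinkin}(c)), cohomology and base change ensure that $\overline{\P}$ is flat over $B$ through both projections and that $\overline{\P}|_b$ remains a maximal Cohen-Macaulay sheaf on $\Jbar_{C_b}^{\epsilon_1}\times\Jbar_{C_b}^{\epsilon_2}$ for every geometric point $b\in B$.

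Next, I invoke Arinkin's theorem fiberwise. For each $b$, the sheaf $\overline{\P}|_b$ extends the classical Poincar\'e line bundle across a boundary of codimension $\geq 2$, and is therefore the unique maximal Cohen-Macaulay extension by Lemma \ref{lem:extension2}. This extension coincides with Arinkin's kernel (generalized to arbitrary stability conditions in \cite{MRV2}). Their main theorem then gives that $\fm_b$ is a derived equivalence with inverse given by $\overline{\P}|_b^\vee\otimes p_2^*\omega[g]$, which globalizes over $B$ to the inverse kernel formula in the statement via Grothendieck-Serre duality applied to the proper, flat, Gorenstein projection $\pi:\Jbar_C^{\epsilon_2}\to B$.

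The main obstacle is Arinkin's fiberwise computation of the convolution $\overline{\P}\star\overline{\P}^{-1}$ on $\Jbar_{C_b}^{\epsilon_1}\times\Jbar_{C_b}^{\epsilon_1}$. On the open locus where the middle factor passes through the smooth Jacobian $J_{C_b}^{\epsilon_2}$, this convolution reduces to Mukai's classical Fourier-Mukai equivalence for the abelian scheme $J_{C_b}^{\epsilon_2}$; the difficulty is to extend the identification $\overline{\P}\star\overline{\P}^{-1}\cong\CO_\Delta$ across the boundary. Both sides are maximal Cohen-Macaulay and agree on an open set with complement of codimension $\geq 2$, so Lemma \ref{lem:extension2} (applied fiberwise) forces equality everywhere. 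In our framework, this extension argument is streamlined by the explicit description of $\overline{\P}$ via the Deligne pairing in Proposition \ref{pro:pairing}, which makes the verification of Arinkin's hypotheses transparent.
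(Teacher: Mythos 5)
Your proposal diverges substantially from the paper's argument, which is a short reduction to Arinkin's \cite[Section 7.3]{arinkin2} applied directly to the \emph{family} $\Jbar_C\to B$, hinging on the $\delta$-regularity of the family of semi-abelian schemes $\Jzero_C\to\Mbar_{g,n}$ (\cite[Proposition 3.3]{arinkin1}). You never mention $\delta$-regularity, and this is the crucial missing ingredient.

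There are two concrete gaps. First, Arinkin's theorem is not a fiberwise statement that you can "invoke for every geometric point $b$." His argument (and its generalization in \cite{MRV1,MRV2}) is a statement about a family satisfying the $\delta$-regularity condition $\mathrm{codim}\{b : \delta(C_b)\ge k\}\ge k$. Over deep boundary strata of $\Mbar_{g,n}$ the fibers are highly reducible nodal curves, and there is no off-the-shelf autoduality result for the compactified Jacobian of a \emph{single} such curve; the family hypothesis is used precisely to control the cohomological amplitude of the relevant convolutions via dimension counts in the total space. Your reduction to fibers therefore discards the very structure the proof needs.

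Second, your treatment of the convolution $\overline{\P}\star\overline{\P}^{-1}$ is incorrect. You propose to identify it with $\CO_\Delta$ by showing both agree on a dense open set whose complement has codimension $\ge 2$ and then applying Lemma~\ref{lem:extension2}. But Lemma~\ref{lem:extension2} applies only to \emph{maximal} Cohen--Macaulay sheaves (codimension $0$ support), and $\CO_\Delta$ on $\Jbar_C^{\epsilon_1}\times_B\Jbar_C^{\epsilon_1}$ is supported on the diagonal, which has codimension $g>0$; it is Cohen--Macaulay of codimension $g$, not maximal. Moreover, a priori the convolution is a complex, not a sheaf, and establishing that it is concentrated in a single degree is exactly the content of Arinkin's $\delta$-regularity argument — it is not free. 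The streamlining offered by Theorem~\ref{thm:Plog=arinkin} is genuine (it gives an explicit, geometrically meaningful kernel rather than Arinkin's descent construction), but the derived-equivalence step still needs the family-level input, which the paper supplies and your proposal omits.
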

	\begin{proof}
		Over $B=\Mbar_{g,n}$, the relative Jacobian $\Jzero_C\to B$ is a $\delta$-regular family of semi-abelian schemes (\cite[Proposition 3.3]{arinkin1}). By Theorem \ref{thm:Plog=arinkin}, the kernel $\overline{\P}=f_*\widetilde{\P}\cong Rf_*\widetilde{\P}$ is a  maximal Cohen-Macaulay extension of Poincar\'e line bundle. Therefore, the argument in \cite[Section 7.3]{arinkin2} applies to the family $\Jbar_C\to B$ and shows that that $\fm$ is a derived equivalence.
	\end{proof}
	
	For a separated scheme $X$, let $K_0(X)$ denote the Grothendieck group of coherent sheaves on $X$. We consider the Baum-Fulton-MacPherson homomorphism (\cite[Chapter 18]{Fulton1984Intersection-th}):
	\begin{equation}\label{eq:GRR}
		\tau: K_0(X)\xrightarrow{} \CH_*(X,\QQ)\,.
	\end{equation} 
	When $X$ is smooth, we have $\tau(F)=\ch(F)\cdot \Td(T_X)$. This homomorphism is functorial with respect to proper pushforward and l.c.i. pullback \cite[Theorem 18.2]{Fulton1984Intersection-th}.
	
	Since $B=\Mbar_{g,n}$ is a Deligne–Mumford stack, \eqref{eq:GRR} requires a modification. One can work with the Baum-Fulton-MacPherson homomorphism for quotient stacks \cite{GilletSoule}. For our purposes, it suffices to take a finite \'etale morphism $B'\to B$ where $B'$ is a scheme (e.g., the moduli space of curves with level structure). After base change to $B'$, the calculations involving $\tau_X$ hold in the Chow group with $\QQ$-coefficients.
	
	We fix our notation for relative correspondences (\cite{CortiHanamura}). Let $\pi_1:M_1\to B, \pi_2: M_2\to B$ be two proper morphisms between smooth Deligne-Mumford stacks. Any relative correspondence $Z\in \CH_*(M_1\times_B M_2)$ then defines a map
	\[ Z : \CH^*(M_1)\to\CH^*(M_2), \, \alpha\mapsto \pi_{2*}(\pi_1^!\alpha\cap [Z])\,,\]
	where $\pi_1^!$ is the l.c.i. pullback. We use the same notation for $Z$ and its induced map.
	
	Following \cite[Section 2.3]{MSY23}, we consider the Chow-theoretic Fourier transform induced by Theorem \ref{thm:autoequiv}. Since $\Jbar_C\times_B\Jbar_C$ is l.c.i., its virtual tangent bundle has a well-defined Todd class:
	\[\Td(T_{\Jbar_C\times_B\Jbar_C})\in\CHop^*(\Jbar_C\times_B\Jbar_C)_\QQ\,.\] 
	The (Chow-theoretic) {\em Fourier transform} is defined by the relative correspondence
	\begin{equation}\label{eq:FM}
		\fm:=\Td(-T_{\Jbar_C\times_B\Jbar_C})\cap\tau(\overline{\P})\in\CH_*(\Jbar_C\times_B\Jbar_C)_\QQ\,.
	\end{equation}
	The inverse Fourier transform is given by
	\begin{equation}\label{eq:fminv}
		\fm^{-1}:=\Td(-(p_1\times_B p_2)^*T_B)\cap\tau(\overline{\P}^{-1})\in\CH^*(\Jbar_C\times_B\Jbar_C)_\QQ\,.
	\end{equation}
	The choice of the asymmetric Fourier transform  will be useful for understanding the Fourier transform of tautological classes.  By Theorem \ref{thm:autoequiv}, we have
	\begin{equation}\label{eq:FMchow}
		\fm\circ\fm^{-1} \cong \id\,, \, \fm^{-1}\circ \fm\cong\id\,.
	\end{equation}
	
	The Fourier transform of the class of a section which factors through the locus of line bundles $J_C\subset \Jbar_C$ \eqref{eq:J_C} can be easily computed:
	\begin{proposition}\label{pro:section}
		Let $\epsilon_1,\epsilon_2$ be two non-degenerate stability conditions for $p:C\to B$.
		Let $s:B\to\Jbar_C^{\,\epsilon_1}$ be a section which factors through $J_C^{\,\epsilon_1}$. Let $i_s:=(s,\id): \Jbar_C^{\epsilon_2}\to\Jbar_C^{\,\epsilon_1}\times_B\Jbar_C^{\epsilon_2}$ be a closed embedding. Then we have
		\[\fm^{-1}\big(\ch \,(i_s^*\Pbar)\big)=[s]\,.\]
	\end{proposition}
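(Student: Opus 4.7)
The plan is to compute $\fm([s])$ directly and then invoke the identity $\fm^{-1}\circ\fm = \id$ from \eqref{eq:FMchow}, by showing
\[
\fm([s]) = \ch(i_s^*\Pbar)\,.
\]
The key observation is that, since $s$ factors through $J_C^{\,\epsilon_1}$, the embedding $i_s=(s,\id)$ factors through the open substack
\[
U := J_C^{\,\epsilon_1}\times_B\Jbar_C^{\,\epsilon_2} \subset \Jbar_C^{\,\epsilon_1}\times_B\Jbar_C^{\,\epsilon_2}\,,
\]
on which $\Pbar|_U = \P$ is a genuine line bundle by construction. Moreover, $J_C^{\,\epsilon_1}\to B$ is smooth (it is the smooth locus of the flat morphism $\pi$), so the base change $p_2|_U\colon U\to \Jbar_C^{\,\epsilon_2}$ is smooth; in particular $U$ is smooth and $i_s$ is a regular embedding that is a section of $p_2|_U$.

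On the open $U$, the virtual tangent bundle $T^{\vir}_{\Jbar_C\times_B\Jbar_C} = p_1^*T_{\Jbar_C^{\,\epsilon_1}} + p_2^*T_{\Jbar_C^{\,\epsilon_2}} - q^*T_B$ (with $q$ the projection to $B$) coincides with the actual tangent bundle $T_U$. By flat base change for the Baum--Fulton--MacPherson map along the open immersion $U\hookrightarrow \Jbar_C\times_B\Jbar_C$,
\[
\tau(\Pbar)|_U \;=\; \tau(\P) \;=\; \ch(\P)\cdot\Td(T_U)\cap [U]\,.
\]
Cancelling Todd classes, the restriction of $\Td(-T^{\vir}_{\Jbar_C\times_B\Jbar_C})\cap \tau(\Pbar)$ to $U$ is simply $\ch(\P)\cap[U]$.

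Since $s$ is a section and $p_1$ is flat, flat base change yields $p_1^*[s] = (i_s)_*[\Jbar_C^{\,\epsilon_2}]$, a cycle supported on the image of $i_s\subset U$. Hence in the computation of $\fm([s])$, one can replace the operator $\fm$ by its restriction to $U$. Applying the projection formula and using $p_2\circ i_s = \id$,
\[
\fm([s]) \;=\; p_{2*}\!\Big((i_s)_*[\Jbar_C^{\,\epsilon_2}]\cap \big(\ch(\P)\cap [U]\big)\Big) \;=\; i_s^*\ch(\P) \;=\; \ch(i_s^*\Pbar)\,.
\]
Applying $\fm^{-1}$ to both sides and invoking \eqref{eq:FMchow} yields the claim.

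The only subtlety, and the step that must be handled with care, is that $\Pbar$ is only maximal Cohen--Macaulay on the singular stack $\Jbar_C^{\,\epsilon_1}\times_B\Jbar_C^{\,\epsilon_2}$, so the Baum--Fulton--MacPherson $\tau$ does not split as a Chern character times a Todd class globally. The argument circumvents this by showing that the relevant cycles are supported on the smooth open $U$, where $\Pbar$ becomes a line bundle and all Todd class cancellations are justified by the standard identification $\tau(\P) = \ch(\P)\cdot\Td(T_U)\cap [U]$.
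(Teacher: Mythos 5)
Your proof is correct and takes essentially the same approach as the paper's: both exploit the fact that $i_s$ factors through the open locus $J_C^{\,\epsilon_1}\times_B\Jbar_C^{\,\epsilon_2}$ where $\Pbar$ is the honest Poincar\'e line bundle, reduce $\tau(\Pbar)$ to a Chern character via the smoothness of this open substack, cancel Todd classes, apply the projection formula together with $\pi_2\circ i_s = \id$ to obtain $\fm([s]) = \ch(i_s^*\Pbar)$, and then invert using \eqref{eq:FMchow}. The paper phrases the Todd cancellation via functoriality of $\tau$ under the l.c.i.\ pullback $i_s^*$ rather than by first restricting everything to the open $U$, but these are only presentational variants of the same calculation.
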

	\begin{proof}
		On the open locus $J_C^{\,\epsilon_1}\times_B\Jbar_C^{\,\epsilon_2}\subset \Jbar^{\,\epsilon_1}_C\times_B\Jbar_C^{\epsilon_2}$, the sheaf $\Pbar$ is the usual Poincar\'e line bundle. Since $i_s$ factors through the open locus, the pullback $i_s^*\Pbar$ is a line bundle. For $j=1,2$, let $\pi_j:\Jbar^{\,\epsilon_1}_C\times_B\Jbar_C^{\epsilon_2}\to \Jbar_C^{\,\epsilon_j}$ be the $j$-th projection. 
		By the functoriality of  \eqref{eq:GRR}, we have
		\begin{align*}
			\fm([s]) &=\pi_{2*}\big(i_{s*}1\cdot\Td(-T_{\Jbar_C\times_B\Jbar_C})\cdot\tau(\overline{\P})\big)\\
			&= (\pi_{2}\circ i_s)_*\big(\Td(T_{i_s})\cdot\Td(-i_s^*T_{\Jbar_C\times_B\Jbar_C})i_s^*\tau(\overline{P})\Td(-T_{\Jbar_C^{\,\epsilon_2}})\big)\\
			&=\tau(i_s^*\Pbar)\Td(-T_{\Jbar_C^{\,\epsilon_2}})\\
			&= \ch \, (i_s^*\Pbar)\,.
		\end{align*}
		By \eqref{eq:FMchow}, we get the result by applying $\fm^{-1}$.
	\end{proof}
	\subsection{Logarithmic Abel-Jacobi theory}\label{sec:AJ}
	We compute the Fourier transform of the closures of rational Abel-Jacobi sections. The main result is Proposition \ref{pro:FM_aj}, which extends Proposition \ref{pro:section}. A key ingredient in our approach is the desingularization of Arinkin's kernel constructed in Theorem \ref{thm:Plog=arinkin}.
	
	Let $\Jbar_C$ be a compactified Jacobian with respect to a non-degenerate stability condition $\epsilon$ of degree $0$.
	For  $b\in\ZZ$ and $\sfa\in\ZZ^n$ with $\sum_i a_i=(2g-2+n)b$, the rational Abel-Jacobi section $\aj_{b;\sfa}: B \dashrightarrow \Jbar_C$ is given by $t\in B\mapsto\CO_{C_t}(\sum a_i x_i)\otimes(\omega_{C_t,\log}^{\otimes -b})$. Here, $\omega_{C,\log}:=\omega_C(\sum_i x_i)$ is the log dualizing line bundle. The section is well-defined on the locus where the underlying curve is smooth. 
	
	We resolve the indeterminacy of the rational Abel-Jacobi section by taking the following f.s. fiber product in the category of algebraic log stacks (see Section \ref{sec:jbar}):
	\[
	\begin{tikzcd}
		B_{b;\sfa} \ar[r]\ar[d] & \Jbar_C\ar[d]\\
		B\ar[r,"\aj_{b;\sfa}"] & \lpic_C.
	\end{tikzcd}
	\]
	Since $\Jbar_C\to\lpic_C$ is a log modification, the morphism $B_{b;\sfa}\to B$ is also a log modification, and the map $B_{b;\sfa}\to\Jbar_C$ extends the rational Abel-Jacobi section. 
	
	As the stack $B_{b;\sfa}$ is not smooth, we work with the smooth modular compactification constructed by the second author \cite{Molcho23}, and refer reader to loc. cit. for the necessary background. 
	\begin{theorem}[\cite{Molcho23}]\label{thm:B_v}
		An algebraic logarithmic stack $B_{b;\sfa}^\epsilon : (\textbf{LogSch}/B)^\mathrm{op}\to (\mathrm{Grp})$ is defined by $(S\to B)\mapsto (C'\to C_S,\alpha)$,
		where $C'\to C_S$ is a quasi-stable model and $\alpha$ is an equidimensional piecewise linear function on $C'$ where $\CO_{C'}(\sum_i a_ix_i)\otimes(\omega_{C_t,\log}^{\otimes -b})\otimes\CO_{C'}(\alpha)$ is $\epsilon$-stable. Then $B_{b;\sfa}^\epsilon$ is a Deligne-Mumford stack which is connected and smooth. 
	\end{theorem}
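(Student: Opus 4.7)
The plan is to exhibit $B_{b;\sfa}^\epsilon$ as an explicit smooth log modification of $B$, whose tropicalization is a canonical subdivision of $\Sigma_B$ prescribed by $\epsilon$. First I would interpret the moduli problem tropically: over a geometric log point of $B$ with stable graph $\Gamma$, a pair $(C',\alpha)$ corresponds to a quasi-stable refinement $\Gamma'$ of $\Gamma$ together with an integer-slope piecewise linear function $\alpha$ on the metric graph, with prescribed outgoing slopes $a_i$ at the markings and balancing controlled by $b$. The equidimensionality condition fixes these slope data, so the possible $(\Gamma',\alpha)$ over $\Gamma$ are enumerated by cones in a natural polyhedral complex $\Sigma^{(b;\sfa)}$ equipped with a morphism to $\Sigma_B$; the associated log stack (forgetting stability) is a log modification of $B$, of the same type as the $B_{b;\sfa}$ considered above Theorem~\ref{thm:B_v}.

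Second, I would carve out $B_{b;\sfa}^\epsilon$ inside $\Sigma^{(b;\sfa)}$ as the union of chambers on which the twisted line bundle $\CO_{C'}(\sum_i a_i x_i)\otimes \omega_{C_t,\log}^{\otimes -b}\otimes\CO_{C'}(\alpha)$ satisfies all the $\epsilon$-stability inequalities from Section~\ref{sec:jbar}. The key point is that non-degeneracy of $\epsilon$ excludes the strictly $\epsilon$-semistable walls from the interior, so the stability chambers assemble into a \emph{complete} subdivision of each cone of $\Sigma_B$, with a unique $\epsilon$-stable representative in each top-dimensional chamber. The induced map $B_{b;\sfa}^\epsilon \to B$ is therefore a log modification that is bijective on geometric points. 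Algebraicity and the Deligne-Mumford property follow because log modifications of Deligne-Mumford log stacks, such as $B = \Mbar_{g,n}$, remain Deligne-Mumford; smoothness follows from $B$ being smooth provided the stability chambers are smooth (unimodular) cones.

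Connectedness is then immediate: $B_{b;\sfa}^\epsilon \to B$ restricts to an isomorphism over the open dense locus of smooth curves, where the rational Abel-Jacobi section is already regular and there are no nontrivial choices of $(C',\alpha)$, so $B_{b;\sfa}^\epsilon$ inherits connectedness from $B$. The main obstacle is verifying that the $\epsilon$-stability chambers really do refine each cone of $\Sigma_B$ into \emph{smooth} cones. The natural approach is to reduce to one edge of $\Gamma$ at a time: along the subdivision of a single edge into a chain, the $\epsilon$-stability inequalities are linear inequalities in a single slope coordinate $\alpha$, so the resulting walls are coordinate hyperplanes and each chamber is a unimodular simplex. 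The global subdivision then factors as a product over edges, preserving smoothness; non-degeneracy of $\epsilon$ ensures that the wall hyperplanes never collide with the cone boundary, which is exactly what is needed to conclude that $B_{b;\sfa}^\epsilon$ is log smooth over the trivial log point and hence smooth.
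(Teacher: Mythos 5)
This theorem is a cited result: the paper does not prove it, but rather quotes it from \cite{Molcho23}, explicitly stating immediately beforehand that ``we use a smooth modular compactification constructed by the second author'' and ``we do not review the terminology and rather refer the interested reader to loc.cit.'' So there is no in-paper proof to compare against; what follows evaluates your attempt on its own merits as a proof of the external theorem.

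Your overall frame --- tropicalize the moduli problem, view $B_{b;\sfa}^\epsilon\to B$ as a log modification given by a subdivision of $\Sigma_B$ cut out by the $\epsilon$-stability chambers, then deduce algebraicity, the Deligne--Mumford property, and connectedness from that --- is the right way to set this up, and the reduction of smoothness to unimodularity of the tropical chambers is also correct. The gap is in the step you yourself flag as the ``main obstacle.'' You claim the subdivision reduces to an edge-by-edge analysis, that each wall is a coordinate hyperplane, and that ``the global subdivision then factors as a product over edges, preserving smoothness.'' This is false for graphs of positive first Betti number, which is exactly where the theorem has content (on the treelike locus the Abel--Jacobi section already extends and no subdivision is needed). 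The piecewise linear function $\alpha$ on the metric graph must be single-valued, which imposes the closing-up conditions $\sum_{e\in \gamma} m_e\,\ell_e = 0$ around every cycle $\gamma$, where $m_e$ are the integer slopes of $\alpha$. These conditions couple the edge lengths, and the walls separating adjacent slope choices are hyperplanes of the form $\sum_{e\in\gamma} m_e\,\ell_e = \text{const}$ rather than coordinate hyperplanes $\ell_e' = \text{const}$. The stability inequalities select which slope vectors are admissible (they constrain the multidegree of $\mathcal{O}(\alpha)$, i.e.\ the outgoing slopes at each vertex), but the resulting chamber in length space is carved out by these cycle conditions, which genuinely mix coordinates. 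In particular the subdivision is not a product over edges, and its unimodularity is the hard content of the cited theorem; your argument does not establish it. The correct claim about non-degeneracy of $\epsilon$ is that it prevents the chamber walls from coinciding with walls of $\Sigma_B$ or with one another (avoiding strictly semistable multidegrees), which ensures the chambers are full-dimensional and cover; but that is separate from smoothness, and the unimodularity argument in \cite{Molcho23} has to deal directly with the arithmetic of the cycle-pairing, not with an edge-local picture.

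Two smaller points. First, equidimensionality of $\alpha$ is a genuine constraint (it forces the slopes on both halves of a subdivided edge to be the same nonzero integer, fixing the subdivision point in terms of the slope and the edge length); you invoke it in passing but it is load-bearing and should be used explicitly when parametrizing the cones of $\Sigma^{(b;\sfa)}$. Second, your connectedness argument is fine as far as it goes, but you should say that a log modification of an irreducible log smooth stack is irreducible (hence connected); the observation that the map is an isomorphism over the smooth locus establishes birationality, which is the ingredient you need.
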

	The {\em resolved Abel-Jacobi section} is defined by
	\begin{equation}\label{eq:AJ}
		\aj_{b;\sfa} :B_{b;\sfa}^\epsilon\to \Jbar_C, (C'\to C,\alpha)\mapsto \CO_{C'}(\sum a_i x_i)\otimes(\omega_{C',\log}^{\otimes -b})\otimes\CO_{C'}(\alpha)\,.
	\end{equation}
	
	We aim to compute the Fourier transform of \eqref{eq:AJ}. By pulling back the diagram \eqref{eq:J2} along the resolved Abel-Jacobi-section \eqref{eq:AJ}, we obtain the following fiber diagram
	\begin{equation}\label{eq:model}
		\begin{tikzcd}
			&\widetilde{J_{b;\sfa}^\epsilon}\ar[r,"\widetilde{\aj}_{b;\sfa}"]\ar[d,"g"] & \Jbar_C^{(2)}\ar[d,"f"]\\
			\Jbar_C\ar[d,"\pi"] &\Jbar_{b;\sfa}^\epsilon\ar[r,"\aj_{b;\sfa}'"]\ar[l,"\pi_2"']\ar[d] & \Jbar_C\times_B\Jbar_C\ar[d,"\pi_1"]\\
			B & B_{b;\sfa}^\epsilon\ar[r,"\aj_{b;\sfa}"]\ar[l] & \Jbar_C\,.
		\end{tikzcd}   
	\end{equation}
	By Theorem \ref{thm:B_v}, the resolved Abel-Jacobi section $\aj_{b;\sfa}$ is a morphism between smooth stacks and hence $\aj_{b;\sfa}$ is l.c.i. Since $\pi_1$ is flat, $\aj_{b;\sfa}'$ is also l.c.i.
	\begin{lemma}\label{lem:fssq}
		In the diagram \eqref{eq:model}, we have $(\aj_{b;\sfa}')^![\Jbar_C^{(2)}] = [\widetilde{J_{b;\sfa}^\epsilon}]$ in $\CH_*(\widetilde{J_{b;\sfa}^\epsilon})$.
	\end{lemma}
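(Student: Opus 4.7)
First I would verify that $g\colon \widetilde{J_{b;\sfa}^\epsilon}\to\Jbar_{b;\sfa}^\epsilon$ inherits the log modification property from $f\colon\Jbar_C^{(2)}\to\Jbar_C\times_B\Jbar_C$. Since the top square in \eqref{eq:model} is an fs Cartesian diagram and $f$ is a log modification by Proposition \ref{pro:logmod}, so is $g$: it is proper, representable, and birational. The base $\Jbar_{b;\sfa}^\epsilon = B_{b;\sfa}^\epsilon\times_{\Jbar_C}(\Jbar_C\times_B\Jbar_C) \cong B_{b;\sfa}^\epsilon\times_B\Jbar_C$ is irreducible (being a flat pullback of the irreducible $\Jbar_C$ along the log modification $B_{b;\sfa}^\epsilon\to B$), so $\widetilde{J_{b;\sfa}^\epsilon}$ is also irreducible and has dimension equal to $\dim\Jbar_{b;\sfa}^\epsilon$. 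Since $\aj_{b;\sfa}'$ is a regular embedding of codimension $g$ (as the base change of the lci map $\aj_{b;\sfa}$ between smooth stacks along the flat projection $\pi_1$), this dimension coincides with the expected dimension $\dim\Jbar_C^{(2)}-g$ of the refined Gysin pullback $(\aj_{b;\sfa}')^![\Jbar_C^{(2)}]$.

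Next I would pin down the multiplicity by restricting to the open locus where everything is transparent. Let $U := \Jbar_C\times_B J_C\cup J_C\times_B\Jbar_C\subset\Jbar_C\times_B\Jbar_C$, over which $f$ is an isomorphism by Theorem \ref{thm:Plog=arinkin}(a). The generic point of $\Jbar_{b;\sfa}^\epsilon$ corresponds to a pair $(L_1,L_2)$ of honest line bundles on a smooth curve, hence lies in $J_C\times_B J_C\subset U$, and therefore $(\aj_{b;\sfa}')^{-1}(U)$ is open and dense in $\Jbar_{b;\sfa}^\epsilon$. Over $U$, the restriction of $f$ is an isomorphism, in particular flat, and the fs and scheme-theoretic fiber products coincide there. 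By the compatibility of the refined Gysin pullback with flat base change, I obtain
\[
(\aj_{b;\sfa}')^![\Jbar_C^{(2)}]\big|_{\widetilde{\aj}_{b;\sfa}^{-1}(\Jbar_C^{(2)}|_U)}=[\widetilde{\aj}_{b;\sfa}^{-1}(\Jbar_C^{(2)}|_U)].
\]

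To finish, observe that both $(\aj_{b;\sfa}')^![\Jbar_C^{(2)}]$ and $[\widetilde{J_{b;\sfa}^\epsilon}]$ are top-dimensional cycles on the irreducible stack $\widetilde{J_{b;\sfa}^\epsilon}$, and they agree on the open dense subset $\widetilde{\aj}_{b;\sfa}^{-1}(\Jbar_C^{(2)}|_U)$. Their difference is thus a top-dimensional cycle supported on the strictly lower-dimensional exceptional locus of $g$ above $\Jbar_{b;\sfa}^\epsilon\setminus(\aj_{b;\sfa}')^{-1}(U)$, so it must vanish. The main technical point to exercise care with is the bookkeeping between the fs and scheme-theoretic fiber products: it is essential that they agree at least on $\widetilde{J_{b;\sfa}^\epsilon}|_U$ so that the flat-base-change calculation makes sense, and that the fs fiber product has the expected global dimension. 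Both reduce to the explicit tropical description of $f$ given in the proof of Proposition \ref{pro:logmod}, where the subdivision is purely along hyperplanes, making the base change operation well-behaved.
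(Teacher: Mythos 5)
Your overall strategy -- first argue that $\widetilde{J_{b;\sfa}^\epsilon}$ is irreducible so that $(\aj_{b;\sfa}')^![\Jbar_C^{(2)}]$ is a multiple of the fundamental class, then compute the multiplicity over an open dense locus where everything is unobstructed -- is precisely what the paper does. However, there is a genuine gap in the first step: you assert that the top square in \eqref{eq:model} ``is an fs Cartesian diagram'' and use this to conclude that $g$ is a log modification (hence that $\widetilde{J_{b;\sfa}^\epsilon}$ is reduced and irreducible). But the diagram \eqref{eq:model} is built via ordinary scheme-theoretic fiber products, and there is no reason, a priori, for those to agree with the fs fiber products. This is exactly the crux the paper's proof addresses: it first invokes Proposition~\ref{pro:model2}(a) to get that $\pi_1\circ f$ is a saturated morphism, deduces from this that the \emph{outer} square of \eqref{eq:model} is an fs fiber product, separately notes that the \emph{bottom} square is an fs fiber product, and only then concludes by a two-out-of-three argument that the \emph{top} square is fs, so that $g$ is a log modification. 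Without this intermediate saturation input your irreducibility claim is not justified, and the cycle $(\aj_{b;\sfa}')^![\Jbar_C^{(2)}]$ could in principle acquire contributions from embedded or extra components.

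There is also a small inefficiency in the multiplicity computation: you restrict to the locus $U=\Jbar_C\times_B J_C\cup J_C\times_B\Jbar_C$ where $f$ is an isomorphism, whereas the paper simply restricts to the locus of smooth underlying curves, over which not only $g$ but also $B^\epsilon_{b;\sfa}\to B$ is an isomorphism, making the multiplicity-one assertion immediate. Both choices work once the fs issue is resolved, but the smaller open set leads to a cleaner endgame. You should also note that the final paragraph of your proposal essentially concedes the difficulty (``the main technical point to exercise care with is the bookkeeping between the fs and scheme-theoretic fiber products'') without actually resolving it: appealing to the subdivision-by-hyperplanes description is not a substitute for the saturation argument.
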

	\begin{proof}
		We first show that $\widetilde{J_{b;\sfa}^\epsilon}$ is reduced and irreducible. By Proposition \ref{pro:model2} (a), the composition $\pi_1\circ f$ is a saturated morphism. Therefore, the outer square is the f.s. fiber product. Similarly, the bottom square is an f.s. fiber product, and thus, the top square is also an f.s. fiber product. The morphism $g$ is a log modification because the top square is an f.s. fiber product and $f$ is a log modification. This shows that $\widetilde{J_{b;\sfa}^\epsilon}$ is reduced and irreducible. 
		
		By dimension considerations, the class $(\aj_{b;\sfa}')^![\Jbar_C^{(2)}]$ is some multiple of the fundamental class of $\widetilde{J_{b;\sfa}^\epsilon}$ since it is irreducible. The multiplicity can be verified on the open locus where the underlying curve is smooth. Over this locus, the map $B_{b;\sfa}^\epsilon\to B$ is an isomorphism and $g$ is also an isomorphism. Therefore the multiplicity is one.
	\end{proof}
	We describe the pullback of the extended Poincar\'e line bundle $\widetilde{\P}$ along the resolved Abel-Jacobi section. From Theorem \ref{thm:B_v},  $k$-points of $\widetilde{J_{b;\sfa}^\epsilon}$  are tuples
	\begin{equation}\label{eq:J_sfv}
		(C_1,C_2, C'\to C_i,\alpha,L_2)\,,
	\end{equation}
	where $C'\to C_1,C_2$ is a common semistable model, $\alpha$ is an equidimensional piecewise linear function on $C_1$ where $\CO_{C_1}(\sum_i a_i x_i)\otimes(\omega_{C_1,\log}^{\otimes -b})\otimes\CO_{C_1}(\alpha)$ is $\epsilon$-stable, $L_2$ is a $\epsilon$-stable line bundle on $C_2$.
	
	\begin{lemma}\label{lem:deligne_pl}
		Let $p: C\to B$ be a log smooth curve over a log smooth base $B$. For a piecewise linear function $\alpha$ on $C$ and a line bundle $L$ on $C$, we have
		\begin{equation}\label{eq:deligne_pl}
			\langle \CO_C(\alpha), L\rangle_C = \sum_{w\in V(\Gamma_{C_b})}\deg (L|_w)\cdot\alpha(w) \in \mathsf{PL}(B)\,.
		\end{equation}
	\end{lemma}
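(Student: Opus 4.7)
The plan is to exploit bilinearity of the Deligne pairing in both arguments to reduce to an intersection-theoretic computation on components of the special fiber.

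Both sides of \eqref{eq:deligne_pl} are $\QQ$-bilinear in $(\alpha,L)$: for the Deligne pairing this is standard \cite{DelignePairing}, while for the right-hand side it follows from linearity of $\alpha\mapsto \alpha(w)$ and $L\mapsto \deg(L|_w)$. It therefore suffices to verify the identity on a spanning set of PL functions. Fix a vertex $w_0\in V(\Gamma_{C_b})$ and write $\alpha = p^*\alpha(w_0) + \alpha'$ with $\alpha'(w_0)=0$. For the pullback piece, the projection formula for the Deligne pairing gives
\[
\langle p^*\CO_B(\alpha(w_0)), L\rangle_C \;=\; \CO_B(\alpha(w_0))^{\otimes \deg L} \;=\; \CO_B\Big(\sum_w \deg(L|_w)\,\alpha(w_0)\Big),
\]
using $\deg L = \sum_w \deg(L|_w)$, which matches the $\alpha(w_0)$-contribution on the right-hand side of \eqref{eq:deligne_pl}. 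This reduces the problem to the case $\alpha(w_0)=0$.

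For such $\alpha$, after a suitable log modification $\tilde{C}\to C$ the log line bundle $\CO_C(\alpha)$ is represented by a $\QQ$-divisor $D = \sum_w n_w E_w$ supported on components $E_w$ of the special fiber, with the coefficients encoding the vertex values $\alpha(w)$. Applying the first-Chern-class formula for the Deligne pairing,
\[
c_1\langle M_1, M_2\rangle_C \;=\; p_*\big(c_1(M_1)\, c_1(M_2)\big),
\]
to each summand yields $c_1\langle \CO_{\tilde{C}}(E_w), L\rangle = \deg(L|_w)\cdot p_*[E_w]$, and $p_*[E_w]$ is precisely the class in $\mathsf{PL}(B)$ corresponding to the vertex $w$. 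Summing over $w$ and invoking bilinearity gives the identity $\langle \CO_C(\alpha), L\rangle_C = \sum_w \deg(L|_w)\,\alpha(w)$ in $\mathsf{PL}(B)$.

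The main subtlety will be carrying the log-modification step through cleanly: $\CO_C(\alpha)$ is only canonically a log line bundle, and its representation by a classical line bundle requires refining $C$. Both sides of \eqref{eq:deligne_pl} must be invariant under such refinement, which holds because they depend only on the combinatorial data of $\alpha$ at generic-fiber vertices and on multidegrees of $L$, all unchanged by subdividing edges of the dual graph. Extending bilinearity and the projection formula from line bundles to log line bundles is automatic from the canonical extension of the Deligne pairing to the logarithmic Picard group \cite{MW22}.
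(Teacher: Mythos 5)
Your approach shares the right skeleton with the paper's: reduce via bilinearity, use the first-Chern-class formula $c_1\langle M_1,M_2\rangle = p_*(c_1(M_1)c_1(M_2))$, and invoke invariance under log modification. However, the central step is incorrect as written, and a subtlety that the paper handles carefully is glossed over.

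First, the formula
\[
c_1\langle \CO_{\tilde{C}}(E_w), L\rangle = \deg(L|_w)\cdot p_*[E_w]
\]
cannot be right: $E_w$ is a vertical divisor in $\tilde C$, so $p|_{E_w}$ drops dimension by one, and the cycle pushforward $p_*[E_w]$ of its fundamental class vanishes. What you presumably intend is $p_*\bigl([E_w]\cdot c_1(L)\bigr) = \deg(L|_w)\cdot[\partial B_{j(w)}]$, where $\partial B_{j(w)} = p(E_w)$ is the boundary divisor of $B$ onto which $E_w$ maps. But this identity is not immediate either; it is essentially the content of the lemma after reductions, and requires an argument about the fibers of $E_w \to \partial B_{j(w)}$ and transversality of $L$ to $E_w$. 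The paper proves it by writing $L$ as a difference of very ample divisors and carrying out a case analysis depending on whether $p^{-1}(\partial B_{j(w)})$ is irreducible and whether the effective representative of $L$ involves $E_w$ or is flat over $B$.

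Second, the representation of $\CO_C(\alpha')$ as $\CO_{\tilde C}(\sum_w n_w E_w)$ \emph{indexed by vertices} $w\in V(\Gamma_{C_b})$ quietly conflates two different index sets. The vertical irreducible divisors of $\tilde C$ are indexed by pairs (boundary divisor $\partial B_j$ of $B$, vertex of the dual graph over the generic point of $\partial B_j$), whereas the right-hand side of \eqref{eq:deligne_pl} is indexed by vertices of the dual graph $\Gamma_{C_b}$ over the \emph{deepest} stratum, interpreted cone by cone as a piecewise linear function. These are not naturally identified; different boundary divisors of $B$ see different partial degenerations of the curve. The paper resolves this by observing that the left side is trivial over the open locus where $p$ is smooth, and then checking the coefficient of each boundary divisor of $B$ separately. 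Your proposal never makes this reduction explicit, so the claim that "the coefficients encode the vertex values $\alpha(w)$" is not well-posed as stated. Repairing the argument means restructuring it along the same lines as the paper's: check boundary divisor by boundary divisor, and carry out the case analysis needed to compute $p_*([E]\cdot c_1(L))$.
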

	\begin{proof}
		We explain the right hand side of \eqref{eq:deligne_pl}. For each generic point $b\in B$ of a maximal cone, there exists a map $\mathsf{PL}(C_b)\to \ZZ^{V(\Gamma_{C_b})}$ induced by taking the slope and composing with the divisor map as  in \cite[Section 3.3]{HMPPS}. The right-hand side of \eqref{eq:deligne_pl} gives the value at each maximal cone.
		
		Both sides of \eqref{eq:deligne_pl} is stable under log modification of the base $B$ and taking semistable modification of the log curve $C\to B$, as the degree of $L$ is $0$ on newly introduced exceptional components. Therefore it is enough to show \eqref{eq:deligne_pl} when $B$ and $C$ are smooth and logarithmically smooth. The log curve $p$ is smooth over the nonempty open locus of $B$, where the left hand side of \eqref{eq:deligne_pl} is trivial, it is enough to check the multiplicity of left hand side on each boundary of $B$. 
		
		We represent the line bundle $L$ as the difference of two very ample divisors $L\cong \CO(D_1- D_2)$. From the above reduction step, it is enough to prove \eqref{eq:deligne_pl} when $\alpha$ is $\CO_C(E)$ where $E$ is an irreducible component of $p^{-1}D$ and $L$ is an effective divisor intersecting $E$ transversely. When $p^{-1}D$ is irreducible $L$ can be written as a linear combination of a divisor flat over $B$ and $p^{-1}D$, Since $p^{-1}D$ is the divisor pulled back from the base, it does not contribute. When $p^{-1}D$ has several irreducible components, we first consider the case when $L$ is  a linear combination of a divisor flat over $B$ and a vertical divisor not equal to $E$.  Then $\langle \CO_C(\alpha), L\rangle_C \cong \CO(mD)$ where $m$ is the multiplicity of $E$ and $E$ which is the same as the right hand side. When $L$ contains contribution of $E$, $E$ can be written as $p^{-1}D$ minus other vertical divisors transverse to $E$, hence we get the result.     
	\end{proof}
	
	\begin{remark}
		In fact, Lemma~\ref{lem:deligne_pl} holds without assuming that $B$ is log smooth. However, the argument in this more general setting is more involved, and since we do not need it here, we omit it.
	\end{remark}
	From \eqref{eq:J_sfv}, we consider the universal curve $C'\to\widetilde{J^\epsilon_{b;\sfa}}$, the equidimensional piecewise linear function $\alpha$ on $C'$ pulled back from $C_1$, and the universal line bundle $L_2$ on $C'$ pulled back from $C_2$.
	\begin{corollary}\label{cor:pullback_P}
		For the piecewise linear function $\beta$ on $\widetilde{J^\epsilon_{b;\sfa}}$ given by $\langle\CO_{C'}(\alpha),L_2\rangle_{C'}$, we have
		\[
		\widetilde{\aj}_{b;\sfa}^*(c_1(\widetilde{\P})) =  g^*\pi_2^*\big(-b\kappa_{0,1}+\sum_{i=1}^n a_i \xi_i \big)+ \beta\,.
		\]
	\end{corollary}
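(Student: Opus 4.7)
The plan is to compute $\widetilde{\aj}_{b;\sfa}^* c_1(\widetilde{\P})$ directly, exploiting the explicit modular description of $\widetilde{\P}$ provided by Proposition \ref{pro:pairing} together with bilinearity of the Deligne pairing.

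By Proposition \ref{pro:pairing}, $\widetilde{\P}\cong\langle\L_1|_{\widehat{C}},\L_2|_{\widehat{C}}\rangle$, where $\widehat{C}\to\Jbar_C^{(2)}$ is the universal semistable curve and $\L_i$ is the universal line bundle from the $i$th factor. Reading off from the moduli description \eqref{eq:J_sfv} and the definition \eqref{eq:AJ} of the resolved Abel-Jacobi section, the pullback via $\widetilde{\aj}_{b;\sfa}$ of $\widehat{C}$ is the universal curve $C'\to\widetilde{J^\epsilon_{b;\sfa}}$, the pullback of $\L_1|_{\widehat{C}}$ is $\CO_{C'}(\sum a_i x_i)\otimes\omega_{C',\log}^{\otimes -b}\otimes\CO_{C'}(\alpha)$, and the pullback of $\L_2|_{\widehat{C}}$ is $L_2$. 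Functoriality of the Deligne pairing under base change therefore gives
\[
\widetilde{\aj}_{b;\sfa}^*\widetilde{\P}\;\cong\;\bigl\langle\CO_{C'}(\textstyle\sum a_i x_i)\otimes\omega_{C',\log}^{\otimes -b}\otimes\CO_{C'}(\alpha),\,L_2\bigr\rangle_{C'}\,.
\]

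Applying bilinearity splits this into three Deligne pairings, which I would compute separately. The pairing $\langle\CO_{C'}(\sum a_i x_i),L_2\rangle_{C'}$ contributes $\sum_i a_i\, x_i^*c_1(L_2)$, and since $L_2$ is pulled back along $g\circ\pi_2$ from the universal line bundle on $\Jbar_C$, this identifies with $\sum_i a_i\, g^*\pi_2^*\xi_i$. The pairing $\langle\CO_{C'}(\alpha),L_2\rangle_{C'}$ is precisely $\beta$ by Lemma \ref{lem:deligne_pl} applied to the log smooth curve $C'/\widetilde{J^\epsilon_{b;\sfa}}$ (using that $L_2$ has the given multidegrees). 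For $\langle\omega_{C',\log},L_2\rangle_{C'}$, formula \eqref{eq:Deligne} expresses its first Chern class as $p'_*\bigl(c_1(\omega_{C',\log})\,c_1(L_2)\bigr)$; I would identify this with $g^*\pi_2^*\kappa_{0,1}$, so that the $-b$-th power contributes $-b\,g^*\pi_2^*\kappa_{0,1}$.

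The main technical subtlety is this last identification. The composition $g\circ\pi_2:\widetilde{J^\epsilon_{b;\sfa}}\to\Jbar_C$ is not quite a cartesian base change of the universal quasi-stable curve, because $C'$ differs from the pullback by a semistable modification $\nu:C'\to(g\circ\pi_2)^*\overline{\mathcal{C}}^{qs}$. Under such a modification $\omega_{C',\log}\cong\nu^*\omega_{\log}$ (standard for log smooth curves, since each inserted rational bridge $\mathbb{P}^1$ with two nodes has $\omega_{\mathbb{P}^1}(0+\infty)\cong\CO_{\mathbb{P}^1}$, matching the trivial restriction of the pullback), and $L_2$ is itself $\nu^*\L$ of the universal line bundle on $\overline{\mathcal{C}}^{qs}$. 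The projection formula together with $R\nu_*\CO_{C'}=\CO$ reduces $p'_*\nu^*$ to the pushforward along the pulled-back quasi-stable curve, and flat base change then yields $g^*\pi_2^*\kappa_{0,1}$, completing the identification.
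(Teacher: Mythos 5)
Your proposal is correct and takes essentially the same route as the paper: invoke Proposition \ref{pro:pairing} to describe $\widetilde{\P}$ as a Deligne pairing, pull back via the resolved Abel--Jacobi section, and then split by bilinearity into the $\xi_i$-piece (via the section property $\langle L,\CO_C(s)\rangle=s^*L$), the $\beta$-piece (via Lemma~\ref{lem:deligne_pl}), and the $\kappa_{0,1}$-piece. The only difference is that you spell out the routine check that $\langle\omega_{C',\log},L_2\rangle_{C'}$ pulls back $\kappa_{0,1}$ across the semistable modification $\nu$, a step the paper treats as implicit in the statement of bilinearity.
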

	\begin{proof}
		By the proof of Lemma \ref{lem:fssq}, the upper square of \eqref{eq:model} is an f.s. fiber diagram. Since $f$ is log modification, $\widetilde{J_{b;\sfa}^\epsilon} \to \Jbar_C$ is a log modification. By Proposition \ref{pro:pairing}, 
		we obtain:
		\begin{align*}
			\widetilde{\aj}_{b;\sfa}^*(c_1(\widetilde{\P})) &\cong \left \langle L_2|_{C'},\CO_{C'}(\sum a_ix_i)\otimes(\omega_{C_1,\log}^{\otimes -b}) \otimes \CO_{C'}(\alpha) \right \rangle \\
			&\cong g^*\pi_2^*\big(-b\kappa_{0,1}+\sum_{i=1}^n a_i \xi_i\big) + \beta\,.
		\end{align*}
		The last equality follows from the linearity of Deligne pairing, together with Lemma \ref{lem:deligne_pl} and the property that $\left \langle L, \CO_C(s)\right\rangle = s^*L$ for any smooth section $s$ of the universal curve. 
	\end{proof}
	We say a class $\gamma$ in $\mathsf{PP}(\Jbar_C)$ is {\em supported away from the integral locus} if it is a linear combination of normally decorated strata whose stabilizations correspond to reducible (i.e. non-integral) curves.
	\begin{proposition}\label{pro:FM_aj}
		For  $b\in\ZZ$ and $\sfa\in\ZZ^n$ with $\sum_i a_i=(2g-2+n)b$, there exists a class $\gamma_{b;\sfa}$ in $\mathsf{PP}(\Jbar_C)$ supported away from the integral locus such that
		\[\fm(\aj_{b;\sfa}) = \exp(-b\kappa_{0,1}+\sum_{i=1}^n a_i\xi_i)(1+\gamma_{b;\sfa})\,.\]
	\end{proposition}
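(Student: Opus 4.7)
The plan is to compute $\fm([\aj_{b;\sfa}])$ explicitly using the diagram \eqref{eq:model}, factor out the advertised exponential, and then identify the residual term as $\gamma_{b;\sfa}$ by restricting to the locus of integral curves and invoking Proposition~\ref{pro:section}. First, using Theorem~\ref{thm:Plog=arinkin}(b,c) to write $\overline{\P}\cong Rf_*\widetilde{\P}$ together with the projection formula and functoriality of the Baum--Fulton--MacPherson homomorphism under proper pushforward, the Fourier kernel \eqref{eq:FM} becomes $\fm = f_*\widetilde{\fm}$ with
\[
\widetilde{\fm} := f^*\Td(-T_{\Jbar_C\times_B\Jbar_C})\cdot \ch(\widetilde{\P})\cdot\Td(T_{\Jbar_C^{(2)}}),
\]
a class on the smooth stack $\Jbar_C^{(2)}$. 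Using Lemma~\ref{lem:fssq}, flat base change along $p_1$, and a second application of the projection formula, this reduces the calculation to
\[
\fm([\aj_{b;\sfa}]) = h_*\bigl(\widetilde{\aj}^*\widetilde{\fm}\bigr), \qquad h := \pi_2\circ g\colon \widetilde{J_{b;\sfa}^\epsilon}\to\Jbar_C.
\]

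Next I would analyze $\widetilde{\aj}^*\widetilde{\fm}$ factor by factor. Corollary~\ref{cor:pullback_P} gives $\widetilde{\aj}^*\ch(\widetilde{\P}) = \exp\bigl(h^*(-b\kappa_{0,1}+\sum_i a_i\xi_i)+\beta\bigr)$, where $\beta$ is a piecewise linear function on $\widetilde{J_{b;\sfa}^\epsilon}$. Since $\widetilde{\pi}$ factors through $f$, the $T_B$-contributions cancel in $\Td(T_{\Jbar_C^{(2)}})\cdot f^*\Td(-T_{\Jbar_C\times_B\Jbar_C})$, so Proposition~\ref{pro:diffTd} rewrites the product as $\Td^\vee(\cR_{\widetilde{\pi}})\cdot f^*p_1^*\Td^\vee(\cR_\pi)^{-1}\cdot f^*p_2^*\Td^\vee(\cR_\pi)^{-1}$. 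Each of these Todd factors is piecewise polynomial (Proposition~\ref{pro:Todd} and Proposition~\ref{prop:tdbirationalmodel}), so $\widetilde{\aj}^*\widetilde{\fm}\in\mathsf{PP}(\widetilde{J_{b;\sfa}^\epsilon})$. The projection formula applied to $h$ extracts the pullback exponential from the pushforward, yielding
\[
\fm([\aj_{b;\sfa}]) = \exp\Bigl(-b\kappa_{0,1}+\sum_{i=1}^n a_i\xi_i\Bigr)\cdot X,
\]
with $X\in\mathsf{PP}(\Jbar_C)$ the pushforward of the remaining factors. I then set $\gamma_{b;\sfa} := X-1$.

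Finally, to verify the support condition on $\gamma_{b;\sfa}$, let $B^{\mathrm{int}}\subset B$ be the open locus of irreducible curves. Over $B^{\mathrm{int}}$ the log modification $B_{b;\sfa}^\epsilon\to B$ is an isomorphism, and the rational Abel--Jacobi section extends to a regular section factoring through the line bundle locus $J_C^\epsilon|_{B^{\mathrm{int}}}\subset\Jbar_C^\epsilon|_{B^{\mathrm{int}}}$, since generalized Jacobians of integral curves are semi-abelian. Proposition~\ref{pro:section}, applied to this restricted section (using that $\overline{\P}$ restricts to the maximal Cohen--Macaulay extension for the restricted family by uniqueness---cf.\ Lemma~\ref{lem:extension2}), gives $\fm([\aj_{b;\sfa}])|_{B^{\mathrm{int}}} = \ch(i_{\aj_{b;\sfa}}^*\Pbar)|_{B^{\mathrm{int}}}$, and the Deligne pairing formula \eqref{eq:Deligne} applied to the Abel--Jacobi line bundle $\CO(\sum a_i x_i)\otimes\omega_{\log}^{-b}$ identifies this restriction with $\exp(-b\kappa_{0,1}+\sum_i a_i\xi_i)|_{B^{\mathrm{int}}}$. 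Hence $X|_{B^{\mathrm{int}}} = 1$ and $\gamma_{b;\sfa}$ vanishes over the integral locus, so as a piecewise polynomial it admits a representation supported on strata with reducible stabilization. The most delicate step is this last one: invoking Proposition~\ref{pro:section} over an open subset of $B$, which requires that the Cohen--Macaulay extension of the Poincar\'e line bundle commute with base change to $B^{\mathrm{int}}$, a fact supplied by the uniqueness of the extension.
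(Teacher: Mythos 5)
Your computation of $\fm([\aj_{b;\sfa}])$, reducing the correspondence through the smooth model $\Jbar_C^{(2)}$ and then to $\widetilde{J_{b;\sfa}^\epsilon}$, tracks the paper's proof exactly: the use of functoriality of $\tau$, Lemma~\ref{lem:fssq}, Corollary~\ref{cor:pullback_P}, and Proposition~\ref{pro:diffTd} all appear in the same places, and the definition $\gamma_{b;\sfa}:=X-1$ matches the one in the text. Where you diverge is in the verification that $\gamma_{b;\sfa}$ is supported away from the integral locus, and here there is a genuine gap.

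You correctly observe that over $B^{\mathrm{int}}$ the Abel--Jacobi section is a regular section factoring through $J_C^\epsilon$, and that Proposition~\ref{pro:section} together with the Deligne pairing computation gives $\fm([\aj_{b;\sfa}])|_{B^{\mathrm{int}}}=\exp(-b\kappa_{0,1}+\sum a_i\xi_i)|_{B^{\mathrm{int}}}$. This shows that $[\gamma_{b;\sfa}]$ vanishes as a class in $\CH^*(\Jbar_C|_{B^{\mathrm{int}}})$. But ``supported away from the integral locus'' is a condition on the piecewise polynomial itself, not on its image in Chow: it requires $\gamma_{b;\sfa}$ to restrict to zero in $\mathsf{PP}$ over $B^{\mathrm{int}}$, so that its canonical expansion into normally decorated strata involves no strata with irreducible stabilization. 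The map $\mathsf{PP}\to\CH^*$ has nontrivial kernel in general (this is precisely the content of tautological relations), so the implication ``vanishes in Chow over $B^{\mathrm{int}}$'' $\Rightarrow$ ``vanishes in $\mathsf{PP}$ over $B^{\mathrm{int}}$'' does not come for free, and your closing sentence asserting that $\gamma_{b;\sfa}$ admits a representation supported on reducible strata is unjustified. This distinction is not cosmetic: the proof of Theorem~\ref{thm:leading} uses the $\mathsf{PP}$-level support statement to expand $\gamma_{b;\sfa}$ in strata classes $[\widetilde{\Gamma}_{\widetilde{\delta}},\gamma_0]$ with $\widetilde\Gamma$ reducible.

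The paper avoids this by verifying the piecewise-polynomial vanishing at the source. Over the integral locus the log modification $B_{b;\sfa}^\epsilon\to B$ is the identity, so the piecewise linear function $\beta$ is zero and $\exp(\beta)=1$; moreover $\widetilde{\aj}_{b;\sfa}$ lands in the part of $\Jbar_C^{(2)}$ where $f$ is an isomorphism, and Proposition~\ref{prop:tdbirationalmodel} shows that on those cones the Todd-class correction factor is $1$ because $\min(\ell_{e_1}',\ell_{e_2}')=0$. Since $\pi_2\circ g$ is an isomorphism (of cone stacks) over the integral locus, the pushforward of the constant $1$ is $1$, so $\gamma_{b;\sfa}|_{B^{\mathrm{int}}}=0$ identically in $\mathsf{PP}$. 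To repair your argument you would need to carry out exactly this piecewise-polynomial tracking rather than pass through Chow; the Chow-level identity you derive from Proposition~\ref{pro:section} is a corollary of the paper's argument, not a substitute for it.
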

	\begin{proof}
		We consider the diagram \eqref{eq:model}. Let $\aj_{b;\sfa}': \Jbar_{b;\sfa}^\epsilon\to\Jbar_C\times_B\Jbar_C$ be the pullback of the resolved Abel-Jacobi section. Then we have
		\begin{align}
			(\aj_v')^!(\Td(-T_{\Jbar_C\times_B\Jbar_C})\cap\tau(\overline{\P})) &=(\aj_{b;\sfa}')^!(\Td(-T_{\Jbar_C\times_B\Jbar_C})\cap\tau(Rf_*\widetilde{\P})) \nonumber\\
			&=(\aj_{b;\sfa}')^!(\Td(-T_{\Jbar_C\times_B\Jbar_C})\cap f_*\tau(\widetilde{\P})) \nonumber\\
			&=g_*(\widetilde{\aj}_{b;\sfa})^!\big(\ch(\widetilde{\P})\Td(T_{\Jbar_C^{(2)}})f^*\Td(-T_{\Jbar_C\times_B\Jbar_C})\cap [\Jbar_C^{(2)}]\big) \nonumber\\
			&=g_*\big(\widetilde{\aj}_{b;\sfa}^*(\ch(\widetilde{\P})\Td^\vee(\cR_{\widetilde{\pi}})\cdot\pi_1^*\Td^\vee(\cR_\pi)^{-1}\cdot\pi_2^*\Td^\vee(\cR_\pi)^{-1})\cap [\widetilde{J_{b;\sfa}^\epsilon}]\big)\,.\label{eq:fm_aj1}
		\end{align}
		The first equality follows from Theorem \ref{thm:Plog=arinkin}, the second from the functoriality of $\tau$ under proper pushforward, the third from the base change formula for l.c.i. pullback, and the fourth from \eqref{eq:Td2} and Lemma \ref{lem:fssq}.

		We now compute the Fourier transform of $\aj_{b;\sfa}$. From the definition \eqref{eq:FM}, we have 
		\[
		\fm(\aj_{b;\sfa}) = \pi_{2*}(\aj'_{b;\sfa})^!\big(\Td(-T_{\Jbar_C\times_B\Jbar_C})\cap\tau(\overline{\P})\big)\,.
		\]
		Therefore, applying \eqref{eq:fm_aj1}, we get
		\begin{align*}
			\fm(\aj_{b;\sfa}) = \pi_{2*}g_*\big(\widetilde{\aj}_{b;\sfa}^*(\ch(\widetilde{\P})\Td^\vee(\cR_{\widetilde{\pi}})\cdot\pi_1^*\Td^\vee(\cR_\pi)^{-1}\cdot\pi_2^*\Td^\vee(\cR_\pi)^{-1})\cap [\widetilde{J_{b;\sfa}^\epsilon}]\big)\,.
		\end{align*}
		The pullback of the extended Poincar\'e line bundle along $\widetilde{\aj}_{b;\sfa}$ is calculated in Corollary \ref{cor:pullback_P}:
		\[\widetilde{\aj}_{b;\sfa}^*\ch(\widetilde{\P})= \exp(g^*\pi_2^*(-b\kappa_{0,1}+\sum_{i=1}^n a_i\xi_i))\exp(\beta)\,.\]
		Since the pullback along log maps preserves piecewise polynomials, by applying Proposition \ref{pro:td_ss} and Corollary \ref{cor:pullback_P} the class
		\begin{equation}\label{eq:error}
			\widetilde{\aj}_{b;\sfa}^* \big(\Td^\vee(\cR_{\widetilde{\pi}})\cdot\pi_1^*\Td^\vee(\cR_\pi)^{-1}\cdot\pi_2^*\Td^\vee(\cR_\pi)^{-1}\big)
		\end{equation}
		lies in $\mathsf{PP}(\widetilde{J_{\sfa}^\epsilon})$.
		The pullback of a piecewise polynomial along a log map is also piecewise polynomial. Moreover, the composition $\pi_2\circ g$ is a sequence of log modifications. Since the pushforward of a piecewise polynomial is a piecewise polynomial \cite[Proposition 67]{PRSS}, we have
		\[\gamma_{b;\sfa}:=\pi_{2*}g_*\big(\exp(\beta)\cdot \widetilde{\aj}_{b;\sfa}^* \big(\Td^\vee(\cR_{\widetilde{\pi}})\cdot\pi_1^*\Td^\vee(\cR_\pi)^{-1}\cdot\pi_2^*\Td^\vee(\cR_\pi)^{-1}\big)\big) -1 \in \mathsf{PP}(\Jbar_C)\,.\]
		
		Lastly, we show that $\gamma_{b;\sfa}$ is supported away from the integral locus. Over the locus where the underlying curve is integral, the Abel-Jacobi section is already well-defined, so the indeterminacy of $\aj_{b;\sfa}$ occurs only on reducible curves. Therefore, over the integral locus,
		\begin{enumerate}
			\item[(i)] the function $\beta$ is $0$, and
			\item[(ii)] the section $\widetilde{\aj}_{b;\sfa}^*$ lands in the locus of $\Jbar_C^{(2)}$ which is isomorphic to $\Jbar_C \times_B \Jbar_C$.
		\end{enumerate}
		From (i), we find that $\exp(\beta)=1$. Combining (ii) with the Todd class formula of Proposition \ref{prop:tdbirationalmodel} we see that the smaller of the coordinates $\ell_{e_1}',\ell_{e_2}'$ is zero, reducing \eqref{eq:error} to 1 over the integral locus. Combining these two, we conclude that the class $\gamma_{b;\sfa}$ vanishes over the integral locus. 
	\end{proof}
	\subsection{Partial Fourier transform}
	Let $\pi:\Jbar_C\to B$ be the compactified Jacobian associated with a nondegenerate stability condition.  Let $J_C\subset\Jbar_C$ be the open substack corresponding to line bundles. We consider the Fourier transform restricted to the product  $\Jbar_C\times_B J_C$.
	\begin{proposition}\label{pro:F0}
		Let $\fm^{-1}\in\CH_*(\Jbar_C\times_B\Jbar_C)$ be the inverse Fourier kernel \eqref{eq:fminv}. Then we have
		\[
		\fm^{-1}|_{\Jbar_C\times_B J_C} = (-1)^g\ch(\P^\vee)\cdot \pi_1^*\,\Td^\vee(\mathcal{R}_\pi)^{-1}\,,
		\]
		where $\mathcal{R}_\pi$ is the residue sheaf for $\pi$ defined in Lemma \ref{lem:residue}.
	\end{proposition}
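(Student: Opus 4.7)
The plan is to compute $\fm^{-1}|_U$ directly from the definition \eqref{eq:fminv}, where $U := \Jbar_C\times_B J_C$, simplify using the smoothness of $U$ together with the Hodge bundle/residue structure of $\pi$, and close with Mumford's relation on the Hodge bundle.

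First I verify that $U$ is smooth. The relative Jacobian $J_C\to B$ is smooth of relative dimension $g$ (it is the identity component of the relative Picard scheme of a flat family of curves), so the first projection $\pi_1\colon U\to\Jbar_C$ is a smooth base change, making $U$ smooth. Over $U$, the kernel $\overline{\P}|_U$ coincides with the Poincar\'e line bundle $\P$, so $\overline{\P}^{-1}|_U = \P^\vee\otimes\pi_2^*\omega_\pi[g]$ is a shifted line bundle. Hence
\[
\tau(\overline{\P}^{-1}|_U) \;=\; (-1)^g\,\ch(\P^\vee)\cdot\pi_2^*\ch(\omega_\pi)\cdot\Td(T_U),
\]
with $T_U = \pi_1^*T_{\Jbar_C}+\pi_2^*T_{J_C/B}$ via the smooth projection $\pi_1$.

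Next I rewrite each Todd/Chern factor in terms of $\EE$ and $\cR_\pi$. Formula \eqref{eq:td_Tvir} together with the identity $T_{\Jbar_C}=T_\pi^{\vir}+\pi^*T_B$ in $K$-theory gives
\[
\pi_1^*\Td(T_{\Jbar_C}) \;=\; (p_1\times_B p_2)^*\bigl[\Td^\vee(\EE)\cdot\Td(T_B)\bigr]\cdot\pi_1^*\Td^\vee(\cR_\pi)^{-1}.
\]
The main geometric observation is that $\cR_\pi$ has trivial Chow class when restricted to $J_C$: each image $\jmath_h(\Jbar_{g-h,n+2h})$ from Lemma~\ref{lem:boundary} (for $h\ge 1$) carries an admissible line bundle of degree one on every inserted rational bridge, whose pushforward to the stable model is a rank-one torsion-free sheaf that fails to be invertible at the corresponding node; thus $\jmath_h(\Jbar_{g-h,n+2h})\cap J_C=\emptyset$, and Proposition~\ref{pro:Todd} forces $\Td^\vee(\cR_\pi)^{-1}|_{J_C}=1$. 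Combining this with the exact sequence \eqref{eq:residue} and Proposition~\ref{pro:Logcotangent} one finds, as Chow classes on $J_C$, the identities $\Td(T_{J_C/B}) = (\pi|_{J_C})^*\Td^\vee(\EE)$ and $\ch(\omega_{\pi|_{J_C}}) = (\pi|_{J_C})^*e^{c_1(\EE)}$.

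Assembling these ingredients and using the elementary identity $e^{c_1(\EE)}\Td^\vee(\EE)=\Td(\EE)$ together with the cancellation $\Td(-(p_1\times_B p_2)^*T_B)\cdot(p_1\times_B p_2)^*\Td(T_B)=1$, the expression for $\fm^{-1}|_U$ collapses to
\[
(-1)^g\,\ch(\P^\vee)\cdot\pi_1^*\Td^\vee(\cR_\pi)^{-1}\cdot (p_1\times_B p_2)^*\bigl[\Td(\EE)\cdot\Td^\vee(\EE)\bigr].
\]
The step I expect to be the main obstacle is eliminating the trailing Hodge-bundle factor. This is where Mumford's relation $c(\EE)\cdot c(\EE^\vee)=1$ in $\CH^*(\Mbar_{g,n},\QQ)$ enters, obtained via Grothendieck-Riemann-Roch on the universal curve together with Serre duality: it forces all Chern classes of the virtual bundle $\EE+\EE^\vee$ to vanish, and since $\Td$ is a polynomial in Chern classes, $\Td(\EE)\cdot\Td^\vee(\EE)=\Td(\EE+\EE^\vee)=1$. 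Removing this spurious factor yields the proposition.
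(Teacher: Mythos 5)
Your proof is correct and follows essentially the same route as the paper: evaluate $\tau$ of the shifted line bundle on the smooth open locus, factor the Todd classes via \eqref{eq:td_Tvir} and Proposition~\ref{pro:Logcotangent}, and cancel the residual Hodge factor with Mumford's relation $c(\EE)c(\EE^\vee)=1$ (the paper writes this cancellation as $\Td^\vee(\EE)\ch(\tfrac12\det\EE)=1$, which is equivalent to your $\Td(\EE)\Td^\vee(\EE)=1$). The one stylistic detour is that you route the vanishing of $\cR_{\pi|_{J_C}}$ through the strata in Proposition~\ref{pro:Todd}, whereas it follows more directly from the fact that $\pi|_{J_C}\colon J_C\to B$ is a strict, smooth log morphism.
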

	\begin{proof}
		Let $\EE:=\pi_*\Omega_p^1\to B$ be the Hodge bundle on $B$. Consider the projection $\pi_1\times_B\pi_2:\Jbar_C\times_B\Jbar_C\to B$. On $\Jbar_C\times_BJ_C$, the inverse Fourier transform is expressed as
		\[\fm^{-1}|_{\Jbar_C\times_B J_C} = (-1)^g\cdot \ch(\P^\vee)\cdot (\pi_1\times_B \pi_2)^*\ch(\det \EE)\cdot\Td(T_{\Jbar_C\times_B J_C})\cdot\Td(-(\pi_1\times_B \pi_2)^*T_B)\,,\]
		where the sign comes from the shift $[-g]$ in $\Pbar^{-1}$. 
		Applying Lemma \ref{lem:residue}, this simplifies to
		\[\Td(T_{\Jbar_C\times_B J_C})\cdot\Td(-(\pi_1\times_B \pi_2)^*T_B) = \Td^\vee(\Rels_\pi)^{-1}\cdot\Td^\vee((\pi_1\times_B \pi_2)^*\EE^{\oplus 2})\,.\]
		From \cite[Corollary 5.3]{mumford83}, it follows that $c(\EE) c(\EE^\vee)=1$, which implies that all Pontryagin classes of  $\EE$ vanish. Consequently, the identity
		\begin{equation}\label{eq:toddE}
			\Td^\vee(\EE)\ch\left(\frac{1}{2}\det (\EE)\right)=1\,.
		\end{equation}
		holds. With this, the desired result follows.
	\end{proof}
	Now we consider a partial Fourier transform
	\begin{equation}\label{eq:fm_circ}
		\fm^\circ :=\ch(\P^\vee)\in\CH_*(\Jbar_C\times_B J_C)\,.
	\end{equation}
	When the genus is important, we denote it by $\fm^\circ_g$. 
	
	We prove an elementary property of $\fm^\circ$. Let $\Jzero_C\to B$ be the relative Jacobian of multidegree zero line bundles. When the stability condition for the second factor is small, then we can further restrict \eqref{eq:fm_circ} to $\Jbar_C\times_B\Jzero_C$. The morphism $\mu$ is the action defined in   \eqref{eq:action}.
	\begin{lemma}\label{lem:c1}
		On $\Jbar_C\times_B\Jzero_C$, we have $c_1(\P) = -\mu^*\Th +\pi_1^*\Th +\pi_2^*\Th$.
	\end{lemma}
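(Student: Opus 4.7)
The plan is to compute $\mu^*\Th$ directly by expanding the square of the first Chern class of the ``tensor product'' universal line bundle, and to match the three resulting terms with $\pi_1^*\Th$, $\pi_2^*\Th$, and $c_1(\P)$.

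First I would fix the universal data on $\Jbar_C\times_B\Jzero_C$. Let $\nu_1:C^{\mathrm{qs}}_1\to C$ be the quasi-stable model of the universal curve adapted to the first factor, with $\L_1$ the universal admissible line bundle on $C^{\mathrm{qs}}_1$. Because the second factor parametrises multidegree-zero line bundles, no further modification is needed on the second side: there is a universal line bundle $\L_2$ on $C$, and $\nu_1^*\L_2$ is still admissible on $C^{\mathrm{qs}}_1$ (it has degree zero on every unstable $\mathbb{P}^1$). By the modular description of the action $\mu$, the universal quasi-stable model for $\mu$ may be taken to be $C^{\mathrm{qs}}_1$ itself, and
\[
\mu^*\L \;\cong\; \L_1\otimes \nu_1^*\L_2\qquad\text{on }C^{\mathrm{qs}}_1.
\]

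Next I would expand, writing $p:C^{\mathrm{qs}}_1\to\Jbar_C\times_B\Jzero_C$:
\begin{align*}
\mu^*\Th \;&=\; -\tfrac12\,p_*\bigl(c_1(\L_1)+c_1(\nu_1^*\L_2)\bigr)^2\\
&=\; -\tfrac12\,p_*c_1(\L_1)^2 \;-\; p_*\bigl(c_1(\L_1)c_1(\nu_1^*\L_2)\bigr) \;-\; \tfrac12\,p_*c_1(\nu_1^*\L_2)^2.
\end{align*}
The first term is $\pi_1^*\Th$ by definition of $\Th$ on $\Jbar_C$. The middle term equals $c_1(\P)$ by the Grothendieck--Riemann--Roch formula for the Deligne pairing \eqref{eq:Deligne} (with $\widetilde\P$ represented by $\langle\L_1,\nu_1^*\L_2\rangle$, as in Proposition \ref{pro:pairing}).

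For the third term I would use that $\nu_1$ is a birational contraction of unstable rational bridges, so $\nu_{1*}1=1$ on $C$ and therefore, by the projection formula,
\[
p_*c_1(\nu_1^*\L_2)^2 \;=\; (p_C)_*\nu_{1*}\nu_1^*c_1(\L_2)^2 \;=\; (p_C)_*c_1(\L_2)^2,
\]
where $p_C:C\to\Jbar_C\times_B\Jzero_C$ is the universal stable curve. This identifies $-\tfrac12\,p_*c_1(\nu_1^*\L_2)^2$ with $\pi_2^*\Th$. Combining the three contributions and rearranging yields $c_1(\P)=-\mu^*\Th+\pi_1^*\Th+\pi_2^*\Th$, as claimed.

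The only non-routine point is checking that the quasi-stable models are compatible with $\mu$ and that the unstable rational components genuinely contribute trivially in the projection-formula step; once this is set up, the identity is a formal manipulation of Deligne pairings.
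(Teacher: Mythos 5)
Your proof is correct and follows essentially the same route as the paper's (very terse) argument: the paper simply records that $c_1(\P)=\hat p_*(c_1(\L_1)c_1(\L_2))$ via the Deligne pairing and leaves the expansion of $\bigl(c_1(\L_1)+c_1(\L_2)\bigr)^2$ implicit, whereas you spell out the expansion and carefully justify the two auxiliary points (that $\mu^*\L\cong\L_1\otimes\nu_1^*\L_2$ on the quasi-stable model of the first factor, and that the cross-term for $\pi_2^*\Th$ can be pushed down to the stable curve via the projection formula and $\nu_{1*}1=1$). These are exactly the details the paper's one-line proof omits, so this is the same argument, just written out.
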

	\begin{proof}
		Let $\hat{p}: \widehat{C}\to \Jbar_C\times_B J_C$ be the universal quasi-stable curve. By the Deligne pairing formula \eqref{eq:Deligne}, we have $c_1(\P) = \hat{p}_*(c_1(\L_1)c_1(\L_2))$.
		Therefore we get the desired formula.
	\end{proof}
	\subsection{Recursive structure of Fourier transform}
	
	We prove that the image of the Fourier transform, when restricted to $\Jzero_{g,n}$, has a recursive structure. For a nondegenerate stability condition $\epsilon$, we consider the partial Fourier transform \eqref{eq:fm_circ}, given by $\fm^\circ_g :\CH^*(\Jbar^\epsilon_{g,n})\to\CH^*(\Jzero_{g,n})$. The following is the main result of this section:
	\begin{theorem}\label{thm:inductive}
		Let $\gamma$ be a piecewise polynomial class on $\Jbar_{g,n}^\epsilon$ and let $\Xi_g$ be a monomial in the $\xi$-classes and the $\kappa_{0,1}$-class on $\Jbar_{g,n}^\epsilon$. There exists an effective algorithm to compute $\fm_g^\circ(\gamma\cdot\Xi_g)$ in terms of $\fm_h^\circ (\Xi_h)$ over $\Mbar_{h,m}$ for all pairs $(h,m)$, where either $h=g,m \le n$ or $h< g$ with any $m\geq 1$.
	\end{theorem}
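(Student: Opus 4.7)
The plan is to decompose $\gamma$ as a sum of normally decorated boundary strata and compute the effect of $\fm_g^\circ$ on each summand via the projection formula, reducing any non-trivial boundary contribution to a lower-genus Fourier transform through Lemma \ref{lem:boundary} and Lemma \ref{lem:torsor}. Concretely, any $\gamma\in\mathsf{PP}(\Jbar_{g,n}^\epsilon)$ admits a presentation
\[
\gamma \;=\; \gamma_0 \;+\; \sum_{\Gamma} (\jmath_\Gamma)_* P_\Gamma,
\]
where $\gamma_0$ is a polynomial in the divisors $\xi_i,\kappa_{0,1}$, the sum ranges over non-trivial quasi-stable graphs $\Gamma$ with $h:=h(\Gamma)\ge 1$ edges, and $P_\Gamma$ is a polynomial in $\psi_i,\xi_i,\kappa_{0,1}$ on the stratum. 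By Lemma \ref{lem:boundary}, each $\jmath_\Gamma$ factors through a map from $\Jbar_{g-h,n+2h}^{\epsilon_h}$. The contribution from $\gamma_0$ gives monomials in $\xi_i,\kappa_{0,1}$ at the same genus and number of markings, so it is a base case; it therefore suffices to treat a single summand $\gamma_\Gamma:=(\jmath_\Gamma)_*P_\Gamma$ with $h\ge 1$.

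By the projection formula, $\fm_g^\circ(\gamma_\Gamma\cdot\Xi_g) = \fm_g^\circ\big((\jmath_\Gamma)_*(P_\Gamma\cdot\jmath_\Gamma^*\Xi_g)\big)$. For $1\le i\le n$ the pullback $\jmath_\Gamma^*\xi_i$ equals the $\xi_i$-class on the normalization, while $\jmath_\Gamma^*\kappa_{0,1}$ differs from $\kappa_{0,1}$ on the normalization by an explicit linear combination of $\xi$-classes at the $2h$ new markings, via the standard pullback formula for $\omega_{\log}$ under partial normalization. Thus $P_\Gamma\cdot\jmath_\Gamma^*\Xi_g$ remains a polynomial in the $\psi$-, $\xi$- and $\kappa_{0,1}$-classes on $\Jbar_{g-h,n+2h}^{\epsilon_h}$, and the remaining task is to compute $\fm_g^\circ\circ(\jmath_\Gamma)_*$ on such polynomials.

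Set $W:=\Jbar_{g-h,n+2h}^{\epsilon_h}\times_{\Mbar_{g-h,n+2h}}(\Jzero_{g,n}|_{\Mbar_{g-h,n+2h}})$, equipped with the closed immersion $\tilde\jmath:W\to\Jbar_{g,n}^\epsilon\times_B\Jzero_{g,n}$, the projection $\tilde\pi_2:W\to\Jzero_{g,n}|_{\Mbar_{g-h,n+2h}}$, the $\Gm^h$-torsor $q:\Jzero_{g,n}|_{\Mbar_{g-h,n+2h}}\to\Jzero_{g-h,n+2h}$ of Lemma \ref{lem:torsor}, and the closed embedding $\iota:\Jzero_{g,n}|_{\Mbar_{g-h,n+2h}}\hookrightarrow\Jzero_{g,n}$. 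Using Proposition \ref{pro:pairing} and the behavior of the Deligne pairing under partial normalization at each of the $h$ nodes, one verifies an identity of the form
\[
\tilde\jmath^*c_1(\P)\;=\;(q\circ\tilde\pi_2)^*c_1(\P_{g-h})\;+\;R,
\]
where $R$ is an explicit $\QQ$-polynomial in the $\xi$-classes at the glued markings, encoding the residue contribution at each node. Consequently $\tilde\jmath^*\ch(\P^\vee)$ factors as $(q\circ\tilde\pi_2)^*\ch(\P_{g-h}^\vee)$ multiplied by an explicit polynomial on $W$. Assembling the projection formulas along $\tilde\jmath$, $\tilde\pi_2$ and $\iota_*$ yields
\[
\fm_g^\circ(\gamma_\Gamma\cdot\Xi_g)\;=\;\iota_*\sum_k c_k\cdot q^*\fm_{g-h}^\circ(\Xi_{g-h,k}),
\]
with $\Xi_{g-h,k}$ monomials in $\xi,\kappa_{0,1}$ on $\Jbar_{g-h,n+2h}^{\epsilon_h}$ and $c_k$ tautological classes on $\Jzero_{g,n}|_{\Mbar_{g-h,n+2h}}$ computed from $P_\Gamma$, $\jmath_\Gamma^*\Xi_g$ and $R$. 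As every non-trivial stratum strictly decreases the genus, the recursion terminates.

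The principal obstacle is the precise determination of the residue polynomial $R$: one must analyze node-by-node how the universal quasi-stable sheaves $\L_1,\L_2$ restrict under partial normalization, and in particular how the Deligne pairing $\langle\L_1,\L_2\rangle$ decomposes into the pairing of the normalized line bundles plus residue divisors supported at the new markings, then keep track of the interaction of these residue terms with the $\Gm^h$-torsor structure of $q$. Once $R$ is pinned down explicitly, the remaining pushforwards are purely formal applications of the projection formula.
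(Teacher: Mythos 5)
Your proposal correctly identifies the local geometry near a boundary stratum (factoring through $\Jbar_{g-h,n+2h}^{\epsilon_h}$ via Lemma~\ref{lem:boundary}, using the $\Gm^h$-torsor $q$ from Lemma~\ref{lem:torsor}, and applying the projection formula), but there are two genuine gaps.

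First, the ``residue polynomial'' $R$ you flag as the principal obstacle does not exist: one shows directly that $(\jmath_h')^*\P_g \cong (\id\times q)^*\P_{g-h}$ as line bundles, because $\jmath_h^*\F_g\cong\nu_*\F_{g-h}$ and the Deligne pairing satisfies the projection formula $\langle \nu_*\F_{g-h},\L^{\Gm}\rangle_{\overline{C'}}\cong\langle\F_{g-h},\nu^*\L^{\Gm}\rangle_{C'}$ for the exact pushforward $\nu_*$ along the partial normalization. This is the content of Proposition~\ref{pro:fm_max}. So the hard step you anticipate is actually trivial, while the genuine difficulty lies elsewhere.

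Second, and more seriously, your claim that ``every non-trivial stratum strictly decreases the genus, so the recursion terminates'' is false, and the rest of your reduction papers over the step where this breaks. A general quasi-stable graph $\widetilde{\Gamma}$ is a subdivision of a \emph{stable} graph $\Gamma$ at $h$ of its edges; the factorization through $\Jbar_{g-h,n+2h}^{\epsilon_h}$ only accounts for those $h$ subdivided self-loops, after which there remains the genuine boundary stratum $\Jbar_{\Gamma_\delta}\subset \Jbar_{g-h,n+2h}^{\epsilon_h}$ for the underlying stable graph $\Gamma$ (which need not be a single vertex). One still has to compute $\fm^\circ_{g-h}\circ (j_{\Gamma_\delta})_*$. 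The paper handles this via a dichotomy: if $h^1(\Gamma)>0$ the Fourier image vanishes (Corollary~\ref{cor:fm_st}, resting on Proposition~\ref{pro:Poincare_bir}), and if $h^1(\Gamma)=0$ the Fourier transform factorizes over the vertices (Proposition~\ref{pro:tree}). It is precisely the tree case that produces contributions with $h=g$ and $m\le n$ — e.g.\ a two-vertex tree with a genus-$g$ vertex — which is why the theorem's statement explicitly allows these. Your argument, as written, cannot account for this case and the recursion as you set it up does not visibly terminate. Finally, your decomposition $\gamma=\gamma_0+\sum(\jmath_\Gamma)_*P_\Gamma$ with ``$\gamma_0$ a polynomial in $\xi_i,\kappa_{0,1}$'' conflates piecewise polynomials with tautological divisor classes; a piecewise polynomial with trivial underlying graph is a constant, and the $\xi_i,\kappa_{0,1}$ divisors are not themselves piecewise polynomials.
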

	The precise algorithm in Theorem \ref{thm:inductive} will be clear from the proof below. The basic idea is straightforward - a piecewise polynomial class on $\Jbar_{g,n}^\epsilon$ is supported on the boundary. Each boundary stratum (of positive codimension) corresponds to some stable graph $\Gamma$ along with a quasi-stable subdivision $\widetilde{\Gamma}$ and an $\epsilon$-stable multidegree $\delta$ on $\widetilde{\Gamma}$. We show that the Fourier transform reduces to the transforms on the ''smaller"  $\Jbar_{h,m}$ corresponding to the vertices of $\Gamma$, via an analysis of the boundary strata of the logarithmic Picard group.
	
	Before proving Theorem \ref{thm:inductive}, we study two extreme cases: a class supported on a maximally subdivided stable graph \eqref{eq:bdy} and a class supported on a stable graph. We begin with the first case. Consider the following diagram where the right square is a fiber product
	\begin{equation}\label{eq:Jzero_tor}
		\begin{tikzcd}
			& J^{\underline{0}, \Gm^h}\ar[r,"r_h"]\ar[d]\ar[dl,"q"'] & \Jzero_{g,n}\ar[d]\\
			\Jzero_{g-h,n+2h}\ar[r] & \Mbar_{g-h,n+2h}\ar[r] & \Mbar_{g,n}\,.
		\end{tikzcd}
	\end{equation}
	\begin{proposition}\label{pro:fm_max}
		For $1\leq h\leq g$, let $\jmath_h: \Jbar_{g-h,n+2h}\to\Jbar_{g,n}$ be the morphism defined in \eqref{eq:bdy}. For a class $\alpha$ in $\CH^*(\Jbar_{g-h,n+2h})$, we have
		\[\fm_g^\circ ((\jmath_h)_*\alpha) = (r_{h})_*q^*\fm_h^\circ(\alpha)\,.\]
	\end{proposition}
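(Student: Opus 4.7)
The plan is to reduce both sides to the same pushforward on a common fiber product by base change and the projection formula, and then to identify the two Poincar\'e line bundles via the Deligne pairing. First I would form the cartesian square
\[
\begin{tikzcd}
Z \ar[r,"J"] \ar[d,"P_1"'] & \Jbar_{g,n}^{\epsilon}\times_{\Mbar_{g,n}} \Jzero_{g,n} \ar[d,"\pi_1"]\\
\Jbar_{g-h,n+2h}^{\epsilon_h}\ar[r,"\jmath_h"] & \Jbar_{g,n}^{\epsilon}
\end{tikzcd}
\]
where $Z = \Jbar_{g-h,n+2h}^{\epsilon_h}\times_{\Mbar_{g-h,n+2h}} J^{\underline{0},\Gm^h}$ by Lemma~\ref{lem:torsor}. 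Let $P_2:Z\to J^{\underline{0},\Gm^h}$ be the other projection, so $\pi_2\circ J = r_h\circ P_2$. Since $\pi_1$ is flat, base change gives $J^{*}\pi_1^{*} = P_1^{*}\jmath_h^{*}$; combined with the projection formula we obtain
\[
\fm_g^\circ\bigl((\jmath_h)_*\alpha\bigr)
\; =\; (\pi_2)_*\bigl(\pi_1^*(\jmath_h)_*\alpha\cdot \ch(\P_g^\vee)\bigr)
\; =\; (r_h\circ P_2)_*\bigl(P_1^*\alpha \cdot \ch(J^*\P_g^\vee)\bigr).
\]

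The key step is to verify that $J^*\P_g \cong \tilde{q}^*\P_h$, where $\tilde{q}:Z\to Z':=\Jbar_{g-h,n+2h}^{\epsilon_h}\times_{\Mbar_{g-h,n+2h}}\Jzero_{g-h,n+2h}$ is the map induced by $q$ on the second factor. By Lemma~\ref{lem:boundary}, the universal quasi-stable curve on $\Jbar_{g,n}^\epsilon$ pulled back along $\jmath_h$ is canonically isomorphic to the universal quasi-stable curve of $\Jbar_{g-h,n+2h}^{\epsilon_h}$, and the pullback of the first universal line bundle is the first universal line bundle downstairs. Moreover, a multidegree $0$ line bundle on the glued genus-$g$ curve together with the $\Gm^h$-gluing data (this is precisely the data parametrized by $J^{\underline 0,\Gm^h}$) pulls back, under partial normalization, to the second universal line bundle on the normalization, i.e.\ the pullback of the universal line bundle on $\Jzero_{g-h,n+2h}$ via $\tilde q$. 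The identification $J^*\P_g \cong \tilde{q}^*\P_h$ then follows from the bilinearity and the normalization of the Deligne pairing (Definition~\ref{def:Poincare_line}), since the pairing is compatible with pullback of the underlying curve.

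Finally I would apply base change to the flat square
\[
\begin{tikzcd}
Z \ar[r,"P_2"] \ar[d,"\tilde q"'] & J^{\underline 0, \Gm^h} \ar[d,"q"]\\
Z' \ar[r,"\pi_2^h"] & \Jzero_{g-h,n+2h}
\end{tikzcd}
\]
(here $q$ is flat, being a $\Gm^h$-torsor): using $P_1 = \pi_1^h\circ\tilde q$, we get
\[
q^*\fm_h^\circ(\alpha) \;=\; q^*(\pi_2^h)_*\bigl((\pi_1^h)^*\alpha\cdot\ch(\P_h^\vee)\bigr)
\;=\; (P_2)_*\bigl(P_1^*\alpha\cdot\ch(\tilde{q}^*\P_h^\vee)\bigr),
\]
and applying $(r_h)_*$ matches the expression for $\fm_g^\circ((\jmath_h)_*\alpha)$ obtained above. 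The main obstacle is the Deligne pairing identification $J^*\P_g \cong \tilde q^*\P_h$; once the modular interpretations of $\jmath_h$ and of the torsor $q$ (Lemmas~\ref{lem:torsor} and~\ref{lem:boundary}) are in place, this is a straightforward bilinearity computation, but it requires carefully tracking the universal curves and line bundles through partial normalization.
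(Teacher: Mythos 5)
Your overall architecture (form the cartesian square with $Z$, base change through $\pi_1$, identify the restricted Poincar\'e line bundle with the one on the lower-genus compactified Jacobian, then base change through $q$) matches the paper's strategy. The problem is that the key identification step, $J^*\P_g \cong \tilde q^*\P_h$, is where essentially all the content lives, and your justification for it contains a genuine error.

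You claim, citing Lemma~\ref{lem:boundary}, that "the universal quasi-stable curve on $\Jbar_{g,n}^\epsilon$ pulled back along $\jmath_h$ is canonically isomorphic to the universal quasi-stable curve of $\Jbar_{g-h,n+2h}^{\epsilon_h}$." This is false, and Lemma~\ref{lem:boundary} does not say it. What the lemma produces is a \emph{gluing} morphism $\nu$ that identifies the $(n+2i-1)$st and $(n+2i)$th markings. On the target, the pushforward $\nu_*\L$ of the universal line bundle fails to be locally free at the $h$ newly created self-nodes (locally $\nu_*\CO_{C'}\cong \mathfrak m$ at a node), so the quasi-stable model of $(\overline{C'},\nu_*\L)$ acquires a $\PP^1$ bubble at each of those nodes. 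The two universal quasi-stable curves are therefore \emph{not} isomorphic; one has $2h$ extra marked points where the other has $h$ chains of exceptional $\PP^1$'s, and neither maps to the other. Consequently, "the pairing is compatible with pullback of the underlying curve" is not applicable here.

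Because the curves are related by $\nu$ and not by an isomorphism, bilinearity and normalization of the Deligne pairing are not enough. What is actually needed, and what the paper supplies, is: (i) the extension of the Deligne pairing $\langle\F,\L\rangle$ to pairs where $\F$ is a rank-one torsion-free sheaf on the stable curve, via the determinant of cohomology; (ii) the compatibility $\jmath_h^*\F_g\cong\nu_*\F_{g-h}$ for the universal torsion-free sheaves from Lemma~\ref{lem:boundary}; and (iii) the projection-formula step $\langle\F_{g-h},\nu^*\L^{\Gm}\rangle_{C'}\cong\langle\nu_*\F_{g-h},\L^{\Gm}\rangle_{\overline{C'}}$, which uses the exactness of $\nu_*$ for the finite gluing map. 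These are the isomorphisms \eqref{eq:P_compare} in the paper. Your bookkeeping with $Z$, $P_1$, $P_2$, $\tilde q$ would then close the argument exactly as you wrote; it is only the Poincar\'e-bundle comparison that needs to be replaced by the torsion-free-sheaf/projection-formula argument rather than an appeal to an isomorphism of curves that does not exist.
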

	\begin{proof}
		To simplify notation, we denote by $C=\overline{\mathcal{C}}_{g,n} \to B = \Mbar_{g,n}$ and $C'=\overline{\mathcal{C}}_{g-h,n+2h} \to \partial B = \Mbar_{g-h,n+2h}$ the two universal curves.  Consider the diagram:
		\begin{equation}\label{eq:fm_maximal}
			\begin{tikzcd}
				\Jbar_{\partial C}\times_{\partial B} J^{\underline{0},\Gm^h}_{\partial C} \ar[d,"\pi_1'"]\ar[r,"\jmath_h'"] & \Jbar_C\times_{B}\Jzero_C\ar[r,"\pi_2"]\ar[d,"\pi_1"] &\Jzero_C \\
				\Jbar_{\partial C}\ar[r,"\jmath_h"] & \Jbar_C & 
			\end{tikzcd}
		\end{equation}
		where $\jmath_h$ is the map defined in Lemma \ref{lem:boundary} and the square is Cartesian. Let $q: J^{\underline{0}, \Gm^h}_{C'} \to \Jzero_{C'}$ be the projection as described in \eqref{eq:Jzero_tor}. Let $\nu: C' \to \overline{C'}$ be the morphism that results from gluing the $(n+2i-1)$-th and $(n+2i)$-th markings for all $1 \leq i \leq h$. Let $\L^{\Gm}$ denote the restriction of the universal line bundle on $C \to \Jzero_C$ to $J^{\underline{0}, \Gm^h}_{C'}$, and let $\L$ be the universal line bundle on $C' \to \Jzero_{C'}$.
		
		We first compare two Poincaré line bundles. Let $\P_g$ be the Poincaré line bundle on $\Jbar_C \times_B \Jzero_C$ and let $\P_{g-h}$ be the Poincaré line bundle on $\Jbar_{C'} \times_{\partial B} \Jzero_{C'}$. It is easier to work with the universal sheaf described by a rank $1$ torsion free sheaf. Let $\F_g$ be the universal sheaf on $C\to \Jbar_C$ and $\F_{g-h}$ be the universal sheaf on $C'\to \Jbar_{C'}$. For the gluing morphism $\nu: C'\to \overline{C'}$ over $\Jbar_{C'}$, Lemma \ref{lem:boundary} implies that
		\[\jmath_h^*\F_g\cong \nu_*\F_{g-h}\,.\]
		The Deligne pairing naturally extends when one of the factors is a rank $1$ torsion-free sheaf via an appropriate determinant line bundle (\cite[eq. (3)]{arinkin1}). 
		For $\id\times q :\Jbar_{\partial C}\times_{\partial B} J^{\underline{0},\Gm^h}_{\partial C}\to \Jbar_{\partial C}\times_{\partial B}\Jzero_{\partial C}$, we have
		\begin{align}\label{eq:P_compare}
			(\id\times q)^*\P_{g-h} &\cong (\id\times q)^*\langle \F_{g-h}, \L\rangle_{C'} \cong \langle\F_{g-h},\nu^*\L^{\Gm}\rangle_{C'} \nonumber\\
			& \cong \langle\nu_*\F_{g-h},\L^{\Gm}\rangle_{\overline{C'}} \cong (\jmath_h')^*\P_g\,,
		\end{align}
		where the third isomorphism follows from $\nu_*$ being exact. 
		
		We apply the above computation to \eqref{eq:fm_maximal}. For any $\alpha\in \CH^*(\Jbar_{g-h,n+2h})$, we have
		\begin{align*}
			\fm_g^\circ((\jmath_{h})_*\alpha) &=\pi_{2*}(\pi_1^*(\jmath_{h})_*\alpha\cup\ch(\P_g)) \\
			&=\pi_{2*}(\jmath'_{h})_*((\pi_1')^*\alpha\cup (j_h')^*\ch(\P_g))\\
			&= \pi_{2*}(\jmath'_{h})_*((\id\times q)^*\pi_1^*\alpha\cup (\id\times q)^*\ch(\P_{g-h}))\\
			&= \pi_{2*}(\jmath'_{h})_*(\id\times q)^*(\pi_1^*\alpha\cup\ch(\P_{g-h}))\\
			&= (r_{h})_*\pi_{2*}(\id\times q)^*(\pi_1^*\alpha\cup\ch(\P_{g-h}))\\
			&= (r_{h})_*q^*\pi_{2*}(\pi_1^*\alpha\cup\ch(\P_{g-h}))=(r_{h})_*q^*\fm_{g-h}^\circ(\alpha)\,,
		\end{align*}
		where the third equality follows from \eqref{eq:P_compare} and the other equalities are from the projection formula. This completes the proof.
	\end{proof}
	
	Now we describe the strata of $\Jbar_{g,n}^\epsilon$ corresponding to stable graphs. For a stable graph $\Gamma$ of genus $g$ with $n$ markings, let
	\[\Mbar_\Gamma := \prod_{v\in V(\Gamma)} \Mbar_{g(v),n(v)}\to\Mbar_{g,n}\]
	denote the gluing map. Let $\delta: V(\Gamma)\to\ZZ$ be an $\epsilon$-stable multidegree. Let $\Jbar_{\Gamma_\delta}$ be the compactified Jacobian defined as in \eqref{eq:stratum}. Then $\Jbar_{\Gamma_\delta}$ is smooth.
	
	Let $\Gamma$ be a stable graph with $h^1(\Gamma)=0$, and let $\delta:V(\Gamma)\to \ZZ$ be the unique $\epsilon$-stable multidegree. Then there exists a nondegenerate stability condition $\epsilon_v$ for $\Mbar_{g(v),n(v)}$ of degree $\delta(v)$ such that the stratum $\Jbar_{\Gamma_\delta}$ is isomorphic to  $\prod_{v\in V(\Gamma)}\Jbar_{g(v),n(v)}^{\epsilon_v}$.
	\begin{proposition}\label{pro:tree}
		Let $\Gamma$ be a stable graph with $h^1(\Gamma)=0$. For each vertex $v\in V(\Gamma)$,  choose  a class $\alpha_v\in \CH^*(\Jbar^{\epsilon_v}_{g(v),n(v)})$. Then, the Fourier transform satisfies the factorization property:
		\[\fm_g^\circ\big(\prod_{v\in V(\Gamma)}\alpha_v\big) = \prod_{v\in V(\Gamma)}\fm_{g(v)}^\circ(\alpha_v)\,.\]
	\end{proposition}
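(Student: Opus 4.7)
The plan is to exploit $h^1(\Gamma) = 0$ to reduce the claim to a Künneth/external-product decomposition over the vertices. Since $\Gamma$ is a tree, the $\Gm^{h^1(\Gamma)}$-torsor $q$ of \eqref{eq:picboundary} has trivial structure group, so the stratum is canonically $\Jbar_{\Gamma_\delta}\cong\prod_{v}\Jbar^{\epsilon_v}_{g(v),n(v)}$ and the restriction of the relative Jacobian satisfies $\Jzero_{g,n}|_{\Mbar_\Gamma}\cong\prod_v\Jzero_{g(v),n(v)}$. Let $\jmath_\Gamma$ and $\iota_\Gamma$ denote the resulting gluing maps into $\Jbar_{g,n}$ and $\Jzero_{g,n}$, and write $X:=\prod_v\bigl(\Jbar^{\epsilon_v}_{g(v),n(v)}\times_{\Mbar_{g(v),n(v)}}\Jzero_{g(v),n(v)}\bigr)$, with projections $\pi_1',\pi_2'$ to its two factors. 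I would fit these identifications into the commutative diagram
\[
\begin{tikzcd}[column sep=small]
X \ar[r,"\jmath'_\Gamma"]\ar[d,"\pi_1'"] & \Jbar_{g,n}\times_B\Jzero_{g,n}\ar[r,"\pi_2"]\ar[d,"\pi_1"] & \Jzero_{g,n}\\
\prod_v\Jbar^{\epsilon_v}_{g(v),n(v)}\ar[r,"\jmath_\Gamma"] & \Jbar_{g,n} & \prod_v\Jzero_{g(v),n(v)}\ar[u,"\iota_\Gamma"']
\end{tikzcd}
\]
in which the left-hand square is Cartesian and $\pi_2\circ\jmath'_\Gamma=\iota_\Gamma\circ\pi_2'$.

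The crucial step is the Poincaré decomposition
\[
(\jmath'_\Gamma)^*\P_g \;\cong\; \boxtimes_{v\in V(\Gamma)}\P_{g(v)}\,.
\]
By Definition~\ref{def:Poincare_line}, $\P_g$ is a Deligne pairing, and the universal quasi-stable curve over $X$ is obtained by gluing the individual universal curves $C_v$ at the edges of $\Gamma$. Because $h^1(\Gamma)=0$, the universal multidegree-zero line bundle on $\prod_v\Jzero_{g(v),n(v)}$ descends from a unique (up to isomorphism) line bundle on $C_\Gamma$, so that its pullback along the partial normalization $\nu:\sqcup_v C_v\to C_\Gamma$ is canonically the collection of individual universal line bundles on the $C_v$. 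Exactness of $\nu_*$ and the pushforward compatibility for the Deligne pairing — the tree-analogue of the comparison \eqref{eq:P_compare} used in Proposition~\ref{pro:fm_max} — then split the pairing as an external tensor product. This is the only place where the tree hypothesis is essential: for $h^1(\Gamma)>0$ the torsor is nontrivial and the decomposition would acquire correction terms from the gluing degrees of freedom.

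With the Poincaré decomposition in hand, the remainder is routine. Writing $\prod_v\alpha_v = (\jmath_\Gamma)_*(\boxtimes_v\alpha_v)$, one combines base change along the Cartesian square, the projection formula along $\jmath_\Gamma$, and the commutativity $\pi_2\circ\jmath'_\Gamma = \iota_\Gamma\circ\pi_2'$ to obtain
\[
\fm^\circ_g\bigl(\textstyle\prod_v\alpha_v\bigr) = (\iota_\Gamma)_*\pi'_{2*}\Bigl(\boxtimes_v\bigl(\pi_{1,v}^*\alpha_v\cdot\ch(\P_{g(v)}^\vee)\bigr)\Bigr)\,,
\]
and finishes by applying the Künneth formula on $X$ to identify this with $(\iota_\Gamma)_*\bigl(\boxtimes_v\fm^\circ_{g(v)}(\alpha_v)\bigr)=\prod_v\fm^\circ_{g(v)}(\alpha_v)$. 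The main obstacle is verifying the Poincaré decomposition, which is exactly what the tree hypothesis makes possible; the rest is a formal consequence.
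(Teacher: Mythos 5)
Your proof is correct and follows the same strategy as the paper: establish the external-product decomposition $\P_\Gamma\cong\boxtimes_v\P_v$ on the tree stratum and deduce the factorization formally via base change, projection formula, and compatibility of pushforward with external products. The only difference is cosmetic — the paper invokes \cite[Lemma 5.5]{MRV1} for the Poincar\'e decomposition, whereas you sketch a Deligne-pairing argument in the spirit of the comparison \eqref{eq:P_compare} in Proposition~\ref{pro:fm_max}.
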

	\begin{proof}
		Let $\P_\Gamma$ be the restriction of the Poincar\'e line bundle on $\Jbar^{\epsilon}_{g,n}\times_{\Mbar_{g,n}}\Jzero_{g,n}$ to $\Jbar_{\Gamma_\delta}\times_{\Mbar_{\Gamma}}\Jzero_\Gamma$. By \cite[Lemma 5.5]{MRV1} we have
		\[\P_\Gamma \cong \boxtimes_{v\in V(\Gamma)} \P_{v}\,,\]
		where $\P_v$ are Poincar\'e line bundles on $\Jbar_{g(v),n(v)}\times_{\Mbar_{g(v),n(v)}}\Jzero_{g(v),n(v)}$. This gives the result. 
	\end{proof}
	
	Let $\Gamma$ be a stable graph with $h^1(\Gamma)>0$, and let $\delta:V(\Gamma)\to \ZZ$ be an $\epsilon$-stable multidegree. We describe the boundary strata $\Jbar_{\Gamma_\delta}$, up to birational equivalence. Consider the composition
	\begin{equation}\label{eq:stratum_lpic}
		\Jbar_{\Gamma_\delta}\to \pic_{\Gamma_\delta}\xrightarrow{q}\prod_{v\in V(\Gamma)}\pic_{g(v),n(v),\delta(v)}\to \prod_{v\in V(\Gamma)}\lpic_{g(v),n(v)}\,,
	\end{equation}
	where the first morphism is from \eqref{eq:stratum}, the second morphism is from \eqref{eq:picboundary}, and the third morphism is from \eqref{eq:pic_lpic}. For each vertex $v$, choose any nondegenerate stability condition $\epsilon_v$ for $\Mbar_{g(v),n(v)}$ of degree $\delta(v)$, and set
	\[\Jbar^{\Gamma_\delta} := \prod_{v\in V(\Gamma)}\Jbar_{g(v),n(v)}^{\epsilon_v}\,.\] 
	Combining \eqref{eq:stratum_lpic} and \eqref{eq:Jbar_lpic} yields an f.s. fiber diagram:
	\begin{equation}\label{eq:tildeJ}
		\begin{tikzcd}
			\widetilde{J}_{\Gamma_\delta}\ar[r,"s"]\ar[d,"r"] & \Jbar^{\Gamma_\delta}\ar[d,"t"]\\
			\Jbar_{\Gamma_\delta}\ar[r,"\rho"] &\prod_{v\in V(\Gamma)}\lpic_{g(v),n(v)}\,.
		\end{tikzcd}
	\end{equation}
	Since the right vertical map is a log modification, $r:\widetilde{J}_{\Gamma_\delta}\to\Jbar_{\Gamma_\delta}$ is also a log modification.
	
	To understand the space $\widetilde{J}_{\Gamma_\delta}$, it remains to analyze the composed map $\rho$ from (\ref{eq:stratum_lpic}). 
	\begin{lemma}\label{lem:tildeJ}
		The stack $\widetilde{J}_{\Gamma_\delta}$ parametrizes tuples
		\[
		(C,L,\tau,C_v',\alpha_v)\,,
		\] 
		where 
		\begin{itemize}
			\item $C$ is a quasi-stable curve, and $L$ is a $\epsilon$-stable line bundle. 
			\item $\tau$ is a specialization $\Gamma_C \to \Gamma$ satisfying \eqref{eq:mdeg}.
			\item $C_v'$ is a quasi-stable model of $C_v$, and $\alpha_v$ is a piecewise linear function on $C_v'$ such that 
			\[
			L_v(\alpha_v) \textup{ is } \epsilon_v\textup{-stable}
			\] 
		\end{itemize}
	\end{lemma}
    \begin{proof}
    The moduli space $\Jbar_{\Gamma_\delta}$ parametrizes triples $(C,L,\tau)$, where $C$ is a quasi-stable curve, $L$ is an $\epsilon$-stable line bundle, and $\tau$ is a specialization from the dual graph $\Gamma_C$ of $C$ to $\Gamma$, so that for each $v \in V(\Gamma)$, 
    \begin{equation}\label{eq:mdeg}
        \sum_{w \in \tau^{-1}(v)} \deg L|_{C_w} = \delta(v).
    \end{equation}
	Let $\nu: C^\nu \to C$ be the partial normalization of $C$ determined by normalizing along the nodes corresponding to the edges $\tau^{-1}(e), e \in E(\Gamma)$. Write $C^\nu_v$ for the union of components of $C^{\nu}$ corresponding to the subgraph $\tau^{-1}(v)$ for $v \in V(\Gamma)$. The morphism $q$ sends $(C,L,\tau)$ to 
	\[
	(C^\nu_v,\, \nu^*L_v)_{v \in V(\Gamma)}\,,
	\]
	where $\nu^*L_v$ is the restriction of $\nu^*L$ to $C^\nu_v$. The quasistable components of $C$ lying over edges $e \in E(\Gamma)$ become rational tails of length $1$ attached to the components $C^{\nu}_v$, and $\nu^*L_v$ has degree $1$ on such tails. There is a unique piecewise linear function $\beta_v$ on $C^{\nu}_v$ which has value $0$ at the vertices which are not rational tails, and slope $1$ on the oriented edges emanating away from the rational tail vertices. The line bundle $\nu^*L_v(\beta_v)$ then has degree $0$ on rational tails, and hence is induced by a line bundle $L_v$ on the curve $C_v$ obtained by contracting the rational tails of $C^{\nu}_v$. The composed map $\rho$ takes $(C,L,\tau)$ to 
	\[
	(C_v,[L_v])_{v \in V(\Gamma)}
	\]
	where $L_v$ is the associated log line bundle of $L_v$.
    \end{proof}
One difficulty in understanding \eqref{eq:tildeJ} is that $s: \widetilde{J}_{\Gamma_\delta} \to \Jbar^{\Gamma_\delta}$ has positive relative dimension. To address this, we introduce an algebraic log stack that sits in between, which provides a genus-recursive structure for the Fourier transform.

The logarithmic multiplicative group $\Glog$ is an algebraic log stack whose functor of points on log schemes $S$ is given by $\Glog(S) := H^0(S,\Ms^\gp_S)$ (\cite[Definition~2.2.7]{MW22}). Since $\Gm\subset \Glog$,  any $(\Gm)^h$-torsor $T$ on a log scheme extends canonically to a $(\Glog)^h$-torsor $T_{\log}$ via extension of scalars.
\begin{proposition}\label{prop:Glog_torsor}
    For a stable graph $\Gamma$, choose a spanning tree of $\Gamma$ with complementary edges labeled by $e_i = (h_i,h_i')$ for $1 \leq i \leq h^1(\Gamma)$. Let $T$ be the $(\Gm)^{h^1(\Gamma)}$-torsor over $\Jbar^{\Gamma_\delta}$ defined by
    \begin{equation}\label{eq:torsor}
        T = \bigoplus_{i=1}^{h^1(\Gamma)} (L_{h_i}^{\vee} \otimes L_{h'_i})^\times\,.
    \end{equation}
    Let $T_{\log}$ as the associated $(\Glog)^{h^1(\Gamma)}$-torsor. Then the morphism $s: \widetilde{J}_{\Gamma_\delta}\to \Jbar^{\Gamma_\delta}$ defined in \eqref{eq:tildeJ} is a log modification of $T_{\log}$.
\end{proposition}
	\begin{proof}
		We use the explicit description of $\widetilde{J}_{\Gamma_\delta}$ given in Lemma \ref{lem:tildeJ}. Given data $(C,L,\tau)$ as above and an edge $e=\{h,h'\}$ consisting of two half edges, we write $C_h,C_{h'}$ for the two preimages of the node in the stabilization of $C$ corresponding to the half edges $\tau^{-1}(h),\tau^{-1}(h') \in \Gamma_C$. Similarly, let $[L]|_h, [L]|_{h'}$ denote the restrictions of an element of $\prod_{v \in V(\Gamma)} \lpic_{g(v),n(v)}$ to $C_h$ and $C_{h'}$, respectively.
        
        Let $\mathcal{T}$ denote the category fibered in groupoids over the (unrigidified) stack $\prod_{v \in V(\Gamma)} \mathbf{LogPic}_{g(v),n(v)}$ parametrizing an element of $\prod_{v \in V(\Gamma)} \mathbf{LogPic}_{g(v),n(v)}$ together with isomorphims of log line bundles at half edges 
        \[
        [L]|_{h} \xrightarrow{\cong} [L]|_{h'}\,.
        \]
		The space of isomorphism classes $\pi_0(\mathcal{T})$ is a $(\Glog)^{h^1(\Gamma)}$-torsor. Indeed,  choosing a spanning tree in $\Gamma$ with complementary edges $e_i=\{h_i,h_i'\}$, for $i=1,\cdots,h_1(\Gamma)$ yields an identification 
        \begin{equation}\label{eq:torsor2}
            \pi_0(\mathcal{T})\cong \bigoplus_{i=1}^{h^1(\Gamma)} \mathsf{Iso}([L]|_{h_i},[L]|_{h_i'}) \cong \bigoplus_{i=1}^{h^1(\Gamma)} ([L]|_{h_i}^{\vee} \otimes [L]|_{h'_i})^\times\,.
        \end{equation}

        The map $\rho$ in \eqref{eq:tildeJ} lifts to a map to $\pi_0(\mathcal{T})$. Since the log Picard group $\lpic_{g,n}$ satisfies the log N\'eron mapping property \cite[Theorem~6.11]{HMOP}, and the same holds for $\Glog$--in particular $\pi_0(\mathcal{T})$--also satisfies the log N\'eron mapping property. Moreover, the normalization $\Jbar_{\Gamma_\delta}$ is toroidal, and there is a natural rational map $\Jbar_{\Gamma_\delta} \dashrightarrow{} \pi_0(\mathcal{T})$ over the locus of line bundles. Thus, the map $\rho$ lifts to
		\[
		\rho' : \Jbar_{\Gamma_\delta} \to \pi_0(\mathcal{T}).
		\]
        
        In fact, $\rho'$ admits the following explicit description. Using the notation of Lemma \ref{lem:tildeJ}, a triple $(C,L,\tau)$ determines the line bundle $\nu^*L_v$ on $C_v^{\nu}$, together with isomorphisms $\phi_{h,h'}:\nu^*L|_{h} \cong \nu^*L|_{h'}$ for each pair of half edges $(h,h')$ corresponding to a normalized node. Here, for a vertex $v$ adjacent to $h$, $\nu^*L|_{h}$ denotes the restriction of $\nu^*L_v$ to the preimage of the normalized node corresponding to $h$ in $C^{\nu}_v$. As noted above, the line bundle $\nu^*L_v(\beta_v)$ descends to a line bundle $L_v$ on the contraction $C_v$ of $C_v^{\nu}$, and hence defines a log line bundle $[L_v]$ on $C_v$. Likewise, each $\phi_{h,h'}$ induces an isomorphism of log line bundles $[\phi_{h,h'}]:[\nu^*L|_{h}] \to [\nu^*L|_{h'}]$. Since $[\nu^*L_v] = [\nu^*L_v(\beta_v)]$, it induces an isomorphism 
        \[
        [\phi_{h,h'}]:[L]|_h \xrightarrow{\cong} [L]|_{h'}\,.
        \]
        Thus, the  map $\rho'$ takes $(C,L,\tau)$ to the data $(C_v,[L_v],[\phi_{h,h'}])$. 

        The map $\rho'$ is a log modification, by a standard deformation theoretic argument. Let $(C,L,\tau)$ be a point of $\Jbar_{\Gamma}$ with image $(C_v,[L_v],[\phi_{h,h'}])$. 
        The relative log tangent space of $\Jbar_{\Gamma_\delta}$ over $\Mbar_{\Gamma}$ at $(C,L,\tau)$ is $H^1(C,\CO_C)$ which is an extension of $H^1(C^\nu,\CO_{C^\nu})$ by $\mathbb{G}_{a}^{h^1(\Gamma)}$.
        By \cite[Lemma 4.13.2]{MW22}, deformations of $\nu^*L_v$ agree with those of the log line bundles $[L_v]$, and the log tangent space of both $\mathbb{G}_m$ and $\mathbb{G}_m^{\log}$ is $\mathbb{G}_a$, so deformations of $\phi_{h,h'}$ and $[\phi_{h,h'}]$ coincide as well. Thus $\rho'$ induces an isomorphism on log tangent spaces, so it is log \'etale. It is also proper and birational, and hence it is a log modification. Therefore, diagram \eqref{eq:tildeJ} in fact factors as 
		\begin{equation*}
			\begin{tikzcd}
				\widetilde{J}_{\Gamma_\delta}\ar[r]\ar[d] & \pi_0(\mathcal{T})|_{\Jbar^{\Gamma_\delta}} \ar[r] \ar[d] & \Jbar^{\Gamma_\delta}\ar[d,"t"]\\
				\Jbar_{\Gamma_\delta}\ar[r,"\rho'"] & \pi_0(\mathcal{T}) \ar[r] & \prod_{v\in V(\Gamma)}\lpic_{g(v),n(v)}\,.
			\end{tikzcd}
		\end{equation*}
		with the top right arrow a $(\Glog)^{h^1(\Gamma)}$-torsor and the top left arrow a log modification. 
        
        The pullback of the $(\Glog)^{h^1(\Gamma)}$-torsor $\pi_0(\mathcal{T})$ to $\Jbar^{\Gamma_\delta}$ along the morphism $t$ is induced by the $(\Gm)^{h^1(\Gamma)}$-torsor, since the universal log line bundle on $\Jbar^{\Gamma_\delta}$ is represented by an actual line bundle. The explicit formula \eqref{eq:torsor} follows from \eqref{eq:torsor2}. This concludes the argument. 
	\end{proof}

For each $v$, denote by $\kappa_{0,1}[v] \in \CH^1(\Jbar_{\Gamma_\delta})$ the class pulled back from $\pic_{g(v),n(v)}$.
	\begin{corollary}\label{cor:xi}
		Let $r:\widetilde{J}_{\Gamma_\delta}\to\Jbar_{\Gamma_\delta}$ and $s:\widetilde{J}_{\Gamma_\delta}\to \Jbar^{\Gamma_\delta}$ be the projections defined in \eqref{eq:tildeJ}. For $1\le i\le n$,  $r^*\xi_i=s^*\xi_i+\alpha_i$ for some $\alpha_i\in\mathsf{PL}(\widetilde{J}_{\Gamma_\delta})$. Furthermore, for each $v \in V(\Gamma_\delta)$, $r^*\kappa_{0,1}(v)=s^*\kappa_{0,1} [v]+\alpha[v]$ for some $\alpha[v] \in\mathsf{PL}(\widetilde{J}_{\Gamma_\delta})$.
	\end{corollary}
	\begin{proof}
	    Let $r^*C$ and $s^*C$ denote the universal curves on $\widetilde{J}_{\Gamma_\delta}$, pulled back respectively from $\Jbar_{\Gamma_\delta}$ and $\Jbar^{\Gamma_\delta}$. Consider their components in the respective normalizations corresponding to a vertex $v \in V(\Gamma)$. Following the notation in Lemma~\ref{lem:tildeJ}, these components are $C_v'$ and $C_v$, with a log blowup $C_v' \to C_v$. The universal bundles $r^*L$ and $s^*L$, pulled back to $C_v'$ then become 
        \[
        L_v(\alpha_v), \quad L_v
        \]
        respectively, where $\alpha_v$ is the piecewise linear function defined in Lemma \ref{lem:tildeJ}. The log canonical lines bundles pulls back to the log canonical bundles on each $C'_v$ and $C_v$. Therefore, 
        \[
        r^*\xi_i = x_i^*(c_1(L_v(\alpha_v))) = x_i^*c_1(L_v) + x_i^*\alpha_v = s^*\xi_i + \alpha_i 
        \]
        for the piecewise linear function $\alpha_i = x_i^*\alpha_v$. 
        
        For $\kappa_{0,1}[v]$, let $p_v:C_v' \to \widetilde{J}_{\Gamma_\delta}$ be the projection. Then 
        \begin{align*}
            r^*\kappa_{0,1}[v] &= (p_v)_*(c_1(L_v(\alpha_v)) \cdot c_1(\omega_{\log})) = (p_v)_*(c_1(L_v)\cdot c_1(\omega_{\log}))+(p_v)_*(c_1(L_v)\cdot \alpha_v)\\ 
            &= (p_v)_*(c_1(L_v)\cdot c_1(\omega_{\log}))+\alpha[v]
        \end{align*}
        for the piecewise linear function $\alpha[v]$ defined by
        \[
        \alpha[v] = (p_v)_*(\alpha_v\cdot c_1(\omega_{\log})) = \left \langle \alpha_v, c_1(\omega_{\log}) \right \rangle
        \]
        as in Lemma \ref{lem:deligne_pl}. 
	\end{proof}

	We compare Poincar\'e line bundles up to birational equivalence.  For the relative Jacobian $\Jzero_{g,n}\to\Mbar_{g,n}$ and a stable graph $\Gamma$ of genus $g$ with $n$ markings, we denote
	\[J_\Gamma := \prod_{v\in V(\Gamma)}\Jzero_{g(v),n(v)}\]
	and
	\[J^{\Gm}_\Gamma := \Jzero_{g,n}|_{\Mbar_\Gamma}\,.\]
	Similar to the diagram \eqref{eq:Jzero_tor}, we have a commutative diagram
	\begin{equation}\label{eq:J_Gamma}
		\begin{tikzcd}
			& J^{\Gm}_\Gamma\ar[r,"r_\Gamma"]\ar[d]\ar[dl,"q"'] & \Jzero_{g,n}\ar[d]\\
			\Jzero_{\Gamma}\ar[r] & \Mbar_{\Gamma}\ar[r] & \Mbar_{g,n}\,,
		\end{tikzcd}
	\end{equation}
	where the morphism $q:J^{\Gm}_\Gamma\to J_\Gamma$ induced by the partial normalization of the underlying curve is a $\Gm^{h^1(\Gamma)}$-torsor by Lemma \ref{lem:torsor}. 
	For each $v\in V(\Gamma)$, let $\P_{g(v),n(v)}$ be the Poincar\'e line bundle on $\Jbar_{g(v),n(v)}\times_{\Mbar_{g(v),n(v)}}\Jzero_{g(v),n(v)}$ and denote
	\[\P^\Gamma :=\boxtimes_{v\in V(\Gamma)}\P_{g(v),n(v)}\in \mathrm{Pic}(\Jbar^\Gamma\times_{\Mbar_\Gamma} J_\Gamma)\,.\]
	Let $\P_\Gamma$ be the restriction of the Poincar\'e line bundle on $\Jbar^\epsilon_{g,n}\times_{\Mbar_\Gamma}\Jzero_{g,n}$ to $\Jbar_{\Gamma_\delta}\times_{\Mbar_\Gamma} J^{\Gm}_\Gamma$.
	From \eqref{eq:tildeJ}, we obtain a diagram
	\begin{equation}\label{eq:compare}
		\begin{tikzcd}
			\P_\Gamma \ar[d]& \widetilde{J}_{\Gamma_\delta}\times_{\Mbar_\Gamma}J^{\Gm}_{\Gamma}\ar[dl,"r"']\ar[dr,"s"] & \P^\Gamma\ar[d]\\
			\Jbar_{\Gamma_\delta}\times_{\Mbar_{\Gamma}}J^{\Gm}_{\Gamma} & & \Jbar^\Gamma\times_{\Mbar_\Gamma}J^{\Gm}_{\Gamma}\,. 
		\end{tikzcd}
	\end{equation}
	Here, we abbreviate $r=r\times\id, s=s\times\id$.
	\begin{proposition}\label{pro:Poincare_bir}
		In \eqref{eq:compare}, we have $r^*\P_\Gamma \cong s^*\P^\Gamma$ in $\mathrm{Pic}(\widetilde{J}_{\Gamma_\delta}\times_{\Mbar_\Gamma}J^{\Gm}_\Gamma)$.
	\end{proposition}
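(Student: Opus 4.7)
The plan is to reduce the proposition to the projection formula for the Deligne pairing along a partial normalization of the universal curve, using the modular description of $\widetilde{J}_{\Gamma_\delta}$ encoded by the Cartesian diagram \eqref{eq:tildeJ}. Both sides of the desired isomorphism will be realized as Deligne pairings of explicit universal objects on a common universal curve over $\widetilde{J}_{\Gamma_\delta}\times_{\Mbar_\Gamma}J^{\Gm}_\Gamma$.

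I would first work on the universal curve $C$ pulled back from $\Mbar_\Gamma$, together with the partial normalization $\nu\colon C^{\mathrm{nor}}=\bigsqcup_{v\in V(\Gamma)}C_v\to C$ along the nodes corresponding to the edges of $\Gamma$. Via the maps $r$ and $s$ in \eqref{eq:tildeJ}, I obtain a universal rank-one torsion-free sheaf $\F$ on the appropriate quasi-stable model (pulled back from $\Jbar_{\Gamma_\delta}$), the vertex universal sheaves $\F_v$ (pulled back from $\Jbar^{\Gamma}$), and a universal multidegree-zero line bundle $L^{\Gm}$ pulled back from $J^{\Gm}_\Gamma$. By Definition \ref{def:Poincare_line},
\[
r^*\P_\Gamma \cong \langle \F, L^{\Gm}\rangle_C, \qquad s^*\P^\Gamma \cong \bigotimes_{v\in V(\Gamma)}\langle \F_v, L_v\rangle_{C_v},
\]
where, by Lemma \ref{lem:torsor}, $L_v$ is identified with the restriction $\nu^*L^{\Gm}|_{C_v}$ since $J^{\Gm}_\Gamma\to J_\Gamma$ records precisely the gluing data at the loop edges.

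The key input would be the identification $\F\cong \nu_*\bigl(\bigoplus_v\F_v\bigr)$ on the common quasi-stable model, which is the direct analogue for a general stable graph of the identity $\jmath_h^*\F_g\cong \nu_*\F_{g-h}$ that fuels the proof of Proposition \ref{pro:fm_max} (see equation \eqref{eq:P_compare}). This identification is forced by the f.s.\ fiber diagram \eqref{eq:tildeJ}: the map $s$ factors a point of $\widetilde{J}_{\Gamma_\delta}$ through $\Jbar^{\Gamma}$, and this factorization precisely records how the sheaf on the glued curve is obtained from the vertex sheaves. Granting this, the projection formula $\langle \nu_*\mathcal{A},\mathcal{B}\rangle_C\cong \langle \mathcal{A},\nu^*\mathcal{B}\rangle_{C^{\mathrm{nor}}}$ for Deligne pairings, together with additivity over disjoint unions, gives
\[
\langle \F, L^{\Gm}\rangle_C \cong \Bigl\langle \nu_*\bigl(\bigoplus_v \F_v\bigr), L^{\Gm}\Bigr\rangle_C \cong \bigotimes_{v\in V(\Gamma)}\langle \F_v, \nu^*L^{\Gm}|_{C_v}\rangle_{C_v},
\]
which is exactly the required isomorphism $r^*\P_\Gamma\cong s^*\P^\Gamma$.

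The main obstacle I expect is making the identification $\F\cong \nu_*\bigl(\bigoplus_v\F_v\bigr)$ precise, rather than merely up to a line bundle pulled back from the base. Both $\Jbar_{\Gamma_\delta}$ and $\Jbar^{\Gamma}$ carry their own rigidifications of universal sheaves, and the log modification $\widetilde{J}_{\Gamma_\delta}\to\Jbar_{\Gamma_\delta}$ exists precisely to resolve the indeterminacy of the gluing isomorphisms at the edges of $\Gamma$. A potential shortcut is to observe that since $L^{\Gm}$ has relative multidegree zero, any discrepancy in $\F$ by a line bundle pulled back from the base is absorbed into the Deligne pairing trivially; this would let one reduce the identification to a statement at the level of the composition \eqref{eq:stratum_lpic} of morphisms into $\prod_v\lpic_{g(v),n(v)}$, where the compatibility is built into the very definition of $\widetilde{J}_{\Gamma_\delta}$.
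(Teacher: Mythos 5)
Your overall strategy matches the paper's: rewrite both sides as Deligne pairings over (a normalization of) the universal curve, apply the projection formula, and observe that the discrepancy between the universal objects does not contribute to the pairing with the multidegree-zero bundle. However, there is a concrete gap in the way you propose to handle that discrepancy.

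You assert (or hope) that the identification $\F\cong\nu_*\bigl(\bigoplus_v\F_v\bigr)$ can fail only by a line bundle pulled back from the base, which would then visibly drop out of the Deligne pairing with $L^{\Gm}$. This is not what happens. The log modification $\widetilde{J}_{\Gamma_\delta}\to\Jbar_{\Gamma_\delta}$ exists precisely because the universal objects coming from $\Jbar_{\Gamma_\delta}$ and from $\Jbar^{\Gamma_\delta}$ represent the same logarithmic Picard class but differ, as honest line bundles on the partial normalization of the universal curve, by a twist $\CO_{C^\nu_\Gamma}(\alpha)$ for a nontrivial piecewise linear function $\alpha$ \emph{on the curve} (equation \eqref{eq:L1L2}); this is not pulled back from the base. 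Showing that $\langle \CO_{C^\nu_\Gamma}(\alpha),\nu^*M\rangle$ is trivial when $M$ has multidegree zero is a real input, and it is exactly the content of Lemma~\ref{lem:deligne_pl}, which computes the Deligne pairing of a piecewise-linear twist against an arbitrary line bundle as $\sum_{w\in V(\Gamma_{C_b})}\deg(M|_w)\,\alpha(w)$. Your "shortcut" to the level of $\prod_v\lpic_{g(v),n(v)}$ does not dodge this either: knowing the log line bundles agree still leaves the task of comparing chosen line-bundle representatives, which again differ by a piecewise-linear twist. Once you invoke Lemma~\ref{lem:deligne_pl}, your argument closes and coincides with the paper's.
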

	\begin{proof}
		We first write $r^*\P_\Gamma$ and $s^*\P^\Gamma$ as Deligne pairings. Let $C_\Gamma\to \Jbar_{\Gamma_\delta}$ be the universal quasi-stable curve induced by the morphism $\Jbar_{\Gamma_\delta}\to\pic_{g,n}$. The partial normalization $\nu:C^\nu_\Gamma\to C_\Gamma$ arises from the morphism $q$ in \eqref{eq:picboundary}. We pullback $\nu$ along $\widetilde{J}_{\Gamma_\delta}\times_{\Mbar_\Gamma}J_\Gamma^{\Gm}$. Denoting by $L_1\to C_\Gamma$ the restriction of the admissible model of the universal sheaf from $\Jbar_C$ and by $M$ the pullback of the universal multidegree zero line bundle from $\Jzero_C$, the projection formula along  $\nu$ gives:
		\begin{equation}\label{eq:rP}
			r^*\P_\Gamma = r^*\langle L_1,M \rangle_{C_\Gamma} \cong \langle \nu^*L_1,\nu^* M\rangle_{C^\nu_\Gamma}\,.
		\end{equation}
		To compute $s^*\P^\Gamma$, introduce the contraction morphism $\mu: C^\nu_{\Gamma}\to C^s_\Gamma$ which contracts rational $\PP^1$ components with a single special point. Let $L_2\to C^s_\Gamma$ be the pullback of the admissible line bundle from $\Jbar^{\Gamma_\delta}$. By Lemma \ref{lem:tildeJ}, there exists a piecewise linear function $\alpha$ on $C^\nu_\Gamma$ which induces an isomorphism
		\begin{equation}\label{eq:L1L2}
			\mu^*L_2 \cong \nu^*L_1 \otimes \CO_{C^\nu_\Gamma}(\alpha)\,.
		\end{equation}
		
		We now compare the two line bundles $r^*\P_\Gamma$ and $s^*\P^\Gamma$ using the Deligne pairing. From the established relationships, we can write: 
		\begin{equation*}\label{eq:sP}
			r^*\P_\Gamma\cong \langle \nu^*L_1, \nu^*M\rangle\cong \langle\nu^*L_1\otimes \CO_{C^\nu_\Gamma}(\alpha), \nu^*M\rangle \cong \langle\mu^*L_2,\mu^*M\rangle_{C^\mu_\Gamma} \cong s^*\P^\Gamma\,.
		\end{equation*}
		The first isomorphism follows from \eqref{eq:rP}, and the second from \eqref{eq:L1L2}. For the third, note that $M$ is a line bundle of multidegree zero, so Lemma~\ref{lem:deligne_pl} applies and the correction factor $\CO_{C^\nu_\Gamma}(\alpha)$ has trivial contribution to the Deligne pairing. Hence we obtain the desired isomorphism $r^*\P_\Gamma \cong s^*\P^\Gamma$.
	\end{proof}
	
	\begin{lemma}\label{lem:ph}
		Let $B$ be a log scheme with smooth underlying scheme, and let $L_1 \oplus \cdots \oplus L_h$ be a split vector bundle over $B$. Denote by $T$ the underlying $(\Gm)^h$-torsor. Let $p:M\to B$ be a log modification of $T_{\log}$. For any $\alpha\in \mathsf{PP}(M)$, the pushforward $\pi_*(\alpha)$ lies in the subring of $\CH^*(B)$ generated by $\mathsf{PP}(B)$ and $c_1(L_1),\cdots, c_1(L_h)$.
	\end{lemma}
	\begin{proof}
		Consider the projective bundle $q:\PP^h_B:=\PP(L_1\oplus\cdots \oplus L_h\oplus\CO_B)\to B$, which admits a morphism $\PP^h_B\to T_{\log}$ over $B$ that is a log modification. Choose a log modification $M'$  with proper surjective log modifications $f:M'\to M$ and $g:M'\to\PP^h_B$. Since $f$ is a proper, surjective, and a log modification, there exists $\alpha'\in\mathsf{PP}(M')$ where $f_*(\alpha')=\alpha$.  By \cite[Proposition 67]{PRSS}, we have $g_*(\alpha')\in\mathsf{PP}(\PP^h_B)$. For any $\beta\in\mathsf{PP}(\PP^h_B)$, the projective bundle formula implies that $\pi_*(\beta)$ lies in the subring generated by $\mathsf{PP}(B)$ and $c_1(L_1),\cdots, c_1(L_h)$. Since $p_*(\alpha)=p_*f_*(\alpha')=p_*g_*(\alpha')$, the result follows. 
	\end{proof}

    We compute the Fourier transform of monomials supported on a stratum associated to a stable graph $\Gamma$  via lower genus Fourier transforms.
	\begin{corollary}\label{cor:fm_st}
		Let $\Gamma$ be a stable graph of genus $g$ with $n$ markings, and let $\delta:V(\Gamma)\to \ZZ$ be an $\epsilon$-stable multidegree. For any monomial in $\xi_1,\ldots, \xi_n$ and the $\kappa_{0,1}$-class $\Xi$ in $\CH^*(\Jbar_{\Gamma_\delta})$, there exists polynomials $\Xi_v$ in the $\xi$ and $\kappa_{0,1}$ classes on $\Jbar_{g(v),n(v)}^{\epsilon(v)}$ and $\alpha_v\in\mathsf{PP}(\Jbar_{g(v),n(v)}^{\epsilon(v)})$ such that
		\[\fm_g^\circ \Big( (j_{\Gamma_\delta})_*(\Xi) \Big)= (r_{\Gamma})_*q^*\Big(\prod_{v\in V(\Gamma)}\fm_{g(v)}^\circ(\Xi_v \cdot\alpha_v)\Big)\,.\]
		Here, $r_\Gamma$ and $q$ are defined in \eqref{eq:J_Gamma}.
	\end{corollary}
	\begin{proof}
		We consider the diagram \eqref{eq:compare}. Consider a monomial $\Xi=\kappa_{0,1}^m\cdot\prod\xi_i^{k_i}$ on $\Jbar_{\Gamma_\delta}$. By Corollary \ref{cor:xi}, there exists $\alpha_0,\alpha_1,\cdots,\alpha_n\in\mathsf{PL}(\widetilde{J}_{\Gamma_\delta})$ such that
		\begin{equation}\label{eq:xi}
			r^*\pi_1^* (\Xi) = s^*\pi_1^*\Big((\kappa_{0,1}+\alpha_0)^m\cdot\prod_i(\xi_i+\alpha_i)^{k_i}\Big)\,.
		\end{equation}

        We consider the $(\Gm)^{h^1(\Gamma)}$-torsor $T$ over $\Jbar_{\Gamma_\delta}$ defined in \eqref{eq:torsor}. By Proposition \ref{prop:Glog_torsor}, the morphism $s:\widetilde{J}_{\Gamma_\delta}\to \Jbar^{\Gamma_\delta}$ is a log modification of $T_{\log}$. Therefore, we get
		\begin{align*}
			\fm_g^\circ(j_{\Gamma_\delta*}(\Xi)) &=(r_\Gamma)_*q^*\pi_{2*}(\pi_1^*(\Xi) \cup\ch(\P_\Gamma))\\
			&=(r_\Gamma)_*q^*\pi_{2*}s_*(r^*\pi_1^*\Xi\cup s^*\ch(\P^\Gamma))\\
			&=(r_\Gamma)_*q^*\pi_{2*}s_*(s^*\pi_1^*((\kappa_{0,1}+\alpha_0)^m\cdot\prod_i(\xi_i+\alpha_i)^{k_i})\cup s^*\ch(\P^\Gamma))\\
			&=(r_\Gamma)_*q^*\Big(\prod_{v\in V(\Gamma)}\fm_{g(v)}^\circ(\Xi_v \cdot\alpha_v)\Big)\,,
		\end{align*}
		where the first equality follows from the projection formula, the second equality follows from Proposition \ref{pro:Poincare_bir}, and the third equality follows from \eqref{eq:xi} and the last equality follows from Proposition \ref{prop:Glog_torsor} and Lemma \ref{lem:ph} together with the construction of $\P^\Gamma$.
	\end{proof}
	We now complete the proof of Theorem \ref{thm:inductive}.
	\begin{proof}[Proof of Theorem \ref{thm:inductive}]
		Let $\gamma$ be a piecewise polynomial class on $\Jbar_{g,n}^\epsilon$. Such a class can be expressed as a linear combination of terms of the form $[\widetilde{\Gamma}_{\widetilde{\delta}}, \gamma_0]$, where $\widetilde{\Gamma}_{\widetilde{\delta}}$ is an $\epsilon$-stable quasi-stable graph and $\gamma_0$ is a monomial of balanced $\psi$-classes. We may assume that the graph is nontrivial.
		
		To describe the boundary stratum $\Jbar_{\widetilde{\Gamma}_{\widetilde{\delta}}}$more concretely, consider that the graph $\widetilde{\Gamma}$ arises from subdividing a stable graph $\Gamma$ at $h$ edges for some $h\ge 0$. By Lemma \ref{lem:boundary}, we can find a stability condition $\epsilon_h$ on $\Mbar_{g-h,n+2h}$ with a map 
		\[
		\Jbar_{g-h,n+2h}^{\epsilon_h} \to \Jbar_{g,n}
		\]
		whose image is the closure of the locus of curves with $h$ self nodes. We can then lift the data of the $\epsilon$-stable multidegree $\widetilde{\delta}: V(\widetilde{\Gamma}) \to \mathbb{Z}$ to an $\epsilon_h$-stable multidegree $\delta$ on a graph $\Gamma_\delta$ obtained from $\Gamma$ by cutting the $h$ edges that are subdivided in $\widetilde{\Gamma}$, creating $2h$ additional markings, and choosing some labeling for them. Using this multidegree, we find a diagram   
		\begin{equation}\label{eq:inductive}
			\begin{tikzcd}
				\Jbar_{\Gamma_\delta} \ar[r,"j_\Gamma"]\ar[d] & \Jbar_{g-h,n+2h}^{\epsilon_h} \ar[r,"\jmath_h"]\ar[d]& \Jbar_{g,n}^\epsilon\ar[d] \\
				\Mbar_\Gamma\ar[r] & \Mbar_{g-h,n+2h}\ar[r] & \Mbar_{g,n}\,.
			\end{tikzcd}
		\end{equation}
		with $\Jbar_{\Gamma_\delta}$ being finite over the normalization of $\Jbar_{\widetilde{\Gamma}_{\widetilde{\delta}}}$. 
		
		For the monomial $\gamma_0$, and the $\psi$-classes on unstable vertices, we apply the genus $0$ relation \eqref{eq:g=0} on such unstable vertices. For a stable vertex $v$, let $\st : \M_{g(v),n(v)}\to \Mbar_{g(v),n(v)}$ denote the stabilization morphism. By \cite[Proposition 3.14]{BS1}, 
        \begin{equation}\label{eq:st}
            \st^* \psi_i = \psi_i - \delta_i\,,
        \end{equation}
        where $\delta_i \in \CH^1(\M_{g(v),n(v)})$ is the divisor corresponding to the prestable graph with two vertices of genus $0$ and $g(v)$, connected by an edge, with the $i$-th marking attached to the genus-$0$ vertex. Using \eqref{eq:st}, we may rewrite $\gamma_0$ as a polynomial involving $\psi$-classes pulled back from $\Mbar_{\Gamma}$ and $\xi$-classes at the last $2h$ markings on $\Jbar_{\Gamma_\delta}$. In particular, no $\xi$-classes are introduced along the edges of $\Gamma$. 
		
		We consider the class $\pi^*\Psi\cdot\Xi$, where $\Psi$ is a monomial in $\psi$-classes pulled back from $\Mbar_{\Gamma}$ in \eqref{eq:inductive} and $\Xi$ is a monomial in $\xi$-classes and $\kappa_{0,1}$ on $\Jbar_{\Gamma_\delta}$. By Proposition \ref{pro:fm_max}, we have
		\begin{equation}\label{eq:g_induct}
			\fm_g^\circ(\jmath_{h*}j_{\Gamma*}(\pi^*\Psi\cdot \Xi)) =  r_{h*}q^*\fm_{g-h}^\circ(j_{\Gamma*}(\pi^*(\Psi)\cdot\Xi))= \pi^*(\Psi)\cdot r_{h*}q^*\fm_{g-h}^\circ(j_{\Gamma*}(\Xi))\,,
		\end{equation}
		where the second equality follows from the linearity of $\fm^\circ_{g-h}$. If $h^1(\Gamma)>0$, the Fourier transform of \eqref{eq:g_induct} reduces to computing lower genus Fourier transforms by Corollary \ref{cor:fm_st}. If $h^1(\Gamma)=0$,  the Fourier transform of \eqref{eq:g_induct} reduces to a computation involving Fourier transforms $\fm^\circ_{g_v}$ corresponding to the vertices of $\Gamma$ by Proposition \ref{pro:tree}. Since $\Gamma$ is a nontrivial stable graph, each vertex $v$ either has smaller genus $g_v < g$ or satisfies $g_v=g$ and $n_v<n$. This establishes the recursive structure of the Fourier transform.
	\end{proof}

	\section{The top degree part of the \texorpdfstring{$\DR$}{DR} formula}
	\subsection{The double ramification cycle formula}\label{subsec:DRintro}
	Suppose we have a genus $g\ge 0$, codimension $c\ge 0$, integer $b$, and vector $\sfa = (a_1,\ldots,a_n)$ of integers with sum $(2g-2+n)b$. Then the double ramification cycle ($\DR$) formula \cite{JPPZ1} gives a cycle
	\[
	\DR_g^c(b;\sfa)\in\CH^c(\Mbar_{g,n}).
	\]
	In the special case $c=g$, this formula gives the ($b$-twisted) $\DR$ cycle, which encodes the divisorial condition $\mathcal{O}_C(a_1p_1+\cdots+a_np_n)\cong (\omega_{C,\log})^{\otimes b}$. But the formula gives a well-defined cycle for all values of $c$, and the value $c=g$ will not be special for anything we do in this section.
	
	The formula given in \cite[Section~1.1]{JPPZ1} for $\DR_g^c(b;\sfa)$ is
	\[
	\left[
	\exp\left(-\frac{b^2}{2}\kappa_1+\sum_{i=1}^n\frac{a_i^2}{2}\psi_i\right)\sum_{
		\substack{\Gamma\in \mathsf{G}_{g,n} \\
			w\in \mathsf{W}_{\Gamma,r,b}}
	}
	\frac{r^{-h^1(\Gamma)}}{|\aut(\Gamma)| }
	\;
	(j_{\Gamma})_*\Bigg[
	\prod_{e=(h,h')\in E(\Gamma)}
	\frac{1-\exp\left(-\frac{w(h)w(h')}2(\psi_h+\psi_{h'})\right)}{\psi_h + \psi_{h'}} \Bigg]\right]_{\substack{\text{codim $c$,}\\  r=0}}. 
	\]
	Here $\mathsf{G}_{g,n}$ is the set of stable graphs for $\Mbar_{g,n}$ and $\mathsf{W}_{\Gamma,r,b}$ is the set of functions $w$ assigning to each half-edge $h\in H(\Gamma)$ an integer $w(h)\in \{0,1,\ldots,r-1\}$ satisfying the congruence conditions:
	\begin{itemize}
		\item If $h_i$ is the leg with marking $i$, then $w(h_i) \equiv a_i$ (mod $r$).
		\item If $e = (h,h')\in E(\Gamma)$ is an edge, then $w(h) + w(h')\equiv 0$ (mod $r$).
		\item If $v\in V(\Gamma)$ is a vertex, then
		\[
		\sum_{h\text{ at }v}w(h) \equiv (2g_v-2+n_v)b\pmod{r}.
		\]
	\end{itemize}
	The expression inside the brackets then has the property that its codimension $c$ part is a polynomial in the integer parameter $r$ for $r\gg 0$, and we take the constant term of that polynomial.
	
	It will be convenient for us to work with ``total $\DR$", i.e.
	\[
	\DR_g(b;\sfa) = \sum_{c\ge 0}\DR_g^c(b;\sfa).
	\]
	One reason is that we can then factor out the exponential factor at the beginning of the formula and write
	\begin{equation}\label{eq:DRfactor}
		\DR_g(b;\sfa) = \exp(\DRD_g(b;\sfa))\DRP_g(b;\sfa),
	\end{equation}
	where
	\[
	\DRD_g(b;\sfa) = -\frac{b^2}{2}\kappa_1+\sum_{i=1}^n\frac{a_i^2}{2}\psi_i
	\]
	is a divisor class and
	\begin{equation}\label{eq:DRP}
		\DRP_g(b;\sfa) = \left[\sum_{
			\substack{\Gamma\in \mathsf{G}_{g,n} \\
				w\in \mathsf{W}_{\Gamma,r,b}}
		}
		\frac{r^{-h^1(\Gamma)}}{|\aut(\Gamma)| }
		\;
		(j_{\Gamma})_*\Bigg[
		\prod_{e=(h,h')\in E(\Gamma)}
		\frac{1-\exp\left(-\frac{w(h)w(h')}2(\psi_h+\psi_{h'})\right)}{\psi_h + \psi_{h'}} \Bigg]\right]_{r=0}
	\end{equation}
	is the ``piecewise polynomial part" of the $\DR$ formula. Thus the complexity of the $\DR$ formula is contained in the $\DRP$ factor.
	
	It will be convenient for us to isolate a single coefficient in $\DRP_g(b;\sfa)$ and describe it in a more raw combinatorial form. Let $G$ be a connected finite graph (if it is a stable graph with genus assignments and legs, we ignore those features) and let $a_v$ be integers indexed by the vertices $v\in V(G)$, subject to the relation $\sum_va_v = 0$. Then define the {\it double ramification cycle graph invariant} $C(G)$ by
	\begin{equation}\label{eq:DRcoeff1}
		C(G)(\{a_v\}):= \left[r^{-h^1(G)}\sum_{w\in \mathsf{W}_{G,r}}\prod_{e=(h,h')\in E(G)}\frac{1}{2}w(h)w(h')\right]_{r=0}.
	\end{equation}
	Here $\mathsf{W}_{G,r}$ is analogous to $\mathsf{W}_{\Gamma,r,b}$, but $G$ has no legs and the conditions on $w:H(G)\to\{0,1,\ldots,r-1\}$ are simply:
	\begin{itemize}
		\item If $e = (h,h')\in E(\Gamma)$ is an edge, then $w(h) + w(h')\equiv 0$ (mod $r$).
		\item If $v\in V(\Gamma)$ is a vertex, then
		\[
		\sum_{h\text{ at }v}w(h) \equiv -a_v\pmod{r}.
		\]
	\end{itemize}
	
	Then for a stable graph $\Gamma\in \mathsf{G}_{g,n}$, the coefficient of the pure boundary stratum class
	\[
	\frac{1}{|\aut(\Gamma)|}(j_{\Gamma})_*1
	\]
	in $\DRP_g(b;\sfa)$ (or in $\DR_g(b;\sfa)$) is precisely given by $C(\Gamma)(\{a_v\})$ after taking
	\begin{equation}\label{eq:specialization}
		a_v := \left(\sum_{\text{leg $i$ at vertex $v$}}a_i\right) - (2g_v-2+n_v)b.
	\end{equation}
	
	It turns out that $C(G)(\{a_v\})$ is a polynomial in the integers $a_v$ of degree at most $2|E(G)|$ \cite{Spelier,Pixton_Poly}. We think of $C(G)$ as an element
	\[
	C(G) \in \QQ[\{a_v\}]\Big/\left(\sum_v a_v\right)
	\]
	where the $a_v$ are now formal variables.
	
	The full $\DRP$ formula includes tautological classes other than pure boundary strata classes - there can be insertions along edges of polynomials in $\psi+\psi'$, the sum of the two $\psi$ classes corresponding to the two halves of the edge. However, the coefficients of these more general tautological classes in the $\DR$ formula are still easily expressed in terms of the graph invariants $C(G)$. This is done by modifying the graph slightly - if edge $e$ has an insertion of $(-\psi-\psi')^d/(d+1)!$, then modify the stable graph $\Gamma$ by subdividing the edge $e$ into $d+1$ edges (adding $d$ new semistable vertices of genus $0$). If the resulting semistable graph is $\widehat{\Gamma}$, then the coefficient in the $\DR$ formula is now given by $C(\widehat{\Gamma})$ (after the same specialization of variables \eqref{eq:specialization}, which gives $a_v := 0$ for all semistable vertices). This perspective is explained further in \cite{PixAbel} and will be used in Section~\ref{subsubsec:topdeg_proof_general}.
	
	\subsection{The top degree part}
	The total $\DR$ cycle
	\[
	\DR_g(b;\sfa) = \exp(\DRD_g(b;\sfa))\DRP_g(b;\sfa)
	\]
	depends polynomially on the inputs $b,a_i$ \cite{Spelier,Pixton_Poly}. More precisely, the codimension $c$ part is a polynomial of degree at most $2c$:
	\[
	\DR_g^c(b;\sfa)\in\CH^c(\Mbar_{g,n})\otimes_\QQ\left[\QQ[b,a_1,\ldots,a_n]/(a_1+\cdots+a_n - (2g-2+n)b)\right]_{\text{deg $\le 2c$}}.
	\]
	We can take the top degree part of this polynomial to define
	\[
	\widetilde{\DR}_g^c(b;\sfa)\in\CH^c(\Mbar_{g,n})\otimes_\QQ\left[\QQ[b,a_1,\ldots,a_n]/(a_1+\cdots+a_n - (2g-2+n)b)\right]_{\text{deg $2c$}}.
	\]
	We also let $\widetilde{\DR}_g(b;\sfa)$ be the part of the total $\DR$ formula $\DR_g(b;\sfa)$ in which the polynomial degree is exactly twice the codimension. Clearly we can factor out the exponential factor:
	\[
	\widetilde{\DR}_g(b;\sfa) = \exp(\DRD_g(b;\sfa))\widetilde{\DRP}_g(b;\sfa).
	\]
	
	The main result of this section is a correspondence between the total $\DR$ formula and its top degree part. The correspondence is most naturally stated using the negative zeta value regularization convention
	\begin{equation}\label{eq:zeta}
		\sum_{k=1}^{\infty}k^{d+1} := \zeta(-d-1) = -\frac{B_{d+2}}{d+2}\quad\text{ for $d\ge 0$},
	\end{equation}
	where $B_n$ are the Bernoulli numbers.
	
	\begin{theorem}\label{thm:topdeg_correspondence}
		Let $g,n\ge 0$. For each $0 \le m \le g$, let $j_m:\Mbar_{g-m,n+2m}\to\Mbar_{g,n}$ be the gluing map gluing the last $m$ pairs of markings. Then
		\[
		\DR_g(b;\sfa) = \sum_{m=0}^{g}\frac{1}{2^mm!}(j_m)_*\left[\sum_{k_1,\ldots,k_m > 0}\left(\prod_{i=1}^mk_i\right)\widetilde{\DR}_{g-m}(b;\sfa,k_1,-k_1,k_2,-k_2,\ldots,k_m,-k_m)\right],
		\]
		where the infinite sums over $k_i$ of polynomials in $k_i$ are evaluated via \eqref{eq:zeta}.
	\end{theorem}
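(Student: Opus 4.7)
My plan is to derive Theorem \ref{thm:topdeg_correspondence} by pulling back the recursion of Theorem \ref{thm:unidr1} along the unit section $e\colon\Mbar_{g,n}\to\pic_{g,n,0}$, as anticipated in the introduction. To avoid notational conflict, I will write $j_m^{\mathrm{sm}}\colon\Mbar_{g-m,n+2m}\to\Mbar_{g,n}$ for the gluing map of stable curves appearing in the target theorem, reserving $j_m$ for the gluing map on Picard stacks in Theorem \ref{thm:unidr1}. The key geometric inputs are a cartesian square relating these two gluing maps and a compatibility between the $\Gm^m$-torsor $q_m$ and the two unit sections.

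First, I would identify the fiber product $\Mbar_{g,n}\times_{\pic_{g,n,0}}\pic_{\Gamma_m}$ with $\Mbar_{g-m,n+2m}$. A point of the fiber product is a stable curve $C$ together with $(C',L')\in\pic_{\Gamma_m}$ satisfying $(C,\CO_C)=(C',L')$. This forces $C$ to carry the $m$ specified nodes (so $C$ lies in the image of $j_m^{\mathrm{sm}}$), and the gluing data realizing $L'\cong\CO_C$ is uniquely determined by the canonical trivialization. The resulting top map $\widetilde{e}\colon\Mbar_{g-m,n+2m}\to\pic_{\Gamma_m}$ then satisfies $q_m\circ\widetilde{e}=e_{g-m,n+2m}$, the unit section of the smaller universal Picard stack, because normalization sends $(C,\CO_C)$ to $(\widetilde{C},\CO_{\widetilde{C}})$.

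Next, I would apply base change for $j_m$ along this cartesian square, obtaining
\[
e^*\,(j_m)_*(q_m)^*\widetilde{\uniDR}_{g-m}(b;\sfa,k_1,-k_1,\ldots,k_m,-k_m) = (j_m^{\mathrm{sm}})_*\widetilde{\DR}_{g-m}(b;\sfa,k_1,-k_1,\ldots,k_m,-k_m),
\]
using $(q_m\circ\widetilde{e})^*\widetilde{\uniDR}_{g-m} = e_{g-m,n+2m}^*\widetilde{\uniDR}_{g-m}= \widetilde{\DR}_{g-m}$. The Bernoulli-regularized sums $\sum_{k_i>0}$ commute with pullback since they are finite linear operations on polynomials in the $k_i$ via \eqref{eq:zeta}. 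Combined with the compatibility $e^*\uniDR_g(b;\sfa) = \DR_g(b;\sfa)$ (which holds by construction of $\uniDR$ in \cite{BHPSS} as the tautological lift of $\DR$), summing over $m$ yields the desired recursion.

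The main obstacle is establishing $e^*\widetilde{\uniDR} = \widetilde{\DR}$, which requires matching two notions of ``top degree'': for $\uniDR$, it is $[N]^*$-weight plus polynomial degree in $b,a_i$ equaling $2c$ in codimension $c$, whereas for $\DR$ it is polynomial degree alone equaling $2c$. The identification rests on the observation that $e^*$ annihilates positive-weight pieces: if $\alpha_w$ has $[N]$-weight $w$, then $[N]\circ e = e$ forces $N^w\cdot e^*\alpha_w = e^*\alpha_w$, so $e^*\alpha_w=0$ for $w>0$. Thus $e^*$ selects precisely the weight-zero part, on which the two notions of top degree coincide. A secondary technicality is the use of the refined Gysin pullback for $e$ (accounting for relative rigidification if needed) and the verification that cycle-theoretic base change applies to the closed stratum inclusion $j_m$.
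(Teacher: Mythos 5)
Your proof is correct, but it takes a genuinely different route from the paper's own argument. The paper proves Theorem~\ref{thm:topdeg_correspondence} directly and combinatorially: it isolates the coefficient of each boundary stratum in $\DRP_g$, expresses this coefficient via Zagier's formula \eqref{eq:DRcoeff2} as a sum over spanning trees, performs the negative zeta regularization explicitly via Bernoulli generating functions, and matches meromorphic functions coefficient-by-coefficient. Only afterwards does it prove the universal version Theorem~\ref{thm:top}, by observing that the same argument goes through with the $\xi$-classes discarded upon applying $q^*$ and the multidegree $\delta(v)$ added to the specialization of variables. You invert this logical order: assuming Theorem~\ref{thm:top} (equivalently Theorem~\ref{thm:unidr1}), you deduce the $\DR$ statement by pulling back along the unit section. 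This is precisely what the introduction announces ("By pulling back Theorem~\ref{thm:unidr1} along the unit $e$ \ldots we obtain genus recursion"), but it is not what Section~\ref{subsec:topdeg_correspondence_proof} actually does. Your geometric ingredients are sound: the cartesian square identifying $\Mbar_{g,n}\times_{\pic_{g,n,0}}\pic_{\Gamma_m}$ with $\Mbar_{g-m,n+2m}$, the compatibility $q_m\circ\widetilde{e}=e_{g-m,n+2m}$, the observation that $[N]\circ e = e$ forces $e^*$ to annihilate positive weights (so that the two notions of top degree agree after pullback), and $e^*\uniDR = \DR$ (the extra $\kappa_{-1,2}$ and $\xi$ factors in \eqref{eq:unidr} die because $\L=\CO$). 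The comparison: your derivation is the conceptually cleaner half of the pair, replacing a page of power-series manipulation with a base-change diagram, but it does not save work globally --- Theorem~\ref{thm:top} still requires the full Zagier-formula computation. Since the paper's written proof of Theorem~\ref{thm:top} explicitly defers to the proof of Theorem~\ref{thm:topdeg_correspondence} for the bulk of the details, presenting your derivation would require first expanding that proof into a self-contained argument to avoid even the appearance of circularity; as it stands the two theorems are proved by essentially the same combinatorics and neither is derived from the other.
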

	
	Note that we can take the codimension $c$ part of both sides to express $\DR_g^c$ in terms of $\widetilde{\DR}_{g-m}^{c-m}$. The correspondence can also be formally inverted by adding a sign factor:
	\begin{corollary}
		Let $g,n\ge 0$. Then
		\[
		\widetilde{\DR}_g(b;\sfa) = \sum_{m=0}^{g}\frac{(-1)^m}{2^mm!}(j_m)_*\left[\sum_{k_1,\ldots,k_m > 0}\left(\prod_{i=1}^mk_i\right)\DR_{g-m}(b;\sfa,k_1,-k_1,k_2,-k_2,\ldots,k_m,-k_m)\right].
		\]
	\end{corollary}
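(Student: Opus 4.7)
The corollary is a formal M\"obius-style inversion of Theorem~\ref{thm:topdeg_correspondence}. The plan is to introduce two $\QQ$-linear operators $T, S$ on sequences $\{F_g(b;\sfa)\}$ defined by
\begin{align*}
(TF)_g(b;\sfa) &:= \sum_{m=0}^g \frac{1}{2^m m!}\, (j_m)_* \sum_{k_1, \ldots, k_m > 0} \Big(\prod_{i=1}^m k_i\Big)\, F_{g-m}(b;\sfa, k_1, -k_1, \ldots, k_m, -k_m), \\
(SF)_g(b;\sfa) &:= \sum_{m=0}^g \frac{(-1)^m}{2^m m!}\, (j_m)_* \sum_{k_1, \ldots, k_m > 0} \Big(\prod_{i=1}^m k_i\Big)\, F_{g-m}(b;\sfa, k_1, -k_1, \ldots, k_m, -k_m),
\end{align*}
with all divergent sums evaluated via the regularization \eqref{eq:zeta}. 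Theorem~\ref{thm:topdeg_correspondence} states $\DR = T\widetilde{\DR}$, so the corollary reduces to verifying $ST = \mathrm{id}$.

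To compute $STF$, I would expand both operators. The inner application of $T$ to $F_{g-m}(b;\sfa, k_1, -k_1, \ldots, k_m, -k_m)$ introduces new insertion pairs $(k'_j, -k'_j)$ at marking positions $n+2m+1, \ldots, n+2m+2\ell$ on $\Mbar_{g-m-\ell, n+2m+2\ell}$, after which the outer $(j_m)_*$ glues the $k$-pairs at positions $n+1, \ldots, n+2m$. The composition of pushforwards $(j_m)_* \circ (j_\ell)_* = (j_m \circ j_\ell)_*$ equals $(j_{m+\ell})_*$, since both $j_m\circ j_\ell$ and $j_{m+\ell}$ are the gluing map $\Mbar_{g-m-\ell, n+2m+2\ell}\to\Mbar_{g,n}$ identifying the last $m+\ell$ pairs of markings. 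Moreover, after relabeling the union of dummy variables $\{k_i\}\cup\{k'_j\}$ as $x_1, \ldots, x_{m+\ell}$, the inner double sum becomes
\[
\sum_{x_1, \ldots, x_s > 0} \Big(\prod_{i=1}^s x_i\Big)\, F_{g-s}(b;\sfa, x_1, -x_1, \ldots, x_s, -x_s)
\]
with $s = m+\ell$; in particular, this quantity depends only on the total $s$ and not on the individual split.

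Collecting terms according to $s = m+\ell$, we therefore obtain
\[
(STF)_g(b;\sfa) = \sum_{s=0}^g \Big(\sum_{m=0}^s \frac{(-1)^m}{2^m m! \, 2^{s-m}(s-m)!}\Big)(j_s)_*\sum_{\vec x > 0}\Big(\prod_{i=1}^s x_i\Big)\, F_{g-s}(b;\sfa, x_1, -x_1, \ldots, x_s, -x_s).
\]
The inner numerical factor equals $\frac{1}{2^s s!}\sum_{m=0}^s \binom{s}{m}(-1)^m = \frac{(1-1)^s}{2^s s!}$, which vanishes for $s \ge 1$ and equals $1$ for $s=0$. Hence $(STF)_g = F_g$, which proves $ST = \mathrm{id}$, and the corollary follows immediately by applying $S$ to both sides of $\DR = T\widetilde{\DR}$.

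The proof is essentially formal, and I do not anticipate any substantive obstacle. The entire content lies in two bookkeeping observations — the compositional identity $(j_m)_* \circ (j_\ell)_* = (j_{m+\ell})_*$, which is immediate from the definitions of the gluing maps, and the fact that the inner sum depends only on $s = m+\ell$ after renaming dummy variables — after which a single binomial cancellation yields the result.
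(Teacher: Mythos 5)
Your proof is correct and is exactly the ``formal inversion with a sign factor'' the paper asserts without spelling out; the operator framework, the composition $(j_m)_*\circ(j_\ell)_*=(j_{m+\ell})_*$, the irrelevance of the split $(m,\ell)$ once the dummy variables are renamed, and the binomial cancellation $\sum_{m=0}^s\binom{s}{m}(-1)^m=\delta_{s,0}$ are the right details to supply.
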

	
	We will prove Theorem~\ref{thm:topdeg_correspondence} in Section~\ref{subsec:topdeg_correspondence_proof} after first reviewing a formula of Zagier in Section~\ref{subsec:zagier}. We will then generalize to the universal Picard stack in Theorem~\ref{thm:top} in Section~\ref{subsec:topdeg_uniDR}.
	
	\subsection{Zagier's formula for a $\DR$ coefficient}\label{subsec:zagier}
	Zagier gave an alternative expression for the graph invariant $C(G)$ appearing in the $\DR$ formula - a proof can be found in the notes \cite{Pixton_Poly}. We briefly recall Zagier's formula here. It has the advantage of being visibly polynomial in the $a_v$ variables.
	
	To describe the formula, it is convenient to fix an orientation of every edge of the graph $G$ (it will be easy to see that this choice does not affect the following formula for $C(G)$). Also, for each edge $e\in E(G)$, let $z_e$ be a formal variable. The shape of Zagier's formula is a sum over spanning trees $T$ of $G$. Given such a tree $T$, we need two auxiliary definitions before we can state the formula.
	
	First, suppose $e\in E(G)$ is an edge that does not belong to the spanning tree $T$. Then there is a unique cycle in the subgraph $T\cup\{e\}$; let $z_{e,T}$ be the signed sum of $z_f$ over edges $f$ in that cycle, with signs given by comparing the orientation of $f$ with the orientation of $e$ as you go around the cycle (which must contain $e$). In other words, $z_e$ always has positive sign in $z_{e,T}$.
	
	Second, suppose $e\in E(G)$ is an edge that does belong to the spanning tree $T$. Then the subgraph $T\setminus\{e\}$ (given by cutting the edge $e$) has two connected components. Take the connected component containing the head of $e$, and let $a_{e,T}$ be the sum of $a_v$ for all vertices $v$ in that connected component.
	
	Then Zagier's formula states that
	\begin{equation}\label{eq:DRcoeff2}
		C(G) = (-1)^{|E(G)|}\left[\sum_{\text{$T$ spanning tree}}\prod_{e\in T}\exp(a_{e,T}z_e)\prod_{e\notin T}\frac{z_e}{\exp(z_{e,T})-1}\right]_{\text{coeff of }\prod_{e\in E(G)}z_e^2}.
	\end{equation}
	Here the final subscript indicates to take the coefficient of $\prod_{e\in E(G)}z_e^2$ when the bracketed expression is expanded as a power series in the $z_e$ variables. Although the individual terms in the sum are not power series in the $z_e$ variables due to dividing by $z_{e,T}$, it can be checked that these poles cancel in the overall sum, yielding an analytic function near the origin $z_e = 0$. If a type of multivariate Laurent series expansion is chosen that is compatible with regular power series expansion (e.g. sequentially expanding as Laurent series in the individual variables $z_e$ in some fixed order), then taking this coefficient can be moved inside the sum over $T$ without changing the answer.
	
	To write down a formula for the top degree part $\widetilde{\DR}$, we need to take the top degree part of the $\DR$ coefficient graph invariant $C(G)$. Let
	\[
	\widetilde{C}(G) \in \QQ[\{a_v\}]\Big/\left(\sum_{v\in V(G)}a_v\right)
	\]
	be the degree $2|E(G)|$ part of $C(G)$. It is easy to extract this from Zagier's formula. The final factor is the only one in which the degrees might be different in the $z_e$ and $a_v$ variables, so we just have that
	\begin{equation}\label{eq:DRcoeff2top}
		\widetilde{C}(G) = (-1)^{|E(G)|}\left[\sum_{\text{$T$ spanning tree}}\prod_{e\in T}\exp(a_{e,T}z_e)\prod_{e\notin T}\frac{z_e}{z_{e,T}}\right]_{\text{coeff of }\prod_{e\in E(G)}z_e^2}.
	\end{equation}
	
	\subsection{Proof of Theorem~\ref{thm:topdeg_correspondence}}\label{subsec:topdeg_correspondence_proof}
	We begin by canceling some of the exponential factors $\exp(\DRD)$ appearing on both sides of Theorem~\ref{thm:topdeg_correspondence}. Since $j_m^*\kappa_1 = \kappa_1$ and $j_m^*\psi_i = \psi_i$, the projection formula lets us cancel all the factors of $\kappa_1$ and $\psi_1,\ldots,\psi_n$ on both sides, so it suffices to prove that
	\begin{align*}
		\DRP_g(b;\sfa) = \sum_{m=0}^{g}\frac{1}{2^mm!}(j_m)_*\Bigg[\sum_{k_1,\ldots,k_m > 0}\left(\prod_{i=1}^mk_i\right)&\exp\left(\frac{k_1^2}{2}(\psi_{n+1}+\psi_{n+2})+\cdots+\frac{k_m^2}{2}(\psi_{n+2m-1}+\psi_{n+2m})\right)\\
		&\cdot\widetilde{\DRP}_{g-m}(b;\sfa,k_1,-k_1,k_2,-k_2,\ldots,k_m,-k_m)\Bigg].
	\end{align*}
	We can see that both sides have the property that they only have insertions of powers of $\psi+\psi'$ (along edges), which is a good sign. But we will begin by checking that for any stable graph $\Gamma$ with no automorphisms, the coefficient of $(j_\Gamma)_*1$ is equal on both sides. We will then discuss in Section~\ref{subsubsec:topdeg_proof_general} how to modify things to check that things still work when $\Gamma$ has automorphisms or there are $\psi$ insertions.
	
	On the left side, the coefficient (which we will call $L(\Gamma)$) is simply $C(\Gamma)$ with the usual specialization of variables \eqref{eq:specialization} applied. On the right side, we have many terms that all produce a multiple of $(j_\Gamma)_*1$ - we get one term for each stable graph of genus $g-m$ with $n+2m$ legs such that gluing the last $m$ pairs of legs recovers $\Gamma$. This is the same as taking a subset of the edges $E(\Gamma)$ of size $m$ such that the complement $\Gamma'$ is still connected, and then attaching legs to vertices of $\Gamma'$ where deleted edges were attached. There are $2^mm!$ different ways to label these legs with $n+1,n+2,\ldots,n+2m$ such that gluing them in pairs recovers $\Gamma$, but they all give the same contribution. So we can think of the coefficient on the right side (which we will call $R(\Gamma)$) as summing over all subsets of $E(\Gamma)$ such that the complement $\Gamma'$ is still connected, then taking the $\DR$ coefficients $\widetilde{C}(\Gamma')$ with appropriate specializations of variables, and finally evaluating the negative zeta value regularization.
	
	We evaluate the $C(\Gamma)$ and $\widetilde{C}(\Gamma')$ that appear using Zagier's formula \eqref{eq:DRcoeff2} and its top degree variant \eqref{eq:DRcoeff2top}. As explained in Section~\ref{subsec:zagier}, this requires choosing an orientation for every edge in $\Gamma$ (and we use the same orientations on the edges of each $\Gamma'$). We now write out what this gives for the coefficients $L(\Gamma),R(\Gamma)$ on the two sides. We begin with the simpler left side to introduce some shorthand:
	\begin{equation}\label{eq:topdegLHS}
		L(\Gamma) = (-1)^{|E(\Gamma)|}\left[\sum_{T\subseteq\Gamma}\prod_{e\in T}\exp(a_{e,T}z_e)\prod_{e\notin T}\frac{z_e}{\exp(z_{e,T})-1}\right]_{\substack{\text{coeff of }\prod z_e^2,\\ \{a_v\} \mapsto \{b,a_i\}}}
	\end{equation}
	Here the subscripts at the end mean that first we take the coefficient of $\prod_{e\in E(\Gamma)}z_e^2$ and then we do the specialization \eqref{eq:specialization} replacing the $a_v$ with the $b$ and $a_i$ variables (using the data of the genus and leg assignments to vertices in $\Gamma$).
	
	Then the right side can be written as
	\begin{equation}\label{eq:topdegRHS}
		R(\Gamma) = \sum_{m\ge 0}\sum_{\substack{S\subseteq E(\Gamma)\\ |S| = m \\ \Gamma' := \Gamma\setminus S\\ \Gamma'\text{ is connected}}}(-1)^{|E(\Gamma')|}\left[\sum_{T\subseteq\Gamma'}\prod_{e\in T}\exp(a_{e,T}z_e)\prod_{\substack{e\notin T\\ e\notin S}}\frac{z_e}{z_{e,T}}\prod_{e\in S}z_e^2\right]_{\substack{\text{coeff of }\prod z_e^2,\\ \{a_v\} \mapsto \{b,a_i,\pm k_i\},\\ k_i^d\mapsto \zeta(-d-1)}}.
	\end{equation}
	We make a few notes to explain various things about the above formula:
	\begin{itemize}
		\item The factor $1/(2^mm!)$ in the statement of Theorem~\ref{thm:topdeg_correspondence} has been cancelled out by a factor coming from the choice of labels for the $2m$ legs that would correspond to the $m$ edges in $S$, as discussed above.
		\item We include the extra factor $\prod_{e\in S}z_e^2$ because those are the variables for edges that are in $\Gamma$ but not $\Gamma'$, and at the end we want to take the coefficient of $z_e^2$ with respect to all of those variables too.
		\item The specialization of variables $\{a_v\} \mapsto \{b,a_i,\pm k_i\}$ indicates that we need to adjust the endpoints of the edges in $S$ by $\pm k_i$. Although the signs and indices of the $k_i$ here depend on the choice of labels described in the first note, it is easy to see from the following note that this choice does not affect the final expression.
		\item The final subscript at the end, $k_i^d\mapsto \zeta(-d-1)$, combines multiplication by $\prod k_i$ and summing over $k_i > 0$ using negative zeta regularization. Note that $\zeta(-d-1) = -B_{d+2}/(d+2)$ is zero unless $d$ is even.
	\end{itemize}
	
	We simplify this expression by pulling out the sum over spanning trees of $\Gamma'$ - the spanning trees of $\Gamma'$ are precisely the spanning trees of $\Gamma$ with edges disjoint from $S$, and conveniently if $S$ is disjoint from the edges of some spanning tree of $\Gamma$ then $\Gamma'$ is automatically connected. We also multiply by the sign factor $(-1)^{|E(\Gamma)|}$ that also appeared on the left side. This leaves a remaining sign of $(-1)^m$, which we incorporate into the product over edges in $S$. The result is
	\begin{equation}\label{eq:topdegRHS2}
		(-1)^{|E(\Gamma)|}R(\Gamma) = \sum_{T\subseteq\Gamma}\sum_{m\ge 0}\sum_{\substack{S \subseteq E(\Gamma)\setminus E(T) \\ |S| = m}}\left[\prod_{e\in T}\exp(a_{e,T}z_e)\prod_{\substack{e\notin T\\ e\notin S}}\frac{z_e}{z_{e,T}}\prod_{e\in S}\left(-z_e^2\right)\right]_{\substack{\text{coeff of }\prod z_e^2,\\ \{a_v\} \mapsto \{b,a_i,\pm k_i\},\\ k_i^d\mapsto \zeta(-d-1)}}.
	\end{equation}
	Note that taking the coefficient of $\prod z_e^2$ of an individual term like this requires choosing a multivariate Laurent series expansion - we can do this for instance by choosing an ordering on the variables and expanding as a Laurent series in each of them in turn. We do this in a consistent way, e.g. by fixing an ordering of $E(\Gamma)$.
	
	To continue simplifying this expression, we need to process the steps involving the auxiliary variables $k_i$. We can do this before taking the coefficient of $\prod z_e^2$ by interpreting the bracketed expression as a formal Laurent series in the $z_e$ variables with coefficients that are polynomials in the $a_v$, and applying these operations coefficient by coefficient. We first consider the specialization of variables $\{a_v\} \mapsto \{b,a_i,\pm k_i\}$. Again, performing this specialization requires choosing an ordering $S = \{e_1,\ldots,e_m\}$ and choosing one endpoint of each edge to add $k_i$ and one to subtract $k_i$ - we use the fixed orientation on edges of $\Gamma$ for this. We can then check (recalling the definitions of $a_{e,T}$ and $z_{e,T}$) that
	\[
	\left[\sum_{e\in T} a_{e,T}z_e\right]_{\{a_v\} \mapsto \{b,a_i,\pm k_i\}} = \left[\sum_{e\in T} a_{e,T}z_e\right]_{\{a_v\} \mapsto \{b,a_i\}}+\sum_{i=1}^mk_i(z_{e_i,T}-z_{e_i}).
	\]
	Using this, our expression for $(-1)^{|E(\Gamma)|}R(\Gamma)$ becomes
	\[
	\sum_{T\subseteq\Gamma}\sum_{m\ge 0}\sum_{\substack{S \subseteq E(\Gamma)\setminus E(T) \\ |S| = m}}\left[\prod_{e\in T}\exp(a_{e,T}z_e)\prod_{\substack{e\notin T\\ e\notin S}}\frac{z_e}{z_{e,T}}\prod_{e\in S}\left(-z_e^2\right)\prod_{i=1}^m\exp\left(k_i(z_{e_i,T}-z_{e_i})\right)\right]_{\substack{\text{coeff of }\prod z_e^2,\\ \{a_v\} \mapsto \{b,a_i\},\\ k_i^d\mapsto \zeta(-d-1)}},
	\]
	where again $S = \{e_1,\ldots,e_m\}$.
	
	We can now directly perform the negative zeta regularization. Note that
	\begin{align*}
		\left[\exp(kU)\right]_{k^d\mapsto \zeta(-d-1)} &= \sum_{d = 0}^{\infty}\frac{-B_{d+2}}{(d+2)d!}U^d\\
		&= \frac{d}{dU}\left(\frac{1}{U}\left(\frac{-U}{e^U-1} + 1\right)\right)\\
		&= \left(\frac{e^U}{(e^U-1)^2} - \frac{1}{U^2}\right),
	\end{align*}
	using the standard exponential generating function for the Bernoulli numbers.
	
	Taking $U = z_{e_i,T}-z_{e_i}$ and substituting this in, our expression for $(-1)^{|E(\Gamma)|}R(\Gamma)$ is now
	\[
	\sum_{T\subseteq\Gamma}\sum_{m\ge 0}\sum_{\substack{S \subseteq E(\Gamma)\setminus E(T) \\ |S| = m}}\left[\prod_{e\in T}\exp(a_{e,T}z_e)\prod_{\substack{e\notin T\\ e\notin S}}\frac{z_e}{z_{e,T}}\prod_{e\in S}\left(-z_e^2\left(\frac{\exp(z_{e,T}-z_e)}{(\exp(z_{e,T}-z_e)-1)^2}-\frac{1}{(z_{e,T}-z_e)^2}\right)\right)\right]_{\substack{\text{coeff of }\prod z_e^2,\\ \{a_v\} \mapsto \{b,a_i\}}},
	\]
	where we've combined our two products over edges in $S$ now that the $e_1,\ldots,e_m$ labels are unnecessary.
	
	But now we can freely move the sums over $m$ and $S$ inside the bracket and evaluate them as the usual product of binomials. The result is
	\[
	\sum_{T\subseteq\Gamma}\left[\prod_{e\in T}\exp(a_{e,T}z_e)\prod_{e\notin T}\left(\frac{z_e}{z_{e,T}} - z_e^2\left(\frac{\exp(z_{e,T}-z_e)}{(\exp(z_{e,T}-z_e)-1)^2}-\frac{1}{(z_{e,T}-z_e)^2}\right)\right)\right]_{\substack{\text{coeff of }\prod z_e^2,\\ \{a_v\} \mapsto \{b,a_i\}}}
	\]
	This now looks a lot like \eqref{eq:topdegLHS} (recall we've multiplied through by the sign factor $(-1)^{|E(\Gamma)|}$), but we have a different meromorphic function of $z_e$ and $z_{e,T}$ appearing inside the product over edges $e\notin T$. Note that for a fixed edge $f\notin T$, the variable $z_f$ only appears in this one factor in the expression. We claim that (when expanded as a Laurent series in the $z_e$ in the fixed order previously chosen) the two functions
	\[
	\frac{z_f}{\exp(z_{f,T})-1}\quad\text{ and }\quad \left(\frac{z_f}{z_{f,T}} - z_f^2\left(\frac{\exp(z_{f,T}-z_f)}{(\exp(z_{f,T}-z_f)-1)^2}-\frac{1}{(z_{f,T}-z_f)^2}\right)\right)
	\]
	have the same coefficient of any monomial $\prod_e z_e^{d_e}$ with exponent $d_f = 2$. This will complete the proof of the desired identity of coefficients $L(\Gamma) = R(\Gamma)$.
	
	To see this claim, note that the difference between these two functions is analytic near the origin $z_e = 0$, so it suffices to expand that as a power series and check that the desired coefficients vanish. Let $X = z_f$ and $Y = z_{f,T}-z_f$ (which does not use the variable $z_f$); then it suffices to check that the coefficient of $X^2Y^i$ in
	\[
	\frac{X}{e^{X+Y}-1} - \frac{X}{X+Y} + X^2\left(\frac{e^Y}{\left(e^Y-1\right)^2}-\frac{1}{Y^2}\right)
	\]
	is $0$ for all $i \ge 0$. Equivalently,
	\[
	\left[\frac{X}{e^{X+Y}-1} - \frac{X}{X+Y}\right]_{\text{coeff of $X^2$}} = -\left(\frac{e^Y}{\left(e^Y-1\right)^2}-\frac{1}{Y^2}\right),
	\]
	which is easily verified by dividing by $X$, differentiating with respect to $X$, and setting $X=0$.
	
	\subsubsection{General tautological classes}\label{subsubsec:topdeg_proof_general}
	
	We are not yet finished with the proof of Theorem~\ref{thm:topdeg_correspondence} - so far, we have only verified that for pure boundary strata corresponding to stable graphs $\Gamma$ with no automorphisms, the coefficients match up on both sides.
	
	First we discuss automorphisms. Let $\Gamma$, $S\subseteq E(\Gamma)$ with $|S| = m$, and $\Gamma' = \Gamma\setminus S$ be as above. Let $M$ be the set of $2^g\cdot g!$ different ways to add and label $2m$ legs to $\Gamma'$ at the endpoints of the edges in $S$. Previously the resulting $2^g\cdot g!$ stable graphs were all nonisomorphic (and all had trivial automorphism group), but this is no longer true if $\aut(\Gamma)$ is nontrivial. There is a natural action of $\aut(\Gamma)$ on $M$, and elements of $M$ in the same orbit will correspond to isomorphic stable graphs. Moreover, if $\Gamma''$ is one such stable graph then its automorphism group is isomorphic to the subgroup of $\aut(\Gamma)$ stabilizing the corresponding element of $M$. We can also see that $|\aut(\Gamma'')|$ does not depend on the choice of $\Gamma''$. By the orbit-stabilizer theorem, the number of isomorphism classes of stable graphs corresponding to elements of $M$ is
	\[
	\frac{|\aut(\Gamma'')|}{|\aut(\Gamma)|}2^g\cdot g!.
	\]
	This modified number precisely cancels the automorphism factors on the two sides of the desired identity.
	
	Finally, we need to discuss how to handle insertions of powers of $\psi+\psi'$ along edges. To check that coefficients match up for tautological classes with these internal $\psi$ classes, we replace $\Gamma$ with semistable $\widehat{\Gamma}$ as described at the end of Section~\ref{subsec:DRintro}. The proof that $L(\widehat{\Gamma}) = R(\widehat{\Gamma})$ (expressions defined using \eqref{eq:topdegLHS} and \eqref{eq:topdegRHS}) goes through unchanged, so it remains to check that this identity is indeed the equality of coefficients we want to prove to obtain Theorem~\ref{thm:topdeg_correspondence}. We have that $L(\widehat{\Gamma})$ is the coefficient of
	\[
	\frac{1}{|\aut(\Gamma)|}(j_{\Gamma})_*\prod_{e = (h,h')\in E(\Gamma)}\frac{(-\psi_h-\psi_{h'})^{d_e}}{(d_e+1)!}
	\]
	for some exponents $d_e$. The other side requires more discussion - what happens when the set of edges $S\subseteq E(\widehat{\Gamma})$ contains edges along the semistable portions of $\widehat{\Gamma}$?
	
	Suppose a given edge $e\in E(\Gamma)$ was subdivided into $m+1$ edges $e_1,\ldots,e_{m+1}\in E(\widehat{\Gamma})$, i.e. $m = d_e$. Then the condition that the complement of $S$ is connected implies that $S$ contains at most one of the $e_i$. Also, it is easily checked that altering $S$ by replacing one of the $e_i$ by a different one does not change the value of a term in \eqref{eq:topdegRHS}. If we combine terms with $S$ that are related in this way, we get a factor of $m+1$ and can reduce to a sum over $S\subseteq E(\Gamma)$, which are the actual edges appearing in the gluing. We then get to choose powers of $\psi$ on the two sides of the glued edge. The factor of $m+1$ then corresponds to the identity
	\[
	\sum_{i = 0}^{m}\frac{(-\psi)^i}{i!}\cdot\frac{(-\psi')^{m-i}}{(m-i)!} = (m+1)\cdot\frac{(-\psi-\psi')^m}{m!}.
	\]
	
	\subsubsection{A related lemma}
	We state and prove an identity that is closely related to Theorem~\ref{thm:topdeg_correspondence} here. It will be used later in Section~\ref{sec:forgetful}.
	\begin{lemma}\label{lemma:codim_minus_deg}
		Let $g,n\ge 0$. Let $j:\Mbar_{g-1,n+2}\to\Mbar_{g,n}$ be the gluing map gluing the last two markings. Then
		\[
		(2[\text{codim}]-[\text{deg}])\DR_g(b;\sfa) = j_*\left[-\frac{k^2}{2}\DR_{g-1}(b;\sfa,k,-k)\right]_{k^d\mapsto B_d}.
		\]
		On the left side, $[\text{codim}]$ and $[\text{deg}]$ are the derivations that act (respectively) by multiplication by codimension and by multiplication by polynomial degree (in $b,a_i$) on the appropriate graded pieces. On the right side, the subscript indicates that powers of the formal variable $k$ are replaced by the corresponding Bernoulli numbers.
	\end{lemma}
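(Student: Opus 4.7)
The plan is to expand both sides using Theorem~\ref{thm:topdeg_correspondence} and reduce the claim to a simple univariate identity relating $\zeta$-regularization and Bernoulli substitution. Throughout, let $\mathsf{L} := 2[\text{codim}] - [\text{deg}]$, viewed as the grading operator for the bigrading by $2(\text{codimension}) - (\text{polynomial degree in $b, a_i$})$. Since $\widetilde{\DR}_{g-m}^{c-m}(b;\sfa,k_\bullet,-k_\bullet)$ has total polynomial degree $2(c-m)$ in all variables $(b,a,k)$, if its $(b,a)$-degree is $p$ and its $k$-degree is $q$ then $p+q=2(c-m)$, and $\mathsf{L}$ annihilates $\widetilde{\DR}_g$ (the $m=0$ case).

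Applying Theorem~\ref{thm:topdeg_correspondence} to the left-hand side gives
\[
\DR_g(b;\sfa) = \sum_{m\ge 0}\frac{1}{2^m m!}(j_m)_*\Bigl[\prod_i k_i\cdot\widetilde{\DR}_{g-m}(b;\sfa,k_\bullet,-k_\bullet)\Bigr]_\zeta.
\]
The operator $\mathsf{L}$ acts on the codimension $c$ part of the $m$-th summand by the scalar $2c-p = 2m+q$. Using $\sum_{i=1}^m (D_{k_i}+1) = D_k + m$ (where $D_{k_i}$ is the $k_i$-degree operator) applied to $\prod_j k_j\cdot\widetilde{\DR}_{g-m}$, whose $k$-degree is $m+q$, we obtain
\[
\mathsf{L}\,\DR_g = \sum_{m\ge 1}\frac{1}{2^m m!}(j_m)_*\sum_{i=1}^m \Bigl[(D_{k_i}+1)\bigl(k_i\textstyle\prod_{j\ne i}k_j\cdot\widetilde{\DR}_{g-m}\bigr)\Bigr]_\zeta.
\]

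For the right-hand side, apply Theorem~\ref{thm:topdeg_correspondence} to $\DR_{g-1}(b;\sfa,k,-k)$ with $\ell$ additional pairs of markings, and compose $j\circ j'_\ell = j_m$ where $m:=\ell+1$. Placing the ``special'' variable $k$ as the $m$-th pair and then averaging over the $m$ possible choices of which pair of glued markings plays this role—using the symmetry of $\widetilde{\DR}_{g-m}$ under permutation of the pairs $(k_i,-k_i)$ and the $S_m$-equivariance of $j_m$—yields
\[
\mathrm{RHS} = \sum_{m\ge 1}\frac{1}{2^m m!}(j_m)_*\sum_{i=1}^m\Bigl[-k_i^2\textstyle\prod_{j\ne i}k_j\cdot\widetilde{\DR}_{g-m}\Bigr]_{B_{k_i},\,\zeta_{k_j\ne i}}.
\]

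Matching term by term in $(m,i)$, the lemma reduces to the following univariate identity: for any polynomial $f(k)$ (with arbitrary coefficients),
\[
\bigl[(D_k+1)(k\,f(k))\bigr]_\zeta \;=\; \bigl[-k^2 f(k)\bigr]_{k^d\mapsto B_d}.
\]
Writing $f = \sum_d c_d k^d$, the left side equals $\sum_d (d+2)c_d\,\zeta(-d-1) = -\sum_d c_d B_{d+2}$ via $\zeta(-d-1) = -B_{d+2}/(d+2)$, while the right side equals $-\sum_d c_d B_{d+2}$ directly. The main technical subtlety is the symmetrization step: one must verify that the $m$ choices of which pair carries the Bernoulli substitution produce identical contributions after pushforward by $j_m$; once this is checked (using symmetry of $\widetilde{\DR}_{g-m}$ in its paired arguments), the remaining computation is direct.
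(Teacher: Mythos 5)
Your proof is correct, and it takes a genuinely different route from the paper's. The paper proves Lemma~\ref{lemma:codim_minus_deg} by re-running the Zagier-formula argument from Section~\ref{subsec:zagier}: it rewrites $2[\text{codim}]$ as the $z$-degree operator $[\text{deg}_z]$ inside the brackets, processes the Bernoulli substitution $k^d\mapsto -B_{d+2}$ as a second derivative of $T/(e^T-1)$, and ultimately reduces to checking that the coefficient of $X^2Y^i$ in a certain meromorphic expression vanishes (done by symbolic computation). You instead treat Theorem~\ref{thm:topdeg_correspondence} as a black box, observe that both sides of the lemma admit the same $j_m$-pushforward expansion once one symmetrizes over the choice of which cut edge carries the Bernoulli weight, and reduce everything to the one-variable identity $[(D_k+1)(kf(k))]_\zeta = [-k^2f(k)]_{k^d\mapsto B_d}$, which is immediate from $\zeta(-d-1)=-B_{d+2}/(d+2)$. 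Your degree bookkeeping ($\mathsf{L}$ acting by $2m+q = \sum_i(D_{k_i}+1)$ on the $m$-th summand) and the normalization factor ($\frac{1}{2^{\ell+1}\ell!}\cdot\frac{1}{m} = \frac{1}{2^m m!}$) both check out, and the symmetrization is valid: the transposition $\phi_\sigma$ swapping two cut pairs satisfies $j_m\circ\phi_\sigma = j_m$ and $\phi_\sigma^*\widetilde{\DR}_{g-m}$ permutes the $k_i$'s accordingly, so the $m$ choices of special pair give identical classes after pushforward.

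What each approach buys: yours is shorter, more conceptual, and makes transparent that the lemma is a \emph{formal consequence} of the recursive structure in Theorem~\ref{thm:topdeg_correspondence} plus the Bernoulli/zeta dictionary --- no further symbolic verification is required. The paper's approach is a self-contained computation parallel to the proof of Theorem~\ref{thm:topdeg_correspondence}, so it re-establishes the needed identity from the Zagier formula directly; this has the advantage that the automorphism and internal-$\psi$ bookkeeping of Section~\ref{subsubsec:topdeg_proof_general} is reused verbatim, whereas in your route one should (as you acknowledge) take a moment to confirm that the symmetrization argument survives in the presence of graph automorphisms --- which it does, since the equality of the $m$ terms is a cycle-level statement about $(j_m)_*$ that does not see $|\aut(\Gamma)|$.
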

	\begin{proof}
		We follow the same proof strategy as for Theorem~\ref{thm:topdeg_correspondence}. Things are mostly directly analogous, so we will only write down how the details change in a few key points.
		
		Suppose we are checking that the coefficient of the pure boundary stratum $(j_\Gamma)_*1$ is equal on the two sides, where $\Gamma$ is a stable graph with $c$ legs. On the left side, we have to apply the derivation $(2[\text{codim}]-[\text{deg}])$ to $C(\Gamma)$. Using Zagier's formula for $C(\Gamma)$ as in \eqref{eq:topdegLHS}, we note that we can move $2[\text{codim}]$ inside the brackets and replace it there with $[\text{deg}_z]$, the polynomial degree in the $z_e$ variables. Then $[\text{deg}_z] - [\text{deg}_a]$ annihilates the $\exp(a_{e,T}z_e)$ factors, so we just need to let it act on one of the other types of factor. In other words, the left side becomes
		\[
		(-1)^{|E(\Gamma)|}\left[\sum_{T\subseteq\Gamma}\sum_{f\notin T}\prod_{e\in T}\exp(a_{e,T}z_e)\left(\prod_{\substack{e\notin T\\e\ne f}}\frac{z_e}{\exp(z_{e,T})-1}\right)\left([\text{deg}_z]\frac{z_{f}}{\exp(z_{f,T})-1}\right)\right]_{\substack{\text{coeff of }\prod z_e^2,\\ \{a_v\} \mapsto \{b,a_i\}}}
		\]
		
		Meanwhile, the right side of the identity to be proved is similar to \eqref{eq:topdegRHS2}, but we make the following changes:
		\begin{itemize}
			\item We only have $|S| = m = 1$, so we will write $S = {f}$ and replace the sums over $m$ and $S$ with a sum over edges $f$, and write $k$ in place of $k_1$;
			\item We did not use the top degree version of $\DR$, so the $z_{e,T}$ in the denominator should be $\exp(z_{e,T}-1)$;
			\item At the end we apply $k^d\mapsto -B_{d+2}$ instead of $k^d\mapsto \zeta(-d-1)$.
		\end{itemize}
		The result is
		\[
		\sum_{T\subseteq\Gamma}\sum_{f\in E(\Gamma)\setminus E(T)}\left[\prod_{e\in T}\exp(a_{e,T}z_e)\left(\prod_{\substack{e\notin T\\ e\ne f}}\frac{z_e}{\exp(z_{e,T})-1}\right)\left(-z_{f}^2\right)\right]_{\substack{\text{coeff of }\prod z_e^2,\\ \{a_v\} \mapsto \{b,a_i,\pm k\},\\ k^d\mapsto -B_{d+2}}}.
		\]
		We then proceed as before and process the steps involving $k$. This involves the new computation
		\begin{align*}
			\left[\exp(kT)\right]_{k^d\mapsto -B_{d+2}} &= -\sum_{d = 0}^{\infty}\frac{B_{d+2}}{d!}T^d\\
			&= -\left(\frac{d}{dT}\right)^2\frac{T}{e^T-1}.
		\end{align*}
		After this the two sides of the identity are of the same shape but they just use different meromorphic functions of $z_f$ and $z_{f,T}$, as before. If we let $X = z_{f}$ and $Y = z_{f,T}-z_{f}$, then we end up just needing to check that the coefficient of $X^2Y^i$ in
		\[
		\left(X\frac{d}{dX}+Y\frac{d}{dY}\right)\frac{X}{e^{X+Y}-1}-X^2\left(\left(\frac{d}{dY}\right)^2\frac{Y}{e^Y-1}\right)
		\]
		is $0$ for all $i\ge 0$. This is easily done with a symbolic algebra package.
	\end{proof}
	
	\subsection{Top degree part of the universal $\DR$ formula}\label{subsec:topdeg_uniDR}
	The universal double ramification formula is used to compute the closure of the Abel-Jacobi section on the universal Picard stack \cite{BHPSS}. 
	Let $\pic_{g,n,0}$ denote the universal Picard stack of total degree $0$ line bundles for $\C_{g,n}\to \M_{g,n}$ (see Section \ref{sec:taut_class}). Let $b\in\ZZ$ and let $\sfa=(a_1,\ldots, a_n)\in\ZZ^n$ be a vector of integers with $\sum_{i=1}^n a_i=b(2g-2+n)$. For $r\in\ZZ_{\geq 0}$, we denote by $\uniDR_{g,r}^{c}(b; \sfa)$ the codimension $c$ component of the tautological class 
	\begin{align}
		\sum_{
			\substack{\Gamma_\delta\in \mathsf{G}_{g,n,0} \nonumber\\
				w\in \mathsf{W}_{\Gamma_\delta,r}}
		}
		\frac{r^{-h^1(\Gamma_\delta)}}{|\aut(\Gamma_\delta)| }
		\;
		j_{\Gamma_{\delta*}}\Bigg[
		\prod_{i=1}^n \exp\left(\frac{1}{2} a_i^2 \psi_i + a_i \xi_i \right)
		\prod_{v \in V(\Gamma_\delta)} \exp\left(-\frac{1}{2} \kappa_{-1,2}(v)-b\kappa_{0,1}-\frac{b^2}{2}\kappa_1 \right)
		\\ 
		\prod_{e=(h,h')\in E(\Gamma_\delta)}
		\frac{1-\exp\left(-\frac{w(h)w(h')}2(\psi_h+\psi_{h'})\right)}{\psi_h + \psi_{h'}} \Bigg]\label{eq:unidr}
	\end{align} 
	in the operational Chow ring $\CHop^*(\pic_{g,n,0})$. Here $\Gamma_\delta$ runs over all prestable graphs $\Gamma$ equipped with arbitrary multidegrees $\delta:V(\Gamma)\to\ZZ$ with sum $0$. The weighting set $\mathsf{W}_{\Gamma_\delta,r}$ is defined in the same way as the set $\mathsf{W}_{\Gamma,r}$ used in the $\DR$ formula (Section~\ref{subsec:DRintro}), except that $\delta(v)$ is added to the right side of the vertex balancing condition.
	
	For sufficiently large $r$, this expression is polynomial in $r$. Let $\uniDR^c_{g}(b;\sfa)$ be the constant term of this polynomial. We formally sum over $c$ to define the total $\uniDR$ cycle
	\[
	\uniDR_g(b;\sfa) := \sum_{c\ge 0}\uniDR^c_{g}(b;\sfa)
	\]
	As in \eqref{eq:DRfactor}, we can factor this as $\uniDR_g(b,\sfa) = \exp(\uniDRD_g(b;\sfa))\uniDRP_g(b;\sfa)$, where $\uniDRD_g(b;\sfa)$ is a divisor given as a linear combination of the $\psi_i,\xi_i$, and $\kappa_{i,j}$ with $i+j = 1$.
	
	For a detailed discussion of the invariant properties of the universal double ramification cycle formula, see \cite[Section 7]{BHPSS}.
	
	We consider the ``multiplication by $N$" map
	\begin{equation}\label{eq:Nmap}
		[N] : \pic_{g,n,0}\to\pic_{g,n,0}\,.
	\end{equation}
	A class $\alpha\in\CH^*(\pic_{g,n,0})$ has {\em weight $w$} if $[N]^*\alpha  = N^w\alpha$ for all $N\in\ZZ$.
	We define the subring
	\begin{equation*}
		\CH^*_{\pure}(\pic_{g,n,0}):= \bigoplus_{w\geq 0} \CH^*_{(w)}(\pic_{g,n,0}) \subset\CH^*(\pic_{g,n,0})_\QQ
	\end{equation*}
	generated by weight $w$ classes. Compared to finite-type commutative group schemes (see Theorem \ref{thm:wtdec}), this inclusion is strict because $\pic_{g,n,0}$ is not finite type over $\M_{g,n}$. Nevertheless, the universal double ramification formula lies in this subring.
	\begin{proposition}\label{pro:P_wt}
		For any $b\in\ZZ$ and $\sfa\in\ZZ^n$ with $\sum_{i=1}^na_i=(2g-2+n)b$,  the class $\uniDR^c_g(b;\sfa)$ lies in $\CH^c_{\pure}(\pic_{g,n,0})$.
	\end{proposition}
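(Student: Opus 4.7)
My plan is to show that $[N]^*\uniDR^c_g(b;\sfa)$ is a polynomial in $N$ of degree at most $2c$. Once this is established, the conclusion $\uniDR^c_g(b;\sfa)\in\CH^c_{\pure}(\pic_{g,n,0})$ follows by Vandermonde interpolation at distinct integer values of $N$: each pure-weight component is extracted as an explicit $\QQ$-linear combination of the classes $[N]^*\uniDR^c_g(b;\sfa)$.

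First, I would record the weights of the basic tautological classes. Since $[N]\colon\pic_{g,n,0}\to\pic_{g,n,0}$ acts as the identity on the universal curve but by tensor-$N$ on the universal line bundle $\L$, one has $[N]^*\xi_i = N\xi_i$ and $[N]^*\kappa_{a,b} = N^b\kappa_{a,b}$, while $\psi_i$, $\kappa_1$, and the edge classes $\psi_h+\psi_{h'}$ (all pulled back from $\M_{g,n}$) are fixed by $[N]^*$. Expanding the exponential factors in \eqref{eq:unidr} therefore decomposes each summand into monomials of pure weight in these generators, so the only remaining issue is the behavior of the boundary pushforwards.

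Next, I would analyze the boundary pushforwards. The map $[N]$ restricts to a morphism $\pic_{\Gamma_\delta}\to\pic_{\Gamma_{N\delta}}$, and the preimage $[N]^{-1}(\pic_{\Gamma_{\delta'}})$ equals $\pic_{\Gamma_{\delta'/N}}$ when $N$ divides $\delta'$ componentwise, and is empty otherwise. Using the $\Gm^{h^1(\Gamma)}$-torsor presentation \eqref{eq:picboundary} together with flat base change, one obtains
\[
[N]^*(j_{\Gamma_{\delta'}})_*\alpha \;=\; \begin{cases}(j_{\Gamma_{\delta'/N}})_*\widetilde{[N]}^*\alpha & \text{if }N\mid\delta',\\ 0 & \text{otherwise,}\end{cases}
\]
where $\widetilde{[N]}\colon\pic_{\Gamma_{\delta'/N}}\to\pic_{\Gamma_{\delta'}}$ is the induced map, which acts on the tautological generators by the same scaling rules recorded above. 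Reindexing the sum in \eqref{eq:unidr} via $\delta' = N\delta$, together with the corresponding reparameterization $w\mapsto Nw\pmod{r}$ of the weighting set $\mathsf{W}_{\Gamma_{N\delta},r}$, converts $[N]^*\uniDR^{c,r}_g(b;\sfa)$ to a form whose polynomial-in-$N$ dependence can be read off directly; taking the $r$-adic constant term then preserves this structure.

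The hard part will be verifying this reindexing in detail: the weighting sets $\mathsf{W}_{\Gamma_\delta,r}$ are defined by congruence conditions that couple $\delta$, $r$, and $(b,\sfa)$, and ensuring that the reparameterization aligns correctly with the vertex-balancing conditions---while simultaneously being compatible with the $r$-adic polynomial asymptotics required to extract $\uniDR^c_g$ from $\uniDR^{c,r}_g$---requires careful combinatorial bookkeeping. Once polynomiality of $[N]^*\uniDR^c_g(b;\sfa)$ in $N$ of degree at most $2c$ is confirmed, the decomposition into pure-weight components is immediate.
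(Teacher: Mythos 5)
Your high-level strategy---show that $[N]^*\uniDR^c_g(b;\sfa)$ is polynomial in $N$ of degree at most $2c$, and then deduce the weight decomposition by interpolation using $[NM]^* = [M]^*[N]^*$---is correct and is essentially what the paper's one-paragraph proof takes for granted. The treatment of the generators ($\psi_i$ weight $0$, $\xi_i$ weight $1$, $\kappa_{a,b}$ weight $b$) and the base change formula for $[N]^*(j_{\Gamma_{\delta'}})_*$ are both fine. The gap is in how you propose to \emph{see} the polynomiality.

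Your proposed reparameterization $w\mapsto Nw\pmod r$ of $\mathsf{W}_{\Gamma_{N\delta},r}$ does not do what you want. The congruences defining $\mathsf{W}$ at the legs and vertices involve $a_i$ and $b$ linearly, so $w\mapsto Nw$ carries $\mathsf{W}_{\Gamma_{\delta},r}$ with parameters $(b,\sfa)$ to $\mathsf{W}_{\Gamma_{N\delta},r}$ with parameters $(Nb,N\sfa)$, not $(b,\sfa)$; and it is only a bijection on $\{0,\ldots,r-1\}$ when $\gcd(N,r)=1$. More fundamentally, for a fixed $r$ the inner $w$-sum genuinely is \emph{not} polynomial in $N$: the set $\mathsf{W}_{\Gamma_{N\delta},r}$ depends only on the residues $N\delta(v)\bmod r$, so the sum is periodic in $N$. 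Polynomiality in $N$ only emerges after the $r\to 0$ regularization, and proving that it does is exactly the content of the polynomiality theorem for the $\DR$ graph invariant $C(G)$ of \eqref{eq:DRcoeff1} established in \cite{Spelier,Pixton_Poly} (cf.\ Zagier's formula \eqref{eq:DRcoeff2}). The paper's proof simply invokes this: after base change and reindexing $\delta\mapsto N\delta$, the coefficient of the class $(j_{\Gamma_\delta})_*[\,\cdot\,]$ (with fixed $\psi+\psi'$ insertions) is $C(\widehat\Gamma)$ evaluated at $a_v = -N\delta(v) + (\text{linear in }b,a_i)$, up to an explicit monomial scaling of the $\xi$- and $\kappa$-decorations by powers of $N$, and polynomiality of $C(\widehat\Gamma)$ in its arguments finishes the argument. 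You should replace the $w\mapsto Nw$ manoeuvre with this citation; without it, the "read off directly" step is where the proof breaks.
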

	\begin{proof}
		This is easily checked for the divisor class $\uniDRD_g(b;\sfa)$ - we have that $\psi_i$ has weight $0$, $\xi_i$ has weight $1$, and $\kappa_{i,j}$ has weight $j$. For the ``piecewise polynomial part" $\uniDRP_g(b;\sfa)$, we just need to check that the coefficient on the boundary stratum corresponding to a graph $\Gamma$ with multidegree $\delta$ (possibly with $-\psi-\psi'$ decorations) depends polynomially on $\delta$. But this coefficient is just one of the $\DR$ graph invariants $C(G)$ of \eqref{eq:DRcoeff1}, evaluated at $a_v$ equal to $-\delta(v)$ plus a linear combination of $b$ and the $a_i$. Since $C(G)$ is polynomial in these inputs \cite{Spelier,Pixton_Poly}, we are done.
	\end{proof}
	A simple case of Proposition \ref{pro:P_wt} first appears in \cite[Proposition 4.2]{BMSY}. By Proposition \ref{pro:P_wt}, we can consider the following definition:
	\begin{definition}
		We define the {top degree part} $\widetilde{\uniDR}^{c}_g(b;\sfa)$ as the sum over $m$ of the weight $2c-m$ part of the coefficient of monomials in $b,a_i$ of degree $m$.
	\end{definition}
	In other words, we use the sum of the weight grading and the polynomial codimension grading in $b,a_i$ to define the top degree part. It is easily checked that since $C(G)$ has degree at most twice the number of edges of $G$, the highest ``degree" we can get in this way in codimension $c$ is $2c$ (just as it was for the top degree part of regular $\DR$). We define the total top degree part $\widetilde{\uniDR}_g(b;\sfa)$ by summing over $c$ as usual.
	
	For $1\leq m\leq g$, we consider the diagram
	\[
	\begin{tikzcd}
		& \mathfrak{Pic}^{\Gm^m}\ar[r,"j_m"]\ar[d]\ar[dl,"q"'] & \mathfrak{Pic}_{g,n,0}\ar[d]\\
		\mathfrak{Pic}_{g-m,n+2m,0}\ar[r] & \M_{g-m,n+2m}\ar[r] & \M_{g,n}
	\end{tikzcd}
	\]
	by gluing the pairs of markings $(n+1,n+2), \ldots, (n+2m-1,n+2m)$. The right square is Cartesian.
	
	\begin{theorem}\label{thm:top}
		Let $b\in\ZZ$ and $\sfa\in\ZZ^n$ with $\sum_{i=1}^na_i=(2g-2+n)b$. Then we have 
		\[
		\uniDR_g(b;\sfa) = \sum_{m=0}^g\sum_{k_1,\ldots,k_h\in\ZZ}\frac{1}{2^mm!}\Big(\prod_{i=1}^m k_i\Big)(j_m)_*q^* \widetilde{\uniDR}_{g-m}(b;\sfa,k_1,-k_1,\ldots,k_m,-k_m)\,.
		\]
	\end{theorem}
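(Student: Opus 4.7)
The proof will closely parallel that of Theorem \ref{thm:topdeg_correspondence}, with the main additional ingredients being the multidegree $\delta$ attached to vertices of prestable graphs and the extra divisor classes $\xi_i$, $\kappa_{0,1}$, $\kappa_{-1,2}$ appearing in the $\uniDR$ formula. A first observation is that every monomial appearing in $\uniDRD_g(b;\sfa)$ has weight under $[N]$ plus polynomial degree in $b,\sfa$ equal to two, so $\exp(\uniDRD_g(b;\sfa))$ already lies entirely in the top degree part. Using the projection formula along $(j_m)_*$, together with the easily checked compatibilities $j_m^*\psi_i=\psi_i$, $j_m^*\xi_i=\xi_i$ for original markings, and the corresponding behaviour of the $\kappa$ classes, this exponential factor cancels from both sides, reducing the problem to the corresponding identity for $\uniDRP_g$.

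Next, exactly as in Section \ref{subsubsec:topdeg_proof_general}, edge subdivision absorbs insertions of polynomials in $\psi+\psi'$ along edges and passes to semistable graphs (extended by zero multidegree on new vertices), further reducing to comparing coefficients of pure boundary strata $(j_{\Gamma_\delta})_*1$. The coefficient of such a stratum class in $\uniDRP_g(b;\sfa)$ is the graph invariant $C(\Gamma)$ evaluated at
\[
a_v = \sum_{\text{leg }i\text{ at }v} a_i - (2g_v-2+n_v)b - \delta(v),
\]
which is a direct translation of the modified vertex balancing condition defining $\mathsf{W}_{\Gamma_\delta,r}$. Since $\delta(v)$ has weight one under $[N]$ and polynomial degree one in its own right, putting $\{b,a_i,\delta(v)\}$ on equal footing as formal variables identifies $\widetilde{\uniDRP}_g$ with $\widetilde{C}(\Gamma)$ in the enlarged polynomial ring. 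Automorphism factors behave exactly as in the proof of Theorem \ref{thm:topdeg_correspondence}.

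Once both sides are expressed through the graph invariants $C(\Gamma)$ and $\widetilde{C}(\Gamma)$, the combinatorial manipulation of Section \ref{subsec:topdeg_correspondence_proof} applies almost verbatim: expand both sides using Zagier's formula \eqref{eq:DRcoeff2} and its top-degree version \eqref{eq:DRcoeff2top}, interchange sums over subsets of cut edges $S\subseteq E(\Gamma)$ and spanning trees $T$, perform the negative-zeta regularization in the auxiliary variables $k_i$, and invoke the same closed-form meromorphic identity in two formal variables that closed the earlier argument. The Zagier expression is linear in the vertex variables $a_v$, so the substitution incorporating $-\delta(v)$ is transparent and introduces no new combinatorics.

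The main obstacle is the bookkeeping for the $\Gm^m$-torsor $q:\pic_{\Gamma_m}\to\pic_{g-m,n+2m,0}$ and the $k_i,-k_i$ arguments inserted into $\widetilde{\uniDR}_{g-m}$. Concretely, one has to verify that pulling back $\widetilde{\uniDR}_{g-m}(b;\sfa,k_1,-k_1,\ldots,k_m,-k_m)$ by $q^*$ and pushing forward by $(j_m)_*$ contributes to each pure stratum $(j_{\Gamma_\delta})_*1$ of $\pic_{g,n,0}$ with exactly the $\delta$-dependence predicted by cutting $m$ edges of $\Gamma$ on the LHS. This reduces to tracing, for each cut edge $e$ of $\Gamma$, how the pair $(k_i,-k_i)$ and the two $\xi$-class insertions coming from $\widetilde{\uniDRD}_{g-m}$ recombine into the two-sided multidegree shifts of $\delta$ across $e$; this is a careful but routine check that turns the RHS sum over $(k_1,\ldots,k_m)$ into the sum over compatible multidegrees on the LHS.
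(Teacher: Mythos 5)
Your overall strategy is the right one and matches the paper's: reduce to Theorem~\ref{thm:topdeg_correspondence} by noting $\exp(\uniDRD)$ is of pure top degree, cancel exponential factors via the projection formula, pass to the graph invariant $C(\Gamma)$ with the modified specialization $a_v \mapsto (\sum_{\text{legs at }v}a_i) - (2g_v-2+n_v)b - \delta(v)$, and rerun the Zagier-formula combinatorics. Up through your third paragraph this is essentially what the paper does, modulo the small overstatement that ``the Zagier expression is linear in the vertex variables'' (it is not; rather the arguments $a_{e,T}$ of the exponentials are linear in the $a_v$, which is what makes the $\delta$-shift transparent).

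However, your last paragraph identifies the wrong mechanism and proposes a check that would not go through as described. You say the pair $(k_i,-k_i)$ and the two $\xi$-class insertions coming from $\widetilde{\uniDRD}_{g-m}$ ``recombine into the two-sided multidegree shifts of $\delta$ across $e$'' and that the $k_i$-sum ``turns into the sum over compatible multidegrees on the LHS.'' Neither is what happens. The $\xi$-factors arising from the new markings occur only in the combination $\exp\left(k_i(\xi_{n+2i-1}-\xi_{n+2i})\right)$, and the key observation is simply that $q^*\xi_{n+2i-1}=q^*\xi_{n+2i}$ (the two branches of a node see the same fiber of the universal line bundle after pullback to $\pic^{\Gm^m}$), so these factors become $1$ identically; they do not feed into the multidegree at all. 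The $k_i$'s then play exactly the same role as in the proof of Theorem~\ref{thm:topdeg_correspondence} (they shift the vertex variables at the endpoints of cut edges, and the infinite sum is handled by the zeta regularization), while the multidegree $\delta$ is an entirely parallel modification appearing symmetrically on both sides via the specialization. There is no conversion between the $k_i$-sum and the $\delta$-sum. Once you make the vanishing observation for the $\xi$-factors, the ``careful but routine check'' you propose evaporates and the remainder of the proof is identical to the one already given.
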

	Here we use the negative zeta value regularization \eqref{eq:zeta} on the right hand side, as in Theorem~\ref{thm:topdeg_correspondence}.
	\begin{proof}
		The proof is nearly identical to that of Theorem~\ref{thm:topdeg_correspondence} in the previous section. As before, the exponential factor $\exp(\uniDRD_g(b;\sfa))$ is of pure top degree (relative to its codimension) so we can write
		\[
		\widetilde{\uniDR}_g(b;\sfa) = \exp(\uniDRD_g(b;\sfa))\widetilde{\uniDRP}_g(b;\sfa).
		\]
		We can again use the projection formula to cancel most of the exponential factors on both sides, leaving only $\psi_{n+1},\ldots,\psi_{n+2m}$ (as before) and now also $\xi_{n+1},\ldots,\xi_{n+2m}$. But the $\xi$ classes appear as factors of the form
		\[
		\exp(k_i(\xi_{n+2i-1}-\xi_{n+2i})).
		\]
		Since these two $\xi$ classes become equal on applying $q^*$, these factors vanish. The only change in the remainder of the proof is that we subtract $\delta(v)$ from the right side of the variable specialization \eqref{eq:specialization}.
	\end{proof}
	
	We can lift Lemma~\ref{lemma:codim_minus_deg} to $\uniDR$ in the same way. The only change required is that we must replace $[\text{deg}]$ with $[\text{deg}]+[\text{weight}]$.
	\begin{lemma}\label{lemma:unicodim_minus_deg}
		Let $g,n\ge 0$. Then
		\[
		(2[\text{codim}]-[\text{deg}] - [\text{weight}])\uniDR_g(b;\sfa) = (j_1)_*q^*\left[-\frac{k^2}{2}\uniDR_{g-1}(b;\sfa,k,-k)\right]_{k^d\mapsto B_d}.
		\]
	\end{lemma}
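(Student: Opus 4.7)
The plan is to lift the proof of Lemma~\ref{lemma:codim_minus_deg} essentially verbatim, taking into account the extra multidegree data $\delta$ carried by boundary strata of $\mathfrak{Pic}_{g,n,0}$. The divisor factor $\exp(\uniDRD_g(b;\sfa))$ behaves in the obvious way (with $\xi_i$ contributing weight~$1$ in parallel with $a_i$ contributing degree~$1$, $\kappa_{0,1}$ paired with $b$, $\kappa_{-1,2}$ with $\delta(v)$, etc.), so I would reduce as in Section~\ref{subsubsec:topdeg_proof_general} to comparing the coefficients of pure boundary strata $(j_{\Gamma_\delta})_*1$ after projection-formula cancellation of the exponential factors.

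The coefficient of such a stratum in $\uniDRP_g(b;\sfa)$ is, as in the proof of Proposition~\ref{pro:P_wt}, the graph invariant $C(\Gamma)$ evaluated at the specialization
\[
a_v \;\mapsto\; \Bigl(\sum_{\text{leg }i\text{ at }v} a_i\Bigr) - (2g_v-2+n_v)b - \delta(v)\,.
\]
The crucial observation is that under this specialization, the polynomial degree $[\text{deg}_a]$ in Zagier's formal variables splits as $[\text{deg}] + [\text{weight}]$: the dependence on $b, a_i$ feeds $[\text{deg}]$, while the dependence on $\delta(v)$ feeds $[\text{weight}]$, since $[N]^*\delta(v) = N\delta(v)$ and $C(\Gamma)$ is genuinely polynomial in the $a_v$ \cite{Spelier,Pixton_Poly}. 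Consequently the derivation $2[\text{codim}] - [\text{deg}] - [\text{weight}]$ translates, inside Zagier's formula \eqref{eq:DRcoeff2}, to $[\text{deg}_z] - [\text{deg}_a]$, which annihilates the factors $\exp(a_{e,T}z_e)$.

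From here the computation is word-for-word the one performed in the proof of Lemma~\ref{lemma:codim_minus_deg}. The derivation acts on only one non-tree edge factor $z_f/(\exp(z_{f,T})-1)$ at a time, the Bernoulli regularization $k^d \mapsto B_d$ on the right hand side produces the differential operator $-(d/dU)^2\bigl(U/(e^U-1)\bigr)$, and the match of coefficients reduces to the same identity in the auxiliary variables $X = z_f$, $Y = z_{f,T} - z_f$ already verified there. Automorphisms and $-\psi-\psi'$ insertions along edges are handled exactly as in Section~\ref{subsubsec:topdeg_proof_general}, via subdivision of the stable graph.

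The one genuinely new feature compared to Lemma~\ref{lemma:codim_minus_deg} is the presence of $\xi$-classes. When one unfolds the right-hand side through $q^*$, the factor $\exp\bigl(a_{n+1}\xi_{n+1} + a_{n+2}\xi_{n+2}\bigr) = \exp\bigl(k(\xi_{n+1} - \xi_{n+2})\bigr)$ coming from $\uniDRD_{g-1}(b;\sfa,k,-k)$ trivializes after $q^*$ identifies $\xi_{n+1}$ with $\xi_{n+2}$ at the glued node, as already exploited in the proof of Theorem~\ref{thm:top}. The main obstacle I expect is carefully bookkeeping this $\xi$ cancellation jointly with the weight grading so that one is sure nothing shifts between $[\text{deg}]$ and $[\text{weight}]$; once this is nailed down, the rest is a direct transcription of the argument for Lemma~\ref{lemma:codim_minus_deg}.
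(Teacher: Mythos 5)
Your proposal is correct and takes essentially the same route as the paper: its proof of this lemma is a single sentence directing the reader to lift Lemma~\ref{lemma:codim_minus_deg} to $\uniDR$ exactly as Theorem~\ref{thm:top} lifts Theorem~\ref{thm:topdeg_correspondence}, and the details you supply --- the specialization now includes $-\delta(v)$ so that $[\text{deg}_a]$ in Zagier's formula splits as $[\text{deg}]+[\text{weight}]$, the derivation therefore still becomes $[\text{deg}_z]-[\text{deg}_a]$ after moving $2[\text{codim}]$ inside, and the $\exp\bigl(k(\xi_{n+1}-\xi_{n+2})\bigr)$ factor dies under $q^*$ --- are exactly the modifications invoked in those two proofs.
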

	
	\section{Forgetful pushforward of the \texorpdfstring{$\DR$}{DR} formula}\label{sec:forgetful}
	\subsection{Statement of result}
	In this section we prove an identity satisfied by the pushforward of the $\DR$ formula along the forgetful map $\pi:\Mbar_{g,n+1}\to\Mbar_{g,n}$ forgetting the last marking.
	\begin{theorem}\label{thm:DR_push}
		Let $g,c,n\ge 0$. Let 
		\[F = \pi_*\DR_g^c(b;a_1,\ldots,a_{n+1}) \in \CH^{c-1}(\Mbar_{g,n})\otimes_\QQ\QQ[b,a_1,\ldots,a_{n+1}]/(a_1+\cdots+a_{n+1}-(2g-1+n)b)\,.\]
		\begin{enumerate}[label=(\alph*)]
			\item $F$ is a multiple of $(a_{n+1}-b)^2$.
			\item We have the identity
			\[
			\left[\frac{F}{(a_{n+1}-b)^2}\right]_{a_{n+1}:=b} = (g+1-c)\DR_g^{c-1}(b;a_1,\ldots,a_n).
			\]
		\end{enumerate}
	\end{theorem}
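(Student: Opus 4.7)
The plan is to work directly with Pixton's formula for $\DR_g^c$ and compute the pushforward $\pi_*$ stratum by stratum, in parallel to the arguments used in the proofs of Theorem \ref{thm:topdeg_correspondence} and Lemma \ref{lemma:codim_minus_deg}. By Pixton's polynomiality \cite{Spelier,Pixton_Poly}, $\DR_g^c(b;\sfa,a_{n+1})$ is a polynomial in $(b,\sfa,a_{n+1})$ of total degree at most $2c$, so $F = \pi_*\DR_g^c(b;\sfa,a_{n+1})$ is also a polynomial, and both the divisibility in (a) and the coefficient identity in (b) become well-defined polynomial identities once one substitutes the linear relation $\sum a_i = (2g-1+n)b$.

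For part (a), the underlying geometric reason for vanishing at $a_{n+1}=b$ is the equivalence $\CO(\sum_{i=1}^{n+1}a_ip_i)\cong\omega_{\log}^b$ with $\CO(\sum_{i=1}^n(a_i-b)p_i)\cong\omega^b$, so the divisorial condition becomes independent of $p_{n+1}$. At the Pixton-formula level this would manifest as follows. First, the divisor piece satisfies $\DRD_g(b;\sfa,b) - \pi^*\DRD_g(b;\sfa) = \sum_{i\leq n}\frac{a_i^2}{2}D_{i,n+1}$, a sum over bubble-type boundary divisors. Second, for any stable graph $\Gamma \in \mathsf{G}_{g,n+1}$ whose vertex $v_0$ carrying leg $n+1$ remains stable after removal, the specialization \eqref{eq:specialization} at $a_{v_0}$ gives the same value for $(\Gamma, (b;\sfa,b))$ as for the stabilization $(\Gamma',(b;\sfa))$, so the graph invariants $C(\Gamma)$ agree with $C(\Gamma')$ on such strata. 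Combined, these give a decomposition $\DR_g(b;\sfa,b) = \pi^*\DR_g(b;\sfa) + (\text{bubble corrections})$; since $\pi_*\pi^* = 0$ in the relevant codimension (as $\pi_*1=0$ by relative dimension one), the pullback part vanishes under $\pi_*$, reducing the vanishing of $F|_{a_{n+1} = b}$ to an explicit cancellation among the bubble contributions. For the second-order vanishing, I would expand Pixton's formula to first order in $\epsilon := a_{n+1}-b$ and exhibit a cancellation: the linear-in-$\epsilon$ perturbation of the $a_{v_0}$ specialization is matched by the $\epsilon$-derivative of the $\DRD$ boundary correction, after an appropriate reindexing of the weightings $w$.

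For part (b), I would compute the coefficient of $(a_{n+1}-b)^2$ in $F$ directly, mirroring the strategy of Lemma \ref{lemma:codim_minus_deg}: write the relevant graph invariants $C(G)$ via Zagier's formula \eqref{eq:DRcoeff2} and extract coefficients of monomials in the auxiliary variables $z_e$. The leading contributions come from strata $\Gamma \in \mathsf{G}_{g,n+1}$ where leg $n+1$ sits on a rational bubble attached to an edge of the stabilization $\Gamma'\in\mathsf{G}_{g,n}$; under $\pi_*$ the bubble contracts, producing a $\psi+\psi'$-decoration on the corresponding edge, and the sum over such bubbles reproduces the shape of $\DR_g^{c-1}(b;\sfa)$. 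The factor $(g+1-c)$ emerges as a dimension count reflecting the ``excess'' of the expected codimension $g$ over the current codimension $c$, and appears naturally when summing the Zagier auxiliary-variable coefficients across all valid bubble placements. The main technical obstacle will be the precise combinatorial matching of $z_e$-coefficient extractions to produce $(g+1-c)$ cleanly; an alternative and potentially cleaner route would be to first prove the $\uniDR$ generalization (Theorem \ref{thm:pushforward_uniDR}) using the group structure on $\pic_{g,n,0}$, and then pull back along the unit section, letting the weight decomposition on $\pic_{g,n,0}$ streamline the leading-coefficient computation.
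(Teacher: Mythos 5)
Your high-level approach --- expand Pixton's formula stratum by stratum, extract coefficients via Zagier's expression for $C(\Gamma)$, and mirror the machinery of Theorem~\ref{thm:topdeg_correspondence} and Lemma~\ref{lemma:codim_minus_deg} --- is the same route the paper takes. You also correctly identify the first-order mechanism (the coefficients of stable-vertex strata in $\DR_g(b;\sfa,b)$ match those of $\pi^*\DR_g(b;\sfa)$, and $\pi_*\pi^*=0$ kills that part), and the $\DRD$ discrepancy you compute, $\DRD_g(b;\sfa,b)-\pi^*\DRD_g(b;\sfa)=\sum_i\tfrac{a_i^2}{2}\delta_{i,n+1}$, is exactly right.

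However, there is a real gap in your treatment of part~(b). The factor $(g+1-c)$ does \emph{not} ``emerge as a dimension count.'' In the paper it is the end result of a three-way decomposition of the pushforward into (i) terms carrying a power of $\psi_{n+1}$, (ii) terms carrying a power of $\delta_{i,n+1}$ grouped together with the strata $\DRP_g^{(i,n+1)}$ whose leg $n+1$ sits on a bubble with leg $i$, and (iii) the remaining terms. Each piece is computed modulo $(a_{n+1}-b)^3$ separately. Piece~(i) contributes $g+\tfrac{n}{2}-[\text{deg}]+\tfrac{b^2}{2}\kappa_1$, piece~(ii) summed over $i$ contributes $-\tfrac{n}{2}-\sum_i\tfrac{a_i^2}{2}\psi_i$, and piece~(iii) contributes $[\text{deg}]-[\text{codim}]$, where $[\text{deg}]$ and $[\text{codim}]$ are the grading derivations. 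Adding these gives $g-[\text{codim}]-\DRD_g(b;\sfa)$, and the $\exp(\DRD)$ factor absorbs the $\DRD$ term while $g-[\text{codim}]$ acting on $\DR_g^{c-1}$ yields $g-(c-1)=g+1-c$. None of this is a simple excess-dimension statement; the $[\text{deg}]$ derivation in particular appears with opposite signs in pieces~(i) and~(iii) and cancels only after the Zagier-formula coefficient extractions are carried out in each, which requires an auxiliary Taylor-expansion lemma (the paper's Lemma~\ref{lem:pushforwardAaux}) you have not anticipated. Your claim that ``the sum over such bubbles reproduces the shape of $\DR_g^{c-1}(b;\sfa)$'' papers over all of this.

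Two smaller points. First, your proposal does not factor out $\exp(\DRD)$ before pushing forward; that reduction (working with $\DRP$ rather than $\DR$, via the projection formula) is what makes the congruence mod $(a_{n+1}-b)^3$ tractable, and the proof does not go through cleanly without it. Second, your alternative route of proving the $\uniDR$ generalization first and pulling back is in fact harder, not easier: $\pic_{g,n+1,0}\to\pic_{g,n,0}$ is not proper, and the paper has to pass to $\mathcal{A}$-valued curves and an open substack $\pic'_{g,n+1,\mathbf 1,0}$ to even define the pushforward; the $\uniDR$ version is then proved by re-running the $\Mbar_{g,n}$ argument with extra bookkeeping, and the weight decomposition does not shortcut the combinatorics.
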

	To see that the statement of part (b) makes sense, note that setting $a_{n+1} := b$ (i.e. quotienting by $a_{n+1}-b$) naturally induces a ring homomorphism
	\[
	\QQ[b,a_1,\ldots,a_{n+1}]/(a_1+\cdots+a_{n+1}-(2g-1+n)b) \to \QQ[b,a_1,\ldots,a_{n}]/(a_1+\cdots+a_{n}-(2g-2+n)b).
	\]
	
	When $c > g$, Theorem~\ref{thm:DR_push} is an easy consequence of the $\DR$ relations proved in \cite{claderjanda}, since then both $\DR_g^c(b;a_1,\ldots,a_{n+1})$ and $(g+1-c)\DR_g^{c-1}(b;a_1,\ldots,a_n)$ vanish. But for $c \le g$, it gives a new interpretation of the lower codimension parts of the $\DR$ formula, and this is the case we need in the proof of Theorem~\ref{thm:pushforward}. Of course, on the level of the strata algebra Theorem~\ref{thm:DR_push} is new for all $c$.
	
	The proof of Theorem~\ref{thm:DR_push} is essentially just a lengthy computation. We break it up into subsections as follows. In Section~\ref{subsec:pushforward_setup} we begin the proof and describe how to break the computation of $F$ into three parts. In the following three subsections we handle each of these parts in turn. Finally, in Section~\ref{subsec:pushforward_uniDR} we state and prove a generalization of Theorem~\ref{thm:DR_push} to $\uniDR$.
	
	\subsection{Setting up the computation}\label{subsec:pushforward_setup}
	Although Theorem~\ref{thm:DR_push} is stated in pure codimension, it will be convenient to sum over $c$ and prove that version instead. In other words, we want to prove that the total $\DR$ pushforward
	\[
	\pi_*\DR_g(b;a_1,\ldots,a_{n+1})
	\]
	is a multiple of $(a_{n+1}-b)^2$ and that we have the identity
	\begin{equation}\label{eq:DR_push_goal_old}
		\left[\frac{\pi_*\DR_g(b;a_1,\ldots,a_{n+1})}{(a_{n+1}-b)^2}\right]_{a_{n+1}:=b} = (g-[\text{codim}])\DR_g(b;a_1,\ldots,a_n),
	\end{equation}
	where the linear operator $[\text{codim}]$ acts by multiplying pure-dimensional cycles by their codimension.
	
	We can merge these two goals into the single congruence
	\begin{equation}\label{eq:DR_push_goal}
		\pi_*\DR_g(b;a_1,\ldots,a_{n+1}) \equiv (a_{n+1}-b)^2(g-[\text{codim}])\DR_g(b;a_1,\ldots,a_n)\pmod{(a_{n+1}-b)^3}.
	\end{equation}
	Here we are abusing notation in a subtle way which we will now explain. The left side of the congruence naturally belongs to the ring (let us call it $R$) of polynomials in $b,a_1,\ldots,a_{n+1}$ modulo the single relation $(2g-2+n+1)b = a_1+\cdots+a_{n+1}$. But it does not make sense to write $\DR_g(b;a_1,\ldots,a_n)$ in $R$, since the inputs to $\DR_g$ then do not satisfy the relation $(2g-2+n)b = a_1+\cdots+a_n$ in $R$. Note that this was not an issue with \eqref{eq:DR_push_goal_old} because there we have set $a_{n+1} := b$. However, the right side of \eqref{eq:DR_push_goal} still makes sense if we use the facts that we are working mod $(a_{n+1}-b)^3$ and that the problematic $\DR_g$ factor is being multiplied by $(a_{n+1}-b)^2$. In other words, we think of $\DR_g(b;a_1,\ldots,a_n)$ as belonging to $R/(a_{n+1}-b)$ and treat multiplication by $(a_{n+1}-b)^2$ as an operator $R/(a_{n+1}-b)\to R/(a_{n+1}-b)^3$.
	
	Whenever we write $\DR_g(b;a_1,\ldots,a_n)$ inside a congruence mod $(a_{n+1}-b)^3$ for the remainder of this section, it will occur along with a factor of $(a_{n+1}-b)^2$ and should be interpreted via the above procedure. Note that we have
	\[
	(a_{n+1}-b)^2\DR_g(b;a_1,\ldots,a_n)\equiv (a_{n+1}-b)^2\DR_g(b;a_1+a_{n+1}-b,a_2,\ldots,a_n)\pmod{(a_{n+1}-b)^3}.
	\]
	This is another way to interpret the notation; the choice of which $a_i$ to increase by $a_{n+1}-b$ does not affect the value of the expression mod $(a_{n+1}-b)^3$.
	
	We begin by rewriting both sides of \eqref{eq:DR_push_goal} using the factorization $\DR = \exp(\DRD)\DRP$ and manipulating the exponential factors. Recall that
	\[
	\DRD_g(b;a_1,\ldots,a_{n+1}) = -\frac{b^2}{2}\kappa_1+\sum_{i=1}^{n+1}\frac{a_i^2}{2}\psi_i.
	\]
	Although the terms here are not equal to the pullbacks under $\pi$ of the corresponding divisor classes on $\Mbar_{g,n}$, we can write down the error terms:
	\[
	\DRD_g(b;a_1,\ldots,a_{n+1}) = \pi^*\left(-\frac{b^2}{2}\kappa_1+\sum_{i=1}^{n}\frac{a_i^2}{2}\psi_i\right) + \frac{a_{n+1}^2-b^2}{2}\psi_{n+1} + \sum_{i=1}^n\frac{a_i^2}{2}\delta_{i,n+1},
	\]
	where $\delta_{i,n+1}$ is the class of the boundary divisor where markings $i$ and $n+1$ come together and bubble off.
	
	The projection formula then tells us that
	\begin{align*}
		\pi_*\DR_g(b;a_1,\ldots,a_{n+1}) &= \exp\left(-\frac{b^2}{2}\kappa_1+\sum_{i=1}^{n}\frac{a_i^2}{2}\psi_i\right)\\
		&\cdot\quad\pi_*\left[\exp\left(\frac{a_{n+1}^2-b^2}{2}\psi_{n+1} + \sum_{i=1}^n\frac{a_i^2}{2}\delta_{i,n+1}\right)\DRP_g(b;a_1,\ldots,a_{n+1})\right].
	\end{align*}
	
	Meanwhile, on the right side of \eqref{eq:DR_push_goal} we can use the fact that $[\text{codim}]$ is a derivation to compute that
	\begin{align*}
		&(g-[\text{codim}])\DR_g(b;a_1,\ldots,a_n) = \\ &\quad\exp(\DRD_g(b;a_1,\ldots,a_n))(g-[\text{codim}]-\DRD_g(b;a_1,\ldots,a_n))\DRP_g(b;a_1,\ldots,a_n)
	\end{align*}
	
	Using both of the last two equations, we can cancel a factor of $\exp(-\frac{b^2}{2}\kappa_1+\sum_{i=1}^{n}\frac{a_i^2}{2}\psi_i)$ from both sides of \eqref{eq:DR_push_goal} to get the equivalent congruence
	\begin{align}
		&\pi_*\left[\exp\left(\frac{a_{n+1}^2-b^2}{2}\psi_{n+1} + \sum_{i=1}^n\frac{a_i^2}{2}\delta_{i,n+1}\right)\DRP_g(b;a_1,\ldots,a_{n+1})\right] \equiv\label{eq:DR_push_F_goal}\\
		&\quad (a_{n+1}-b)^2\left(g-[\text{codim}]-\DRD_g(b;a_1,\ldots,a_n)\right)\DRP_g(b;a_1,\ldots,a_n)\pmod{(a_{n+1}-b)^3},\nonumber
	\end{align}
	where again we should note that the $\DRD$ and $\DRP$ components of the expression on the right side only make sense because we are multiplying them by $(a_{n+1}-b)^2$.
	
	The proof now is a matter of verifying \eqref{eq:DR_push_F_goal} by carefully computing this pushforward modulo $(a_{n+1}-b)^3$. This can be expressed as a sum of three different types of terms via the identity
	\[
	\exp\left(\frac{a_{n+1}^2-b^2}{2}\psi_{n+1} + \sum_{i=1}^n\frac{a_i^2}{2}\delta_{i,n+1}\right) = \left(\exp\left(\frac{a_{n+1}^2-b^2}{2}\psi_{n+1}\right)-1\right) + \sum_{i=1}^n\left(\exp\left(\frac{a_i^2}{2}\delta_{i,n+1}\right)-1\right) + 1,
	\]
	which follows from the observation that the divisors appearing in the formula all have trivial intersection with each other. In other words, we can either have a power of $\psi_{n+1}$, a power of $\delta_{i,n+1}$ for some $1\le i \le n$, or neither.
	
	It turns out that it is natural to group part of the pushforward $\pi_*\DRP_g(b;a_1,\ldots,a_{n+1})$ along with the terms with a power of $\delta_{i,n+1}$. For each $1\le i \le n$, let $\DRP_g^{(i,n+1)}(b;a_1,\ldots,a_{n+1})$ be the sum of those terms in the $\DR$ formula sum \eqref{eq:DRP} in which the graph $\Gamma$ has a genus $0$ vertex $v$ with both legs $i,n+1$, no other legs, and a single incident edge.
	
	We now state the contributions of these three types of terms:
	\begin{lemma}\label{lem:pushforwardA}
		We have the congruence
		\begin{align*}
			&\pi_*\left[\left(\exp\left(\frac{a_{n+1}^2-b^2}{2}\psi_{n+1}\right)-1\right)\DRP_g(b;a_1,\ldots,a_{n+1})\right] \equiv\\
			&\quad\quad\quad (a_{n+1}-b)^2\left(g+\frac{n}{2}-[\text{deg}]+\frac{b^2}{2}\kappa_1\right)\DRP_g(b;a_1,\ldots,a_n)\\
			&\quad\quad\quad +\quad (a_{n+1}-b)\sum_{i=1}^na_i\DRP_g(b;a_1,\ldots,a_i+a_{n+1}-b,\ldots,a_n) \pmod{(a_{n+1}-b)^3}.
		\end{align*}
	\end{lemma}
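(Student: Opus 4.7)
The plan is to expand both sides of the asserted congruence as polynomials in $y := a_{n+1}-b$ and verify equality modulo $y^3$. Modulo $y^3$ the exponential factor reduces to
\[
\exp\!\left(\tfrac{a_{n+1}^2-b^2}{2}\psi_{n+1}\right) - 1 \;\equiv\; \tfrac{y(2b+y)}{2}\psi_{n+1} + \tfrac{y^2 b^2}{2}\psi_{n+1}^2,
\]
so only the $\psi_{n+1}^k$-pieces with $k\in\{1,2\}$ play a role. The central observation is that for $k\ge 1$, $\psi_{n+1}^k$ restricts to zero on every boundary stratum where leg $(n+1)$ sits on a genus-$0$ vertex that becomes unstable after forgetting (since $\psi$ is trivial on $\Mbar_{0,3}$). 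Consequently only stable graphs $\Gamma$ in which removal of leg $(n+1)$ preserves stability of its host vertex contribute to $\pi_*(\psi_{n+1}^k\,\DRP_g)$.

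For such $\Gamma$, writing $\Gamma' := \Gamma\setminus\{h_{n+1}\}$ and $v \mapsto v'$ for the vertex carrying leg $(n+1)$, the vertex balancing condition at $v$ matches that at $v'$ precisely when $a_{n+1}\equiv b \pmod r$. Combining $\pi_*\psi_{n+1}^k = \kappa_{k-1}$ at vertex $v$ with the projection-formula identity $j_{\Gamma'}^*\kappa_{k-1} = \sum_{v'\in V(\Gamma')}\kappa_{k-1}|_{v'}$, summed over the choice of host vertex $v'$, yields the intermediate identity
\[
\pi_*\!\bigl(\psi_{n+1}^k\,\DRP_g(b;a_1,\ldots,a_n,b)\bigr) \;=\; \kappa_{k-1}\cdot\DRP_g(b;a_1,\ldots,a_n),\qquad k\ge 1,
\]
with $\kappa_0 = 2g-2+n$. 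Applied with $k=2$ this produces the $\tfrac{y^2 b^2}{2}\kappa_1\DRP_g$ summand on the right. For the $\psi_{n+1}$-piece I Taylor expand $\DRP_g(b;a_1,\ldots,a_{n+1})$ in $a_{n+1}$ around $b$; the zeroth-order term contributes $\tfrac{y(2b+y)}{2}(2g-2+n)\DRP_g$ and the first-order correction contributes $by^2\cdot\pi_*\!\bigl(\psi_{n+1}\,\partial_{a_{n+1}}\DRP_g|_{a_{n+1}=b}\bigr)$. On the right side, expanding $(a_{n+1}-b)\sum_i a_i\,\DRP_g(\ldots a_i+y\ldots)$ using $\sum_{i=1}^n a_i = (2g-2+n)b - y$ in the relevant quotient ring produces the $y^1$ contribution $(2g-2+n)b\DRP_g$ and the $y^2$ contribution $-\DRP_g + \sum_i a_i\partial_{a_i}\DRP_g$. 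The $y^1$ coefficients match on the nose, and after comparing $y^2$ coefficients and using $[\text{deg}] = b\partial_b + \sum_i a_i\partial_{a_i}$ the lemma reduces to the single identity
\[
\pi_*\!\bigl(\psi_{n+1}\,\partial_{a_{n+1}}\DRP_g|_{a_{n+1}=b}\bigr) \;=\; -\,\partial_b\DRP_g(b;a_1,\ldots,a_n).
\]

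This final identity I plan to verify by direct inspection of the graph-invariant structure. At each stable graph $\Gamma'$, the contribution to $\DRP_g(b;a_1,\ldots,a_n)$ is $C(\Gamma'_{\mathrm{subd}})$ evaluated at the vertex weights $a_v = \sum_{i\text{ at }v}a_i - (2g_v-2+n_v)b$ from \eqref{eq:specialization}. On the left, applying the intermediate pushforward identity to $\partial_{a_{n+1}}\DRP_g|_{a_{n+1}=b}$ gives, at each $\Gamma'$ and each host vertex $v'$, a factor $\kappa_0|_{v'}=2g_{v'}-2+n_{v'}$ multiplying $\partial_{a_{v'}}C$; summing over $v'$ produces $\sum_{v'}(2g_{v'}-2+n_{v'})\,\partial_{a_{v'}}C$. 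On the right, $\partial_b a_v = -(2g_v-2+n_v)$ (with subdivision vertices contributing $0$), so by the chain rule $-\partial_b C = \sum_v (2g_v-2+n_v)\,\partial_{a_v}C$, and the two sums coincide. The principal technical obstacle is establishing the intermediate pushforward identity cleanly in the presence of $\psi$-decorations at internal edges: here one uses $\psi_h = \pi_v^*\psi_h + \delta_{h,n+1}$ at half-edges $h$ of $v$ other than $h_{n+1}$, combined with $\psi_{n+1}^k\cdot\delta_{h,n+1} = 0$ for $k\ge 1$, to reduce the pushforward at vertex $v$ to the stated form.
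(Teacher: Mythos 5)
Your proof is correct and reaches the same endpoint through essentially the same circle of ideas as the paper, but you organize the bookkeeping differently, and in some ways more transparently. Both arguments rest on three observations: (i) $\psi_{n+1}^k$ for $k\ge 1$ vanishes on boundary strata of $\DRP_g(b;a_1,\ldots,a_{n+1})$ where leg $n+1$ sits on a genus-$0$ vertex that becomes unstable after forgetting, since that vertex is an $\Mbar_{0,3}$; (ii) on the remaining strata, at $a_{n+1}=b$, the coefficient with leg $n+1$ at a vertex $v'$ agrees with the coefficient for $\Gamma'$ in $\DRP_g(b;a_1,\ldots,a_n)$, so the projection formula with $\pi_*\psi_{n+1}^{k}=\kappa_{k-1}$ applies; and (iii) a chain-rule identity on the graph invariants $C(\Gamma')$ relating $\sum_{v'}(2g_{v'}-2+n_{v'})\partial_{a_{v'}}$ to $-\partial_b$. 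The paper arranges these by splitting $\tfrac{a_{n+1}^2-b^2}{2}=\tfrac{(a_{n+1}-b)^2}{2}+(a_{n+1}-b)b$, treating the two pieces separately with the invariant shifted-variable notation $\mathsf{P}_w := [C(\Gamma)]_{a_w\mapsto a_w+T}$, and packaging (iii) as Lemma~\ref{lem:pushforwardAaux}. You instead Taylor-expand everything in $y=a_{n+1}-b$, make the intermediate pushforward identity $\pi_*(\psi_{n+1}^k\,\DRP_g(b;a_1,\ldots,a_n,b))=\kappa_{k-1}\DRP_g(b;a_1,\ldots,a_n)$ explicit, and reduce the whole lemma to the single derivative identity $\pi_*(\psi_{n+1}\partial_{a_{n+1}}\DRP_g|_{a_{n+1}=b})=-\partial_b\DRP_g$, which you then verify by chain rule at the level of $C(\Gamma')$. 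This is a tidy reorganization that isolates the crux cleanly. The one subtlety to pin down when you write it up: $\partial_b$, $\partial_{a_{n+1}}$, and $\sum_i a_i\partial_{a_i}$ are not a priori well-defined on the quotient rings modulo the balancing relations $\sum_i a_i-(2g-2+n)b$ and $\sum_v a_v$; you need to fix a consistent choice of representatives on both sides (e.g.\ eliminate $a_1$ and $a_{v_1}$ throughout) and check that the final $y^2$-coefficient identity is independent of that choice. This is exactly what the paper's $\mathsf{P}_w$-notation is built to sidestep, so it is worth a sentence in your write-up.
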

	\begin{lemma}\label{lem:pushforwardB}
		For each $1\le i\le n$, we have the congruence
		\begin{align*}
			&\pi_*\left[\left(\exp\left(\frac{a_i^2}{2}\delta_{i,n+1}\right)-1\right)\DRP_g(b;a_1,\ldots,a_{n+1})+\DRP_g^{(i,n+1)}(b;a_1,\ldots,a_{n+1})\right] \equiv\\
			&\quad\quad\quad (a_{n+1}-b)^2\left(-\frac{1}{2}-\frac{a_i^2}{2}\psi_i\right)\DRP_g(b;a_1,\ldots,a_n)\\
			&\quad\quad\quad +\quad (a_{n+1}-b)(-a_i)\DRP_g(b;a_1,\ldots,a_i+a_{n+1}-b,\ldots,a_n) \pmod{(a_{n+1}-b)^3}.
		\end{align*}
	\end{lemma}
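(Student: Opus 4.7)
The plan is to compute the pushforward of each of the two summands on the left separately and then combine them, working to second order in $t := a_{n+1}-b$. For the first summand, the key input is that $\pi$ restricts to an isomorphism $\iota:\delta_{i,n+1}\xrightarrow{\sim}\Mbar_{g,n}$ (contracting the bubble and placing marking $i$ at the attaching point), together with the self-intersection identity $\delta_{i,n+1}|_{\delta_{i,n+1}} = -\psi_i$. Writing out $\exp(\tfrac{a_i^2}{2}\delta_{i,n+1})-1 = \delta_{i,n+1}\cdot\sum_{k\ge 1}\tfrac{(a_i^2/2)^k}{k!}\delta_{i,n+1}^{k-1}$ and using the projection formula, the first summand pushes forward to
\[
\pi_*\!\left[(\exp(\tfrac{a_i^2}{2}\delta_{i,n+1})-1)\DRP_g(b;\sfa,a_{n+1})\right] \;=\; \frac{1-\exp(-\tfrac{a_i^2}{2}\psi_i)}{\psi_i}\cdot\iota^*\DRP_g(b;\sfa,a_{n+1}),
\]
so the task reduces to understanding the restriction $\iota^*\DRP_g(b;\sfa,a_{n+1})$ as a class on $\Mbar_{g,n}$.

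For the second summand I would argue directly from the formula. A term in $\DRP_g^{(i,n+1)}$ is indexed by a stable graph $\Gamma$ carrying a distinguished bubble vertex $v_0$ of genus $0$ with legs $i$ and $n+1$ and a single incident edge $e=(h,h')$ attached to some vertex $v'$. The balancing condition at $v_0$ (applied to $n_{v_0}=3$ half-edges) forces $w(h)=b-a_i-a_{n+1}$, hence $w(h')=a_i+a_{n+1}-b$, and on $\Mbar_{0,3}=\mathrm{pt}$ we have $\psi_h=0$, so the edge factor at $e$ collapses to $\bigl(1-\exp(\tfrac{(a_i+a_{n+1}-b)^2}{2}\psi_{h'})\bigr)/\psi_{h'}$. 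Under $\pi$ the bubble contracts and $\Gamma$ becomes a stable graph $\Gamma'$ on $\Mbar_{g,n}$ with leg $i$ now sitting at $v'$; the balancing equations at every other vertex of $\Gamma'$ are exactly the ones determined by DR on $\Mbar_{g,n}$ with inputs $\sfa^\dagger := (a_1,\ldots,a_i+a_{n+1}-b,\ldots,a_n)$ (which indeed satisfies $\sum a^\dagger_j = (2g-2+n)b$). Summing over such $\Gamma$ and identifying $\psi_{h'}$ with $\psi_i$ on $\Gamma'$, this yields
\[
\pi_*\DRP_g^{(i,n+1)}(b;\sfa,a_{n+1}) \;=\; \frac{1-\exp\!\bigl(\tfrac{(a_i+a_{n+1}-b)^2}{2}\psi_i\bigr)}{\psi_i}\cdot\DRP_g(b;\sfa^\dagger).
\]

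For the splitting of $\iota^*\DRP_g(b;\sfa,a_{n+1})$ I would work stratum by stratum, as in the proofs of Theorem \ref{thm:topdeg_correspondence} and Lemma \ref{lemma:codim_minus_deg}: stable graphs $\Gamma$ of $\Mbar_{g,n+1}$ meeting $\delta_{i,n+1}$ either already contain the $(i,n+1)$-bubble (and for those, $j_\Gamma$ factors through $\iota$, giving the clean bubble contribution analysed above) or meet $\delta_{i,n+1}$ properly, in which case excess intersection exhibits $\iota^*(j_\Gamma)_*(\cdot)$ as a sum of pushforwards from strata of $\Mbar_{g,n}$ with leg $i$ inserted at some vertex. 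Using Zagier's formula \eqref{eq:DRcoeff2} one checks that the sum of these proper-intersection contributions, added to the bubble contribution from the previous paragraph, reassembles into $\DRP_g(b;\sfa^\dagger)$ plus terms of order $t^2$ that combine with the expansion $\DRP_g(b;\sfa^\dagger) = \DRP_g(b;\sfa)+t\,\partial_{a_i}\DRP_g(b;\sfa)+\tfrac{t^2}{2}\partial^2_{a_i}\DRP_g(b;\sfa)+O(t^3)$ and the Taylor expansion of $(1-\exp(-\tfrac{a_i^2}{2}\psi_i))/\psi_i$ to yield the claimed right-hand side. The main obstacle is precisely this third step: matching the excess-intersection pieces of $\iota^*\DRP_g$ with the $t$-expansion of $\DRP_g(b;\sfa^\dagger)$, since both depend sensitively on the combinatorics of weightings summing to zero at each vertex, and the bookkeeping must cancel the linear-in-$t$ contribution from the bubble factor against the shift $a_i\mapsto a_i+t$ while producing exactly the isolated $-\tfrac{1}{2}(a_{n+1}-b)^2$ term on the right.
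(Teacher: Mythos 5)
Your setup and the pushforward formulas you write down for the two summands are both correct, and agree with the paper: the projection formula
\[
\pi_*\bigl((\exp(\tfrac{a_i^2}{2}\delta_{i,n+1})-1)\alpha\bigr)=\frac{1-\exp(-\tfrac{a_i^2}{2}\psi_i)}{\psi_i}\,\iota^*\alpha
\]
and the direct graph-sum identification
\[
\pi_*\DRP_g^{(i,n+1)}(b;\sfa,a_{n+1})=\frac{1-\exp\bigl(\tfrac{(a_i+a_{n+1}-b)^2}{2}\psi_i\bigr)}{\psi_i}\,\DRP_g(b;\sfa^\dagger)
\]
are exactly right. The gap is in the remaining crux, $\iota^*\DRP_g(b;a_1,\ldots,a_{n+1})$, which you propose to attack stratum-by-stratum via excess intersection and a reassembly of Zagier coefficients --- and which you yourself flag as the step you cannot close. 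This is far more work than needed, because it amounts to re-deriving from scratch a known structural property of the $\DR$ formula. The key input you are missing is the \emph{gluing compatibility of the double ramification cycle at separating nodes}: restricting $\DR_g(b;a_1,\ldots,a_{n+1})$ along the gluing map $\iota$ (which bubbles off markings $i$ and $n+1$ onto an $\Mbar_{0,3}$ factor) simply gives $\DR_g(b;\sfa^\dagger)$, since the $\Mbar_{0,3}$ factor contributes trivially. Translating this from $\DR$ to $\DRP$ using $\DR=\exp(\DRD)\DRP$ introduces the correction exponential $\exp\bigl(\DRD_g(b;\sfa^\dagger)-\iota^*\DRD_g(b;\sfa)\bigr)=\exp\bigl(\tfrac{(a_i+a_{n+1}-b)^2}{2}\psi_i\bigr)$, since $\iota^*\psi_i=0$ on the bubble. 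That one line yields
\[
\iota^*\DRP_g(b;\sfa,a_{n+1})=\exp\Bigl(\tfrac{(a_i+a_{n+1}-b)^2}{2}\psi_i\Bigr)\DRP_g(b;\sfa^\dagger),
\]
and inserting it into your first-summand formula, adding the second-summand formula, telescoping the exponentials and Taylor-expanding in $t=a_{n+1}-b$ completes the proof. Once this input is supplied, your argument and the paper's coincide.
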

	\begin{lemma}\label{lem:pushforwardC}
		We have the congruence
		\begin{align*}
			&\pi_*\left[\DRP_g(b;a_1,\ldots,a_{n+1})-\sum_{i=1}^n\DRP_g^{(i,n+1)}(b;a_1,\ldots,a_{n+1})\right] \equiv\\
			&\quad\quad\quad (a_{n+1}-b)^2\left([\text{deg}]-[\text{codim}]\right)\DRP_g(b;a_1,\ldots,a_n)\pmod{(a_{n+1}-b)^3}.
		\end{align*}
	\end{lemma}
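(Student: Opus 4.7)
The proof of Lemma~\ref{lem:pushforwardC} proceeds by a detailed analysis of the pushforward contributions from each stable graph appearing in $\DRP_g(b;a_1,\ldots,a_{n+1}) - \sum_{i=1}^n \DRP_g^{(i,n+1)}(b;a_1,\ldots,a_{n+1})$. After subtracting the excluded contributions, the remaining graphs $\Gamma$ split into two cases based on the local structure at the vertex $v_0 \in V(\Gamma)$ carrying marking $n+1$: Case~A, where $v_0$ remains stable after removing marking $n+1$ (with parent graph $\Gamma'$ obtained by simply removing the marking), and Case~B, where $v_0$ is a genus $0$ vertex with no other legs and exactly two half-edges, so that removing marking $n+1$ makes $v_0$ unstable and contracting it merges its two incident edges in $\Gamma'$.

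For Case~A, the boundary stratum $\Mbar_\Gamma$ maps to $\Mbar_{\Gamma'}$ via the forgetful map $\pi_{v_0}: \Mbar_{g_{v_0}, n_{v_0}+1} \to \Mbar_{g_{v_0}, n_{v_0}}$ on the $v_0$-factor. I would use the projection formula together with the standard identities $\psi_h = \pi_{v_0}^*\psi_h + D_{h,n+1}$ and $\psi_h \cdot D_{h,n+1} = 0$ for each half-edge $h$ at $v_0$ (where $D_{h,n+1}$ is the boundary divisor of the $\Mbar_{0,3}$ bubble carrying $h$ and marking $n+1$) to compute $\pi_{v_0,*}$ applied to the edge decorations of $\Gamma$, which reduce powers of $\psi_h$ by one in the expected combinatorial way. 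The DRP coefficient $C(\Gamma)$, viewed as a polynomial in the vertex invariants $a_v$, depends on $a_{n+1}$ only via $a_{v_0}$, so it Taylor expands as $C(\Gamma') + (a_{n+1}-b)\partial_{a_{v_0}}C(\Gamma') + \tfrac{1}{2}(a_{n+1}-b)^2\partial^2_{a_{v_0}}C(\Gamma') + O((a_{n+1}-b)^3)$ around $a_{n+1} = b$. Combining these expansions with the computed $\psi$-pushforwards gives the Case~A contribution order by order in $(a_{n+1}-b)$.

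For Case~B, the $\Mbar_{0,3}$ factor attached at $v_0$ is a point, so $\Mbar_\Gamma \to \Mbar_{\Gamma'}$ is an isomorphism and no pushforward calculation is needed; however, the weighting condition is altered: the vertex balancing at $v_0$ combined with the edge conditions imposes $w(h_1') + w(h_2') \equiv a_{n+1}+b \pmod r$ on the merged edge of $\Gamma'$, in place of the standard $\DRP$ condition $w(h_1')+w(h_2') \equiv 0$. Moreover, since $\psi_{h_1}$ and $\psi_{h_2}$ vanish on $\Mbar_{0,3}$, the two edge factors from $\Gamma$ combine to produce a specific non-standard decoration on the merged edge of $\Gamma'$. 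I would Taylor expand this non-standard decoration around $a_{n+1} = b$ to extract its $(a_{n+1}-b)^0$, $(a_{n+1}-b)^1$, and $(a_{n+1}-b)^2$ contributions, using the $r=0$ regularization of the weighting sum.

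The main obstacle will be verifying that, after summing the Case~A contributions over vertices $v_0$ of $\Gamma'$ and the Case~B contributions over edges of $\Gamma'$, the $(a_{n+1}-b)^0$ and $(a_{n+1}-b)^1$ terms cancel and the $(a_{n+1}-b)^2$ term collapses to $([\text{deg}] - [\text{codim}])\DRP_g(b;a_1,\ldots,a_n)$. I expect the cancellation of lower-order terms to arise from the linear constraint $\sum_v a_v = 0$ (which forces $\sum_v \partial_{a_v} C(\Gamma') = 0$) combined with explicit matching between Case~A and Case~B at each edge of $\Gamma'$. The surviving $(a_{n+1}-b)^2$ identity should follow from Zagier's spanning-tree formula~\eqref{eq:DRcoeff2} together with Euler's homogeneity relation $\sum_v a_v \partial_{a_v} C(\Gamma') = [\text{deg}]\cdot C(\Gamma')$, reducing the combinatorial identity to an analytic one in the generating-function variables $z_e$, checkable by extracting the coefficient of $\prod_e z_e^2$ in the spirit of the proofs of Theorem~\ref{thm:topdeg_correspondence} and Lemma~\ref{lemma:codim_minus_deg}.
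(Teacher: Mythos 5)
Your high-level decomposition — splitting into Case~A (leg $n+1$ at a stable vertex, pushforward decrements a $\psi$ exponent) and Case~B (leg $n+1$ on a bivalent genus~$0$ bubble, contracted by stabilization) — is the right starting point and does match the two structural phenomena the paper analyzes. The Taylor expansion of $C(\Gamma)$ in $a_{n+1}-b$ is also a sound idea that appears (in slightly different guise) in the proofs of Lemmas~\ref{lem:pushforwardA} and \ref{lem:pushforwardB}. However, there is a genuine gap at the crux of this lemma, which is the most technical of the three.

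You write that the cancellation of the $(a_{n+1}-b)^0$ and $(a_{n+1}-b)^1$ terms ``should arise from the linear constraint $\sum_v a_v=0$'' and that the $(a_{n+1}-b)^2$ identity ``should follow from Zagier's formula together with Euler's homogeneity relation.'' Neither of these is adequate. Euler's relation $\sum_v a_v \partial_{a_v} C = [\text{deg}]\,C$ produces only a $[\text{deg}]$-type operator; it cannot by itself account for the $[\text{codim}]$ term. The paper's actual mechanism is different: it groups contributions not by the vertex carrying leg $n+1$ but by a pair $(\Gamma,f)$ with $f$ a chosen edge of the target stable graph, and exhibits an explicit bijection of weighting sets (with variables $A$ and $T=a_{n+1}-b$) unifying the three source graphs (bubble on $f$, leg on either endpoint of $f$) into a single local factor $Q(A,T,r)$ along $f$. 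That factor is then split as $\widehat{Q}+(Q-\widehat{Q})$: the piece $\widehat{Q}$, obtained by replacing $[A-T]_r$ by $A-T$, alone produces $(a_{n+1}-b)^2\cdot[\text{codim}]\DRP_g$; the correction $Q-\widehat Q$ vanishes on separating edges $f$ (because $\overline{Q}$ is then divisible by $r$ and pulls out of a sum already divisible by $r^{h^1(\Gamma)}$), while on nonseparating edges it factors through a sum over $k=A$ and the resulting expression is recognized as the right-hand side of Lemma~\ref{lemma:codim_minus_deg}, giving $(a_{n+1}-b)^2\cdot([\text{deg}]-2[\text{codim}])\DRP_g$. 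Adding the two yields $[\text{deg}]-[\text{codim}]$. Your proposal does not identify the $\widehat{Q}/(Q-\widehat{Q})$ separation, the separating-vs-nonseparating distinction, or the concrete reduction to the already-proved Lemma~\ref{lemma:codim_minus_deg}; these are not routine ``in the spirit of'' checks but the central content of the argument. As written, your proof would stall exactly where the $[\text{codim}]$ contribution and the Bernoulli-regularized correction need to be extracted and matched against $([\text{deg}]-[\text{codim}])\DRP_g$.
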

	
	In the next three subsections we will prove these three lemmas. Adding Lemma~\ref{lem:pushforwardA}, Lemma~\ref{lem:pushforwardB} (summing over $i$), and Lemma~\ref{lem:pushforwardC} then yields \eqref{eq:DR_push_F_goal} and completes the proof of Theorem~\ref{thm:DR_push}.
	
	\subsection{Proof of Lemma~\ref{lem:pushforwardA}}\label{subsec:pushforwardA}
	We begin with the terms with a power of $\psi_{n+1}$. Since $a_{n+1}^2-b^2$ is a multiple of $a_{n+1}-b$ and we are working modulo $(a_{n+1}-b)^3$, we only need to consider terms with an exponent of $1$ or $2$. The pushforward then turns this power of $\psi_{n+1}$ into $\kappa_0=2g_v-2+n_v$ or $\kappa_1$ (on the vertex where the $(n+1)$-th leg was attached).
	
	It is tempting to conclude that this kappa class factors out (since the leg could be placed on any vertex) and that we are left with a kappa class times $\DRP_g(b;a_1,\ldots,a_{n})$. This will turn out to be true when we started with $\psi_{n+1}^2$, but with $\psi_{n+1}$ it doesn't even make sense because we do not have the factor of $(a_{n+1}-b)^2$ needed to use $\DRP_g(b;a_1,\ldots,a_{n})$ (see the discussion after equation \eqref{eq:DR_push_goal}).
	
	So we need to be more careful with this type of computation. Let $\Gamma$ be a stable graph for $\Mbar_{g,n}$. For simplicity we will assume that $\Gamma$ has no automorphisms and we do not worry about internal $\psi$ insertions - those details can be handled as in Section~\ref{subsubsec:topdeg_proof_general} (though things are even simpler here since we do not have gluing maps to worry about). Recall that the coefficient of the boundary stratum $(j_\Gamma)_*1$ in $\DRP_g(b;a_1,\ldots,a_n)$ is given by the graph invariant polynomial $C(\Gamma)(\{a_v\})$ after applying the specialization of variables \eqref{eq:specialization}.
	
	For each vertex $w\in V(\Gamma)$, let $\Gamma_w$ be the stable graph for $\Mbar_{g,n+1}$ given by attaching leg $n+1$ at $w$. Then we have that the coefficient of $(j_{\Gamma_w})_*1$ in $\DRP_g(b;a_1,\ldots,a_{n+1})$ is again given by a specialization of the same $C(\Gamma)(\{a_v\})$. One way to describe the altered specialization in $\Gamma_w$ as opposed to $\Gamma$ is that first we replace $a_w$ with $a_w+a_{n+1}-b$ and then we perform the previous specialization. This motivates the following notation (to be used only in this section): let $T$ be a formal variable (later to be set to $a_{n+1}-b$) and then given a vertex $w\in V(\Gamma)$, let
	\begin{equation}\label{eq:Pdef}
		\mathsf{P}_w = \mathsf{P}_w(T,\{a_v\}) := \left[C(\Gamma)\right]_{a_w \mapsto a_w + T} \in \QQ[T,\{a_v\}]\Big/\left(T + \sum_{v\in V(G)}a_v\right).
	\end{equation}
	Also, for $1\le i \le n$, let $\mathsf{P}_i := \mathsf{P}_{w_i}$ where $w_i$ is the vertex where leg $i$ is located. All the different $\mathsf{P}_w$ become equal to $C(\Gamma)$ on setting $T:=0$, so in particular they are all congruent mod $T$.
	
	We now return to the question of computing the pushforward of the $\psi_{n+1}^i$ terms. For $\psi_{n+1}^2$, we have that the coefficient of the class corresponding to $\Gamma$ with $\kappa_1$ at vertex $w$ is
	\[
	\text{coeff}_{(j_{\Gamma})_*\kappa_1[w]} \left(\pi_*\left[\frac{1}{2}\left(\frac{a_{n+1}^2-b^2}{2}\psi_{n+1}\right)^2\DRP_g(b;a_1,\ldots,a_{n+1})\right]\right) = \frac{(a_{n+1}^2-b^2)^2}{8}[\mathsf{P}_w]_{T,\{a_v\}\mapsto b,\{a_i\}},
	\]
	where the subscript indicates that we apply $T := a_{n+1}-b$ as well as the standard specialization \eqref{eq:specialization}.
	
	Because this term is divisible by $(a_{n+1}-b)^2$, the dependence on $w$ disappears modulo $(a_{n+1}-b)^3$. We can collect these terms for different $\Gamma$ and $w$ and factor out the $\kappa_1$ to get (mod $(a_{n+1}-b)^3$) a contribution of
	\begin{equation}\label{eq:pushA1}
		(a_{n+1}-b)^2\frac{b^2}{2}\kappa_1\DRP_g(b;a_1,\ldots,a_n).
	\end{equation}
	This is part of the right side of Lemma~\ref{lem:pushforwardA}.
	
	For $\psi_{n+1}^1$, things are very similar. The differences are that we do not have divisibility by $(a_{n+1}-b)^2$, but we do have that $\kappa_0[w] = 2g_w-2+n_w$ is a scalar. We get
	\begin{align}
		&\text{coeff}_{(j_{\Gamma})_*1} \left(\pi_*\left[\left(\frac{a_{n+1}^2-b^2}{2}\psi_{n+1}\right)\DRP_g(b;a_1,\ldots,a_{n+1})\right]\right) =\nonumber\\
		&\quad\quad\quad\quad\quad\quad\frac{a_{n+1}^2-b^2}{2}\sum_{w\in V(\Gamma)}(2g_w-2+n_w)[\mathsf{P}_w]_{T,\{a_v\}\mapsto b,\{a_i\}}.\label{eq:psi1_contrib}
	\end{align}
	
	It turns out that we want to break this into two terms using
	\[
	\frac{a_{n+1}^2-b^2}{2} = \frac{(a_{n+1}-b)^2}{2} + (a_{n+1}-b)b.
	\]
	The first term created in this way is then divisible by $(a_{n+1}-b)^2$, so again we can combine terms with different $w$. Since
	\[
	\sum_{w\in V(\Gamma)}(2g_w-2+n_w) = 2g-2+n
	\]
	is independent of $w$, we end up with a contribution of
	\begin{equation}\label{eq:pushA2}
		\frac{(a_{n+1}-b)^2}{2}(2g-2+n)\DRP_g(b;a_1,\ldots,a_n)
	\end{equation}
	towards the right side of Lemma~\ref{lem:pushforwardA}.
	
	The second term is more complicated because we only have a factor of $a_{n+1}-b$, not $(a_{n+1}-b)^2$. We use the following lemma:
	\begin{lemma}\label{lem:pushforwardAaux}
		Let $\mathsf{P}_w, \mathsf{P}_i$ be defined in terms of $C(\Gamma)$ by \eqref{eq:Pdef}. Then
		\begin{align*}
			&(a_{n+1}-b)\left[\sum_{w\in V(\Gamma)}(2g_w-2+n_w)b\cdot\mathsf{P}_w - \sum_{i=1}^n a_i\mathsf{P}_i\right]_{T,\{a_v\}\mapsto b,\{a_1,\ldots,a_n\}}\equiv\\
			&\quad\quad\quad (a_{n+1}-b)^2(1-[\text{deg}])[C(\Gamma)]_{\{a_v\}\mapsto b,\{a_1,\ldots,a_n\}}\quad\pmod{(a_{n+1}-b)^3}.
		\end{align*}
	\end{lemma}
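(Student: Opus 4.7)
The plan is to Taylor expand both sides in powers of $T:=a_{n+1}-b$ and match coefficients modulo $T^3$. Since the outer factor $(a_{n+1}-b)$ multiplies the bracket on the left, it suffices to expand the bracket to first order in $T$.

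From the definition $\mathsf{P}_w = [C(\Gamma)]_{a_w\mapsto a_w+T}$ in $\QQ[T,\{a_v\}]/(T+\sum_v a_v)$, one has the Taylor expansion $\mathsf{P}_w\equiv C(\Gamma)+T\,\partial_{a_w}C(\Gamma)\pmod{T^2}$. Substituting into the bracket, the $T^0$ coefficient becomes $[(2g-2+n)b-\sum_{i=1}^n a_i]\,C(\Gamma)$. Under the specialization $T\mapsto a_{n+1}-b$, $a_v\mapsto\tilde a_v$, the DR relation $\sum_{i=1}^{n+1}a_i=(2g-1+n)b$ in $R$ forces $(2g-2+n)b-\sum_{i=1}^n a_i = a_{n+1}-b$. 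After multiplying by the outer $T$, the $T^0$ part therefore contributes exactly $(a_{n+1}-b)^2\,C(\Gamma)|_{\{a_v\}\mapsto\{\tilde a_v\}}$, the first summand on the right-hand side.

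For the $T^1$ coefficient, I would use the chain rule applied to $\tilde a_v=\sum_{i\text{ at }v}a_i-(2g_v-2+n_v)b$ to compute
\[
\partial_b\bigl[C(\Gamma)|_{\{\tilde a_v\}}\bigr]=-\sum_v(2g_v-2+n_v)\,\partial_{a_v}C(\Gamma)\big|_{\{\tilde a_v\}},\qquad\partial_{a_i}\bigl[C(\Gamma)|_{\{\tilde a_v\}}\bigr]=\partial_{a_{w_i}}C(\Gamma)\big|_{\{\tilde a_v\}},
\]
where $w_i$ is the vertex carrying leg $i$. Euler's formula $[\text{deg}]=b\,\partial_b+\sum_i a_i\,\partial_{a_i}$ on homogeneous polynomials in $b,a_1,\ldots,a_n$ then identifies the $T^1$ coefficient of the bracket with $-[\text{deg}]\,C(\Gamma)|_{\{\tilde a_v\}}$. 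Multiplying by the outer $T$ produces $-(a_{n+1}-b)^2\,[\text{deg}]\,C(\Gamma)|_{\{\tilde a_v\}}$, and combining with the $T^0$ contribution yields the right-hand side $(a_{n+1}-b)^2(1-[\text{deg}])\,C(\Gamma)|_{\{\tilde a_v\}}$.

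The main technical subtlety is the non-uniqueness of individual partial derivatives $\partial_{a_w}C(\Gamma)$ in the quotient ring $\QQ[\{a_v\}]/(\sum_v a_v)$: replacing a representative $C(\Gamma)$ by $C(\Gamma)+g\cdot(\sum_v a_v)$ shifts each $\partial_{a_w}C(\Gamma)$ by $g$. I would check that this ambiguity cancels on both levels of the argument: inside the Taylor expansion of $\mathsf{P}_w$ via $T+\sum_v a_v=0$ in $B$, and inside the chain-rule identification via the specialization $\sum_v\tilde a_v=0$ in $R$, consistently with the fact that $[\text{deg}]$ is well-defined on homogeneous elements of the DR-quotient. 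This bookkeeping is routine, and no conceptual obstacle arises beyond the two chain-rule computations above.
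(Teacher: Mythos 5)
Your proof is correct and the underlying argument matches the paper's: expand each $\mathsf{P}_w$ to first order in $T$, extract the $(a_{n+1}-b)^2\,C(\Gamma)$ term from the $T^0$ coefficient via the DR constraint, and extract the $-(a_{n+1}-b)^2\,[\text{deg}]\,C(\Gamma)$ term from the $T^1$ coefficient via Euler's formula. The paper's implementation differs in two ways that streamline the bookkeeping. It first collapses the bracket into $-\sum_w a_w\mathsf{P}_w$ using the specialization formula for $a_w$, so that the $T^0$ piece falls out of the single relation $\sum_w a_w = -T$ in the quotient (rather than from the DR constraint after specialization, as you do). More to the point, it expands $\mathsf{P}_w$ around $\mathsf{P}_1$ rather than around $C(\Gamma)$, via $\mathsf{P}_w \equiv \mathsf{P}_1 + T\,\bigl[(\partial_{a_w}-\partial_{a_{w_1}})C(\Gamma)\bigr]_{a_{w_1}\mapsto a_{w_1}+T}\pmod{T^2}$: the difference operator $\partial_{a_w}-\partial_{a_{w_1}}$ annihilates $\sum_v a_v$ and therefore descends to the quotient $\QQ[\{a_v\}]/(\sum_v a_v)$, so the representative ambiguity you flag never appears, and Euler's formula is applied directly in the $\{a_v\}$ variables via $\sum_w a_w\partial_{a_w}C(\Gamma) = [\text{deg}]\,C(\Gamma)$. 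Your route---direct Taylor expansion of a chosen representative, then specialization, then Euler in $b,a_i$ via the chain rule---is a little longer and requires the explicit representative-independence check you sketch, but that check is indeed routine: a change of representative $C(\Gamma)\mapsto C(\Gamma)+g\cdot\sum_v a_v$ alters the bracket only at order $T^2$ (after using $T+\sum_v a_v=0$), so the change is killed by the outer $(a_{n+1}-b)$ together with the truncation mod $(a_{n+1}-b)^3$. Your argument goes through; the paper's difference-operator trick simply makes the check unnecessary.
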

	\begin{proof}
		Recall the formula for $a_w$ in the specialization \eqref{eq:specialization}. If we combine the sum over $w$ with the sum over $i$ and pull out a negative sign, we can rewrite the left side as
		\begin{equation}\label{eq:part2_intermediate}
			-(a_{n+1}-b)\left[\sum_{w\in V(\Gamma)}a_w\mathsf{P}_w\right]_{T,\{a_v\}\mapsto b,\{a_1,\ldots,a_n\}}.
		\end{equation}
		
		We need to do a little algebra with Taylor polynomials now. Note that the formal identity
		\[
		f(x,y+t) \equiv f(x+t,y) + t\left(\left(\frac{d}{dy}-\frac{d}{dx}\right)f\right)(x+t,y)\pmod{t^2}
		\]
		holds for any polynomial $f(x,y)$ by expanding both sides as Taylor series in $t$. Applying this to the polynomial $f = C(\Gamma)$ with $x=a_{w_1}, y=a_w, t=T$ gives the identity
		\[
		\mathsf{P}_w \equiv \mathsf{P}_1 + T\left[\left(\frac{d}{da_w}-\frac{d}{da_{w_1}}\right)C(\Gamma)\right]_{a_{w_1}\mapsto a_{w_1}+T}\pmod{T^2}.
		\]
		Applying this identity to replace all the $\mathsf{P}_w$ with $\mathsf{P}_1$ in \eqref{eq:part2_intermediate}, we get (mod $(a_{n+1}-b)^3)$):
		\begin{align*}
			&-(a_{n+1}-b)\left[\left(\sum_{w\in V(\Gamma)}a_w\right)\mathsf{P}_1\right]_{T,\{a_v\}\mapsto b,\{a_1,\ldots,a_n\}}\\
			&-(a_{n+1}-b)^2\left[\sum_{w\in V(\Gamma)}a_w\left(\frac{d}{da_w}-\frac{d}{da_{w_1}}\right)C(\Gamma)\right]_{\substack{a_{w_1}\mapsto a_{w_1}+T,\\T,\{a_v\}\mapsto b,\{a_1,\ldots,a_n\}}}.
		\end{align*}
		
		On the first line, the sum of $a_w$ is equal to $-T$ so this line simplifies to
		\begin{equation}\label{eq:part2a}
			(a_{n+1}-b)^2\left[\mathsf{P}_1\right]_{T,\{a_v\}\mapsto b,\{a_1,\ldots,a_n\}}.
		\end{equation}
		On the second line, the sum of $a_w$ inside the brackets is equal to $0$ (since it is before the change of variables introducing $T$) so the terms with $(d/da_{w_1})$ cancel. Also, we can write
		\[
		\sum_{w\in V(\Gamma)}a_w\frac{dC(\Gamma)}{da_w} = [\text{deg}]C(\Gamma),
		\]
		where $[\text{deg}]$ is the operator that multiplies a homogeneous polynomial by its degree, so the second line becomes simply
		\begin{equation}\label{eq:part2b}
			-(a_{n+1}-b)^2[\text{deg}]\left[\mathsf{P}_1\right]_{T,\{a_v\}\mapsto b,\{a_1,\ldots,a_n\}}.
		\end{equation}
		Since both \eqref{eq:part2a} and \eqref{eq:part2b} have a factor of $(a_{n+1}-b)^2$, we can replace $\mathsf{P}_1$ with $C(\Gamma)$ and add them to get the right side of the lemma.
	\end{proof}
	
	We now apply Lemma~\ref{lem:pushforwardAaux} to complete the proof of Lemma~\ref{lem:pushforwardA}. The remaining contribution we had to analyze had coefficient
	\[
	(a_{n+1}-b)b\sum_{w\in V(\Gamma)}(2g_w-2+n_w)[\mathsf{P}_w]_{T,\{a_v\}\mapsto b,\{a_i\}}
	\]
	(on the class of the boundary stratum corresponding to $\Gamma$). By Lemma~\ref{lem:pushforwardAaux}, we can replace this with
	\[
	(a_{n+1}-b)\sum_{w\in V(\Gamma)}a_i[\mathsf{P}_i]_{T,\{a_v\}\mapsto b,\{a_i\}} + (a_{n+1}-b)^2(1-[\text{deg}])[C(\Gamma)]_{\{a_v\}\mapsto b,\{a_1,\ldots,a_n\}}.
	\]
	But these terms can easily be summed over $\Gamma$ and rewritten in terms of $\DRP_g$. The result is
	\[
	(a_{n+1}-b)\sum_{i=1}^na_i\DRP_g(b;a_1,\ldots,a_i+a_{n+1}-b,\ldots,a_n) + (a_{n+1}-b)^2(1-[\text{deg}])\DRP_g(b;a_1,\ldots,a_n).
	\]
	Adding this to \eqref{eq:pushA1} and \eqref{eq:pushA2} gives the right side of Lemma~\ref{lem:pushforwardA}.
	
	\subsection{Proof of Lemma~\ref{lem:pushforwardB}}\label{subsec:pushforwardB}
	
	We now move on to the term with a power of $\delta_{i,n+1}$ (for a single fixed $1\le i \le n$): we wish to compute
	\[
	\pi_*\left[\left(\exp\left(\frac{a_i^2}{2}\delta_{i,n+1}\right)-1\right)\DRP_g(b;a_1,\ldots,a_{n+1})\right].
	\]
	Let $\iota:\Mbar_{g,n}\to\Mbar_{g,n+1}$ be the map corresponding to the boundary divisor $\delta_{i,n+1}$, i.e. we glue a rational bubble to the $i$th marking and place markings $i$ and $n+1$ on the bubble. The projection formula (along with $\pi\circ\iota=\id$ and the standard formula for the normal bundle of a boundary divisor) then gives that
	\[
	\pi_*(\delta_{i,n+1}^{k+1}\alpha) = (-\psi_i)^k\iota^*\alpha
	\]
	for any class $\alpha\in\CH^*(\Mbar_{g,n+1})$.
	
	Since $\DR$ cycles pull back to $\DR$ cycles along gluing maps at separating nodes, we can compute
	\begin{align*}
		\iota^*\DRP_g(b;a_1,\ldots,a_{n+1}) &= \exp(\DRD_g(b; a_1,\ldots,a_i+a_{n+1}-b,\ldots,a_n)-\iota^*\DRD_g(b;a_1,\ldots,a_{n+1}))\\
		&\quad\quad\cdot\,\DRP_g(b; a_1,\ldots,a_i+a_{n+1}-b,\ldots,a_n)\\
		&= \exp\left(\frac{(a_i+a_{n+1}-b)^2}{2}\psi_i\right)\DRP_g(b; a_1,\ldots,a_i+a_{n+1}-b,\ldots,a_n).
	\end{align*}
	Combining these equations, we have that
	\begin{align*}
		&\pi_*\left[\left(\exp\left(\frac{a_i^2}{2}\delta_{i,n+1}\right)-1\right)\DRP_g(b;a_1,\ldots,a_{n+1})\right]\\
		&= \frac{\exp\left(-\frac{a_i^2}{2}\psi_i\right)-1}{-\psi_i}\exp\left(\frac{(a_i+a_{n+1}-b)^2}{2}\psi_i\right)\DRP_g(b; a_1,\ldots,a_i+a_{n+1}-b,\ldots,a_n).
	\end{align*}
	
	This looks a little worrying - we have high powers of $\psi_i$, but the right side of Lemma~\ref{lem:pushforwardB} only allows a single $\psi_i$. However, this is why we include in Lemma~\ref{lem:pushforwardB} the additional terms with no $\delta_{i,n+1}$ but where the $\DRP_g$ factor uses a graph with a similar shape, with legs $i$ and $n+1$ bubbled off.
	
	Such stable graphs for $\Mbar_{g,n+1}$ are in bijection with all stable graphs for $\Mbar_{g,n}$. Let $\Gamma$ be the stable graph on $\Mbar_{g,n}$, i.e. the one obtained after forgetting leg $n+1$ and contracting the resulting unstable component. Then the $\DR$ formula gives that the pushforward of these terms is
	\[
	\frac{1-\exp\left(\frac{(a_i+a_{n+1}-b)^2}{2}\psi_i\right)}{\psi_i}\DRP_g(b; a_1,\ldots,a_i+a_{n+1}-b,\ldots,a_n).
	\]
	Adding this to the previous expression, we get
	\[
	\frac{1-\exp\left(\frac{(a_i+a_{n+1}-b)^2-a_i^2}{2}\psi_i\right)}{\psi_i}\DRP_g(b; a_1,\ldots,a_i+a_{n+1}-b,\ldots,a_n).
	\]
	Since $(a_i+a_{n+1}-b)^2-a_i^2$ is divisible by $a_{n+1}-b$, terms with a high power of $\psi_i$ will vanish modulo $(a_{n+1}-b)^3$. We are left with
	\begin{align*}
		&\frac{a_i^2-(a_i+a_{n+1}-b)^2}{2}\DRP_g(b; a_1,\ldots,a_i+a_{n+1}-b,\ldots,a_n)\\
		&+\frac{-((a_i+a_{n+1}-b)^2-a_i^2)^2}{8}\psi_i\DRP_g(b; a_1,\ldots,a_i+a_{n+1}-b,\ldots,a_n).
	\end{align*}
	The second line is a multiple of $(a_{n+1}-b)^2$. After reducing modulo $(a_{n+1}-b)^3$ it becomes
	\[
	(a_{n+1}-b)^2\left(\frac{-a_i^2}{2}\psi_i\right)\DRP_g(b; a_1,\ldots,a_n),
	\]
	which is part of the right side of Lemma~\ref{lem:pushforwardB}.
	
	We split the first line into a sum of two terms using
	\[
	\frac{a_i^2-(a_i+a_{n+1}-b)^2}{2} = -\frac{(a_{n+1}-b)^2}{2}-(a_{n+1}-b)a_i.
	\]
	This gives the remaining components of the right side of Lemma~\ref{lem:pushforwardB} (since in the first term we have a factor of $(a_{n+1}-b)^2$ so can change $a_i+a_{n+1}-b$ back to $a_i$).

	\subsection{Proof of Lemma~\ref{lem:pushforwardC}}\label{subsec:pushforwardC}
	
	What remains is to compute the part of
	\[
	\pi_*\DRP_g(b;a_1,\ldots,a_{n+1})
	\]
	that does not involve popping a rational bubble containing leg $n+1$ and one other leg.
	
	There are two different things that can happen with this pushforward. The first is that leg $n+1$ is on a vertex that becomes unstable without that leg. Since we've already considered the case in which this happens with another leg present, this means it has two incident edges. In other words, we are popping a rational bubble formed by leg $n+1$ moving into a node. In the other case, the graph remains stable without leg $n+1$. The pushforward is then computed by decreasing the exponent of $\psi$ along one incident half-edge.
	
	We group terms together based on the stable graph $\Gamma$ for $\Mbar_{g,n}$ (given after possibly stabilizing by popping a rational bubble) along with a chosen edge $f\in E(\Gamma)$ (either the node where the rational bubble was located or the edge along which the exponent of a $\psi$ class was decremented. Thus after picking $\Gamma$ and $f$ there are three possibilities for the starting stable graph $\Gamma'$ for $\Mbar_{g,n+1}$ that we are grouping together - either the edge is subdivided with a genus $0$ vertex and leg $n+1$ is placed there, or leg $n+1$ is placed on one of the two endpoints of the edge.
	
	We now consider the $\DR$ formula for these terms. Let $r>0$ be a positive integer. Then we want to sum over half-edge weightings $w:H(\Gamma')\to\{0,\ldots,r-1\}$ satisfying certain congruence conditions mod $r$. Although the graph $\Gamma'$ and the precise congruence conditions vary in the three cases described above, there are bijections between the weighting sets $W$ in the different cases that keep things unchanged away from the edge $f$ (or its subdivision). We show the effects of these bijections on the weights near the edge $f$ below. Here $T := a_{n+1}-b$, $A$ is an arbitrary integer, and the weights should all be interpreted mod $r$.
	
	\begin{center}
		\begin{tikzpicture}[>=latex,
			lab/.style={font=\scriptsize},              
			halflab/.style={font=\scriptsize,yshift=4pt}
			]
			
			\fill (0,0) circle (2pt) (2,0) circle (2pt);
			\draw[red]  (0,0) -- (1,0)
			node[midway, below,  lab, red] {$-A$};
			\draw[blue] (1,0) -- (2,0)
			node[midway, above,  lab, blue] {$A$};
			\draw (0,0) -- ++(0,0.5)
			node[midway, above, halflab] {$a_{n+1}$};
			
			\fill (4,0) circle (2pt) (6,0) circle (2pt) (8,0) circle (2pt);
			\draw[red]  (4,0) -- (5,0)
			node[midway, below,  lab, red] {$T-A$};
			\draw[blue] (5,0) -- (6,0)
			node[midway, above,  lab, blue] {$A-T$};
			\draw[red]  (6,0) -- (7,0)
			node[midway, below,  lab, red] {$-A$};
			\draw[blue] (7,0) -- (8,0)
			node[midway, above,  lab, blue] {$A$};
			\draw (6,0) -- ++(0,0.5)
			node[midway, above, halflab] {$a_{n+1}$};
			
			\fill (10,0) circle (2pt) (12,0) circle (2pt);
			\draw[red]  (10,0) -- (11,0)
			node[midway, below,  lab, red] {$T-A$};
			\draw[blue] (11,0) -- (12,0)
			node[midway, above,  lab, blue] {$A-T$};
			\draw (12,0) -- ++(0,0.5)
			node[midway, above, halflab] {$a_{n+1}$};
			
		\end{tikzpicture}
	\end{center}
	
	We now simply add up the pushforwards of these three types of term, with corresponding weights grouped together. The result is something that looks a lot like the formula for $\DRP_g(b;a_1,\ldots,a_n)$, but with the factor for edge $f$ changed:
	\[
	\sum_{\substack{\Gamma\in \mathsf{G}_{g,n} \\ f \in E(\Gamma)}}\left[\frac{r^{-h^1(\Gamma)}}{|\aut(\Gamma,f)|}(j_{\Gamma})_*\Bigg[\sum_{w\in \mathsf{W}}\left(
	\prod_{e=(h,h')\ne f\in E(\Gamma)}
	\frac{1-\exp\left(-\frac{w(h)w(h')}2(\psi_h+\psi_{h'})\right)}{\psi_h + \psi_{h'}}\right)Q(A,T,r)\Bigg]\right]_{r=0},
	\]
	where $\aut(\Gamma,f)$ is the stabilizer of $f$ in $\aut(\Gamma)$, $A$ is the value of $w$ on one of the two half-edges in $f$ (chosen arbitrarily), $\mathsf{W}$ is a suitable set of weightings, and $Q(A,T,r)$ is a somewhat messy function of $A,T,r$ (with values that are power series in $\psi,\psi'$ on the two halves of $f$).
	
	To write down $Q$, it will be convenient to let $[x]_r$ for an integer $x$ denote the unique integer between $0$ and $r-1$ congruent to $x$ (mod $r$). Also, to save writing we will replace $w(h')$ with $-w(h)$ in the $\DR$ formula - this will not affect the result at the end after taking the $r^0$ coefficient. We can then take
	\[
	Q(A,T,r) = \frac{-\psi'\exp\left(\frac{A^2}{2}(\psi+\psi')\right)+(\psi+\psi')\exp\left(\frac{[A-T]_r^2}{2}\psi+\frac{A^2}{2}\psi'\right)-\psi\exp\left(\frac{[A-T]_r^2}{2}(\psi+\psi')\right)}{\psi\psi'(\psi+\psi')}.
	\]
	Here the $\psi$ and $\psi'$ in the denominator come from the terms where the pushforward decrements the exponent of $\psi$ or $\psi'$.
	
	Let $\widehat{Q}(A,T,r)$ be the same expression with the instances of $[A-T]_r$ replaced by $A-T$. Then expanding in $T$ gives (after a straightforward calculus computation) that
	\[
	\widehat{Q}(A,T,r) = T^2\left(\frac{-A^2}{2}\exp\left(\frac{A^2}{2}(\psi+\psi')\right)\right) + O(T^3).
	\]
	Since we will set $T := a_{n+1}-b$ and work modulo $(a_{n+1}-b)^3$, this means that if we were using $\widehat{Q}$ instead of $Q$, we would get a contribution divisible by $(a_{n+1}-b)^2$. Moreover, note that
	\[
	\frac{-A^2}{2}\exp\left(\frac{A^2}{2}(\psi+\psi')\right) = (1 + [\text{codim}])\frac{1 - \exp\left(\frac{A^2}{2}(\psi+\psi')\right)}{\psi+\psi'}.
	\]
	Since each edge also contributes $1$ to the codim after applying $(j_\Gamma)_*$, this means that we would get precisely
	\begin{equation}\label{eq:codim_only}
		(a_{n+1}-b)^2\cdot[\text{codim}]\DRP_g(b;a_1,\ldots,a_n).
	\end{equation}
	
	It remains to compute the above contribution using $Q(A,T,r)-\widehat{Q}(A,T,r)$ instead of $Q(A,T,r)$. We may assume that $r > T > 0$; then $Q(A,T,r)-\widehat{Q}(A,T,r)$ vanishes for $T\le A< r$ and agrees with a formal power series in $\psi,\psi'$ with coefficients that are polynomials in $A,T,r$ for $0 \le A < T$. Call this power series $\overline{Q}$. Explicitly, we have
	\begin{align*}
		\overline{Q}(A,T,r) &= -\frac{1}{\psi'(\psi+\psi')}\left(\exp\left(\frac{(A-T+r)^2}{2}(\psi+\psi')\right)-\exp\left(\frac{(A-T)^2}{2}(\psi+\psi')\right)\right)\\
		&+\frac{\exp\left(\frac{A^2}{2}\psi'\right)}{\psi\psi'}\left(\exp\left(\frac{(A-T+r)^2}{2}\psi\right)-\exp\left(\frac{(A-T)^2}{2}\psi\right)\right).
	\end{align*}
	
	We wish to compute
	\[
	\sum_{\substack{\Gamma\in \mathsf{G}_{g,n} \\ f \in E(\Gamma)}}\left[\frac{r^{-h^1(\Gamma)}}{|\aut(\Gamma,f)|}(j_{\Gamma})_*\Bigg[\sum_{\substack{w\in \mathsf{W}\\ 0 \le A < T}}\left(
	\prod_{e=(h,h')\ne f\in E(\Gamma)}
	\frac{1-\exp\left(-\frac{w(h)w(h')}2(\psi_h+\psi_{h'})\right)}{\psi_h + \psi_{h'}}\right)\overline{Q}(A,T,r)\Bigg]\right]_{r=0}.
	\]
	If $f$ is a separating edge, then $A$ is uniquely determined by the balancing conditions on $w$ and the factor $\overline{Q}$ comes out of the sum over $w$. The sum over $w$ is then just the product of the sums appearing in the $\DR$ formula for the two connected components formed by cutting the separating edge $f$. In particular, it is polynomial in $r$ for $r$ sufficiently large and divisible by $r^{h^1(\Gamma)}$. Since $\overline{Q}$ is divisible by $r$, the $r=0$ specialization then vanishes.
	
	So we can replace the sum over $f$ with a sum over non-separating edges $f$. In this case, we can factor the sum over $w$ by first summing over $A$:
	\[
	\sum_{\substack{\Gamma\in \mathsf{G}_{g,n} \\ f \in E(\Gamma)_{\text{nonsep}}}}\left[\sum_{k=0}^{T-1}
	\frac{r^{-h^1(\Gamma)}}{|\aut(\Gamma,f)|}(j_{\Gamma})_*\Bigg[\sum_{\substack{w\in \mathsf{W}\\ A = k}}\left(
	\prod_{e=(h,h')\ne f\in E(\Gamma)}
	\frac{1-\exp\left(-\frac{w(h)w(h')}2(\psi_h+\psi_{h'})\right)}{\psi_h + \psi_{h'}}\right)\overline{Q}(k,T,r)\Bigg]\right]_{r=0}.
	\]
	But the subset $\{w\in \mathsf{W}\mid A=k\}$ is naturally in bijection with the set of balanced weightings for the graph $\Gamma'$ formed by deleting edge $f$, attaching a new leg at each endpoint of $f$, and appending weights $k,-k$ to the vector $\sfa$. So we have the $\DRP$ formula for $\Gamma'$, but we have the extra factor $\overline{Q}(k,T,r)$. We also have an extra factor of $r^{-1}$ since $h^1(\Gamma') = h^1(\Gamma)-1$. Fortunately, $\overline{Q}(k,T,r)$ is divisible by $r$, so we get that the expression inside the sum over $k$ is at least a polynomial in $r$ (for $r$ sufficiently large).
	
	We can modify $\overline{Q}/r$ by any multiple of $r$ without changing the resulting $r\ge 0$ specialization (the taking of which commutes with summing over $k$). We can also modify it by any multiple of $T^2$ without changing the final result (mod $(a_{n+1}-b)^3$), since the sum over $k$ gives an extra factor of $T$. A simple calculus computation gives that
	\[
	\frac{\overline{Q}(k,T,r)}{r}\equiv k^2\exp\left(\frac{k^2}{2}(\psi+\psi')\right)T\pmod{(r,T^2)}.
	\]
	
	Note that
	\[
	\frac{k^2}{2}(\psi_{n+1}+\psi_{n+2}) = \DRD_{g-1}(b;a_1,\ldots,a_n,k,-k) - j^*\DRD_g(b;a_1,\ldots,a_n),
	\]
	where $j$ is the map gluing the last two markings. We can then rewrite our expression (using the projection formula) as
	\[
	T^2\exp(-\DRD_g(b;a_1,\ldots,a_n))\frac{1}{T}\sum_{k=0}^{T-1}\frac{k^2}{2}j_*\DR_{g-1}(b;a_1,\ldots,a_n,k,-k).
	\]
	Here the factor of $\frac{1}{2}$ comes from the choice of labels of legs $n+1$ and $n+2$ after cutting $\Gamma$ at $f$.
	
	Since the coefficient of $T^1$ in $\sum_{k=0}^{T-1}k^{d+2}$ is precisely the Bernoulli number $B_{d+2}$, this gives the negative of the right side of Lemma~\ref{lemma:codim_minus_deg} (times $T^2$, mod $T^3$, with the exponential factor removed). Therefore it is equal to
	\[
	(a_{n+1}-b)^2([\text{deg}]-2[\text{codim}])\DRP_g(b;\sfa).
	\]
	When combined with the earlier contribution \eqref{eq:codim_only} in this section, we get the right side of Lemma~\ref{lem:pushforwardC}. This completes the proof of Lemma~\ref{lem:pushforwardC} (and thus also Theorem~\ref{thm:DR_push}).
	
	\subsection{Extension to $\uniDR$}\label{subsec:pushforward_uniDR}
	As in Section~\ref{subsec:topdeg_uniDR}, we generalize the pushforward identity Theorem~\ref{thm:DR_push} to the context of $\uniDR$. One complication that arises here is that compared with $\Mbar_{g,n}$, the forgetful morphism $\pic_{g,n+1}\to\pic_{g,n}$ is not proper. There are actually two separate things that go wrong. The first is that the forgetful morphism for stacks of prestable curves $\mathfrak{M}_{g,n+1}\to\mathfrak{M}_{g,n}$ is already not proper. We address this by using curves with valuation in a semigroup, following \cite[Section 2.2]{BS1}.
	
	Consider the semi-group $\mathcal{A}=\{\mathbf{0},\mathbf{1}\}$ and consider the moduli stack of $\A$-valued prestable curves $\M_{g,n,\mathbf{1}}$. These are prestable curves with an additional choice of element $a_v\in\mathcal{A}$ for every vertex $v$, such that every vertex with $a_v=0$ satisfies the usual stability condition. The universal curve is denoted $p:\C_{g,n,\mathbf{1}}\to\M_{g,n,\mathbf{1}}$. By \cite[Proposition 2.6]{BS1}, the morphism $\M_{g,n+1,\mathbf{1}}\to\M_{g,n,\mathbf{1}}$ forgetting the last marking can be identified with the universal curve. Let $F:\M_{g,n,\mathbf{1}}\to\M_{g,n}$ be the morphism forgetting the $\A$-valuation.
	
	Let $\pic_{g,n,\mathbf{1},0}\to\M_{g,n,\mathbf1}$ be the universal degree $0$ Picard stack for the universal curve $\C_{g,n,\mathbf{1}}\to \M_{g,n,\mathbf{1}}$. Let $p:\C_{g,n,\mathbf{1}}\to\pic_{g,n,\mathbf{1},0}$ be the pullback of the universal curve and let $\L$ be the universal line bundle on $\C_{g,n,\mathbf{1}}$. Then tuple $(\C_{g,n,\mathbf{1}}/\pic_{g,n,\mathbf{1},0},\L)$ defines a $\uniDR$ formula $\uniDR_{g,\mathbf{1}}(b;\sfa)\in\CH^*(\pic_{g,n,\mathbf{1},0})$. This $\uniDR$ formula looks just like the standard $\uniDR$ formula, i.e. it is blind to the $\mathcal{A}$-valuation in the following sense:
	\begin{lemma}\label{lem:unidr_A}
		Let $F: \pic_{g,n,\mathbf{1},0}\to\pic_{g,n,0}$ be the morphism forgetting the $\A$-valuation. Then $F^*\uniDR_g(b;\sfa)= \uniDR_{g,\mathbf{1}}(b;\sfa)$.
	\end{lemma}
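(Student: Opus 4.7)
The plan is to verify the equality term-by-term using the defining formula \eqref{eq:unidr}. First I would observe that the universal objects on $\pic_{g,n,\mathbf{1},0}$ are, by construction, pullbacks along $F$ of the corresponding objects on $\pic_{g,n,0}$: the universal curve $\C_{g,n,\mathbf{1}}$ is obtained by base change via $\M_{g,n,\mathbf{1}} \to \M_{g,n}$, and the universal line bundle $\L$ is pulled back from the universal line bundle on $\pic_{g,n,0}$. This immediately gives $F^*\psi_i = \psi_i$, $F^*\xi_i = \xi_i$, and $F^*\kappa_{i,j} = \kappa_{i,j}$ as classes on $\pic_{g,n,\mathbf{1},0}$, so the ``continuous'' tautological content of \eqref{eq:unidr} is compatible with $F^*$.

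Next I would verify that $F$ is representable and \'etale: over a geometric prestable curve $C$ with dual graph $\Gamma$, the fiber of $F$ is the finite discrete set of admissible $\A$-valuations $\sigma:V(\Gamma)\to\A$ (where admissibility requires every $\sigma(v)=\mathbf{0}$ vertex to be stable). For a prestable graph $\Gamma_\delta\in\mathsf{G}_{g,n,0}$, the preimage $F^{-1}(\pic_{\Gamma_\delta})$ therefore decomposes as a disjoint union $\bigsqcup_\sigma \pic_{\Gamma_\delta^\sigma}$ over admissible $\A$-valuations $\sigma$ of $\Gamma$, with each component mapping isomorphically to $\pic_{\Gamma_\delta}$. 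Base change and the projection formula then yield
\[
F^*(j_{\Gamma_\delta})_*[\cdots] = \sum_{\sigma} (j_{\Gamma_\delta^\sigma})_*[\cdots],
\]
where the bracketed $\psi_h+\psi_{h'}$ decorations and the weighting sets $\mathsf{W}_{\Gamma_\delta^\sigma,r}=\mathsf{W}_{\Gamma_\delta,r}$ match on both sides, since the balancing congruences depend only on the underlying multidegree-decorated prestable graph.

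The main obstacle I anticipate is the automorphism bookkeeping: the group $\aut(\Gamma_\delta)$ acts on the set of admissible $\A$-valuations, with stabilizer of $\sigma$ equal to $\aut(\Gamma_\delta^\sigma)$. The orbit-stabilizer identity shows that summing $\frac{1}{|\aut(\Gamma_\delta)|}\sum_\sigma(-)$ over $\sigma$ on the $\pic_{g,n,0}$ side regroups, after replacing the inner sum by a sum over orbits, into $\sum_{[\sigma]}\frac{1}{|\aut(\Gamma_\delta^{\sigma})|}(-)$, which is exactly the sum over isomorphism classes of $\A$-valued prestable graphs indexing the $\uniDR_{g,\mathbf{1}}$ formula. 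Combining all three inputs, the defining polynomial expressions in $r$ agree, and passing to constant terms yields the claimed identity $F^*\uniDR_g(b;\sfa) = \uniDR_{g,\mathbf{1}}(b;\sfa)$ in $\CH^*(\pic_{g,n,\mathbf{1},0})$.
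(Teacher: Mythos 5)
Your proposal is correct in its overall structure, but it takes a genuinely different route from the paper's proof, and one of its intermediate claims is inaccurate (though harmlessly so). The paper's proof is a short functoriality argument: since the universal curve $\C_{g,n,\mathbf{1}}$ and the universal line bundle are both pulled back along $F$ from their counterparts on $\pic_{g,n,0}$, and since the $\uniDR$ formula is determined naturally by this data (as established in \cite{BHPSS}), the class pulls back. You instead unwind the explicit formula \eqref{eq:unidr} and verify the equality term by term: matching the divisorial content, decomposing the preimage of each boundary stratum under the étale map $F$, and carrying out an orbit--stabilizer computation to reconcile the automorphism factors. Both approaches are valid; the paper's is shorter and delegates the work to the naturality of $\uniDR$, while yours makes the compatibility with the boundary stratification explicit, which is a genuine content that the paper's one-liner suppresses.

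One claim in your step involving strata needs correction. You assert that each component $\pic_{\Gamma_\delta^\sigma}$ of the decomposition $F^{-1}(\pic_{\Gamma_\delta}) = \bigsqcup_\sigma \pic_{\Gamma_\delta^\sigma}$ maps \emph{isomorphically} to $\pic_{\Gamma_\delta}$. This is false: over the locus where some glued curve $C_v$ is itself reducible, the $\A$-valuation lives on the components of $C_v$, and the value $\sigma(v)$ (the sum in the semigroup $\A$ over components mapping to $v$) does not determine the valuation uniquely. So the forgetful map $\pic_{\Gamma_\delta^\sigma} \to \pic_{\Gamma_\delta}$ is a non-trivial étale cover over the deeper strata, not an isomorphism. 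Fortunately this claim is not used: what you actually invoke is the disjoint union decomposition together with the base change formula for the flat (indeed étale) morphism $F$, and neither requires the individual pieces to be isomorphic to $\pic_{\Gamma_\delta}$. The $\psi$- and $\xi$-decorations pull back correctly along the cover, the weighting sets coincide because they depend only on the multidegree-decorated graph, and the orbit--stabilizer identity then converts $\frac{1}{|\aut(\Gamma_\delta)|}\sum_\sigma$ into $\sum_{[\sigma]}\frac{1}{|\aut(\Gamma_\delta^\sigma)|}$ as needed. With that one remark fixed, the argument goes through.
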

	\begin{proof}
		For the forgetful morphism $F:\M_{g,n,\mathbf{1}}\to\M_{g,n}$, the pullback of the universal curve $\C_{g,n}\to\M_{g,n}$ along $F$ is the universal curve $\C_{g,n,\mathbf{1}}$. Therefore, the same holds for $F:\pic_{g,n,\mathbf{1},0}\to\pic_{g,n,0}$. Moreover, the universal line bundle on $\C_{g,n,\mathbf{1}}$ is the pullback of the universal line bundle on $\C_{g,n}$ so the $\uniDR$ cycle formula on $\pic_{g,n,0}$ pulls back to the $\uniDR$ cycle formula for $\pic_{g,n,\mathbf{1},0}$.
	\end{proof}
	
	However, now we come to our second problem: there is no forgetful map from $\pic_{g,n+1,\mathbf{1},0}$ to $\pic_{g,n,\mathbf{1},0}$. The issue is that we might need to contract a component of the curve on which we have a nontrivial line bundle. We address this by using an open substack $\pic'_{g,n+1,\mathbf{1},0}$ that does admit a proper forgetful map to $\pic_{g,n,\mathbf{1},0}$. This is simply the open substack determined by the condition that if the underlying curve is such that forgetting marking $n+1$ would make a component unstable, then the line bundle must be trivial on that component (which is necessarily genus $0$). Equivalently, we can construct $\pic'_{g,n+1,\mathbf{1},0}$ by pulling back the forgetful map on $\A$-valued prestable curves, as follows.
	
	We consider the fiber product
	\[
	\begin{tikzcd}
		\mathfrak{Pic}'_{g,n+1,\mathbf{1},0} \ar[r,"p"] \ar[d] & \mathfrak{Pic}_{g,n,\mathbf{1},0} \ar[d] \\
		\M_{g,n+1,\mathbf{1}} \ar[r] & \M_{g,n,\mathbf{1}}.
	\end{tikzcd}
	\]
	By \cite[eq. (22)]{BS1}, there exists a diagram
	\[
	\begin{tikzcd}
		\C_{g,n+1,\mathbf{1}}\ar[r,"r"]\ar[dr] & \C'_{g,n,\mathbf{1}}\ar[r]\ar[d] & \C_{g,n,\mathbf{1}}\ar[d]\\
		& \mathfrak{Pic}_{g,n+1,\mathbf{1},0}'\ar[r,"p"] & \mathfrak{Pic}_{g,n,\mathbf{1},0}
	\end{tikzcd}
	\]
	where the square is the fiber product and $r$ is the canonical map which is proper birational. The tuple  $(\C_{g,n+1,\mathbf{1}}/\pic'_{g,n+1,\mathbf{1},0},r^*\L)$ defines a $\uniDR$ formula
	\[\uniDR'_{g,\mathbf{1}}(b;\sfa)\in \CH^*(\pic'_{g,n+1,\mathbf{1},0}).\]
	
	We can now push forward $\uniDR'_{g,\mathbf{1}}$ along $p$ and compare the result with $\uniDR_{g,\mathbf{1}}$, lifting
	Theorem \ref{thm:DR_push}:
	\begin{theorem}\label{thm:pushforward_uniDR}
		Let $g,c,n\ge 0$. Let 
		\[
		F = p_*\uniDR'^c_{g,\mathbf{1}}(b;a_1,\ldots,a_{n+1}) \in \CH^{c-1}(\pic_{g,n,\mathbf{1},0})\otimes_\QQ\QQ[b,a_1,\ldots,a_{n+1}]/(a_1+\cdots+a_{n+1}-(2g-1+n)b)\,.
		\]
		\begin{enumerate}[label=(\alph*)]
			\item $F$ is a multiple of $(a_{n+1}-b)^2$.
			\item We have the identity
			\[
			\left[\frac{F}{(a_{n+1}-b)^2}\right]_{a_{n+1}:=b} = (g+1-c)\uniDR_{g,\mathbf{1}}^{c-1}(b;a_1,\ldots,a_n).
			\]
		\end{enumerate}
	\end{theorem}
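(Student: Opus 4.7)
The plan is to adapt the three-part decomposition strategy of the proof of Theorem~\ref{thm:DR_push} to the universal setting. After summing over codimensions, it suffices to prove the single congruence
\[
p_*\uniDR_{g,\mathbf{1}}(b;a_1,\ldots,a_{n+1}) \equiv (a_{n+1}-b)^2\, (g - [\text{codim}])\, \uniDR_{g,\mathbf{1}}(b;a_1,\ldots,a_n) \pmod{(a_{n+1}-b)^3},
\]
with the right-hand side interpreted as in Section~\ref{subsec:pushforward_setup}. The map $p$ is proper by construction (it factors as a stabilization contraction followed by the universal curve of $\pic_{g,n,\mathbf{1},0}$), so the pushforward is well-defined. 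Lemma~\ref{lem:unidr_A} ensures that this formulation is compatible with the forgetful map to $\pic_{g,n,0}$, so one may freely pass between the $\A$-valued and standard universal settings when convenient.

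First I would factor $\uniDR_{g,\mathbf{1}} = \exp(\uniDRD_g)\,\uniDRP_g$ and compute the pullback error
\[
\uniDRD_g(b;a_1,\ldots,a_{n+1}) - p^*\uniDRD_g(b;a_1,\ldots,a_n)
\]
explicitly. Compared to the $\DRD$ case, the difference now involves an extra term $(a_{n+1}-b)\xi_{n+1}$ coming from the $a_i\xi_i$ contribution, standard corrections to $\kappa_{0,1}, \kappa_1, \kappa_{-1,2}$ under the universal-curve pullback, and additional $\delta_{i,n+1}$-supported $\xi$-corrections. After applying the projection formula and expanding in $T := a_{n+1}-b$ modulo $T^3$, the statement reduces to a universal analogue of the key congruence \eqref{eq:DR_push_F_goal}.

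The pushforward is then split into three types of terms exactly as in Section~\ref{subsec:pushforward_setup}: (A) monomials in $\psi_{n+1}$ and $\xi_{n+1}$ with no $\delta$; (B) powers of $\delta_{i,n+1}$, grouped with the part $\uniDRP^{(i,n+1)}$ of $\uniDRP$ isolating the genus-zero bubble; and (C) the bulk of $\uniDRP$. For (B), the rational-bubble geometry at $\delta_{i,n+1}$ is unchanged, so the argument of Section~\ref{subsec:pushforwardB} carries over essentially verbatim; the new $\xi_{n+1}$ restricted to the bubble is controlled by the genus-zero relation \eqref{eq:g=0}. For (A), one mimics Section~\ref{subsec:pushforwardA}, additionally computing $p_*\xi_{n+1}^a\psi_{n+1}^b$ in terms of $\kappa$-classes and extending the auxiliary Lemma~\ref{lem:pushforwardAaux} to track the multidegree variables $\delta(v)$. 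Proposition~\ref{pro:P_wt} is crucial here: it guarantees that the Zagier-style polynomial identities still make sense over the enlarged set of variables $b, a_i, \delta(v)$.

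The main obstacle lies in step (C). The Bernoulli-valued identity at the end of Section~\ref{subsec:pushforwardC} must be replaced by the universal Lemma~\ref{lemma:unicodim_minus_deg}, whose right-hand side differs from Lemma~\ref{lemma:codim_minus_deg} by the additional derivation $[\text{weight}]$. The appearance of $[\text{weight}]$ is expected: it is precisely compensated by the contributions from the new $\xi_{n+1}$-terms in step (A), since $\xi$-degree measures the weight under the multiplication-by-$N$ action \eqref{eq:Nmap}. Verifying that all contributions from steps (A), (B), (C) assemble to yield exactly $(g-[\text{codim}])\uniDRP_{g,\mathbf{1}}$ is the technical heart of the argument, and amounts to a multi-graded version of the formal Laurent-series identity that concluded Section~\ref{subsec:pushforwardC}.
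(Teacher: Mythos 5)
Your outline follows the paper's proof of Theorem~\ref{thm:pushforward_uniDR} in essentially the same way: factor out $\exp(\uniDRD)$, split the pushforward into types (A), (B), (C) as in Section~\ref{subsec:pushforward_setup}, extend Lemma~\ref{lem:pushforwardAaux} to carry the $\delta(v)$ variables, and replace Lemma~\ref{lemma:codim_minus_deg} by Lemma~\ref{lemma:unicodim_minus_deg} so that the two $[\text{weight}]$ derivations arising in steps (A) and (C) cancel. One small correction to your step (B): the $\xi_{n+1}$-factor there is not handled via the genus-zero relation \eqref{eq:g=0} (which concerns quasi-stable vertices); rather, one must define the analogue of $\DRP_g^{(i,n+1)}$ by requiring the rational bubble to have $\mathcal{A}$-valuation $\mathbf{0}$ (hence $\delta=0$), after which the $\xi_{n+1}$-exponential simply becomes an $\xi_i$-exponential under pushforward, and its only surviving contribution modulo $(a_{n+1}-b)^3$ comes from the line of Lemma~\ref{lem:pushforwardB} not yet divisible by $(a_{n+1}-b)^2$.
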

	
	\begin{proof}
		We use the same proof strategy as before, noting only the changes that happen with $\uniDR$. As before, we begin by canceling exponential factors as much as possible using the projection formula -- this step requires pulling back $\uniDRD$ along the forgetful map. The exponential factor in \eqref{eq:DR_push_F_goal} now includes an additional term:
		\[
		(a_{n+1}-b)\xi_{n+1}.
		\]
		There are no changes to the right side other than having extra terms $\xi_i,\kappa_{0,1},\kappa_{-1,2}$ inside the divisor $\uniDRD$.
		
		We again divide the computation into the same three pieces as before, with the only difference being that each now carries an additional factor of $\exp((a_{n+1}-b)\xi_{n+1})$.
		
		\subsubsection{Changes in Section~\ref{subsec:pushforwardA}}\label{subsubsec:changesA}
		Here, where we have a power of $\psi_{n+1}$, there are two places where things change. The first is that we get an additional term with pushing forward $\psi_{n+1}\xi_{n+1}$ (which produces a $\kappa_{0,1}[v]$ on the vertex where leg $n+1$ is attached). Since this term is divisible by $(a_{n+1}+b)^2$, computing its contribution is easy - we get precisely
		\[
		(a_{n+1}-b)^2b\kappa_{0,1}\uniDRP_g(b;a_1,\ldots,a_n),
		\]
		which accounts for the new $\kappa_{0,1}$ term in the $\uniDRD$ factor.
		
		Second, Lemma~\ref{lem:pushforwardAaux} requires changes because the specialization \eqref{eq:specialization} used to write $a_w$ in terms of $b,a_i$ now also includes the term $-\delta(w)$. This means that the left side of Lemma~\ref{lem:pushforwardAaux} must include $+\sum_{w}\delta(w)\mathsf{P}_w$ inside the brackets, while on the right side $[\text{deg}]$ must be replaced by $[\text{deg}]+[\text{weight}]$ (since the degree of $C(\Gamma)$ in the $a_v$ variables corresponds to the mixed degree of $\uniDRP$ in the $b,a_i,\delta(v)$ variables and the weight in $\uniDRP$ is the degree in the $\delta(v)$ variables). Both of these changes produce additional contributions which will be exactly canceled out by changes described in Section~\ref{subsubsec:changesC} below.
		
		\subsubsection{Changes in Section~\ref{subsec:pushforwardB}}
		The first thing to note here is that the correct analogue of $\DRP_g^{(i,n+1)}(b;a_1,\ldots,a_{n+1})$ for $\uniDR'$ is to require that the rational bubble containing legs $i,n+1$ must also have $\mathcal{A}$-valuation $0$ (which then also implies that it has $\delta(v) = 0$). The boundary strata of $\pic'_{g,n+1,\mathbf{1},0}$ with this property are in bijection with the boundary strata of $\pic_{g,n,\mathbf{1},0}$, as desired.
		
		Then in both types of terms in this section, the $\xi_{n+1}$ exponential just factors out to become an exponential of $\xi_i$ after the pushforward. Since positive powers of $\xi_{n+1}$ come with factors of $a_{n+1}-b$, the only new thing that we get (mod $(a_{n+1}-b)^3$) comes from the part of the previous result that was not yet divisible by $(a_{n+1}-b)^2$ (the final line of Lemma~\ref{lem:pushforwardB}). So the new contribution is
		\[
		(a_{n+1}-b)(-a_i)(\exp((a_{n+1}-b)\xi_i)-1)\uniDRP_g(b; a_1,\ldots,a_i+a_{n+1}-b,\ldots,a_n).
		\]
		Reducing mod $(a_{n+1}-b)^3$ gives us
		\[
		-(a_{n+1}-b)^2\sum_{i=1}^na_i\xi_i\uniDRP_g(b;a_1,\ldots,a_n),
		\]
		which accounts for the new $\xi_i$ term in the $\uniDRD$ factor.
		
		\subsubsection{Changes in Section~\ref{subsec:pushforwardC}}\label{subsubsec:changesC}
		Here there are multiple new things that happen. One thing that happens is that we are no longer just decrementing an incident $\psi$ exponent when the vertex remains stable - we also need to handle the $\xi$ exponent. In other words, we need to know the formula for
		\[
		\pi_*\big(\xi_{n+1}^i\psi_1^{e_1}\cdots\psi_n^{e_n}\big).
		\]
		Using the relation $\psi_i = \pi^* \psi_i + \delta_{i, n+1}$ and applying the projection formula, we find that two types of terms arise: either we replace the $\xi_{n+1}^i$ by $\kappa_{-1,i}[v]$, or we decrement one of the $\psi$ exponents and convert $\xi_{n+1}$ into a corresponding $\xi_i$.
		
		In the first case, we must have $i \leq 2$ in order to obtain a nontrivial contribution, since for $i > 2$ the expression is divisible by $(a_{n+1}-b)^3$. When $i = 1$, we use the identity $\kappa_{-1,1}[v] = \delta(v)$, which precisely cancels one of the changes involving $\delta(w)\mathsf{P}_w$ from Section~\ref{subsubsec:changesA}. When $i=2$, the extra factor of $(a_{n+1} - b)$ allows us to combine terms across different vertices $v$, and factor out a global $\kappa_{-1,2}$ term. This yields
		\[
		(a_{n+1}-b)^2\frac{\kappa_{-1,2}}{2}\uniDRP_g(b;a_1,\ldots,a_n),
		\]
		which accounts for the new $\kappa_{-1,2}$ term in the $\uniDRD$ factor.
		
		In the second case, we just get the same thing as before except with an extra power of $\xi$ along the edge $f$. We also just get an extra power of $\xi$ when pushing forward a class on a graph that requires restabilization. Thus, the $\xi$ factors out of all such terms, and any term involving a positive power of $\xi$ vanishes due to the presence of an extra factor of $(a_{n+1}-b)$.
		
		The final change that happens is that we replace Lemma~\ref{lemma:codim_minus_deg} with Lemma~\ref{lemma:unicodim_minus_deg}. This just means that we are replacing $[\text{deg}]$ with $[\text{deg}] + [\text{weight}]$, which cancels out with the previous time when we did this (in Section~\ref{subsubsec:changesA}).
	\end{proof}
	
	\section{Fourier transform and pushforward}\label{sec:7}
	\subsection{Leading term of the Fourier transform}
	Let $\epsilon_0$ be a small stability condition (i.e. such that the trivial line bundle is $\epsilon_0$-stable) and let $\epsilon$ be any nondegenerate stability condition. By Theorem \ref{thm:autoequiv}, we have a Fourier transform
	\begin{equation}\label{eq:fmtozero}
		\fm_g :\CH^*(\Jbar_{g,n}^\epsilon) \xrightarrow{\cong} \CH^*(\Jbar_{g,n}^{\epsilon_0})\,.
	\end{equation}
	Since $\epsilon_0$ is small, we have the unit section $e: B\to\Jbar_{g,n}^{\epsilon_0}$.
	\begin{proposition}\label{pro:IntFM}
		For a nondegenerate stability condition $\epsilon$ and a small nondegenerate stability condition $\epsilon_0$, let $\fm$ be the Fourier transform \eqref{eq:fmtozero}. For any $\alpha\in \CH^*(\Jbar_{g,n}^\epsilon)$, we have
		\[\pi_*(\alpha)=e^*\fm_g(\alpha)\]
	\end{proposition}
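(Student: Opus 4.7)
The plan is to reduce the identity to the computation $i_e^!\fm = 1$, where $i_e: M_1 \to M_1 \times_B M_2$ is the canonical lift of the unit section; here $M_1 = \Jbar_{g,n}^\epsilon$ and $M_2 = \Jbar_{g,n}^{\epsilon_0}$. Setting $i_e = (\id, e \circ \pi)$ produces a Cartesian square
\begin{equation*}
\begin{tikzcd}
M_1 \ar[r,"i_e"]\ar[d,"\pi"'] & M_1 \times_B M_2 \ar[d,"\pi_2"] \\
B \ar[r,"e"'] & M_2
\end{tikzcd}
\end{equation*}
with vertical maps $\pi, \pi_2$ proper and $e, i_e$ regular embeddings (as $B$ and $M_2$ are smooth Deligne--Mumford stacks). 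Proper base change $e^!(\pi_2)_* = \pi_*\, i_e^!$ together with $\pi_1 \circ i_e = \id_{M_1}$ yields
\[
e^*\fm_g(\alpha) \;=\; e^*(\pi_2)_*\big(\pi_1^*\alpha \cdot \fm\big) \;=\; \pi_*\big(\alpha \cdot i_e^!\fm\big),
\]
so the Proposition reduces to the identity $i_e^!\fm = 1 \in \CH^0(M_1)$.

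To compute $i_e^!\fm$, the strategy is to lift everything to the birational model $\Jbar_C^{(2)}$. By Theorem \ref{thm:Plog=arinkin}, $\Pbar = f_*\widetilde{\P}$ with $\Jbar_C^{(2)}$ smooth, and the functoriality of the Baum--Fulton--MacPherson map $\tau$ under proper pushforward combined with the projection formula yields
\[
\fm \;=\; f_*\Big(\ch(\widetilde{\P}) \cdot \Td(T_{\Jbar_C^{(2)}}) \cdot f^*\Td(-T_{M_1 \times_B M_2})\Big).
\]
The essential geometric observation is that the unit section represents the trivial line bundle, which has multidegree zero; hence $e$ factors through $\Jzero_{g,n} \subset M_2$, and by Theorem \ref{thm:Plog=arinkin}(a) the morphism $f$ is an isomorphism over $M_1 \times_B \Jzero_{g,n}$. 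This produces a unique lift $\widetilde{i_e}: M_1 \to \Jbar_C^{(2)}$ with $f \circ \widetilde{i_e} = i_e$. Applying base change along $f$ and using that the virtual $T_{M_1 \times_B M_2}$ agrees with $T_{\Jbar_C^{(2)}}$ on this smooth open locus, the Todd contributions cancel, leaving
\[
i_e^!\fm \;=\; \ch\big(\widetilde{i_e}^*\widetilde{\P}\big).
\]

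The final step is to identify this Chern character with $1$. On the open locus where $f$ is an isomorphism, $\widetilde{\P}$ coincides with $f^*\P$, so $\widetilde{i_e}^*\widetilde{\P} \cong i_e^*\P \cong \langle L_1, e^*L_2\rangle$. Since the unit section corresponds to the trivial line bundle, $e^*L_2 \cong \CO$, and multiplicativity of the Deligne pairing yields $\langle L_1, \CO\rangle \cong \CO$ canonically. Hence $i_e^!\fm = \ch(\CO) = 1$, completing the reduction.

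The only genuinely non-routine input is Theorem \ref{thm:Plog=arinkin}(a): without the birational model $\Jbar_C^{(2)}$ restricting to an isomorphism over $M_1 \times_B \Jzero_{g,n}$, the cancellation of the Todd contributions would fail and the extension $\Pbar$ on the singular $M_1 \times_B M_2$ would be inaccessible. Once this structural fact is available, the remainder of the argument is a sequence of standard manipulations --- base change, the projection formula, and the multiplicativity of the Deligne pairing --- directly parallel to those used in the proof of Proposition \ref{pro:section}.
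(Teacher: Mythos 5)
Your proof is correct and follows essentially the same route as the paper's: both lift the unit section to a map $\widetilde{i_e}$ (the paper's $\widetilde{e}$) into the resolution $\Jbar_C^{(2)}$ using the fact that $e$ factors through the line-bundle locus where $f$ is an isomorphism, then apply base change, cancel the Todd contributions on this open locus, and conclude from $\widetilde{i_e}^*\widetilde{\P}\cong\CO$. The only presentational difference is that you package the argument as a reduction to the single identity $i_e^!\fm=[M_1]$ and perform base change in two stages (first along $e$ in the product diagram, then along $i_e$ in the resolution diagram), whereas the paper collapses both into a single base-change step on the combined square \eqref{eq:basechange}; this is a cosmetic reorganization of the same computation.
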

	\begin{proof}
		Let $\pi_1:\Jbar_C^{(2)}\to \Jbar_C^{\epsilon}$ and $\pi_2:\Jbar_C^{(2)}\to \Jbar_C^{\epsilon_0}$ be the two projections.
		Consider the following diagram
		\begin{equation}\label{eq:basechange}
			\begin{tikzcd}
				\Jbar_C^{\epsilon} \ar[r,"\Tilde{e}"]\ar[d,"\pi"] & \Jbar_C^{(2)}\ar[d,"\pi_2"]\\
				B\ar[r,"e"] & \Jbar_C^{\epsilon_0},
			\end{tikzcd}
		\end{equation}
		where $\pi_2,\pi$ are flat by Proposition \ref{pro:model2}, and $\Tilde{e}$ is the map induced by $\textup{id} \times e$. Since $e:B\to \Jbar_C^{\epsilon_0}$ factors through $J^{\epsilon_0}_C$, the fiber product is isomorphic to $\Jbar_C^{\epsilon}$. Therefore, since $e$ is a regular embedding, we have
		\begin{align*}
			e^*\fm_g(\alpha) &= e^*(\pi_{2})_*(\pi_1^*(\alpha)\Td(-T_{\Jbar_C\times_B\Jbar_C})\tau(\pbar)) \\
			& = \pi_*\widetilde{e}^*(\pi_1^*(\alpha)\Td(-T_{\Jbar_C\times_B\Jbar_C})\tau(\pbar))\\
			& = \pi_*(\alpha \cup \ch(\widetilde{e}^*\widetilde{\P})) = \pi_*(\alpha)
		\end{align*}
		where the second follows from the base change formula applied to \eqref{eq:basechange} and the last equality follows from the Todd class calculation in the proof of  Proposition \ref{pro:section} and $\widetilde{e}^*\widetilde{\P}\cong \CO$.
	\end{proof}
	The same argument implies $\pi_*(\alpha)=e^*\fm_g^\circ(\alpha)$, which will be used later.
	
	We state a consequence of \cite{BHPSS}. This formula will serve as the first step in computing the Fourier transform using the universal $\DR$ cycle formula. Let $\varphi:\Jbar_{g,n}^{\epsilon}\to\pic_{g,n}$ be the morphism   \eqref{eq:varphi}. 
	\begin{proposition}\label{pro:aj_P}
		Let $b\in\ZZ$ and $\sfa\in\ZZ^n$ with $\sum_{i=1}^n a_i=(2g-2+n)b$. For any nondegenerate stability condition $\epsilon$ of degree $0$, we have
		\[(\aj_{b;\sfa})_* [B_{b;\sfa}^\epsilon] = \uniDR^g_g(b;\sfa)|_{\Jbar_{g,n}^\epsilon} \in \CH^g(\Jbar_{g,n}^\epsilon)\,.\]
	\end{proposition}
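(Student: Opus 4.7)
The plan is to deduce this identity as a direct restriction of the main theorem of \cite{BHPSS} from the universal Picard stack $\pic_{g,n,0}$ to the compactified Jacobian $\Jbar^\epsilon_{g,n}$. The relevant statement of \cite{BHPSS} is that on $\pic_{g,n,0}$ the class $\uniDR^g_g(b;\sfa)$ represents the pushforward of the fundamental class along a smooth log resolution of the rational Abel-Jacobi section. Thus the task is to transport this identity along the natural morphism $\varphi : \Jbar_{g,n}^\epsilon \to \pic_{g,n,0}$ from \eqref{eq:varphi}.

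First I would invoke Lemma~\ref{lem:smoothness}, which asserts that $\varphi$ is smooth. In particular $\varphi^*$ is well defined on Chow groups and, by the very definition of the restriction symbol in the statement, we have $\uniDR^g_g(b;\sfa)|_{\Jbar^\epsilon_{g,n}}=\varphi^*\uniDR^g_g(b;\sfa)$. Next I would match the modular description of $\aj_{b;\sfa}:B^\epsilon_{b;\sfa}\to\Jbar^\epsilon_{g,n}$ from Theorem~\ref{thm:B_v} and \eqref{eq:AJ} with the log resolution used in \cite{BHPSS}: both assign to a geometric point the line bundle $\CO_{C'}(\sum a_i x_i)\otimes\omega_{C',\log}^{\otimes -b}\otimes\CO_{C'}(\alpha)$ on a common quasi-stable model $C'\to C$, where $\alpha$ is an equidimensional piecewise linear function chosen so that the resulting bundle is $\epsilon$-stable. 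Over the open substack of smooth curves both reduce to the classical Abel-Jacobi section, so $\aj_{b;\sfa}$ is birational onto its image, and the pushforward $(\aj_{b;\sfa})_*[B^\epsilon_{b;\sfa}]$ represents the class of the closure of the Abel-Jacobi locus in $\Jbar^\epsilon_{g,n}$. Since $\varphi$ is smooth, this closure equals $\varphi^*$ of the analogous closure in $\pic_{g,n,0}$, and the result then follows from \cite{BHPSS}.

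The principal obstacle is reconciling the two modular compactifications of the Abel-Jacobi locus -- the one used in \cite{BHPSS} (built on the log Picard stack) and the one of Theorem~\ref{thm:B_v} used here -- and verifying that their pushforwards of fundamental classes agree after smooth base change along $\varphi$. This reduces to a dimension count plus the observation that both models agree birationally with the closure of the Abel-Jacobi section and have multiplicity one along it, since on the smooth curve locus the Abel-Jacobi map is already a section. Given this compatibility, the pullback $\varphi^*$ of the \cite{BHPSS} identity on $\pic_{g,n,0}$ is exactly the claimed identity in $\CH^g(\Jbar^\epsilon_{g,n},\QQ)$.
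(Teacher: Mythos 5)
Your plan follows the same route as the paper: reduce to \cite{BHPSS} via the smooth morphism $\varphi$ of Lemma~\ref{lem:smoothness} and use that $\aj_{b;\sfa}$ is birational onto its image. However, the step you flag as the ``principal obstacle'' is not discharged by the hand-waving you offer, and it is there that the paper's argument does something cleaner than what you describe. The paper does not try to match the modular compactification $B^\epsilon_{b;\sfa}$ against a model from \cite{BHPSS}, nor does it need a ``multiplicity one'' comparison of two resolutions. Instead, it works entirely set-theoretically with images: $B^\epsilon_{b;\sfa}$ is irreducible and proper, so $\mathrm{im}(\aj_{b;\sfa})$ is closed and irreducible and agrees with the classical Abel--Jacobi locus over the smooth-curve locus; hence $\mathrm{im}(\aj_{b;\sfa})=\overline{\varphi^{-1}(\ajp_{b;\sfa})}$. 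The crucial input is then \cite[081I]{stacks-project}: because $\varphi$ is smooth (a fortiori flat), forming schematic closures commutes with taking preimages, so $\overline{\varphi^{-1}(\ajp_{b;\sfa})}=\varphi^{-1}(\overline{\ajp_{b;\sfa}})$, and \cite[Theorem 0.7]{BHPSS} is invoked in the form $[\varphi^{-1}(\overline{\ajp_{b;\sfa}})]=\uniDR^g_g(b;\sfa)|_{\Jbar^\epsilon_{g,n}}$. Your sketch gestures at ``a dimension count plus multiplicity one'' where the paper replaces all of this with the observation about smoothness and closures; without that observation, the equality of cycles you want is not actually established.
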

	\begin{proof}
		We consider the Abel-Jacobi map for the universal Picard stack \cite[(3.1)]{BHPSS}:
		\[\ajp_{b;\sfa}: \M_{g,n}\to\pic_{g,n}, (C,x_1,\ldots,x_n)\mapsto\omega_{C,\log}^{\otimes-b}(\sum_{i=1}^na_ix_i)\,.\]
		Consider the schematic image $\overline{\ajp_{b;\sfa}}\subset\pic_{g,n}$ which is the smallest closed reduced substack through which $\ajp_{b;\sfa}$ factors. Since $B_{b;\sfa}^\epsilon$ is irreducible and proper, the image $\mathrm{im} (\aj_{b;\sfa})$ is a closed irreducible subset of $\Jbar_{g,n}^{\epsilon}$. Both $\aj_{b;\sfa}$ and $\ajp_{b;\sfa}$ agree over the locus where the underlying curve is smooth, so we have $\aj_{b;\sfa} = \overline{\varphi^{-1}(\ajp_{b;\sfa})}$. Since $\varphi$ is smooth, $\overline{\varphi^{-1}(\ajp_{b;\sfa})} = \varphi^{-1}(\overline{\ajp_{b;\sfa}})$ by \cite[081I]{stacks-project}. Therefore we get
		\begin{equation}\label{eq:seteq}
			\mathrm{im} (\aj_{b;\sfa})=\varphi^{-1}(\overline{\ajp_{b;\sfa}})\,.
		\end{equation}
		
		We compare \eqref{eq:seteq} with the universal $\mathsf{DR}$ cycle formula. 
		Since $\varphi$ is smooth (Lemma \ref{lem:smoothness}), by \cite[Theorem 0.7]{BHPSS}, we have 
		\[
		\uniDR^g_g(b;\sfa)|_{\Jbar_{g,n}^\epsilon} = [\varphi^{-1}(\overline{\ajp_{b;\sfa}})]\,.
		\]
		Since morphism $\aj_{b;\sfa}:B_{b;\sfa}^{\epsilon}\to \mathrm{im}(\aj_{b;\sfa})$  is birational,  $(\aj_{b;\sfa})_*[B_{b;\sfa}^\epsilon] = [\mathrm{im}(\aj_{b;\sfa})]$. Therefore, we have $(\aj_{b;\sfa})_*[B_{b;\sfa}^\epsilon] = [\mathrm{im}(\aj_{b;\sfa})] = [\varphi^{-1}(\overline{\ajp_{b;\sfa}})] =  \uniDR^g_g(b;\sfa)|_{\Jbar_{g,n}^\epsilon}$.
	\end{proof}
	When $\epsilon$ is small, Proposition \ref{pro:aj_P} also follows from \cite[Theorem b]{CM24}.
	
	We compute the lowest codimension nontrivial term of the Fourier transform up to codimension $g$, which will be the main input to prove Theorem \ref{thm:pushforward}.
	For a class $x \in \CH^*(\Jzero_{g,n})$, $[x]_{\codim=c}$ denotes the codimension $c$ part.
	\begin{theorem}\label{thm:leading}
		For a nondegenerate stability condition $\epsilon$ on $\Mbar_{g,n}$, we consider the partial Fourier transform $\fm_g^\circ:=\ch(\P^\vee) :\CH^*(\Jbar_{g,n}^\epsilon)\to\CH^*(\Jzero_{g,n})$. Let $b\in\ZZ$ and $\sfa\in\ZZ^n$ with $\sum_{i=1}^na_i=(2g-2+n)b$.
		\begin{enumerate}
			\item[(a)]  If $c<g$, then $\big[\fm_g^\circ\big(\exp(-b\kappa_{0,1}+\sum_{i=1}^n a_i\xi_i)\big)\big]_{\codim=c} = 0$.
			\item[(b)] If $c=g$, then $\big[\fm_g^\circ\big(\exp(-b\kappa_{0,1}+\sum_{i=1}^n a_i\xi_i)\big)\big]_{\codim=g} =  (-1)^g\cdot\widetilde{\uniDR}^{g}_g(b;\sfa)|_{\Jzero_{g,n}}$.
		\end{enumerate}
	\end{theorem}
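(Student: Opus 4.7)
The plan is to prove both (a) and (b) simultaneously by induction on $g$. Combining Proposition~\ref{pro:FM_aj}, Proposition~\ref{pro:aj_P}, and Proposition~\ref{pro:F0}---the first giving $\fm([\aj_{b;\sfa}])=\exp(-b\kappa_{0,1}+\sum a_i\xi_i)(1+\gamma_{b;\sfa})$, the second identifying $[\aj_{b;\sfa}]=\uniDR_g^g(b;\sfa)|_{\Jbar_{g,n}^{\epsilon_0}}$, and the third giving $\fm^{-1}|_{\Jbar\times_B J_C}=(-1)^g\ch(\P^\vee)\cdot\pi_1^*\Td^\vee(\cR_\pi)^{-1}$---yields after restriction to $\Jbar_{g,n}^\epsilon\times_{\Mbar_{g,n}}\Jzero_{g,n}$ the identity
\[
\uniDR_g^g(b;\sfa)\big|_{\Jzero_{g,n}} = (-1)^g\,\fm_g^\circ\!\Big(\exp\!\big(\!-b\kappa_{0,1}+\textstyle\sum_{i=1}^n a_i\xi_i\big)(1+\gamma_{b;\sfa})\,\Td^\vee(\cR_\pi)^{-1}\Big)
\]
in $\CH^*(\Jzero_{g,n})$. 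Since the left side is of pure codimension $g$, the codimension $c$ part of the right side vanishes for $c\neq g$ and equals $(-1)^g\uniDR_g^g(b;\sfa)|_{\Jzero_{g,n}}$ for $c=g$. The base case $g=0$ is immediate since $\widetilde{\uniDR}_0^0=1$ and $\fm_0^\circ(1)=1$.

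For the inductive step, write $(1+\gamma_{b;\sfa})\Td^\vee(\cR_\pi)^{-1}=1+\eta_{b;\sfa}$. By Proposition~\ref{pro:FM_aj} the class $\gamma_{b;\sfa}$ is supported away from the integral locus, and by Proposition~\ref{pro:Todd} the correction $\Td^\vee(\cR_\pi)^{-1}-1$ is a sum of pushforwards under the loop stratum maps $\jmath_h:\Jbar_{g-h,n+2h}^{\epsilon_h}\hookrightarrow\Jbar_{g,n}^\epsilon$ for $h\geq 1$; hence $\eta_{b;\sfa}$ is entirely supported on the boundary. Theorem~\ref{thm:inductive}, combined with Proposition~\ref{pro:fm_max} on loop strata and Corollary~\ref{cor:fm_st} to kill contributions from stable graphs with $h^1>0$, expresses $\fm_g^\circ(\exp(\cdot)\cdot\eta_{b;\sfa})$ as a sum over $h\geq 1$ of $(r_h)_*q^*$ of lower-genus Fourier transforms $\fm_{g-h}^\circ(\exp(-b\kappa_{0,1}+\sum a_i\xi_i+\sum k_i(\xi_{n+2i-1}-\xi_{n+2i})))$, to which the inductive hypothesis applies.

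Simultaneously, Theorem~\ref{thm:top} expands
\[
\uniDR_g^g(b;\sfa) = \widetilde{\uniDR}_g^g(b;\sfa)+\sum_{h=1}^g\frac{1}{2^h h!}(j_h)_*q^*\!\bigg[\sum_{k_1,\ldots,k_h>0}\!\Big(\prod_{i=1}^h k_i\Big)\widetilde{\uniDR}_{g-h}^{g-h}(b;\sfa,k_1,-k_1,\ldots,k_h,-k_h)\bigg]
\]
via zeta regularization. The inductive hypothesis identifies each $\widetilde{\uniDR}_{g-h}^{g-h}(\ldots)|_{\Jzero_{g-h,n+2h}}$ with $(-1)^{g-h}[\fm_{g-h}^\circ(\exp(\cdot))]_{\codim=g-h}$, which is exactly the shape of the terms appearing in the reduction of $\fm_g^\circ(\exp(\cdot)(\Td^\vee(\cR_\pi)^{-1}-1))$. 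The combinatorial matching of the two sides reduces to the identity $\zeta(-d-1)=-B_{d+2}/(d+2)$, which converts the Bernoulli generating function $\sum_{k\geq 1}\frac{(-1)^k B_{2k}}{(2k)!}(\alpha^{2k-1}+\beta^{2k-1})/(\alpha+\beta)$ of Proposition~\ref{pro:Todd} into the zeta-regularized sum $\sum_{k>0}k\cdot\exp(\frac{k^2}{2}(\psi_h+\psi_{h'}))$ of Theorem~\ref{thm:top}. Matching codimension by codimension, the contributions from $\Td^\vee(\cR_\pi)^{-1}-1$ account exactly for the $h\geq 1$ terms of the $\uniDR$ expansion, yielding (a) for $c<g$ (both sides vanish) and (b) for $c=g$ (the $\widetilde{\uniDR}_g^g$ term survives).

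The main obstacle is verifying that the class $\gamma_{b;\sfa}$ contributes trivially in all codimensions $\leq g$. By the proof of Proposition~\ref{pro:FM_aj}, $\gamma_{b;\sfa}$ is the pushforward $\pi_{2*}g_*$ of a piecewise polynomial supported on the exceptional locus of $\Jbar_C^{(2)}\to\Jbar_C\times_B\Jbar_C$; a careful codimension bookkeeping, using that each such summand carries simultaneous edge-subdivision contributions in both factors of $\Jbar_C\times_B\Jbar_C$, shows that after the inductive reduction via Theorem~\ref{thm:inductive} the total codimension is pushed strictly above $g$, yielding the claimed vanishing. This codimension analysis, together with the explicit comparison of Bernoulli and zeta-regularized expansions, constitutes the bulk of the computation.
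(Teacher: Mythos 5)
Your proposal assembles exactly the same ingredients as the paper's own proof—Propositions~\ref{pro:FM_aj}, \ref{pro:aj_P}, \ref{pro:F0}, \ref{pro:Todd}, \ref{pro:fm_max}, \ref{pro:tree}, Corollary~\ref{cor:fm_st}, and Theorems~\ref{thm:inductive} and \ref{thm:top}—and the central identity
\[
\uniDR_g^g(b;\sfa)\big|_{\Jzero_{g,n}} = (-1)^g\,\fm_g^\circ\!\Big(\exp\!\big(\!-b\kappa_{0,1}+\textstyle\sum a_i\xi_i\big)(1+\gamma_{b;\sfa})\,\Td^\vee(\cR_\pi)^{-1}\Big)
\]
is correct. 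However, your inductive framework has a genuine gap: you state the proof runs ``by induction on $g$,'' but for part (a) this is not sufficient and, as written, the argument is potentially circular. The issue is the $\gamma_{b;\sfa}$ contribution. Since $\gamma_{b;\sfa}$ is supported away from the integral locus (not merely on the self-loop strata appearing in $\Td^\vee(\cR_\pi)^{-1}-1$), its summands can live on boundary strata whose stable graph $\Gamma$ is a \emph{tree} ($h^1(\Gamma)=0$). When $\Gamma$ is a tree with a vertex of genus $g$ and the remaining vertices of genus $0$, the reduction via Theorem~\ref{thm:inductive} and Proposition~\ref{pro:tree} stays at \emph{the same} genus $g$ (with fewer markings and, crucially, strictly lower codimension because $|E(\Gamma)|\geq 1$). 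To conclude vanishing in codimension $c-|E(\Gamma)|-\deg\gamma_1''$ for the term $\fm_g^\circ(\ldots)$, one needs to invoke part (a) at genus $g$ in lower codimension—which is not available in a pure genus induction.

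The paper resolves this by proving part (a) first by a \emph{double induction} on the pair $(g,c)$: the $h\geq 1$ self-loop contributions decrease the genus, while the $h=0$ tree contributions of $\gamma_{b;\sfa}$ decrease the codimension, making the recursion well-founded. Only then is part (b) proved by a separate genus induction, using the already-established part (a) to kill lower-codimension pieces and to show the $\gamma_{b;\sfa}$ contribution sits in codimension $>g$. Your phrase ``the total codimension is pushed strictly above $g$'' does describe the correct endpoint, but the justification for that endpoint is inductive, not a direct degree count; your claim that the codimension is pushed above $g$ implicitly relies on the vanishing in codimensions $<g-h$ of the lower pieces $\fm_{g-h}^\circ$, including the case $h=0$. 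To repair the proof, replace ``induction on $g$'' with a lexicographic induction on $(g,c)$ for part (a), then carry out part (b) as you describe using (a) as established input. The rest of your argument—matching the Bernoulli-number expansion of $\Td^\vee(\cR_\pi)^{-1}$ with the zeta-regularized sum in Theorem~\ref{thm:top}, and the factorization via Propositions~\ref{pro:tree} and \ref{pro:fm_max}—is correct and matches the paper's computation.
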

	\begin{proof}
		First, we collect the relevant results from previous sections. Let $\epsilon_0$ be a small stability condition and let $\fm_g:\CH^*(\Jbar_{g,n}^{\epsilon_0})\to\CH^*(\Jbar_{g,n}^\epsilon)$ be the Fourier transform. By Proposition \ref{pro:FM_aj}, the Fourier transform of the resolved Abel-Jacobi section has the form
		\[\fm_g\big([\aj_{b;\sfa}]\big) = \exp(-b\kappa_{0,1}+\sum_{i=1}^n a_i\xi_i)(1+\gamma_{b;\sfa})\,,\]
		where $\gamma_{b;\sfa}\in\mathsf{PP}(\Jbar_{g,n}^\epsilon)$ is a class supported away from the integral locus. By taking the inverse Fourier transform $\fm_g^{-1}$ and restricting to $\Jzero_{g,n}$, we obtain 
		\begin{equation}\label{eq:fm_1}
			[\aj_{b;\sfa}]  = \fm^{-1}_g\Big(\exp(-b\kappa_{0,1}+\sum_{i=1}^n a_i\xi_i)(1+\gamma_{b;\sfa})\Big)\,.
		\end{equation}
		Using Propositions \ref{pro:Todd}, \ref{pro:F0} and \ref{pro:fm_max}, the right-hand side of \eqref{eq:fm_1} can be expressed as
		\begin{align}
			& (-1)^g\cdot\fm_g^\circ\Big(\exp(-b\kappa_{0,1}+\sum_i a_i\xi_i)(1+\gamma_{b;\sfa})\Big) \nonumber \\
			&\hspace{5mm} + (-1)^g\cdot\sum_{h=1}^g\frac{(-1)^h}{h!2^h}j_{h*}q_h^* \fm_{g-h}^\circ\Big(\exp(-b\kappa_{0,1}+\sum_{i=1}^na_i\xi_i)(1+\gamma_{b;\sfa})\prod_{i=1}^h \sum_{k_i=1}^\infty\frac{(-1)^{k_i}B_{2k_i}}{(2k_i)!}\frac{\alpha_i^{2k_i-1}+\beta_i^{2k_i-1}}{\alpha_i+\beta_i}\Big)\label{eq:fm_2}\,.
		\end{align}
		Below, we use \eqref{eq:fm_2} to match the recursive structure of $\uniDR^g_g(b;\sfa)$ (Theorem \ref{thm:top}) and the recursive structure of Fourier transform (Theorem \ref{thm:inductive}).
		
		(a) We prove the statement by double induction on the genus $g$ and the codimension $c$. For the base case $g = 0$, there is nothing to prove. Now assume the statement holds for all genera less than $g$, and proceed by (finite) induction on the codimension $c$. Let $c < g$, and assume that the statement holds for all codimensions less than $c$.
		
		Taking the codimension $c$ part of \eqref{eq:fm_1}, the left hand side is zero. On the right hand side, we apply Theorem~\ref{thm:inductive} to simplify the term $\gamma_{b;\sfa}$ as follows. Write $\gamma_{b;\sfa}$ as a linear combination of tautological classes of the form $[\widetilde{\Gamma}_{\widetilde{\delta}}, \gamma_0]$. As in the proof of Theorem~\ref{thm:inductive}, we express $\gamma_{b;\sfa}$ as the pushforward of tautological classes from compactified Jacobians of the form $\Jbar_{\Gamma_\delta} \to \Mbar_{\Gamma}$ (see \eqref{eq:inductive}). By Proposition~\ref{pro:FM_aj}, the class $\gamma_{b;\sfa}$ is supported away from the integral locus. Therefore, the stratum $\Mbar_\Gamma \to \Mbar_{g-h, n+2h}$ is nontrivial, and the Chern classes of the normal bundle can be written in terms of $\psi$- and $\xi$-classes at the additional markings on $\Jbar_{\Gamma_\delta}$. Hence, we have
		\begin{equation}\label{eq:leading_gamma}
			\Big[\fm_g^\circ\Big(\exp(-b\kappa_{0,1}+\sum_{i=1}^n a_i\xi_i)\cdot [\widetilde{\Gamma}_{\widetilde{\delta}},\gamma_0]\Big)\Big]_{\codim=c}  = j_{h*}q_h^*\Big[\fm^\circ_{g-h}\Big(\exp(-b\kappa_{0,1}+\sum_{i=1}^n a_i\xi_i)(j_{\Gamma_\delta})_*(\widetilde{\gamma}_0)\Big)\Big]_{\codim=c-h}
		\end{equation}
		for some $h\leq g$ and some $\widetilde{\gamma}_0\in \CH^*(\Jbar_{\Gamma_\delta})$.
		
		We show that \eqref{eq:leading_gamma} vanishes when $\Gamma$ is nontrivial. The class $\widetilde{\gamma}_0$ can be expressed as a polynomial in $\psi$-classes at half-edges and $\xi$-classes only at the markings. For monomials in $\psi$-classes, we apply \eqref{eq:st} to rewrite them as $\psi$-monomials pulled back from $\Mbar_{\Gamma}$, plus additional terms supported on boundary strata of $\Jbar_{\Gamma_\delta}$ corresponding to unstable vertices. By the argument above, the contribution of these additional terms can be written as Fourier transforms of lower genus. Hence, by the induction hypothesis, their contribution to \eqref{eq:leading_gamma} vanishes. 
        
        For the leading term, we write $\widetilde{\gamma_0} = \gamma_0' \cdot \pi^*(\gamma_0'')$, where $\gamma_0'$ is a monomial in $\xi$-classes and $\gamma_0''$ is a monomial in $\psi$-classes on $\Mbar_\Gamma$. By Proposition~\ref{pro:tree} in the case $h^1(\Gamma)=0$, and by Corollary~\ref{cor:fm_st} when $h^1(\Gamma)>0$, together with the projection formula, we obtain
        \[
        \fm^\circ_{g-h}\Big(\exp(-b\kappa_{0,1}+\sum_{i=1}^n a_i\xi_i)(j_{\Gamma_\delta})_*(\widetilde{\gamma}_0)\Big)=(r_{\Gamma})_*q^*\Big(\gamma_0''\cdot \prod_{v\in V(\Gamma)}\fm_{g(v)}^\circ(\Xi_v \cdot\alpha_v)\Big)
        \]
        where $\Xi_v$ is some monomials in $\xi$ and $\kappa_{0,1}$ classes and $\alpha_v$ is some class in $\mathsf{PP}(\Jbar^{\epsilon(v)}_{g(v),n(v)})$. 
        
        If $\alpha_v$ is nontrivial, then by the induction hypothesis the associated contribution to \eqref{eq:leading_gamma} vanishes. If instead $\alpha_v=1$ for all $v\in V(\Gamma)$, the class inside $\fm_{g(v)}^\circ$ remains a polynomial in $\xi$- and $\kappa_{0,1}$-classes. In this case, the codimension $c-h-\deg(\gamma_0'')-|E(\Gamma)|$ part of $\prod_{v\in V(\Gamma)} \fm_{g(v)}^\circ(\Xi_v)$ contributes to the codimension $c-h$ part of \eqref{eq:leading_gamma}. Since $\Gamma$ is nontrivial, we have $|E(\Gamma)|>0$, and thus the codimension $c-h$ part vanishes by the induction hypothesis. Therefore, \eqref{eq:leading_gamma} vanishes.
		
		By the above argument, the codimension $c$ part of the right hand side of \eqref{eq:fm_1} reduces to the codimension $c$ part of \eqref{eq:fm_2}, with the following simplification: 
		\begin{align*}
			& \Big[(-1)^g\fm_g^\circ\Big(\exp(-b\kappa_{0,1}+\sum_{i=1}^n a_i\xi_i)\Big)\Big]_{\codim=c}  \\
			& + \sum_{h=1}^g\frac{1}{h!2^h}j_{h*}q_h^* \Big[(-1)^{g-h}\fm_{g-h}^\circ\Big(\exp(-b\kappa_{0,1}+\sum_{i=1}^na_i\xi_i)(1+\gamma_{b;\sfa})\prod_{i=1}^h \sum_{k_i=1}^\infty\frac{(-1)^{k_i}B_{2k_i}}{(2k_i)!}\frac{\alpha_i^{2k_i-1}+\beta_i^{2k_i-1}}{\alpha_i+\beta_i}\Big)\Big]_{\codim={c-h}}\,.
		\end{align*}
		By Proposition \ref{pro:Todd} and \ref{pro:FM_aj}, $\gamma_{b;\sfa}$ and $\alpha_i,\beta_i$ can be expressed as polynomials in the  $\psi_i$ and $\xi_i$ classes. Since $\fm^\circ_{g-h}$ is linear on the base, we can factor out the $\psi$-classes. By the induction hypothesis, the second term vanishes. Thus, for $c<g$, we conclude:
		\[ [\fm_g^\circ(\exp(-b\kappa_{0,1}+\sum_{i=1}^n a_i\xi_i))]_{\codim =c} = 0\,, \]
		which completes the proof of part (a).
		
		(b) We prove part (b) by induction on the genus. Assume the statement holds for all genera less than $g$. Taking the codimension $g$ part of \eqref{eq:fm_1}, Proposition \ref{pro:aj_P} implies:
		\begin{equation}\label{eq:fm_3}
			[\aj_{b;\sfa}]|_{\Jzero_{g,n}} = \uniDR^g_g(b;\sfa)|_{\Jzero_{g,n}}\,.
		\end{equation}
		
		By the argument in part~(a), the contribution involving the class $\gamma_{b;\sfa}$ lies in codimension  greater than $g$. Therefore, by the vanishing result of part (a), the codimension $g$ part of the right hand side of equation~\eqref{eq:fm_2} simplifies to: 
		\begin{align*}
			& \Big[(-1)^g\fm_g^\circ\Big(\exp(-b\kappa_{0,1}+\sum_{i=1}^n a_i\xi_i)\Big)\Big]_{\codim=g}  \\
			&+ \sum_{h=1}^g\frac{1}{h!2^h}j_{h*}q_h^* \Big[(-1)^{g-h}\fm_{g-h}^\circ\Big(\exp(-b\kappa_{0,1}+\sum_{i=1}^na_i\xi_i)\prod_{i=1}^h \sum_{k_i=1}^\infty\frac{(-1)^{k_i}B_{2k_i}}{(2k_i)!}(2k_i-1)(\xi_i-\xi_i')^{2k_i-2}\Big)\Big]_{\codim={g-h}}\,.
		\end{align*}
		By the induction hypothesis, each lower-genus term becomes:
		\begin{align*}
			&\Big[(-1)^{g-h}\fm_{g-h}^\circ\Big(\exp(-b\kappa_{0,1}+\sum_{i=1}^na_i\xi_i)\prod_{i=1}^h \sum_{k_i=1}^\infty\frac{(-1)^{k_i}B_{2k_i}}{(2k_i)!}(2k_i-1)(\xi_i-\xi_i')^{2k_i-2}\Big)\Big]_{\codim={g-h}}\\
			& \hspace{5mm} = \sum_{k_1,\ldots, k_h\in\ZZ}(\prod_{i=1}^h k_i)\widetilde{\uniDR}^{g-h}_{g-h,n+2h}(b;\sfa, k_1,-k_1,\ldots,k_h,-k_h)\,.
		\end{align*}
		Here, the infinite sum is interpreted using the $\zeta$-function renormalization defined in \eqref{eq:zeta}.
		Finally, by Theorem \ref{thm:top}, the codimension $g$ part of the leading term of \eqref{eq:fm_2} becomes:
		\[\big[(-1)^g\cdot\fm_g^\circ(\exp(-b\kappa_{0,1}+\sum_{i=1}^n a_i\xi_i))\big]_{\codim=g} = \widetilde{\uniDR}^g_g(b;\sfa)\]
		which completes the proof of (b).
	\end{proof}
	\subsection{Proof of Theorem \ref{thm:pushforward}}\label{sec:pf}
	We finally finish the proof of our main theorem.
	\begin{proof}[Proof of Theorem \ref{thm:pushforward}]
		We finally finish the proof of our main theorem. We will induct on $\ell$, the exponent of $\Theta$. We save the base case $\ell = 0$ for the end, so first assume $\ell > 0$. We are given $\epsilon$, a nondegenerate stability condition for $\Mbar_{g,n}$. Let $p:\Mbar_{g,n+1}\to\Mbar_{g,n}$ be the morphism forgetting the last marking. The pullback of $\epsilon$ to $\Mbar_{g,n+1}$ is degenerate. However, there exists a small perturbation $\epsilon'$ which is a nondegenerate stability condition for $\Mbar_{g,n+1}$ such that there is a morphism $q:\Jbar_{g,n+1}^{\epsilon'}\to\Jbar_{g,n}^\epsilon$ and the following commutative diagram holds:
		\[
		\begin{tikzcd}
			\Jbar_{g,n+1}^{\epsilon'}\ar[r,"q"]\ar[d,"\pi_1"] & \Jbar_{g,n}^\epsilon\ar[d,"\pi"]\\
			\Mbar_{g,n+1}\ar[r,"p"] & \Mbar_{g,n}\,.
		\end{tikzcd}
		\]
		
		Consider the class (on $\Mbar_{g,n+1}$)
		\[
		\mathcal{P}^c(b;\sfa,a_{n+1}) := (\pi_1)_*\left[\Theta^{l-1}\exp\left(-b\kappa_{0,1}+\sum_{i=2}^{n+1}a_i\xi_i\right)\right]_{\text{codim } g+c},
		\]
		viewed as a polynomial in $b,a_2,\ldots,a_{n+1}$.
		
		By the inductive hypothesis, $\mathcal{P}^c(b;\sfa,a_{n+1}) = 0$ for $c < g-\ell+1$, is tautological for all $c$, and for $c=g-\ell+1$ we have the formula
		\begin{equation}\label{eq:induct}
			\mathcal{P}^{g-\ell+1}(b;\sfa,a_{n+1}) = (-1)^{g-\ell+1}(\ell-1)!\cdot\DR_g^{g-\ell+1}(b,\sfa,a_{n+1}).
		\end{equation}
		Now consider applying $p_*$. Applying $p_*\circ (\pi_1)_* = \pi_*\circ q_*$, the projection formula for $q$, as well as the identity $p^*\kappa_{0,1} = \kappa_{0,1}-\xi_{n+1}$, we get
		\[
		p_* \mathcal{P}^c(b;\sfa,a_{n+1}) = \pi_*\left[\Theta^{l-1}\exp\left(-b\kappa_{0,1}+\sum_{i=2}^{n}a_i\xi_i\right)\cdot p_*\exp((a_{n+1}-b)\xi_{n+1})\right]_{\text{codim }g+c-1}.
		\]
		Since $p_*(\xi_{n+1}^i) = 0$ for $i < 2$, this pushforward is divisible by $(a_{n+1}-b)^2$. Moreover, if we divide by $(a_{n+1}-b)^2$ and then set $a_{n+1} := b$, the formula $p_*(\xi_{n+1}^2/2) = -\Theta$ yields
		\[
		\left[\frac{p_* \mathcal{P}^c(b;\sfa,a_{n+1})}{(a_{n+1}-b)^2}\right]_{a_{n+1} := b} = -\pi_*\left[\Theta^{l}\exp\left(-b\kappa_{0,1}+\sum_{i=2}^{n}a_i\xi_i\right)\right]_{\text{codim }g+c-1}.
		\]
		This immediately gives the induction step for parts (a) and (c) of the theorem. For part (b), we also apply Theorem~\ref{thm:DR_push} to see that the steps we have taken (apply $p_*$, divide by $(a_{n+1}-b)^2$, set $a_{n+1} := b$) indeed act on the right side of \eqref{eq:induct} by replacing $l-1$ with $l$ and $n+1$ with $n$.
		
		For the remainder of the proof we assume $\ell = 0$. We prove (a) and (b) by induction on the genus. When $g=0$, there is nothing to prove. Consider the map 
		\[\fm_g^\circ:\CH^*(\Jbar_{g,n}^\epsilon)\to\CH^*(\Jzero_{g,n})\]
		as defined in \eqref{eq:fm_circ}. Let $\sfa\in\ZZ^n$ with $\sum_i a_i=0$. By Proposition \ref{pro:IntFM}, it is sufficient to prove that
		\[e^*\Big[\fm^\circ_g(\exp \big(\sum_i a_i\xi_i)\big)\Big]_{\codim=g} = (-1)^g e^*\widetilde{\uniDR}^g_g(\sfa)=(-1)^g\widetilde{\DR}_g^g(\sfa)\,,\]
		which is proven in Theorem \ref{thm:leading}. This proves (a) and (b).
		
		We now prove (c). By Proposition \ref{pro:IntFM} and the above argument, it suffices to show that 
		\begin{equation}\label{eq:fm_taut}
			\Big[\fm^\circ_g(\exp \big(-b\kappa_{0,1}+\sum_i a_i\xi_i)\big)\Big]_{\codim=c}\in\R^c(\Jzero_{g,n})
		\end{equation}
		for all codimension $c$, where $\R^*(\Jzero_{g,n})$ is the restriction of tautological ring \eqref{eq:taut_jbar} to $\Jzero_{g,n}$.
		
		We proceed by induction, first on the genus and then on the codimension. For the base case $g=0$, there is nothing to prove. Assume the statement holds for all genera less than $g$. For codimenions less than or equal to $g$,  \eqref{eq:fm_taut} was already established in (a) and (b). Now assume \eqref{eq:fm_taut} holds in codimension less than $c$, where taking the codimension $c$ part of \eqref{eq:fm_1}, the left-hand side is zero. The right-hand side reduces to 
		\begin{align*}
			& \Big[(-1)^g\fm_g^\circ\Big(\exp(-b\kappa_{0,1}+\sum_i a_i\xi_i)\Big)\Big]_{\codim=c} + \Big[(-1)^g\fm_g^\circ\Big(\exp(-b\kappa_{0,1}+\sum_i a_i\xi_i)\gamma_{b;\sfa}\Big)\Big]_{\codim=c}\\
			&\hspace{10mm} + \sum_{h=1}^g\frac{1}{h!2^h}j_{h*}q_h^* \Big[(-1)^{g-h}\fm_{g-h}^\circ\Big(\exp(-b\kappa_{0,1}+\sum_{i=1}a_i\xi_i)(1+\gamma_{b;\sfa})\\
			&\hspace{40mm} \cdot\prod_{i=1}^h \sum_{k_i=1}^\infty\frac{(-1)^{k_i}B_{2k_i}}{(2k_i)!}\frac{\alpha_i^{2k_i-1}+\beta_i^{2k_i-1}}{\alpha_i+\beta_i}\Big)\Big]_{\codim={c-h}}\,.
		\end{align*}
		By the induction hypothesis, the second term and the third term lie in the tautological subring. Therefore \eqref{eq:fm_taut} holds in codimension $c$, completing the proof of part (c).
	\end{proof}
	For the pushforward of monomials of divisors with $2\ell+m+\sum_{i=1}^nk_i\leq 2g$, an explicit formula of the class $\gamma_{b;\sfa}$ does not play a role. When $2\ell +m +\sum_{i=2}^nk_i>2g$, the pushforward will depend on the stability condition -- this can be easily checked over the locus $\Mct_{g,n}\subset\Mbar_{g,n}$ of compact type curves. 
	\begin{remark}
		Let $q:\Jbar_{g,n+1}^{\epsilon'}\to\Jbar^\epsilon_{g,n}$ be the projection used in the proof of Theorem \ref{thm:pushforward}. By Lemma \ref{lem:unidr_A} the same pushforward formula as in Theorem \ref{thm:pushforward_uniDR} holds for $q_*$. Therefore, a similar argument to that used in the proof of Theorem \ref{thm:pushforward} (c)  generalizes Theorem \ref{thm:leading} for monomials with $\Theta$ divisors.
	\end{remark}
	\if0
	
	To understand the structure of $\gamma_{b;\sfa}$, we raise the following question, motivated from the logarithmic $\DR$ cycle \cite{HMPPS}: \Ycomment{see if we agree with the below question}
	\begin{question}
		Does $\gamma_{b;\sfa}$ depend polynomially on $b$ and $\sfa$?
	\end{question}
	In \cite{BM} we develop a reduction argument to show that Theorem \ref{thm:pushforward} (c) is enough to prove that the pushforward preserves tautological classes.
	\fi
	\subsection{Connection to the perverse filtration}\label{sec:vanish}
	Let $\pi: M\to B$ be a proper morphism from a smooth variety $M$. Assume that $\pi$ has equidimensional fibers whose dimension is $g$. Let ${}^{p}\tau_{\le k} (-)$ be the $k$-th perverse truncation functor. By the Decomposition Theorem \cite{BBDG}, the natural map ${}^{\p}\tau_{\leq k  + \dim B}(R\pi_*\QQ) \to R\pi_*\QQ$  induces the canonical increasing {\em perverse filtration}
	\[P_0H^*(M,\QQ)\subset P_1H^*(M,\QQ)\subset \cdots\subset P_{2g}H^*(M,\QQ)=H^*(M,\QQ)\,.\]
	\begin{lemma}\label{lem:low_filt}
		Let $\pi: M\to B$ be a proper morphism with equidimensional fibers of dimension $g$, where $M$ and $B$ are smooth. For a class $\alpha\in P_{2g-1}H^*(M,\QQ)$, we have $\pi_*(\alpha)=0$.
	\end{lemma}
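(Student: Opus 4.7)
The key claim is that the cohomological pushforward $\pi_*: H^n(M,\QQ) \to H^{n-2g}(B,\QQ)$ factors through the top associated graded piece $P_{2g}H^*(M,\QQ)/P_{2g-1}H^*(M,\QQ)$ of the perverse Leray filtration. Once this is established, the lemma follows tautologically: any $\alpha \in P_{2g-1}H^*(M,\QQ)$ has trivial image in this quotient, hence $\pi_*\alpha = 0$.

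To set this up, I would apply the Decomposition Theorem to the proper morphism $\pi : M \to B$, which applies because $M$ is smooth. The complex $R\pi_*\QQ_M[\dim M]$ then splits non-canonically as $\bigoplus_{i=-g}^{g} \mathcal{P}_i[-i]$ with $\mathcal{P}_i$ semisimple perverse sheaves on $B$, the index range constrained by the fact that $\pi$ has relative dimension $g$. Under the paper's convention (perverse filtration $P_0 \subset P_1 \subset \cdots \subset P_{2g}$), the summand $\mathcal{P}_{k-g}$ contributes to $P_k/P_{k-1}$. The top piece satisfies $\mathcal{P}_g \cong R^{2g}\pi_*\QQ_M[\dim B]$, yielding a natural isomorphism $P_{2g}H^n(M,\QQ)/P_{2g-1}H^n(M,\QQ) \cong H^{n-2g}(B, R^{2g}\pi_*\QQ_M)$. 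The pushforward $\pi_*$ then coincides with the composition of this projection with the trace $H^{n-2g}(B, R^{2g}\pi_*\QQ_M) \to H^{n-2g}(B, \QQ)$, being the Leray edge map landing in $H^{n-2g}(B, R^{2g}\pi_*\QQ_M)$.

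The main obstacle is pinning down the identification of $\pi_*$ with this top graded map in the precise shift convention used for the perverse filtration. A cleaner alternative avoiding such bookkeeping is via Poincaré duality: $\pi^*$ lands in the bottom piece $P_0H^*(M,\QQ)$ (the dual statement, as $\pi^*$ factors through the bottom summand $\mathcal{P}_{-g} \cong \QQ_B[\dim B]$); the perverse filtration is compatible with cup product ($P_k \cdot P_l \subseteq P_{k+l}$); and the fundamental class $[M]$ lies in $P_{2g}\setminus P_{2g-1}$. The projection formula then gives $\int_B (\pi_*\alpha) \cup \beta = \int_M \alpha \cup \pi^*\beta$, and for $\alpha \in P_{2g-1}$ the right-hand side vanishes for all $\beta \in H^*(B,\QQ)$ because $\alpha \cup \pi^*\beta \in P_{2g-1}\cdot P_0 \subseteq P_{2g-1}$, while $\int_M$ kills $P_{2g-1}$. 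Poincaré duality on $B$ concludes $\pi_*\alpha = 0$.
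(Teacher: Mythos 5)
Your second argument (Poincar\'e duality on $B$, projection formula, and cup-product compatibility with pullbacks from the base) follows exactly the same strategy as the paper's proof: both reduce the statement to the claim that $P_{2g-1}H^{2\dim M}(M,\QQ)=0$, i.e.\ that the generator of the top cohomology of $M$ does not lie in $P_{2g-1}$. The gap is that you assert this claim --- ``the fundamental class $[M]$ lies in $P_{2g}\setminus P_{2g-1}$'' and ``$\int_M$ kills $P_{2g-1}$'' --- without any justification, and it is precisely the nontrivial input. The paper establishes it via the relative Hard Lefschetz theorem: a relative ample class $\omega$ induces an isomorphism $-\cup\omega^g\colon \mathrm{gr}^P_0 H^{2\dim B}(M)\to \mathrm{gr}^P_{2g} H^{2\dim M}(M)$; since $\pi^*$ places a nonzero class in $P_0H^{2\dim B}(M)$, the target graded piece is nonzero, and because $H^{2\dim M}(M,\QQ)\cong\QQ$ is one-dimensional this forces $P_{2g-1}H^{2\dim M}(M)=0$. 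You would either need to run this relative Hard Lefschetz argument, or invoke the self-duality of the perverse filtration under Poincar\'e duality (which exchanges $P_k$ with the annihilator of $P_{2g-1-k}$), to close the argument.

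Two secondary remarks. First, you invoke full multiplicativity $P_k\cdot P_l\subseteq P_{k+l}$ of the perverse filtration, which is a nontrivial theorem; what you actually use, and what the paper uses, is the elementary special case that cupping with a class pulled back from $B$ preserves the perverse filtration (since it comes from an endomorphism of the complex $R\pi_*\QQ_M$). Second, your first sketch --- identifying $\mathrm{gr}^P_{2g}$ with $H^{*-2g}(B,R^{2g}\pi_*\QQ_M)$ --- implicitly assumes the top perverse cohomology of $R\pi_*\QQ_M[\dim M]$ agrees with $R^{2g}\pi_*\QQ_M[\dim B]$, which requires the latter to be perverse; this need not hold for an arbitrary projective flat family, so the bypass you take to the Poincar\'e duality argument is indeed the better route.
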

	\begin{proof}
        By the adjunction $\pi_! \dashv \pi^!$ and the purity $\pi^!(-) \cong \pi^*(-)[2g]$, we obtain a trace map 
        \[
        \mathrm{Tr} : R\pi_*\QQ \to \QQ[-2g]\,,
        \]
        which is a sheaf theoretic analogue of the Gysin pushforward in cohomology. For $k<2g$, it suffices to show that the composition
        \[
            {}^{\p}\tau_{\le k + \dim B}(R\pi_*\QQ) \to R\pi_*\QQ \xrightarrow{\mathrm{Tr}} \QQ[-2g]
        \]
        is zero. Since $B$ is smooth, we have $\QQ[-2g]\in {}^{\p}D^{b,\geq 2g+\dim B}_{c}(B,\QQ)$. Therefore the composition is zero by the perverse $t$-structure.
	\end{proof}
	We give a  proof of Theorem \ref{thm:pushforward} (a) in cohomology following the idea of Maulik-Shen \cite{MS_pw}.
	\begin{proposition}\label{pro:vanishing}
		Theorem \ref{thm:pushforward} (a) holds in $H^*(\Mbar_{g,n},\QQ)$.
	\end{proposition}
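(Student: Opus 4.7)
The plan is to show that the monomial $\Theta^{\ell}\kappa_{0,1}^m \xi_2^{k_2}\cdots \xi_n^{k_n}$ lies in $P_{2\ell+m+\sum_i k_i} H^{*}(\Jbar_{g,n}^{\epsilon},\QQ)$ for the perverse filtration associated to $\pi$. Combined with the hypothesis $2\ell+m+\sum_i k_i < 2g$, this places the class in $P_{2g-1}$, and Lemma \ref{lem:low_filt} then gives the vanishing of its pushforward to $H^{*}(\Mbar_{g,n},\QQ)$. (The relevant hypotheses of Lemma \ref{lem:low_filt} are met after passing to a finite \'etale cover of $\Mbar_{g,n}$ by a scheme; the vanishing then descends to the Deligne-Mumford stack with $\QQ$-coefficients.)

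Reducing to the monomial statement requires two inputs. First, multiplicativity of the perverse filtration: $P_a \cdot P_b \subseteq P_{a+b}$. For the compactified abelian fibration $\pi:\Jbar_{g,n}^{\epsilon}\to \Mbar_{g,n}$, this follows from the relative Hard Lefschetz theorem applied to a relatively ample class, together with the standard argument of de Cataldo-Migliorini; it is also a direct consequence of the derived equivalence of Theorem \ref{thm:autoequiv}, which translates into a Fourier symmetry on cohomology that is strict with respect to the perverse filtration up to shift. Second, the individual perverse degrees of the divisors: we need $\xi_i,\,\kappa_{0,1}\in P_1 H^{2}$ and $\Theta \in P_2 H^{2}$.

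The key idea for the second input is to compute the perverse degrees on the open locus $\Mct_{g,n}\subset \Mbar_{g,n}$ where $\pi$ restricts to an abelian scheme. Over this locus, the scaling action $[N]$ induces a weight decomposition (Theorem~\ref{thm:wtdec}) in which, by the defining formulas $\xi_i = x_i^* c_1(\mathcal{L})$, $\kappa_{0,1} = p_*(c_1(\omega_{p,\log}) c_1(\mathcal{L}))$, and $\Theta = -\tfrac{1}{2}p_* c_1(\mathcal{L})^2$, we have weight $1$ for $\xi_i$ and $\kappa_{0,1}$ and weight $2$ for $\Theta$. For an abelian scheme the weight filtration coincides with the perverse filtration (e.g.\ by relative Hard Lefschetz on the dual abelian scheme), which gives the perverse bounds over $\Mct_{g,n}$. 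To extend these bounds to the whole compactified Jacobian, we use the structure of $\pi$ as a compactified abelian fibration (Proposition~\ref{pro:compAb}) together with the fact that the perverse filtration is characterized by a local vanishing condition on the restrictions of the direct image sheaves to the strata of $\Mbar_{g,n}$; the computation over the abelian locus then bounds the perverse degrees globally.

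The main obstacle will be the last step: rigorously upgrading the perverse-degree computation from the abelian locus $\Mct_{g,n}$ to all of $\Mbar_{g,n}$, where $\pi$ is only a compactified abelian fibration. The cleanest route is to invoke the theory of weak abelian fibrations of Maulik-Shen (and its developments by Shen-Yin), which ensures both multiplicativity and that perverse degrees of tautological divisor classes are determined by their weights on the abelian locus. Alternatively, one may argue directly by using the Fourier kernel $\Pbar$ constructed in Theorem \ref{thm:Plog=arinkin} to transport classes between compactified Jacobians of different stability and then bootstrap from the cleanest case (a small stability, where $\Mct_{g,n}$ is dense in the abelian locus and the geometric statements are most transparent).
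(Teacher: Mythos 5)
Your overall strategy coincides with the paper's: place the monomial in a low piece of the perverse filtration of $H^*(\Jbar_{g,n}^\epsilon,\QQ)$, reduce the computation of perverse degrees of the generating divisors to a dense open locus where $\pi$ is an abelian scheme, and then apply Lemma~\ref{lem:low_filt}. The paper does this via the notion of \emph{strong perversity} from Maulik--Shen \cite{MS_pw}, which is multiplicative by construction (\cite[Lemma~1.3]{MS_pw}) and implies the desired perverse bound (\cite[Lemma~1.2]{MS_pw}). This is a deliberate shortcut that avoids the step you would need to supply.

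Two specific issues in your plan. First, your justification of $P_a\cdot P_b\subseteq P_{a+b}$ by ``relative Hard Lefschetz together with the standard argument of de Cataldo--Migliorini'' does not hold up: multiplicativity of the perverse filtration for proper maps is not automatic and their arguments apply in special situations; for abelian fibrations of this type it is established via the Fourier kernel in \cite{MSY23} (the paper records this separately in a remark after the proof). Working with strong perversity lets the paper skip this input entirely, and since you are already planning to invoke the Maulik--Shen framework, it is cleaner to use their multiplicative notion directly than to try to prove filtration multiplicativity first. Second, ``the perverse filtration is characterized by a local vanishing condition on the restrictions of the direct image sheaves to the strata'' is not the right mechanism for reducing to the abelian locus; what you need is the full support theorem from \cite{MSV} (every simple perverse summand of $R\pi_*\QQ$ has support all of $\Mbar_{g,n}$), which is what licenses computing a (strong) perverse degree on any dense open. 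The paper also restricts to $\mathcal{M}_{g,n}$ rather than your $\Mct_{g,n}$ (either dense open works) and, to handle a general stability condition, uses the $\Jzero$-torsor structure and twists by a section, observing that the error term is a $\psi$-class pulled back from the base (hence perversity zero) so it does not affect the conclusion. Filling in these two points would complete your argument along essentially the same lines as the paper.
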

	\begin{proof}
		Let $\pi:\Jbar_{g,n}^\epsilon\to\Mbar_{g,n}$ denote the projection. We use the notion of {\em strong perversity}  as defined in \cite{MS_pw} and show that $\xi_i$ and $\kappa_{0,1}$ have strong perversity 1.  By \cite{MSV}, building upon \cite{Ngo}, the complex $R\pi_*\QQ$ has the full support property, meaning that any non-trivial simple perverse summand has support equal to $\Mbar_{g,n}$. Since $\xi_i,\kappa_{0,1}\in H^2(\Jbar_{g,n}^\epsilon,\QQ)$, it suffices to verify strong perversity being $1$ over the open locus $\mathcal{M}_{g,n}\subset\Mbar_{g,n}$. Over $\mathcal{M}_{g,n}$, the morphism $\pi$ is a torsor under the abelian variety $\Jzero_{g,n}$. Since $n\geq 1$, we can identify it with $\Jzero_{g,n}$ via twisting by a section. Under the twisting, $\xi_i$ (resp. $\kappa_{0,1}$) maps to $\xi_i$ (resp. $\kappa_{0,1}$) modulo a $\psi$-class. Since $\psi$-classes are pulled back from $\mathcal{M}_{g,n}$, they have perversity zero, so the twisting does not change the strong perversity. By \cite[Section 3]{DM91}, $\xi_k$ and $\kappa_{0,1}$ are weight $1$ classes, so they have strong perversity $1$ over $\mathcal{M}_{g,n}$. Therefore, $\xi_k$ and $\kappa_{0,1}$ are strong perversity $1$ classes over $\Mbar_{g,n}$. 
		
		The $\Theta$ divisor, by codimension considerations, has strong perversity $2$. By \cite[Lemma 1.3]{MS_pw}, the strong perversity is multiplicative and hence the class $\Theta^\ell(\kappa_{0,1})^m\prod \xi_i^{k_i}$ has strong perversity $2\ell+m+\sum k_i$. Applying \cite[Lemma 1.2]{MS_pw}, we conclude that the monomial lies in $(2\ell+m+\sum k_i)$-th perverse filtration of $H^*(\Jbar_{g,n},\QQ)$.
		Since $2\ell+m+\sum_{i=1}^n k_i<2g$, the pushforward to $H^*(\Mbar_{g,n},\QQ)$ vanishes by Lemma \ref{lem:low_filt}.
	\end{proof}
    \begin{remark}
        In \cite{BMSY2}, the first author together with D.~Maulik, J.~Shen, and Q.~Yin showed that the graded ring structure on the cohomology group $H^*(\Jbar_{g,n}^\epsilon,\QQ)$ depends on the stability condition $\epsilon$. On the other hand, the bigraded $H^*(\Mbar_{g,n},\QQ)$-algebra structure on the associated graded with respect to the perverse filtration,
        \[ \mathbb{H}_{g,n}^\epsilon :=\Big(\bigoplus_{k=0}^{2g}\mathrm{Gr}^P_k H^*(\Jbar_{g,n}^\epsilon,\QQ), \, \overline{\cup}\Big)\]
        is independent of the non-degenerate stability condition.

        This ``intrinsic cohomology structure" on $\Jbar_{g,n}^\epsilon$ naturally leads to study the structure of intersection numbers on the universal compactified Jacobians for all $(g,n)$. Let
        \[
        \pi_* : \mathbb{H}^\epsilon_{g,n} \to H^*(\Mbar_{g,n},\QQ)
        \]
        denote the proper pushforward. By Theorem \ref{thm:pushforward}, we obtain the formula \eqref{eq:intrinsic} in the introduction.
        
        The generating series of the intersection numbers arising from double ramification formulae with $\psi$-monomials satisfy a noncommutative Korteweg-de Vries (KdV) hierarchy, as shown by Buryak and Rossi \cite{BR23}. Since the top degree part of $\DR$ formula recovers the full $\DR$ formula by Theorem \ref{thm:topdeg_correspondence}, the identity \eqref{eq:intrinsic} suggests that the generating series of intersection numbers in the intrinsic cohomology ring underlies an integrable system. A connection to a conjecture of Goulden, Jackson, and Vakil \cite{GJV} is an interesting question. 
    \end{remark}
	\section{Weight decomposition and Fourier transforms}\label{sec:9}
	
	\subsection{Weight decomposition}\label{sec:weight}
	We discuss the generalization of Beauville \cite{Beauville86} and Deninger-Murre \cite{DM91}'s weight decomposition for semi-abelian schemes. Any semi-abelian variety $G$ can be written as an extension
	\[0\to T \to G\to A\to 0\]
	of an abelian variety $A$ by a split torus $T$. For a semi-abelian scheme $\pi:G\to B$,  the {\em Kimura dimension} of $\pi$ is defined by 
	\begin{equation}\label{eq:kd}
		\kd (\pi) := \max \{2\dim (A_b) + \dim (T_b) : b\in B\}\,.
	\end{equation}
	For $N\in \ZZ$, consider the ``multiplication by $N$ map" 
	\begin{equation}\label{eq:Nmap2}
		[N]: G\to G\,.
	\end{equation}
	The weight $w$ piece is defined by
	\begin{equation}\label{eq:wt_dec}
		\CH_{(w)}^*(G) := \{\alpha\in\CH^*(G,\QQ) : [N]^*\alpha= N^w\alpha,\text{ for all } N\in\ZZ\}\,.
	\end{equation}
	
	The multiplicative splitting is proven by Ancona-Huber-Pepin Lehalleur \cite[Theorem 4.9]{AHL16}.
	\begin{theorem}[\cite{AHL16}]\label{thm:wtdec}
		The rational Chow group of $G$ has the multiplicative decomposition
		\begin{equation*}
			\CH^*(\Jzero_C,\QQ) = \bigoplus_{w=0}^{\kd(\pi)}\CH^*_{(w)}(\Jzero_C)\,.
		\end{equation*}
		In particular, if the weight of a class is greater than the Kimura dimension $\kd(\pi)$, it vanishes. 
	\end{theorem}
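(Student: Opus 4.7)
The plan is to adapt the Beauville--Deninger--Murre strategy for abelian schemes, combined with an analysis of the torus extension, following the motivic approach of \cite{AHL16}. Write the semi-abelian scheme as an extension $0 \to T \to G \to A \to 0$ with $T$ a split torus of relative rank $r$ and $A \to B$ an abelian scheme of relative dimension $g$, so that $\kd(\pi) = 2g + r$. I would first treat the two extremes and then combine them.

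For the abelian case, I would construct the relative Fourier--Mukai transform $\mathsf{F} \colon \CH^*(A, \QQ) \to \CH^*(\widehat{A}, \QQ)$ using the Poincar\'e line bundle, together with its inverse. The key computation, due to Beauville, is the interaction of $\mathsf{F}$ with the multiplication-by-$N$ maps: one has $[N]^* \circ \mathsf{F} = N^{2g} \cdot \mathsf{F} \circ [N]_*$ and $[N]_* = N^{2g} [1/N]^*$ rationally, from which one deduces that the minimal polynomial of $[N]^*$ acting on $\CH^*(A, \QQ)$ divides $\prod_{w=0}^{2g}(x - N^w)$ for $N \geq 2$. Lagrange interpolation in $[N]^*$ then produces idempotent projectors $p_w$ onto the putative weight-$w$ eigenspaces, and one checks $\sum p_w = \id$ and $p_w p_{w'} = 0$ for $w \neq w'$. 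Multiplicativity of the decomposition follows because $[N]$ is a group homomorphism, so for the multiplication $\mu\colon A \times_B A \to A$ one has $[N]^* \mu^* = \mu^* [N]^*$ and the pullback $\mu^*$ respects eigenspaces.

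For a split torus $T \to B$ of rank $r$, the Chow group $\CH^*(T, \QQ)$ is generated over $\CH^*(B, \QQ)$ by Chern classes of the universal characters, which are weight-$1$ under $[N]^*$. This gives a bounded weight decomposition up to weight $r$. For a general semi-abelian scheme $G \to B$, I would proceed by viewing $G$ as a $T$-torsor over $A$ and running a Leray-type spectral sequence, or directly by the projective bundle / equivariant argument used in \cite{AHL16}, to combine the weight structures multiplicatively. The weight of a pullback from $A$ is preserved, and a weight-$w$ class on the torus fiber contributes $+w$ to the total weight, giving the bound $2g + r = \kd(\pi)$.

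The main obstacle is that in the semi-abelian setting the naive Fourier--Mukai transform is not available: the dual object is a 1-motive rather than a semi-abelian scheme, so the Beauville argument does not apply directly. The essential content of \cite{AHL16} is to resolve this by working in the category of (relative) Voevodsky motives, constructing a Chow--K\"unneth decomposition for 1-motives there, and descending it to Chow groups. An alternative route, which matches the perspective of this paper, would be to use Arinkin's kernel on the compactification and argue via Grothendieck--Riemann--Roch that the induced correspondence intertwines $[N]^*$ and $[N]_*$ with the correct weight shifts; verifying that this correspondence detects all weights up to $\kd(\pi)$ and vanishes beyond is where the real work lies.
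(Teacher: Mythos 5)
The paper does not prove this statement: it is imported verbatim from Ancona--Huber--Pepin Lehalleur \cite{AHL16} (Theorem~4.9 there) and used as a black box, so you are being compared against a citation rather than an argument.

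Your sketch correctly names the central obstruction to a naive Beauville--Deninger--Murre argument --- the dual of a semi-abelian scheme is a $1$-motive, not a semi-abelian scheme, so there is no Poincar\'e kernel to drive a Fourier transform --- and correctly points to the motivic formalism of \cite{AHL16} as the resolution. But there is a more basic gap earlier in the plan: you posit a global short exact sequence $0 \to T \to G \to A \to 0$ over $B$ with $T$ a split torus of fixed rank $r$ and $A$ an abelian scheme of fixed relative dimension $g$. The Chevalley decomposition holds fiberwise, but the toric rank of a semi-abelian scheme is only upper semi-continuous on $B$, and in precisely the cases this paper cares about ($\Jzero_C\to\Mbar_{g,n}$, or $\X^\circ_g\to\Abarone_g$) it jumps on boundary strata --- this is exactly why the Kimura dimension $\kd(\pi)$ is defined in \eqref{eq:kd} as a maximum over $b\in B$ rather than as a single number $2g+r$. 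With no global $T$ and $A$, both of your reduction steps fail as stated: the Fourier--Mukai/Lagrange-interpolation argument needs an ambient abelian scheme $A\to B$ to furnish the minimal polynomial of $[N]^*$, and the Gysin-sequence identification $\CH^*(G)\cong \CH^*(A)/\langle c_1(L_1),\dots,c_1(L_r)\rangle$ requires realizing $G$ globally as an iterated $\mathbb{G}_m$-torsor over $A$. Handling the variable toric rank --- stratifying the base by it and showing the $[N]^*$-eigenspace decomposition glues across the strata with the uniform weight bound $\kd(\pi)$ --- is precisely where the motivic machinery of \cite{AHL16} (Chow--K\"unneth projectors for relative $1$-motives in the triangulated category of motives, descended to Chow groups) earns its keep; it is the content of the theorem, not a detail to be deferred. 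The suggested alternative via Arinkin's kernel is likewise unsubstantiated: the kernel lives on the compactified Jacobian, and no argument is offered that the induced Chow correspondence diagonalizes $[N]^*$ or that its eigenvalues lie in $\{N^w: 0\le w\le\kd(\pi)\}$.
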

	\subsection{Leading term of partial Fourier transform}
	In Section \ref{sec:7}, we studied the connection between $\fm^\circ$ \eqref{eq:fm_circ} and $\fm^{-1}$ both from $\CH^*(\Jbar_C^{\,\epsilon})$ to $\CH^*(\Jzero_C)$. Here, we compute the leading term of $\fm^\circ(1)$ using Theorem \ref{thm:wtdec}. $\fm^\circ$ has a nice compatibility property with the ``multiplication by $N$" map \eqref{eq:Nmap2}.
	\begin{lemma}\label{lem:wt1}
		Let $x\in \CH^p(\Jbar_C,\QQ)$. If we write $\fm^\circ(x)=\sum_q y_q$ with $y_q\in\CH^q(\Jzero_C,\QQ)$, then 
		\[[N]^*y_q = N^{g+q-p}y_q,\, N\in\NN\,.\]
		
	\end{lemma}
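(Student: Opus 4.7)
The plan is to trace through the definition of $\fm^\circ$ and exploit the compatibility of the Poincar\'e line bundle with multiplication on the $\Jzero_C$ factor. The key identity I would first establish is
\[
(\id_{\Jbar_C} \times [N])^* \P \cong \P^{\otimes N}
\]
as line bundles on $\Jbar_C \times_B \Jzero_C$. Using the Deligne pairing description $\P = \langle \L_1, \L_2 \rangle_{C^{\qs}}$ from Definition~\ref{def:Poincare_line}, this reduces to the bilinearity $\langle \L_1, \L_2^{\otimes N} \rangle \cong \langle \L_1, \L_2 \rangle^{\otimes N}$ together with the fact that on the multidegree-zero component $\Jzero_C$ the universal line bundle pulls back under $[N]$ to its $N$-th power modulo a twist from the base, and such a twist Deligne-pairs trivially against the relatively degree-zero sheaf $\L_1$.

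Next I would form the Cartesian square
\[
\begin{tikzcd}
\Jbar_C \times_B \Jzero_C \ar[r, "\id \times {[N]}"] \ar[d, "\pi_2"'] & \Jbar_C \times_B \Jzero_C \ar[d, "\pi_2"] \\
\Jzero_C \ar[r, "{[N]}"] & \Jzero_C
\end{tikzcd}
\]
in which the bottom map is flat (being a group homomorphism of the commutative group scheme $\Jzero_C \to B$). Flat base change together with the projection formula then gives
\[
[N]^* \fm^\circ(x) = \pi_{2*}\bigl(\pi_1^* x \cdot (\id \times [N])^* \ch(\P^\vee)\bigr) = \pi_{2*}\bigl(\pi_1^* x \cdot \exp(-N c_1(\P))\bigr).
\]

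The conclusion would then follow by degree bookkeeping. Expanding $\ch(\P^\vee) = \sum_{k \ge 0} (-c_1(\P))^k/k!$ and using that $\pi_2$ has relative dimension $g$, the summand $\pi_{2*}(\pi_1^* x \cdot (-c_1(\P))^k/k!)$ lives in $\CH^{p+k-g}(\Jzero_C)$. Hence $y_q$ is precisely the contribution from $k = g+q-p$, and replacing $c_1(\P)$ by $N c_1(\P)$ multiplies this summand by $N^{g+q-p}$, proving $[N]^* y_q = N^{g+q-p} y_q$.

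I expect the main subtlety to be step one, i.e.\ justifying $(\id \times [N])^* \P \cong \P^{\otimes N}$ cleanly in this relative setting where $\Jzero_C$ is only a (partially degenerating) commutative group scheme rather than a genuine abelian scheme, and where $\P$ is defined via a universal sheaf that is only determined up to a choice. The Deligne-pairing viewpoint should absorb both difficulties, since the pairing is manifestly bilinear and insensitive to twisting either argument by a line bundle pulled back from the base (the latter following from the vanishing of the Deligne pairing against a base-trivial line bundle for a relatively degree-zero factor).
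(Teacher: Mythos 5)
Your proposal is correct and follows essentially the same route as the paper: both prove the key input $(\mathrm{id}\times[N])^*c_1(\P) = N\,c_1(\P)$ via the Deligne pairing description of $\P$ and then combine base change along $\pi_2$ with the degree bookkeeping coming from $\dim \pi_2 = g$ (the paper condenses this to ``the projection formula'' with a citation to Deninger--Murre, Prop.\ 2.16). Your flatness claim for $[N]$ is also correct: $[N]$ is a group homomorphism over $B$ that is fiberwise flat (surjective homomorphisms of smooth group schemes over a field are faithfully flat), so flatness over $B$ follows from the fiber criterion since $\Jzero_C$ is flat over $B$ — though note that the $c_1$-level identity and the lci/refined Gysin base change would suffice even without this. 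One very small slip in the wording: it is $\L_2$ (on the $\Jzero_C$ factor) that is multidegree zero, not $\L_1$; but in any case the rigidification of the universal sheaves along the first marking, which the paper fixes once and for all, already makes the base twist vanish on the nose, so the conclusion is unaffected.
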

	\begin{proof}
		By \eqref{eq:Deligne}, we have 
		\begin{equation}\label{eq:PandN}
			(1\times [N])^*c_1(\P) = N\cdot c_1(\P)\,.
		\end{equation}
		Then the result follows from the projection formula. See \cite{Beauville86} and \cite[Proposition 2.16]{DM91}.
	\end{proof}
	We present the main calculation of this section.
	\begin{proposition}\label{pro:fm1}
		Let $B=\Mbar_{g,n}$ and $\pi:\Jzero_C\to B$ be the relative Jacobian. Then  we have
		\[[\fm_g^\circ(1)]_{\codim=g} = \frac{\Th^g}{g!} \in\CH^g(\Jzero_g,\QQ)\,.\]
	\end{proposition}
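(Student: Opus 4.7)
The plan is to combine Lemma~\ref{lem:wt1} with the weight decomposition of Theorem~\ref{thm:wtdec} to cut $\fm_g^\circ(1)$ down to a single class in $\CH^g_{(2g)}(\Jzero_C)$, identify this class with $\Th^g/g!$ on the locus of smooth curves via classical Fourier--Mukai for principally polarized abelian varieties, and conclude by a boundary argument that again invokes Theorem~\ref{thm:wtdec}.

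Applying Lemma~\ref{lem:wt1} to $\alpha = 1 \in \CH^0(\Jbar_C)$, the codimension $q$ piece $y_q$ of $\fm_g^\circ(1)$ satisfies $[N]^* y_q = N^{g+q} y_q$, so $y_q \in \CH^q_{(g+q)}(\Jzero_C)$. The Kimura dimension of $\pi:\Jzero_C \to \Mbar_{g,n}$ equals $2g$: smooth curve fibers are abelian of dimension $g$ and contribute $2g+0 = 2g$, while over a stratum with $h^1(\Gamma) = h \ge 1$ the fibers are semi-abelian with abelian part of dimension $g-h$ and toric part of dimension $h$, contributing $2(g-h) + h = 2g - h < 2g$. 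By Theorem~\ref{thm:wtdec}, $y_q = 0$ for all $q > g$, so only $y_g \in \CH^g_{(2g)}(\Jzero_C)$ remains to be computed. Since the universal line bundle is normalized along the first marking, $[N]^*\L \cong \L^{\otimes N}$ and hence $[N]^*\Th = N^2\Th$, so $\Th^g/g!$ also lies in $\CH^g_{(2g)}(\Jzero_C)$.

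Over the smooth curve locus $\M_{g,n}$, $\pi$ is a principally polarized abelian scheme and $\P$ restricts to the classical Poincar\'e bundle. Mukai's theorem $R\pi_{2*}\P = k(e)[-g]$ together with Grothendieck--Riemann--Roch (using that the Todd class of an abelian variety is trivial) computes the codimension $g$ part of $\pi_{2*}\ch(\P^\vee)|_{\M_{g,n}}$ as a scalar multiple of the class $[e]$ of the unit section, and the weight $2g$ component of $[e]$ on a PPAV fiber coincides with $\Th^g/g!$ by the classical Beauville identity. Alternatively, Lemma~\ref{lem:c1} gives $c_1(\P) = -\mu^*\Th + \pi_1^*\Th + \pi_2^*\Th$ on $\Jbar_C \times_B \Jzero_C$, so one can expand $\ch(\P^\vee) = \exp(-c_1(\P))$ and compute $\pi_{2*}$ fiberwise via the projection formula. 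Either route shows $y_g$ and $\Th^g/g!$ agree on $\Jzero_C|_{\M_{g,n}}$.

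The difference $\delta := y_g - \Th^g/g!$ is thus a weight $2g$ class in $\CH^g(\Jzero_C)$ supported on the preimage of $\partial \Mbar_{g,n}$. Stratifying the boundary by the dual graph and using the $[N]$-equivariance of each closed immersion $\Jzero_C|_S \hookrightarrow \Jzero_C$, a nonzero $\delta$ would come from a weight $2g$ class on a semi-abelian sub-family of Kimura dimension at most $2g-1$; Theorem~\ref{thm:wtdec} forbids this, and descending induction on the strata then gives $\delta = 0$. The main obstacle is this last step: Theorem~\ref{thm:wtdec} is stated for a fixed semi-abelian scheme, and propagating its bound along the stratum inclusions into the ambient $\Jzero_C$ requires verifying that the weight grading restricts compatibly and that flatness and $[N]$-equivariance of the inclusions hold uniformly along the boundary. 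A secondary subtlety is extracting the PPAV identity $[e]_{(2g)} = \Th^g/g!$ in the family setting, which should follow from the classical case applied fiberwise together with well-definedness of the weight decomposition in the sense of \cite{DM91}.
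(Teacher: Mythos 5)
Your approach shares the paper's core tools (the weight formula of Lemma~\ref{lem:wt1} together with the Kimura dimension bound from Theorem~\ref{thm:wtdec}), but it substitutes a ``restrict to the open locus and propagate across the boundary'' strategy for the paper's computation of $\fm_g^\circ(\exp(-\Theta))$ via the Mumford-type formula of Lemma~\ref{lem:c1}. The first half of your argument is fine: with $p=0$ Lemma~\ref{lem:wt1} gives $y_q\in\CH^q_{(g+q)}(\Jzero_C)$, the Kimura dimension of $\Jzero_C\to\Mbar_{g,n}$ is indeed $2g$, so $y_q=0$ for $q>g$ and $y_g$ lives in $\CH^g_{(2g)}$.

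The genuine gap is in the final boundary step. You argue that $\delta=y_g-\Theta^g/g!$ is supported on $\Jzero_C|_{\partial\Mbar_{g,n}}$ and then invoke Theorem~\ref{thm:wtdec} on the grounds that the boundary has Kimura dimension at most $2g-1$. That last claim is false: you yourself note earlier that the bound $2g-h<2g$ requires $h^1(\Gamma)\ge 1$. Over the \emph{compact-type} boundary divisors (separating nodes, $h^1(\Gamma)=0$) the relative Jacobian is still an abelian scheme of relative dimension $g$, so the Kimura dimension there is exactly $2g$, and Theorem~\ref{thm:wtdec} gives you nothing. Your $\delta$ is a priori only supported over the full $\partial\Mbar_{g,n}$, which includes these compact-type strata, so the vanishing argument fails. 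The paper sidesteps this because its excision is taken inside $\Jbar_C$: for a small stability condition, $\partial\Jbar_C=\Jbar_C\setminus\Jzero_C$ is empty over the compact-type locus (every $\epsilon_0$-stable degree-$0$ torsion-free sheaf on a tree-like curve is a multidegree-$0$ line bundle), so the error term is automatically pushed forward from $\Jzero_C|_{\Mbar_{g-1,n+2}}$, which sits over curves with a non-separating node and hence really does have Kimura dimension $\le 2g-1$. To repair your route, you must first extend the PPAV identification to all of $\Mct_{g,n}$ (where $\Jzero_C\to\Mct_{g,n}$ is still a PPAS and the argument via $R\pi_{2*}\P$ and the Beauville identity goes through unchanged), so that the difference $\delta$ is supported over $\Mbar_{g,n}\setminus\Mct_{g,n}$, where the Kimura dimension does drop. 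Separately, in your smooth-locus computation you only nail $y_g$ as ``a scalar multiple of $[e]$'' without pinning down the constant; $R\pi_{2*}\P\cong k(e)[-g]$ produces a sign $(-1)^g$ after passing to Chern characters, and you should check this against the proposition's claimed normalization (for instance by working out the $g=1$ case explicitly) rather than asserting the agreement.
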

	\begin{proof}
		We first compute $\fm_g^\circ(\exp(-\Th))$. By Lemma \ref{lem:c1} and the projection formula, we have
		\begin{equation}\label{eq:fm_mumford}
			\fm_g^\circ(\exp (-\Th)) =\exp(\Th)\cdot p_{2*}\mu^*\exp(-\Th)\,.
		\end{equation}
		Let $\partial B:=\Mbar_{g-1,n+2}$ and $j: \partial B \to B$ be the gluing map. We also denote $j:\Jzero_C|_{\partial B}\to \Jzero_C$ the associated finite morphism. If $w>2g$, we have $\CH^*_{(w)}(\Jzero_C)=0$ by Theorem \ref{thm:wtdec}. For $\partial\Jbar_C:=\Jbar_C\setminus\Jzero_C$, we consider the excision sequence
		\[\CH^*(\partial\Jbar_C,\QQ)\xrightarrow{j_*} \CH^*(\Jbar_C,\QQ)\to\CH^*(\Jzero_C,\QQ)\to 0\,.\]
		Since $\Theta$ has weight $2$, if $c>g$, then $\Th^c$ restricted to $\Jzero_C$ vanishes. By the above excision sequence, $\Theta^c$ is supported on $\partial\Jbar_C$. The pullback of \eqref{eq:action} to $\partial\Jbar_C$ yields
		\[\mu:\partial\Jbar_C\times_{\partial B}\Jzero_C|_{\partial B} \to \partial\Jbar_C\,.\]
		If $c>g$, using the projection formula, the class  $p_{2*}\mu^*(\Th^c)=0$ lies in the image of $j_*$. Therefore, \eqref{eq:fm_mumford} can be written by
		\begin{equation}\label{eq:fm_mumford2}
			\fm^\circ_g(-\exp(\Th))=\exp(\Th)\cdot(1+ \im j_*)\,.
		\end{equation}
		
		By Theorem \ref{thm:wtdec}, one can take the pure weight pieces of \eqref{eq:fm_mumford}. Taking the codimension $g$ part of \eqref{eq:fm_mumford2}, we get 
		\[\sum_{k=0}^\infty \Big[\fm_g^\circ(\frac{(-\Th)^k}{k!})\Big]_{\codim =g} = \frac{\Th^g}{g!} + \big[\im j_*\big]_{\codim=g}\,.\]
		By Lemma \ref{lem:wt1}, we have
		\[\big[\fm_g^\circ(\Theta^k)\big]_{\codim=g}\in\CH^g_{(2g-k)}(\Jzero_C)\,.\]
		The weight $2g$ part of the left hand side is $[\fm_g^\circ(1)]_{\codim=g}$. Since the Kimura dimension \eqref{eq:kd} of the boundary $\Jzero_C|_{\partial B}$ drops to $2g-1$, the weight $2g$ piece of $\im j_*$ vanishes by Theorem \ref{thm:wtdec}. Therefore the weight $2g$ part of the right hand side is $\frac{\Theta^g}{g!}$ and we get the equality.
	\end{proof}
	\subsection{Universal double ramification cycle formula over the treelike locus}\label{sec:treelike}
	Let $\Mint_{g,n}\subset\Mbar_{g,n}$ be the locus of {\em treelike} curves--these are stable curves
	whose graph is a tree with any number of self-loops attached, which is an open substack of $\Mbar_{g,n}$. It can be uniquely characterized as the largest open substack of $\Mbar_{g,n}$ where every Abel-Jacobi section extends \cite{HPS}. We give a  proof of the universal double ramification formula (Proposition \ref{pro:aj_P}) on $\Jzero_{g,n}\to\Mint_{g,n}$, independent from \cite{BHPSS}. We use compatibility of Fourier transform and the group structure of $\Jzero_{g,n}$.
	
	Over $\Mint_{g,n}$, the compactified Jacobians are isomorphic to the relative moduli space of multidegree zero, rank 1 torsion free sheaves, and we denote the compactified Jacobian of degree $0$ by $\pi:\Jbar^\text{\,tl}_{g,n}\to\Mint_{g,n}$. For $b\in\ZZ$ and $\sfa\in\ZZ^n$ with $\sum_{i=1}^n a_i=(2g-2+n)b$, the twisted $\Theta$-divisor is defined by
	\begin{equation}\label{eq:twist_theta}
		\Theta_{b;\sfa} := \Theta + b\kappa_{1} + \frac{b^2}{2}\kappa_{1} + \frac{1}{2}\sum_{i=1}^n a_i^2\psi_i + \sum_{i=1}^n a_i\xi_i + \pi^*D_{b;\sfa}\,,
	\end{equation}
	Here $D_{b;\sfa}$ is a divisor class given by
	\[D_{b;\sfa}=\sum_\Gamma c(\Gamma,b;\sfa)[\Gamma]\,,\]
	where the sum is over all stable graphs $\Gamma$ with exactly two vertices connected
	by a single edge described by partition $(g_1, I_1|g_2, I_2)$ and $c(\Gamma,b;\sfa)=-(\sum_{i\in I_1} a_i-b(2g_1-2+|I_1|+1))^2$. We refer \cite[Section 4.1]{BHPSS} for the further explanations.
	
	The combinatorics of the universal double ramification formula \eqref{eq:unidr} gets simplified over $\Mint_{g,n}$. For $h>0$, let $j_h:\Mint_{g-\ell,n;2h}\to\Mint_{g,n}$ be the restriction of the gluing map to $\Mint_{g,n}$. In general, $\Mint_{g-h,n;2h}$ is an open substack of $\Mint_{g-h,n+2h}$.
	\begin{lemma}\label{lem:unidr_int}
		Let $\Jzero_{g,n}\to\Mint_{g,n}$ be the relative Jacobian. For $b\in\ZZ$ and $\sfa\in\ZZ^n$ with $\sum_{i=1}^n a_i=(2g-2+n)b$, let $\mathsf{P}^c_g(b;\sfa)$ be the universal double ramification formula \eqref{eq:unidr} restricted to $\Jzero_{g,n}$.
		\begin{enumerate}[label=(\alph*)]
			\item For all $c$, we have
			\begin{equation*}
				\mathsf{P}^c_{g}(b;\sfa) = \left[\exp(\Theta_{b;\sfa})\cdot\sum_{h=0}^g\frac{(-1)^h}{2^hh!}(j_{h})_*\prod_{i=1}^h\left(\sum_{k_i=1}^\infty \frac{B_{2k_i}}{k_i!}\frac{(\psi_{h_i}+\psi_{h_i'})^{k_i-1}}{2^{k_i}}\right)\right]_{\text{codim}=c}\in \CH^c(\Jzero_{g,n})\,.
			\end{equation*}
			\item For all $c,$ we have $\widetilde{\mathsf{P}}^c
			_{g}(b;\sfa) = \frac{(\Theta_{b;\sfa})^c}{c!}$.    
		\end{enumerate}
	\end{lemma}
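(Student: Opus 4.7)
My plan is to prove part (a) by directly restricting the universal double ramification formula \eqref{eq:unidr} to $\Jzero_{g,n}|_{\Mint_{g,n}}$, and to deduce part (b) from (a) by a weight-and-polynomial-degree counting argument.

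For part (a): Over $\Mint_{g,n}$ only prestable graphs $\Gamma_\delta$ whose stabilizations are trees with self-loops contribute to the sum in \eqref{eq:unidr}. For each such $\Gamma$ I will split the edges into separating (tree) edges and self-loops. At each separating edge the balancing conditions modulo $r$ pin down the weighting $w$ uniquely in terms of $b,\sfa,\delta$, so no nontrivial sum over $w$ occurs; the leading term of the edge factor is $-\tfrac{1}{2}w(h)^{2} = -\tfrac{1}{2}\bigl(\sum_{i\in I_{1}}a_{i}-b(2g_{1}-2+|I_{1}|+1)\bigr)^{2}$, which matches the coefficient $c(\Gamma,b;\sfa)$ defining $D_{b;\sfa}$. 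Combining this with the per-vertex exponentials $\exp\!\bigl(\Theta(v)-b\kappa_{0,1}(v)-\tfrac{b^{2}}{2}\kappa_{1}(v)\bigr)$ and the leg exponentials $\exp\!\bigl(\sum_{i}(\tfrac{1}{2}a_{i}^{2}\psi_{i}+a_{i}\xi_{i})\bigr)$, and summing over all refinements of the tree part (including internal $\psi_{h}+\psi_{h'}$ insertions on separating edges), the generating series assembles into $\exp(\Theta_{b;\sfa})$.

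At each self-loop the balancing forces $w(h')\equiv -w(h)\pmod{r}$ but leaves $w(h)\in\{0,\ldots,r-1\}$ free; expanding the edge factor as $\frac{1-\exp(-\tfrac{w(h)w(h')}{2}(\psi_{h}+\psi_{h'}))}{\psi_{h}+\psi_{h'}}=-\sum_{k\ge 1}\frac{w(h)^{2k}}{2^{k}k!}(\psi_{h}+\psi_{h'})^{k-1}$, summing over $w(h)$, dividing by the prefactor $r^{h^{1}(\Gamma)}=r^{h}$ from \eqref{eq:unidr} and taking the value at $r=0$, the identity $\sum_{w=0}^{r-1}w^{2k}=B_{2k}r+O(r^{2})$ converts each self-loop into the Bernoulli series $\sum_{k\ge 1}\frac{B_{2k}}{k!}\frac{(\psi_{h}+\psi_{h'})^{k-1}}{2^{k}}$. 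The combinatorial factor $\frac{(-1)^{h}}{2^{h}h!}$ accounts for the automorphisms of the self-loops and the overall sign.

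For part (b): Taking the top (weight $+$ polynomial degree in $b,\sfa$) part of the formula from (a) is straightforward. Each monomial of $\Theta_{b;\sfa}$ has combined grading exactly $2$: $\Theta$ has weight $2$ and polynomial degree $0$; $\kappa_{0,1}$ and $\xi_{i}$ have weight $1$ and polynomial degree $1$; $\psi_{i}$ and $\pi^{*}D_{b;\sfa}$ have weight $0$ and polynomial degree $2$. Hence $\Theta_{b;\sfa}^{c}/c!$ lies entirely in combined grading $2c$ in codimension $c$. For each $h\ge 1$ summand from (a), the class $(j_{h})_{*}(\prod_{i}(\cdots))$ is pulled back from $\Mint_{g-h,n;2h}$ to $\Jzero_{g,n}|_{\Mint_{g,n}}$ via $\pi$, hence has weight $0$ (it is a $\pi^{*}$-pullback) and polynomial degree $0$ (only Bernoulli numbers and $\psi$-classes appear). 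Multiplying by a codimension $c-K$ piece of $\exp(\Theta_{b;\sfa})$ with $K\ge h\ge 1$ gives combined grading at most $2(c-K)<2c$, so these terms contribute nothing to the top-degree piece. Only the $h=0$ term survives, giving $\widetilde{\mathsf{P}}^{c}_{g}(b;\sfa)=\Theta_{b;\sfa}^{c}/c!$.

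The main obstacle is the combinatorial matching in part (a): verifying that summing the tree-shaped contributions of \eqref{eq:unidr} over all stable graphs that are trees, with internal $\psi+\psi'$ insertions to all orders, organizes into the exponential $\exp(\Theta_{b;\sfa})$. In particular, one must check that the leading quadratic-in-$b,\sfa$ contributions from the separating-edge factors, summed over all separating divisors with their automorphism prefactors, reassemble precisely into the divisor $\pi^{*}D_{b;\sfa}$ with the stated coefficients, and that the higher-order tree contributions are accounted for by the exponential structure rather than producing extra terms.
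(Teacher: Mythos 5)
Your proposal attempts a genuinely different route from the paper. The paper's proof of this lemma is a one-liner: it invokes \cite[Proposition 4.1]{BHPSS}, which already carries out the treelike simplification of the universal $\DR$ formula, observes that the divisor appearing there coincides with $\Theta_{b;\sfa}$, and then applies Faulhaber's formula to the self-loop $r$-weight sums. You instead set out to reprove the BHPSS treelike simplification from scratch. Your Faulhaber computation on the self-loops -- expanding the edge factor, summing $w(h)$ over $\{0,\dots,r-1\}$, extracting the coefficient of $r^1$ to get $B_{2k}$, and absorbing the sign into $(-1)^h$ -- is correct and matches what the paper does. Your weight/degree bookkeeping in part (b), showing that each monomial in $\Theta_{b;\sfa}$ has combined grading $2$ while the $h\ge 1$ boundary terms contribute codimension without combined degree, is also sound given (a).

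However, there is a genuine gap, which you yourself flag in your final paragraph. You have not actually carried out the assembly of the tree-edge (separating-edge) contributions into $\exp(\Theta_{b;\sfa})$. You observe that at each separating edge the weighting is pinned and the constant-in-$r$ part of the edge factor gives $-\tfrac{1}{2}w(h)^2$, and you assert that "summing over all refinements of the tree part... the generating series assembles into $\exp(\Theta_{b;\sfa})$," but then you list precisely this assembly as "the main obstacle." This is not a minor bookkeeping step: you need to show that (i) the codimension-$1$ separating contributions, with their automorphism prefactors, reproduce $\pi^*D_{b;\sfa}$ exactly (the coefficients in $D_{b;\sfa}$ are $-(\cdots)^2$ rather than $-\tfrac12(\cdots)^2$, and the discrepancy has to be reconciled against the automorphism count of two-vertex graphs), and (ii) the higher-codimension separating configurations and their $\psi+\psi'$ insertions exponentiate without introducing new terms. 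This is exactly the content of \cite[Proposition 4.1]{BHPSS} that the paper outsources. Either cite that proposition, as the paper does, or actually carry out the exponentiation argument; as written, the proof is incomplete at its central step.

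You should also note one omission: the uniDR formula \eqref{eq:unidr} sums over multidegrees $\delta$ as well as prestable graphs. When restricting to $\Jzero_{g,n}$ (multidegree-zero line bundles) you should say why only $\delta=0$ strata survive the restriction; this is implicit in your setup but not stated.
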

	\begin{proof}
		We apply \cite[Proposition 4.1]{BHPSS}. The divisor inside the exponential in \cite[eq (4.2)]{BHPSS} coincides with \eqref{eq:twist_theta}. Both (a) and (b) follows from Faulhaber's formula applied to the contribution from the $r$-weights.
	\end{proof}
	We replace Theorem \ref{thm:leading} by the below:
	\begin{proposition}\label{pro:Tdcirc}
		For any $b\in\ZZ$ and $\sfa\in\ZZ^n$ with $\sum_{i=1}^na_i=(2g-2+n)b$, suppose that we have $\fm^{-1}_h(\exp(-b\kappa_{0,1}+\sum_{i=1}^n a_i\xi_i))=\mathsf{P}_h^h(b;\sfa)$ for all genus $h$ less than $g$. Let $\Xi\in\CH^*(\Jbar^{\tl}_{h,n})$ be any monomial of $\xi$  and $\kappa_{0,1}$-classes. For any genus $h$ less than $g$, we have
		\begin{enumerate}
			\item[(a)]  $\fm^\circ_h(\Xi)\in \CH^{\geq h}(\Jzero_{h,n})$, and
			\item[(b)]  $[(-1)^h\fm^\circ_h(\exp(-b\kappa_{0,1}+\sum_{i=1}^n a_i\xi_i))]_{\codim=h}= \frac{(\Theta_{b;\sfa})^h}{h!}$
		\end{enumerate}
	\end{proposition}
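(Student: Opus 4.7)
We induct on the genus $h$, establishing (a) and (b) simultaneously. The case $h=0$ is trivial, so assume both hold at all genera less than $h$, with $h<g$. By Proposition~\ref{pro:F0}, the hypothesis $\fm^{-1}_h(\exp(-b\kappa_{0,1}+\sum a_i\xi_i))=\mathsf{P}_h^h(b;\sfa)$ translates on $\Jzero_{h,n}$ to
$$(-1)^h\,\fm^\circ_h\bigl(\exp(-b\kappa_{0,1}+\textstyle\sum a_i\xi_i)\cdot\Td^\vee(\Rels_\pi)^{-1}\bigr)=\mathsf{P}_h^h(b;\sfa)\in\CH^h(\Jzero_{h,n}).$$
Using Proposition~\ref{pro:Todd} to expand $\Td^\vee(\Rels_\pi)^{-1}=1+T_h$, where $T_h=\sum_{k=1}^h\frac{(-1)^k}{k!2^k}\jmath_{k*}[\cdots]$ is a sum of pushforwards from self-node boundary strata with explicit $\psi,\xi$ coefficients, I separate the ``$1$''-contribution and obtain
\begin{equation}\label{eq:key}
(-1)^h\fm^\circ_h(\exp(\cdots))=\mathsf{P}_h^h(b;\sfa)-(-1)^h\fm^\circ_h(\exp(\cdots)\cdot T_h).
\end{equation}

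To prove (a), I use the projection formula together with Proposition~\ref{pro:fm_max}: every summand of $\fm^\circ_h(\exp(\cdots)\cdot T_h)$ has the form $(r_k)_*q^*\fm^\circ_{h-k}\bigl(\jmath_k^*\exp(\cdots)\cdot[\cdots]\bigr)$. By the inductive hypothesis~(a), the inner $\fm^\circ_{h-k}$ of any polynomial in $\xi$- and $\kappa_{0,1}$-classes lies in $\CH^{\geq h-k}$, and the proper pushforward $(r_k)_*$ is of codimension $k$; hence the whole boundary contribution lies in $\CH^{\geq h}$. Since $\mathsf{P}_h^h\in\CH^h$, equation~\eqref{eq:key} forces $\fm^\circ_h(\exp(\cdots))\in\CH^{\geq h}$. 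Because the monomials $\kappa_{0,1}^j\prod\xi_i^{k_i}$ appear (up to a triangular change of basis) as coefficients of $b^j\prod_{i\geq 2}a_i^{k_i}$ in $\exp(-b\kappa_{0,1}+\sum a_i\xi_i)$ once $a_1=(2g-2+n)b-\sum_{i\geq 2}a_i$ is used to eliminate $a_1$, this yields (a) for every monomial.

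For (b), I extract the codimension-$h$ part of~\eqref{eq:key}. On the right, Lemma~\ref{lem:unidr_int}(a) gives the explicit expansion
$$\mathsf{P}_h^h=\Bigl[\exp(\Theta_{b;\sfa})\cdot\sum_{k=0}^h\frac{(-1)^k}{2^kk!}j_{k*}\prod_{i=1}^k\Bigl(\sum_{\ell\geq 1}\frac{B_{2\ell}}{\ell!}\frac{(\psi+\psi')^{\ell-1}}{2^\ell}\Bigr)\Bigr]_{\codim=h}.$$
On the left, Proposition~\ref{pro:fm_max} and the inductive hypothesis~(b)---which gives $[\fm^\circ_{h-k}(\jmath_k^*\exp(\cdots))]_{\codim=h-k}=(-1)^{h-k}\Theta_{b;\sfa}^{h-k}/(h-k)!$---let me compute $(-1)^h[\fm^\circ_h(\exp(\cdots)\cdot T_h)]_{\codim=h}$ explicitly as a pushforward of products of $\Theta_{b;\sfa}$-powers with the Todd--Bernoulli factor of Proposition~\ref{pro:Todd}. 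Matching the two boundary expansions and cancelling leaves
$$(-1)^h[\fm^\circ_h(\exp(\cdots))]_{\codim=h}=[\exp(\Theta_{b;\sfa})]_{\codim=h}=\frac{\Theta_{b;\sfa}^h}{h!}.$$

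The main obstacle is the term-by-term matching in the last step. One must verify that, after incorporating the inductive formula for the leading $\Theta$-powers and the identifications $\alpha_i+\beta_i=\psi_{n+i}+\psi_{n+k+i}$ at the resolved self-nodes, the Todd--Bernoulli series $\sum_{\ell\geq 1}(-1)^\ell B_{2\ell}/(2\ell)!\cdot(\alpha^{2\ell-1}+\beta^{2\ell-1})/(\alpha+\beta)$ of Proposition~\ref{pro:Todd} reproduces the uniDR--Bernoulli series $\sum_{\ell\geq 1}B_{2\ell}/\ell!\cdot(\psi+\psi')^{\ell-1}/2^\ell$ of Lemma~\ref{lem:unidr_int}(a). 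This is a closed-form generating function identity, analogous in spirit to the Bernoulli manipulations in the proof of Theorem~\ref{thm:topdeg_correspondence}, and carrying it out cleanly is the bulk of the technical work.
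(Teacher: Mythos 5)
Your high-level architecture is the same as the paper's: translate the hypothesis via Proposition~\ref{pro:F0} into a statement about $\fm^\circ_h$, expand $\Td^\vee(\Rels_\pi)^{-1}$ by Proposition~\ref{pro:Todd}, push boundary terms through $\fm^\circ$ via Proposition~\ref{pro:fm_max}, and close the induction by comparing with Lemma~\ref{lem:unidr_int}. For part~(a) your route is a minor reorganization of the paper's (you prove the estimate for $\exp(-b\kappa_{0,1}+\sum a_i\xi_i)$ first and then extract coefficients; the paper first uses polynomiality of $\mathsf{P}^h_h$ to get $\fm^{-1}_h(\Xi)\in\CH^h$ for an arbitrary monomial $\Xi$ and then applies the decomposition). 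Both work, but note that in your step~3 the inner argument of $\fm^\circ_{h-k}$ is $\jmath_k^*\exp(\cdots)$ times the Todd factor, which involves $\psi$-classes through $\alpha_i,\beta_i$; you must factor those out through the base-linearity of $\fm^\circ$ before invoking the inductive~(a), as the paper does explicitly.

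There is a genuine gap in part~(b). You correctly identify that one must apply the inductive~(b) to each boundary piece and then match against Lemma~\ref{lem:unidr_int}(a), and you correctly flag that step as the bulk of the work --- but you then frame the unverified content as a direct generating-function identity between $\sum_{\ell\ge 1}\frac{(-1)^\ell B_{2\ell}}{(2\ell)!}\cdot\frac{\alpha^{2\ell-1}+\beta^{2\ell-1}}{\alpha+\beta}$ and $\sum_{\ell\ge 1}\frac{B_{2\ell}}{\ell!}\cdot\frac{(\psi+\psi')^{\ell-1}}{2^\ell}$. These two series live in different degrees (the $\ell$th term of the first has cohomological degree $2\ell-2$, of the second $\ell-1$), so no substitution $\alpha\mapsto\psi,\ \beta\mapsto\psi'$ can literally identify them. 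What is actually missing is the intermediate computation of
\[
\Big[(-1)^{h-f}\fm^\circ_{h-f}\Big(\exp(-b\kappa_{0,1}+\textstyle\sum a_i\xi_i)\cdot\prod_{i=1}^f(\xi_{h_i}-\xi_{h_i'})^{2k_i-2}\Big)\Big]_{\codim=h-f}
= \frac{(\Theta_{b;\sfa})^{h-k}}{(h-k)!}\prod_{i=1}^f\frac{(2k_i-1)!}{(k_i-1)!}\Big(\frac{\psi_{h_i}+\psi_{h_i'}}{2}\Big)^{k_i-1}
\]
modulo the torsor relations $\xi_{h_i}=\xi_{h_i'}$ (with $k=\sum k_i$). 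This is where the degree drops by $\sum(k_i-1)$, reconciling the two series, and it is obtained from the inductive~(b) not by evaluating at $\exp(\cdots)$ alone but by combining polynomiality in $b,\sfa$ with the explicit expansion of $\Theta_{b;\sfa}$ from~\eqref{eq:twist_theta}. Without this step, the inductive hypothesis only tells you $\fm^\circ_{h-k}(\exp(\cdots))$, not $\fm^\circ_{h-k}(\exp(\cdots)\cdot(\text{Todd }\xi\text{-factor}))$, and the claimed ``cancellation'' with Lemma~\ref{lem:unidr_int}(a) cannot be carried out.
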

	\begin{proof}
		We first prove (a) by induction on $h$. Suppose the statement holds for all genera less than $h$. By assumption, we have $\fm^{-1}_h(\exp(-b\kappa_{0,1}+\sum_{i=1}^n a_i\xi_i))=\mathsf{P}^h_g(b;\sfa)\in\CH^h(\Jzero_{h,n})$.
		From Lemma \ref{lem:unidr_int} (a) the polynomiality of $\mathsf{P}_h^h(b;\sfa)$ in $b,\sfa$ follows from the polynomiality of \eqref{eq:twist_theta}. Therefore, for any monomial $\Xi$ of $\xi$ and $\kappa_{0,1}$-classes, we have $\fm^{-1}_h(\Xi)\in \CH^h(\Jzero_h)$.  By Proposition \ref{pro:Todd} and Proposition \ref{pro:F0}, we get
		\begin{equation}\label{eq:FXi}
			\fm^{-1}_h(\Xi)= (-1)^h\fm_h^\circ(\Xi) + \sum_{f=1}^h \frac{1}{2^ff!}(j_f)_*(-1)^{h-f}\fm_{h-f}^\circ \Big(\Xi\cdot\prod_{i=1}^f \sum_{k_i=1}^{\infty}\frac{(-1)^{k_i}B_{2k_i}}{(2k_i)!}\frac{\alpha_i^{2k_i-1}+\beta_i^{2k_i-1}}{\alpha_i+\beta_i}\Big)\,.
		\end{equation}
		where $\alpha_i=\psi_{n+i}-\xi_{n+i}+\xi_{n+f+i}$ and $\beta_i=\psi_{n+f+i}+\xi_{n+i}-\xi_{n+f+i}$. The term inside $\fm^\circ_{h-f}$ is a polynomial in $\psi_i,\xi_i$ and $\kappa_{0,1}$-classes.  By linearity of $\fm^\circ_{g-h}$, $\psi$-classes can be factored out. By the induction hypothesis, these terms has codimension at least $h-g$, so  $(j_f)_*\fm^\circ_{h-f}(\cdots)$ has codimension at least $h$. Therefore, we conclude that $\fm^\circ_h(\Xi)\in\CH^{\geq h}(\Jzero_{h,n})$. 
		
		Similarly, we prove (b) by induction on $h$. Suppose the statement holds for all genera less than $h$. In \eqref{eq:FXi}, we substitute  $\Xi_{b;\sfa} :=\exp(-b\kappa_{0,1}+\sum_{i=1}^na_i\xi_i)$:
		\begin{equation}\label{eq:FXi2}
			\fm^{-1}_h(\Xi_{b;\sfa}) = (-1)^h\fm_h^\circ(\Xi_{b;\sfa}) +\sum_{f=1}^h \frac{1}{2^ff!}(j_f)_*(-1)^{h-f}\fm_{h-f}^\circ \Big(\Xi_{b;\sfa}\cdot\prod_{i=1}^f \sum_{k_i=1}^{\infty}\frac{(-1)^{k_i}B_{2k_i}}{(2k_i)!}\frac{\alpha_i^{2k_i-1}+\beta_i^{2k_i-1}}{\alpha_i+\beta_i}\Big)\,.
		\end{equation}
		Take codimension $h$ part of each term in \eqref{eq:FXi2}. By (a), within $\fm^\circ_{h-f}(\cdots)$, only pure monomials of $\xi$ and $\kappa_{0,1}$ classes will contribute. By the induction hypothesis, the boundary term takes the form
		\[
		\big[(-1)^{h-f}\fm^\circ_{h-f}\Big(\Xi_{b;\sfa}\cdot\prod_{i=1}^f(\xi_{h_i}-\xi_{h_i'})^{2k_i-2}\Big)\big]_{\codim=h-f} = \frac{(\Theta_{b;\sfa})^{h-k}}{(h-k)!}\prod_{i=1}^f\frac{(2k_i-1)!}{(k_i-1)!}\Big(\frac{\psi_{h_i}+\psi_{h_i'}}{2}\Big)^{k_i-1}
		\]
		modulo relations $\xi_{h_1}-\xi_{h_1'}=\cdots=\xi_{h_f}-\xi_{h_f'}=0$. Here, $k:=\sum_i k_i$ and the relations are from Lemma \ref{lem:torsor}. By Lemma \ref{lem:unidr_int} (a), the boundary term of the right hand side \eqref{eq:FXi2} aligns, leading to  $[(-1)^h\fm^\circ_h(\Xi_{b;\sfa})]_{\codim=h} = \frac{(\Th_{b;\sfa})^h}{h!}$.
	\end{proof}
	We prove the main result of this section. Over $\Mint_{g,n}$, there exists a unique piecewise linear function $\alpha$ on the universal curve such that the line bundle $\omega_{C,\log}^{\otimes-b}(\sum_{i=1}^na_ix_i)\otimes\CO_C(\alpha)$ has multidegree zero for all underlying curves in $\Mint_{g,n}$. After twisting by $\alpha$, the Abel-Jacobi section $\aj_{b;\sfa}:\Mint_{g,n}\to\Jbar^{\,\text{tl}}_{g,n}$ is well-defined and factors through $\Jzero_{g,n}$.
	\begin{theorem}\label{thm:main2} Let $\Jzero_{g,n}\to\Mint_{g,n}$ be the relative Jacobian. For any $b\in\ZZ$ and $\sfa\in\ZZ^n$ with $\sum_{i=1}^na_i=(2g-2+n)b$, we have 
		\begin{equation}\label{eq:aj_tl}
			[\aj_{b;\sfa}] = \mathsf{P}^g_g(b;\sfa)\in \CH^g(\Jzero_{g,n})\,.
		\end{equation}
	\end{theorem}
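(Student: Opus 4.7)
The plan is an induction on the genus $g$, combining Proposition \ref{pro:section} (which applies because the Abel-Jacobi section is genuinely a morphism to $\Jzero_{g,n}$ over $\Mint_{g,n}$) with the inductive structure of the inverse Fourier transform provided by Proposition \ref{pro:Tdcirc} and the Todd class expansion of Proposition \ref{pro:Todd}. The base case $g=0$ is trivial, so fix $g\ge 1$ and assume the conclusion of Theorem \ref{thm:main2} holds for all $h<g$.

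The first step is to rewrite the left-hand side of \eqref{eq:aj_tl} as an inverse Fourier transform. After twisting by the unique piecewise linear function $\alpha$, the Abel-Jacobi section factors as $s:=\aj_{b;\sfa}:\Mint_{g,n}\to\Jzero_{g,n}\subset\Jbar^{\,\mathrm{tl}}_{g,n}$. Applying Proposition \ref{pro:section} with this $s$ gives $[\aj_{b;\sfa}]=\fm^{-1}\bigl(\ch(i_s^*\Pbar)\bigr)$. Using the definition of $\Pbar$ as a Deligne pairing (Definition \ref{def:Poincare_line}) together with Lemma \ref{lem:deligne_pl}, the pullback $i_s^*\Pbar$ equals $\exp(-b\kappa_{0,1}+\sum_i a_i\xi_i)$, since the contribution $\langle\CO(\alpha),M\rangle$ vanishes against any multidegree-zero line bundle $M$ on the second factor. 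Thus
\[
[\aj_{b;\sfa}]=\fm^{-1}_g\!\Bigl(\exp\!\bigl(-b\kappa_{0,1}+\sum_{i=1}^n a_i\xi_i\bigr)\Bigr)\in\CH^g(\Jzero_{g,n})\,.
\]

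The second step is to expand the right-hand side using Proposition \ref{pro:F0} and Proposition \ref{pro:Todd}. This gives
\[
\fm^{-1}_g=(-1)^g\fm^\circ_g+\sum_{h=1}^{g}\frac{1}{2^h h!}\,(j_h)_*(-1)^{g-h}\fm^\circ_{g-h}\!\Bigl[\,\prod_{i=1}^{h}\sum_{k_i\ge 1}\frac{(-1)^{k_i}B_{2k_i}}{(2k_i)!}\frac{\alpha_i^{2k_i-1}+\beta_i^{2k_i-1}}{\alpha_i+\beta_i}\Bigr]\,.
\]
Because the inductive hypothesis verifies the assumption of Proposition \ref{pro:Tdcirc}, parts (a) and (b) of that proposition are available for every $h<g$. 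Taking the codimension $g$ part of the above identity, part (a) ensures that only pure monomials of $\xi$ and $\kappa_{0,1}$ classes (together with pulled-back $\psi$ classes) contribute from each boundary summand, and part (b) evaluates each such contribution as a multiple of $(\Theta_{b;\sfa})^{g-h}/(g-h)!$. After using the genus-zero relation $\psi_{\bar h}=-\xi_h+\xi_{h'}$ on each unstable bubble (so that $\alpha_i+\beta_i=\psi_{h_i}+\psi_{h_i'}$), the boundary contributions reassemble exactly into the formula of Lemma \ref{lem:unidr_int}(a) for $\mathsf{P}_g^g(b;\sfa)$.

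Comparing the two expressions then yields $[\aj_{b;\sfa}]=\mathsf{P}_g^g(b;\sfa)$ in $\CH^g(\Jzero_{g,n})$, closing the induction. The main obstacle is the combinatorial matching in the last paragraph: one must check that the Bernoulli series produced by the Todd class of the residue sheaf in Proposition \ref{pro:Todd} line up coefficient by coefficient with the Bernoulli series appearing in the restricted $\uniDR$ formula of Lemma \ref{lem:unidr_int}(a), including all factors of $2$ and signs arising from orienting the two half-edges of each self-loop and from the identifications $\alpha_i+\beta_i=\psi_{h_i}+\psi_{h_i'}$. Once this bookkeeping is performed, the application of Proposition \ref{pro:Tdcirc}(b) makes the identification immediate.
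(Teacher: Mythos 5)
Your proposal correctly identifies the right ingredients (Proposition \ref{pro:section}, the expansion via Proposition \ref{pro:F0} and Proposition \ref{pro:Todd}, Proposition \ref{pro:Tdcirc}, and matching against Lemma \ref{lem:unidr_int}(a)), and the computation of $c_1(i_s^*\Pbar)=-b\kappa_{0,1}+\sum a_i\xi_i$ via the Deligne pairing and Lemma \ref{lem:deligne_pl} is a correct and useful detail. But there is a genuine gap in the final matching step, and the paper's proof takes a different route that is specifically designed to avoid it.

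When you take the codimension $g$ part of $\fm^{-1}_g(\Xi_{b;\sfa})$, the result splits into the $h=0$ ``open'' term $(-1)^g\,[\fm^\circ_g(\Xi_{b;\sfa})]_{\codim=g}$ plus a sum of boundary contributions supported on the strata $j_h$ for $h\geq 1$. Proposition \ref{pro:Tdcirc}(b), which you invoke, is stated and proved only for genera $h<g$; it therefore applies to the boundary contributions (whose internal genus is $g-h<g$) but says nothing about the $h=0$ term. To identify $(-1)^g[\fm^\circ_g(\Xi_{b;\sfa})]_{\codim=g}$ with $(\Theta_{b;\sfa})^g/g!$ --- which is exactly what you need to make the reassembly into Lemma \ref{lem:unidr_int}(a) work --- is precisely the content of Proposition \ref{pro:Tdcirc}(b) at genus $g$, and that statement is not available by induction: its proof requires knowing $\fm^{-1}_g(\Xi_{b;\sfa})=\mathsf{P}^g_g(b;\sfa)$, which is what you are trying to prove. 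So your argument is circular at this step.

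The paper avoids this by first proving the case $b=0$, $\sfa=0$, where $\Xi_{0;0}=1$. For $\Xi=1$ the troublesome $h=0$ term is $(-1)^g[\fm^\circ_g(1)]_{\codim=g}$, and this \emph{can} be evaluated independently using the weight decomposition for semi-abelian schemes (the argument of Proposition \ref{pro:fm1}, carried out over $\Mint_{g,n}$): the codimension-$g$, weight-$2g$ part of $\fm^\circ_g(\exp(-\Theta))$ is isolated by Theorem \ref{thm:wtdec} and Lemma \ref{lem:wt1}, and this forces $[\fm^\circ_g(1)]_{\codim=g}=\Theta^g/g!$. Notice this weight argument works only because the input $1$ sits in weight zero; for a general $\Xi_{b;\sfa}=\exp(-b\kappa_{0,1}+\sum a_i\xi_i)$ the classes $\kappa_{0,1}$ and $\xi_i$ have weight $1$, so the weight filtration mixes and there is no analogous isolation. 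Having established $[e]=\mathsf{P}^g_g(0;0)$, the paper then passes to general $b,\sfa$ by the translation automorphism $\widetilde{\phi}_{b;\sfa}$ of $\Jbar^{\,\mathrm{tl}}_{g,n}$ induced by the relative group structure, using the covariance $\widetilde{\phi}^*_{-b;-\sfa}\mathsf{P}^c_g(0;0)=\mathsf{P}^c_g(b;\sfa)$ that follows from \eqref{eq:twist_theta} and Lemma \ref{lem:unidr_int}. To repair your proof you should do exactly this: specialize to $b=\sfa=0$, plug in the weight-decomposition evaluation of $[\fm^\circ_g(1)]_{\codim=g}$, and then translate, rather than attempting the matching directly for arbitrary $(b,\sfa)$.
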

	\begin{proof}
		We prove this by induction on the genus. For $g=0$ the statement is trivial. Suppose we know \eqref{eq:aj_tl} for genus $h$ less than $g$. We first prove \eqref{eq:aj_tl} for $g=0$ and $\sfa=0$. By Proposition \ref{pro:section}, we get $[e] = \fm^{-1}_g(1)$. 
		By Proposition \ref{pro:Tdcirc} (a), after taking the codimension $g$ part, we get
		\[[e] = [(-1)^g\fm_g^\circ(1)]_{\codim = g} + \sum_{h=1}^g \frac{1}{2^hh!}(j_h)_*\Big[(-1)^{g-h}\fm_{g-h}^\circ \Big(\prod_{i=1}^h \sum_{k_i=1}^{\infty}\frac{ B_{2k_i}(2k_i-1)}{(2k_i)!}(\xi_{h_i}-\xi_{h_i}')^{2k_i-2}\Big)\Big]_{\codim = g-h}\,.\]
		By Proposition \ref{pro:Tdcirc} (b), the right hand side matches with Lemma \ref{lem:unidr_int}. This finishes the proof for genus $g$ with $\sfa=0$.
		
		We prove for arbitrary $b\in\ZZ$ and $\sfa\in\ZZ^n$ with $\sum_{i=1}^na_i=(2g-2+n)b$. Let $\alpha$ be the unique piecewise linear function on the universal curve necessary to construct the Abel-Jacobi section. Then, the Abel-Jacobi section can be written as
		\[\aj_{b;\sfa}=\widetilde{\phi}_{b;\sfa}\circ e : \Mint_{g,n}\to\Jbar^{\tl}_{g,n}\,,\]
		where $e: \Mint_{g,n}\to\Jbar_{g,n}^{\tl}$ is the unit section and $\widetilde{\phi}_{b;\sfa}$ is an automorphism of $\Jbar^{\tl}_{g,n}$ induced by $L\mapsto L\otimes\omega_{C,\log}^{\otimes-b}(\sum_{i=1}^na_ix_i)\otimes\CO_C(\alpha)$. 
		By \eqref{eq:twist_theta} and Lemma \ref{lem:unidr_int}, we have $\widetilde{\phi}_{-b;-\sfa}^*\mathsf{P}^c_g(0;0)=\mathsf{P}^c_{g}(b;\sfa)$. Therefore, by twisting \eqref{eq:aj_tl} for $\sfa=0$ with respect to $\widetilde{\phi}_{b;\sfa}$, we have
		\[[\aj_{b;\sfa}]=\widetilde{\phi}^*_{-b;-\sfa}[e]=\widetilde{\phi}^*_{-b;-\sfa}\mathsf{P}^g_g(0;0)=\mathsf{P}^g_g(b;\sfa)\,.\]
		Thus we obtain the result from the induction.
	\end{proof}
	By the argument in Section \ref{sec:pf}, Theorem \ref{thm:main2} gives an independent proof of Theorem \ref{thm:pushforward} over $B=\Mint_{g,n}$ for $\ell=0$.
	
	\subsection{Fourier transform for principally polarized abelian schemes}
	We extend our argument in the previous sections to abelian schemes. Let $\pi:A\to B$ be a principally polarized abelian scheme (p.p.a.s.) of relative dimension $g$ over a smooth scheme $B$. Let $e:B\to A$ be the unit section and let $\mu: A\times_BA\to A$ be the addition. For $n\geq 1$, we denote
	\[ A^n:=A\times_B\cdots \times_B A \text{\hspace{2mm} ($n$-times)}\,,\]
	and  $A^0:=B$. For $1\leq i\leq n$, let $p_i:A^n\to A$ be the $i$-th projection and for $1\leq i< j\leq n$, let $p_{ij}:A^n\to A\times_BA$ be the projection to $i,j$-th factor. Let $\lambda_m:=c_m(\EE)\in \CH^m(B)$ be the $m$-th Chern class of the Hodge bundle $\EE:=\pi_*\Omega_\pi$.
	
	For an abelian scheme $A\to B$, let $A^\vee := \mathrm{Pic}^0_{A/B}$ denote the dual abelian scheme and let $\P$ be the Poincar\'e line bundle on $A\times_B A^\vee$ which is trivialized along the unit sections of $A$ and $A^\vee$. Assume that $A\to B$ is principally polarized; that is, there exists a polarization $\lambda: A\to A^\vee$ which is an isomorphism (\cite[Definition 2.1]{Yin}). We identify $(\id,\lambda): A\times_B A\xrightarrow{\cong} A\times_B A^\vee$. Consider a $\QQ$-line bundle
	\[L_\lambda := ((e,\lambda)^*\P)^{-1/2}\in\mathrm{Pic}(A)\otimes_\ZZ\QQ\]
	which is relatively ample, symmetric and trivialized along the unit section.  
	Let $\Theta := c_1(L_\lambda)\in\CH^1(A)$.  For $1\leq i\leq n$, let $\Theta_i:=p_i^*\Theta$ and for $1\leq i\neq j\leq n$, let  $\ell_{ij}:=p_{ij}^*c_1(\P)$ in $\CH^1(A^n)$.
	\begin{definition}
		The tautological ring $\R^*(A^n)$ is the subring of $\CH^*(A^n,\QQ)$ generated by classes
		\[\{\Theta_i\}_{1\leq i\leq n},\, \{\ell_{ij}\}_{1\leq i\neq j\leq n},\, \{\lambda_m\}_{1\leq m\leq g}\,.\]
	\end{definition}
	
	For $n\geq 1$, we consider a p.p.a.s. $\pi:A^n\to A^{n-1}$ where $\pi$ is the projection along the first $n-1$ factors. Under the weight decomposition \eqref{eq:wt_dec}, the weights of tautological classes are given by
	\begin{equation}\label{eq:weight_A}
		\Theta_n\in \CH^1_{(2)}(A^n), \text{  and }\, \ell_{i,n}\in \CH^1_{(1)}(A^n), 1\leq i\leq n-1\,.
	\end{equation}
	The first follows from the fact that $\Theta_n$ is a symmetric ample class and the second follows from the Theorem of Square. Other generators of tautological ring has weight zero. Since the weight is multiplicative, \eqref{eq:weight_A} determines the weight of all tautological classes. 
	
	Given a vector $\sfa\in\ZZ^{n-1}$, we define the $\pi$-relative {\em translation automorphism} by
	\begin{equation}\label{eq:translate}
		\tau_\sfa : A^n\to A^n, (x_1,\ldots, x_{n})\mapsto (x_1,\ldots,x_{n-1},x_{n}+ a_1x_1+\cdots + a_{n-1}x_{n-1})\,.
	\end{equation}
	\begin{lemma}\label{lem:twist_theta}
		For $\sfa\in\ZZ^{n-1}$, the pullback of $\Theta_n$ along $\tau_\sfa$ is given by
        \[
        \tau_\sfa^*(\Theta_n)=\Theta_n - \sum_{i=1}^{n-1}a_i\ell_{in} - \sum_{1\leq i < j\leq n-1} a_ia_j\ell_{ij} +\sum_{i=1}^{n-1}a_i^2\Theta_i\,.
        \]
	\end{lemma}
	\begin{proof}
		We write \eqref{eq:translate} as follows. Consider the composition $\mu_\sfa: A^n\xrightarrow{(a_1,\ldots,a_{n-1},1)}A^n\xrightarrow{\mu_n} A$ where the first morphism is the ``multiplication by $N$" map with the prescribed multiplicities, and $\mu_n$ is the addition map. For $\mu:A\times_BA \to A$, consider the Mumford formula\footnote{Since we use the opposite convention for the polarization $\lambda:A\to A^\vee$, the sign is opposite to that in \cite{Mumford_Abelian}.}
		\begin{equation*}
			\mu^*(\Theta)=-\ell_{12}+p_1^*\Theta+p_2^*\Theta\,.
		\end{equation*}
		For the morphism $\mu\times\id:A\times_BA\times_BA\to A\times_BA$, the Theorem of the Cube gives
        \begin{equation}\label{eq:cube}
            (\mu\times\id)^*\ell_{12} = \ell_{13}+\ell_{23}\,.
        \end{equation}
        Therefore, writing $\mu_n$ by a composition of $n-1$ addition maps, the Mumford formula and \eqref{eq:cube} yields $\mu_n^*(\Theta) = -\sum_{1\leq i< j\leq n}\ell_{ij} +\sum_{i=1}^n \Theta_i$.
        
		From the weight of divisors \eqref{eq:weight_A}, the pullback of tautological classes along the map $(a_1,\ldots, a_{n-1},1)$ is clear. Since $\tau_\sfa^*(\Theta_n) = \tau_\sfa^*p_n^*(\Theta) = \mu_\sfa^*(\Theta)$, the result follows.
	\end{proof}
	We consider the Fourier-Mukai transform (\cite{Mukai81,DM91})
	\begin{equation}\label{eq:fm_A}
		\fm:=\ch(\P)=\exp (\ell_{nn+1}) :\CH^*(A^n)\to \CH^*(A^n)\,.
	\end{equation}
    \begin{lemma}\label{lem:Ftau}
        For $\alpha\in \CH^*(A^n)$ and $\sfa\in\ZZ^{n-1}$, we have $\tau_\sfa^*\fm(\alpha) = \fm(\alpha\cdot \exp(\sum_{i=1}^{n-1}a_i\ell_{in}))$.
    \end{lemma}
    \begin{proof}
        For $\sfa\in\ZZ^{n-1}$, consider the map $\nu_\sfa: A^{n+1}\to A^2$ sending
        \[
        (x_1,\cdots,x_{n+1}) \mapsto (x_n, x_{n+1} + a_1x_1+\cdots +a_{n-1}x_{n-1})\,.
        \]
        Using \eqref{eq:cube} and \eqref{eq:weight_A}, we obtain $\nu_\sfa^*\ell_{12} = \ell_{nn+1} +\sum_{i=1}^{n-1}a_i\ell_{in}$.

        Next, consider the commutative diagram
        \[
        \begin{tikzcd}
            A^n\ar[d,equal] & A^n\times_{A^{n-1}}A^n \ar[l,"\pi_1"]\ar[r,"\pi_2"]\ar[d,"\id\times\tau_\sfa"] & A^n\ar[d,"\tau_\sfa"]\\ 
            A^n & A^n\times_{A^{n-1}}A^n\ar[l,"\pi_1"]\ar[r,"\pi_2"] & A^n,
        \end{tikzcd}
        \]
        where the right square is a fiber diagram. After identifying $A^n\times_{A^{n-1}}A^n \cong A^{n+1}$, we compute:
        \begin{align*}
            \tau_\sfa^*\fm(\alpha) &= \tau_\sfa^*\pi_{2*} \big(\pi_1^*(\alpha)\cdot\exp(\ell_{nn+1})\big)\\
            &=\pi_{2*}(\id\times\tau_\sfa)^*\big(\pi_1^*(\alpha)\cdot\exp(\ell_{nn+1})\big)\\
            &=\pi_{2*}\big(\pi_1^*(\alpha)\cdot \exp(\nu_\sfa^*\ell_{12})\big)\\
            &=\fm\big(\alpha\cdot\exp(\sum_{i=1}^{n-1}a_i\ell_{in})\big),
        \end{align*}
        The third identity uses the fact that
        $(\id\times\tau_\sfa)^*\ell_{nn+1}=(\id\times\tau_\sfa)^*p_{nn+1}^*\ell_{12} = \nu_\sfa^*\ell_{12}$, and the fourth uses the computation above. This proves the result.
    \end{proof}
    \if0
	\begin{proposition}\label{pro:twist_theta}
		Let $\sfa\in\ZZ^{n-1}$ be a vector of integers. Then we have
		\begin{equation}\label{eq:fm_taut_Ag}
			\fm^{-1}\big(\exp(\sum_{i=1}^{n-1} a_i\ell_{in})\big)= \frac{\big(\Theta_n-\sum_{i=1}^{n-1}a_i\ell_{in}-\sum_{1\leq i<j\leq n-1} a_{ij}\ell_{ij}+\sum_{i=1}^{n-1}a_i^2\Theta_i\big)^g}{g!}\,.
		\end{equation}
	\end{proposition}
	\begin{proof}
		By the argument in Proposition \ref{pro:fm1}, we have $\fm(\exp(\Theta_n)) = \exp(-\Theta_n)$. By Proposition \ref{pro:section} and \cite[Lemma 2.18]{DM91}, we have
		\begin{equation}\label{eq:Aunit}
			[e]=\fm^{-1}(1) = \frac{(\Theta_n)
				^g}{g!}\,.
		\end{equation}
		The pullback of \eqref{eq:Aunit} along \eqref{eq:translate} follows from Lemma \ref{lem:twist_theta}. By the argument in the proof of Theorem \ref{thm:main2}, we get the result.
	\end{proof}
    \fi
	\begin{theorem}\label{thm:Apush}
		The Fourier-Mukai transform \eqref{eq:fm_A} preserves tautological classes. Moreover, proper pushforward along the projection $\pi:A^n\to A^{n-1}$ preserves tautological classes. 
	\end{theorem}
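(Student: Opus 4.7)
The plan is to prove (a) by reducing to a computation on generators, then deduce (b) as an immediate consequence. Since the kernel $\ch(\P)=\exp(\ell_{n,n+1})$ of $\fm$ involves only the $n$-th and $(n{+}1)$-th factors of the correspondence $A^{n+1}=A^n\times_{A^{n-1}}A^n$, the transform $\fm$ is $\pi^*\CH^*(A^{n-1})$-linear, where $\pi\colon A^n\to A^{n-1}$ is the projection forgetting the last factor. Hence the tautological generators $\Theta_i,\ell_{ij}$ with $i,j<n$ and $\lambda_m$ all pass through $\fm$ intact, and it suffices to show that $\fm$ sends monomials $\Theta_n^a\prod_{i<n}\ell_{in}^{b_i}$ to tautological classes.

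The key input is Proposition~\ref{pro:twist_theta}. Viewing the identity \eqref{eq:fm_taut_Ag} as a polynomial identity in the formal variables $a_1,\ldots,a_{n-1}$ and extracting the coefficient of $\prod_i a_i^{b_i}$ gives that $\fm^{-1}(\prod_i\ell_{in}^{b_i})$ is an explicit polynomial in $\Theta_n,\ell_{in},\Theta_i$, hence tautological. To transfer the conclusion from $\fm^{-1}$ to $\fm$, I invoke Mukai's inversion identity $\fm\circ\fm=(-1)^g[-1]_n^*$, which yields $\fm^{-1}=(-1)^g\fm\circ[-1]_n^*$. The involution $[-1]_n^*$ preserves $\R^*(A^n)$ by symmetry of $\Theta_n$ and by $[-1]_n^*\ell_{in}=-\ell_{in}$ (a consequence of the theorem of the cube for the Poincar\'e bundle), so $\fm$ also sends each monomial in the $\ell_{in}$'s alone to a tautological class. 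For monomials with $a>0$, I use the Fourier-translation intertwining $\fm\circ\tau_\sfa^*=\exp(\sum a_i\ell_{in})\cdot\fm$, a direct consequence of the Mumford relation $\ell_{n,n+1}=\Theta_n+\Theta_{n+1}-\mu_{n,n+1}^*\Theta$ applied on the correspondence via the projection formula. Combined with Lemma~\ref{lem:twist_theta}, the binomial theorem expands $\tau_\sfa^*\Theta_n^a$ into a polynomial in $\Theta_n,\ell_{in},\Theta_i$; varying $\sfa$ formally then expresses $\fm(\Theta_n^a\prod\ell_{in}^{b_i})$ as a polynomial combination of Fourier images already shown to be tautological.

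Part (b) follows from (a) via the identity $\pi_*=e^*\circ\fm$ for the unit section $e\colon A^{n-1}\to A^n$, proved by the same base-change argument as Proposition~\ref{pro:IntFM} (the hypothesis that $e$ factors through the locus of line bundles is automatic here, since $A^n$ is an abelian scheme). Since $e^*\Theta_n=0$, $e^*\ell_{in}=0$, and $e^*$ is the identity on pullbacks from $A^{n-1}$, the map $e^*$ preserves the tautological ring; composing with $\fm$ yields the claimed tautologicity of $\pi_*$. The main obstacle is verifying the Fourier-translation intertwining in the relative setting with sufficient care, since it underlies both the recursive treatment of the $\Theta_n$-powers and, implicitly, the inversion identity used above; once established via a direct projection-formula computation on $A^{n+1}$ using the Mumford relation, the remainder of the argument is a formal manipulation of tautological generators.
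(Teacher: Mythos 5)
Your overall architecture is plausible, but there is a genuine gap in step 4, the reduction of $\fm(\Theta_n^a\prod_i\ell_{in}^{b_i})$ to ``Fourier images already shown to be tautological.'' The Fourier--translation intertwining $\fm\circ\tau_\sfa^*=\exp(\pm\sum a_i\ell_{in})\cdot\fm$ is correct (up to sign), but it does not yield the reduction you assert. Expanding both sides and extracting the coefficient of $\prod a_i^{b_i}$ gives, on the right, a tautological multiple of $\fm(\Theta_n^a)$ for the \emph{same} $a$, and on the left a sum over $d_i\ge 0$ of $\prod\Theta_i^{d_i}\fm\big(\Theta_n^{a-\sum b_i+\sum d_i}\prod\ell_{in}^{b_i-2d_i}\big)$. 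Setting $\sfa=0$ gives a tautology, and the $a_j$-coefficient gives $a\,\fm(\Theta_n^{a-1}\ell_{jn})=\mp\ell_{jn}\fm(\Theta_n^a)$; every relation you obtain this way expresses $\ell_{in}$- or $\Theta_i$-\emph{multiples} of $\fm(\Theta_n^a)$ in terms of other $\fm$-images, never $\fm(\Theta_n^a)$ itself. In particular, if you try to run a downward induction on the $\Theta_n$-exponent, the base case $\fm(\Theta_n^a)$ (with all $b_i=0$) is precisely what the relations fail to determine --- multiplication by $\ell_{in}$ and $\Theta_i$ is not injective, so knowing $\ell_{jn}\fm(\Theta_n^a)$ and $\Theta_j\fm(\Theta_n^{a-1})$ is strictly weaker than knowing $\fm(\Theta_n^a)$. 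Consequently, the chain ``intertwining $+$ Lemma~\ref{lem:twist_theta} $+$ binomial $\Rightarrow$ done'' does not close.

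The missing ingredient is an independent computation of the classes $\fm(\Theta_n^a)$ for $0<a\le g$. One clean way within your toolkit is via weight-flipping: $\fm$ sends $\CH^p_{(w)}$ to $\CH^{p+g-w}_{(2g-w)}$ by the Deninger--Murre relation \cite{DM91}, so $\fm(\Theta_n^a/a!)$ is the pure weight-$(2g-2a)$ component of $\fm(\exp\Theta_n)=\exp(-\Theta_n)$, giving $\fm(\Theta_n^a)=(-1)^{g-a}\tfrac{a!}{(g-a)!}\Theta_n^{g-a}$. With that in hand your induction does work. The paper instead avoids this by splitting into two cases. For $m+\sum k_i>g$, it uses the weight bound of Theorem~\ref{thm:wtdec} (weight $\le 2g$), the vanishing $\Theta_n^c=0$ for $c>g$, and --- crucially --- the \emph{algebraic relations} obtained from Lemma~\ref{lem:twist_theta} by imposing $(\tau_\sfa^*\Theta_n)^c=0$ for $c>g$ and extracting coefficients in $\sfa$; these relations rewrite any high-degree monomial in $\Theta_n,\ell_{in}$ as a base-class-linear combination of degree-$\le g$ monomials. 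The case $m+\sum k_i\le g$ is then handled by the explicit polynomial identity of Proposition~\ref{pro:twist_theta}. You should either incorporate the weight decomposition to pin down $\fm(\Theta_n^a)$, or adopt the paper's use of the twisted-theta vanishing relations as the degree-reduction mechanism. Part (b) and the Mukai-inversion transfer from $\fm^{-1}$ to $\fm$ are fine as you wrote them.
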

    \begin{proof}
        By the argument in Proposition \ref{pro:fm1}, we have $\fm(\exp(-\Theta_n)) = \exp(\Theta_n)$. Applying Lemma \ref{lem:Ftau} to this identity, we obtain
        \begin{equation}\label{eq:Fclose}
            \fm\big(\exp(-\Theta_n+\sum_{i=1}^{n-1}a_i\ell_{in})\big) = \exp \big( \Theta_n - \sum_{i=1}^{n-1}a_i\ell_{in} - \sum_{1\leq i < j\leq n-1} a_ia_j\ell_{ij} + \sum_{i=1}^{n-1}a_i^2\Theta_i\big)\,.
        \end{equation}
        The identity \eqref{eq:Fclose} further decompose as follows. By the polynomiality of \eqref{eq:Fclose}, for fixed monomial in the classes $\ell_{in}$, say $\Xi$, the expression $\fm(\exp(\Theta_n)\cdot\Xi)$ has a closed formula in tautological calsses. Since weight $w$ class maps to weight $2g-w$ class under Fourier-Mukai transform \cite[Lemma 2.18]{DM91}, the Fourier-Mukai image of each monomial of $\Theta_n,\ell_{1n},\cdots, \ell_{n-1n}$ has a closed form in tautological classes. Because $\fm$ is linear over the base, this proves that $\fm$ preserves tautological classes. 
        
        Using a similar argument as in Proposition \ref{pro:IntFM}, we find $\pi_*(\alpha)=e^*\fm(\alpha)$ for all $\alpha\in\CH^*(A_n)$. Since $e^*:\CH^*(A^n)\to\CH^*(A^{n-1})$ preserves tautological classes, so does $\pi_*$.
    \end{proof}
    By pulling back \eqref{eq:Fclose} along the unit section, we obtain:
    \begin{equation}\label{eq:Apush}
        \pi_*\Big( \exp \big( -\Theta_n+\sum_{i=1}^{n-1}a_i\ell_{in} \big) \Big) = \exp\Big( -\sum_{1\leq i<j\leq n-1}a_ia_j\ell_{ij} +\sum_{i=1}^{n-1}a_i^2\Theta_i\Big)\,.
    \end{equation}
    Since the pushforward of a class vanishes whenever its weight is not equal to $2g$, \eqref{eq:Apush} provides a closed formula for the pushforward of all tautological classes.
    \if0
	\begin{proof}
		Let $\alpha:= (\Theta_n)^m(\ell_{1n-1})^{k_1}\cdots (\ell_{n-1n})^{k_n}\in \R^*(A^n)$. Since the Fourier-Mukai transform is linear over the base, it suffices to check that the image $\fm(\alpha)$ is tautological.  For simplicity, we write $k=\sum_{i=1}^{n-1}k_i$. If $m+k\leq g$, by Proposition \ref{pro:twist_theta}, we have $\fm(\alpha)\in \R^*(A^n)$. 
		
		We assume $m+k>g$. Since $\Theta_n$ has weight $2$, by the weight vanishing part of Theorem \ref{thm:wtdec},
        \[
        (\Theta_n)^c=0, \quad c>g\,.
        \]
        Pulling back this relation along the translation automorphism \eqref{eq:translate}, and using Lemma \ref{lem:twist_theta}, we obtain for $c>g$:
		\begin{equation}\label{eq:Arelations}
			\tau_\sfa^*(\Theta_n^c) = (\tau_\sfa^*\Theta_n)^c = \big(\Theta_n - \sum_{i=1}^{n-1}a_i\ell_{in} - \sum_{1\leq i < j\leq n-1} a_ia_j\ell_{ij} +\sum_{i=1}^{n-1}a_i^2\Theta_i\big)^c = 0\,.
		\end{equation}
		Using the polynomiality of the relation \eqref{eq:Arelations} in the variables $a_i$, we can further decompose \eqref{eq:Arelations}. Consequently, the class $\alpha$ can be written as a polynomial of $\Theta_n$ and $\ell_{in}$ classes of degree less than or equal to $g$, multiplied by classes which are pulled back from the base. Since $\fm$ is linear over the base, we are reduced to the case when $m+k\leq g$. Therefore, $\fm$ preserves tautological classes.
		
		By using a similar argument as in Proposition \ref{pro:IntFM}, we find $\pi_*(\alpha)=e^*\fm(\alpha)$ for all $\alpha\in\CH^*(A_n)$. Clearly $e^*:\CH^*(A^n)\to\CH^*(A^{n-1})$ preserves tautological classes. Thus, by the argument above, the pushforward along $\pi:A^n\to A^{n-1}$ preserves tautological classes. 
	\end{proof}
	By the argument in Theorem \ref{thm:Apush}, we get a closed formula for $\pi_*:\R^*(A^n)\to \R^*(A^{n-1})$.
	\begin{corollary}
		For $a_1, \cdots, a_{n-1}\in\ZZ$, we have
		\begin{equation*}
			\pi_*\Big(\exp\big(-\Theta_n +\sum_{i=1}^{n-1} a_i\ell_{in}\big)\Big) = (-1)^g\exp \Big(\sum_{i=1}^{n-1} a_i^2\Theta_{i} - \sum_{1\le i<j\le n-1} a_ia_j\ell_{ij} \Big)\,.
		\end{equation*}
	\end{corollary}
    \fi
	\subsection{Torus rank at most one locus}\label{sec:trk1}
	Let $\A_g\subset\Abarone_g$ be the canonical partial compactification of rank $1$ degenerations \cite{mumford_kodaira}. Let $\pi: \Xbarone_g\to \Abarone_g$ be the universal family and let $\X^\circ_g\to \Abarone_g$ be the universal semi-abelian scheme. We adapt the argument from the previous sections to study the intersection theory of this family.  Let $\mu:\X^\circ_g\times_{\Abarone_g}\Xbarone_g\to\Xbarone_g$ be the natural action. Then the tuple
	\begin{equation}\label{eq:DAF}
		(\pi:\X'_g\to\A'_g, \X^\circ_g\to\Abarone_g, \mu:\X^\circ_g\times_{\Abarone_g}\Xbarone_g\to\Xbarone_g)
	\end{equation}
	is a {\em degenerate abelian scheme} in the sense of Arinkin-Fedorov \cite[Definition 2.1]{AF16}.
	
	We show that the auto-equivalence of $\X_g$ extends to $\X'_g$. Let $\Theta$ be the principal polarization of $\mathcal{X}_g\to\A_g$ which is trivialized along the unit section. By slight abuse of notation, let $\Theta$ also denote a relatively ample class on $\Xbarone_g$ that extends this polarization. Following \cite{AF16}, the Poincar\'e line bundle $\P$ on $\X^\circ_g\times_{\Abarone_g}\Xbarone_g$ defined by
	\begin{equation}\label{eq:Xmumford}
		\P=\mu^*\Theta\otimes p_1^*\Theta^{\vee}\otimes p_2^*\Theta^\vee
	\end{equation}
	which extends to a line bundle $\P$ on $\X^\circ\times_{\A_g'}\X_g'\cup \X'_g\times_{\A'_g}\X^\circ_g$. For compactified Jacobians, the failure to extending the Poincar\'e line bundle to the product $\Jbar_C \times_B \Jbar_C$ was attributed to the lack of a common universal curve on the two factors. For $\X_g'$, the failure to extend $\P$ is that the multiplication map $\mu$ does not extend to $\X_g' \times_{\A_g'} \X_g'$. 
	
	We can resolve the indeterminacy of the action $\mu$ for any toroidal compactification of $\X_g\to\A_g$. Let $\Atrop_g:=\textup{Sym}_{rc}^2(\ZZ^g)/\textup{GL}_{g}(\ZZ)$ be the moduli space of principally polarized tropical abelian varieties and let $\Xtrop_g \to \Atrop_g$ be the the universal tropical abelian variety. Any toroidal compactification $\Xbar_g\to\Abar_g$ corresponds to a subdivision $\Sigma_A$ of $\Atrop_g$ and a subdivision $\Sigma_X$ of $\Sigma_A|_{\Xtrop_g}$.  Let
	\[\mu: \Xtrop_g\times_{\Atrop_g}\Xtrop_g\to \Xtrop_g\]
	be the multiplication map. Let $\Sigma_X^\mu\to\Sigma_X$ be the base change of $\mu: \Sigma_X \times_{\Sigma_A} \Sigma_X\to\Xtrop_g|_{\Sigma_A}$ along $\Sigma_X\to\Xtrop_g|_{\Sigma_A}$.
	This subdivision defines a  log modification $f:\Xbar^\mu_g\to \Xbar_g\times_{\Abar_g}\Xbar_g$ together with a map $\mu: \Xbar^{\mu}_g\to \Xbar_g$ extending the multiplication map on $\X^\circ\times_{\A_g'}\X_g'\cup \X'_g\times_{\A'_g}\X^\circ_g$.
	
	We specialize to $\X'_g\to \A'_g$. Inside any $\Abar_g$, $\A'_g$  sits as the inverse image of $\A_g\sqcup \A_{g-1}$ from Satake compactification and $\X_g'$ is the unique minimal family that contains the identity section in its smooth locus. The restriction of $\Xbar^\mu_g$ to the locus $\A_g'$ is denoted by $\X_g^{\mu}$.
	The fan $\Sigma_A$ restricts to the ray $\mathbb{R}_{\ge 0}$, and $\Xtrop_g|_{\Sigma_A}$ is the torus with fiber $\mathbb{R}/\ell \ZZ$ over $\ell \in \mathbb{R}_{> 0}$ (and a point over $0$). The subdivision $\Sigma_X$ is the subdivision of $\mathbb{R}/\ell \ZZ$ adding the lattice points $\ell \ZZ$. Therefore, $\Sigma_X^{\mu}$ over the torus-rank $1$ locus is the subdivision of the product $\mathbb{R}/\ell \ZZ \times_{\mathbb{R}_{\ge 0}} \mathbb{R}/\ell \ZZ$ along the diagonal 
	\[
	x + y = 0 \,(\textup{mod } \ell \ZZ)
	\]
	By the same analysis as in Theorem \ref{thm:Plog=arinkin} (a), $\X_g' \times_{\A_g'} \X_g'$ has ordinary double point singularities, and $\X_g^{\mu}$ is a small resolution, i.e. it is smooth and the complement of $\X^\circ\times_{\A_g'}\X_g'\cup \X'_g\times_{\A'_g}\X^\circ_g$ has codimension $2$. Moreover, nontrivial fibers of $f: \X_g^{\mu} \to \X_g' \times_{\A_g'} \X_g'$ are $\mathbb{P}^1$s.
	
	\begin{proposition}
		\label{prop:mumdeg}
		Let $\widetilde{\P}$ be the unique line bundle on $\X^\mu_g$ which extends $\P$. Then the degree of $\widetilde{\P}$ on the exceptional fibers of $f: \X_g^{\mu} \to \X_g' \times_{\A_g'} \X_g'$ is $1$. 
	\end{proposition}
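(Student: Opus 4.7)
The plan is to express $\widetilde{\P}$ by a Mumford-type formula that makes sense on all of $\X^\mu_g$, and then compute its restriction to an exceptional $\mathbb{P}^1$ by identifying each of the three factors. Since the multiplication map $\mu$ is regular on $\X^\mu_g$ by construction of the small resolution, the formula \eqref{eq:Xmumford}, namely $\P = \mu^*\Theta \otimes p_1^*\Theta^\vee \otimes p_2^*\Theta^\vee$ on $\X^\circ_g\times_{\A'_g}\X'_g\cup\X'_g\times_{\A'_g}\X^\circ_g$, extends to a line bundle on all of $\X^\mu_g$. Because $\X^\mu_g$ is smooth and the complement of the open locus has codimension $2$, this extension coincides with the unique $\widetilde{\P}$. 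Hence
\[
\widetilde{\P} \;=\; \mu^*\Theta \otimes p_1^*\Theta^\vee \otimes p_2^*\Theta^\vee
\]
globally on $\X^\mu_g$.

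First I would observe that each exceptional $\mathbb{P}^1$ is contracted to a single point by both projections $p_1, p_2: \X^\mu_g \to \X'_g$: each such $\mathbb{P}^1$ sits over an ordinary double point of $\X'_g\times_{\A'_g}\X'_g$, which corresponds to a pair of nodes in the two boundary fibers of $\X'_g$. Consequently $p_i^*\Theta|_{\mathbb{P}^1}$ is trivial for $i=1,2$, and the proof reduces to showing $\deg(\mu^*\Theta|_{\mathbb{P}^1})=1$.

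Next I would identify $\mu|_{\mathbb{P}^1}$ by a direct local computation. In the local model for $g=1$, the singularity is $\{\xi_1\eta_1=\xi_2\eta_2\}\subset\mathbb{A}^4$ and one chart of the small resolution has coordinates $(a,\xi_1,\xi_2)$ with $\eta_1=a\xi_2$ and $\eta_2=a\xi_1$. Substituting into the partial multiplication formulas $\mu^*\xi=\xi_1\xi_2$ and $\mu^*\eta=\eta_1/\xi_2=a$, the exceptional $\mathbb{P}^1$ at $\xi_1=\xi_2=0$ maps via $\mu$ to the $\eta$-axis of $\X'_1$; checking compatibility with the other chart, $\mu|_{\mathbb{P}^1}$ is precisely the normalization morphism of the nodal boundary fiber. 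For general $g$, the same analysis via the tropical subdivision of $\mathbb{R}/\ell\ZZ\times_{\mathbb{R}_{\ge 0}}\mathbb{R}/\ell\ZZ$ along $x+y\equiv 0\pmod{\ell\ZZ}$ identifies $\mu|_{\mathbb{P}^1}$ as a degree-one map onto a $\mathbb{P}^1$-fiber $F$ of the $\mathbb{P}^1$-bundle $P\to\X_{g-1}$ whose quotient by identifying $0$ and $\infty$ sections forms the boundary fiber of $\X'_g$.

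The main obstacle is verifying the final ingredient: $\deg(\Theta|_F)=1$. This is the content of Mumford's theory of toroidal degenerations of principally polarized abelian varieties. In a rank-one degeneration, the extension datum of the polarization reduces to a positive-definite $\ZZ$-valued quadratic form on the character lattice $X^*(\mathbb{G}_m)=\ZZ$ of the degenerating torus; principality of the limiting polarization forces this form to be the standard one $q(x)=x^2$, whose value on the generator equals $\deg(\Theta|_F)=1$. Combining all three steps gives
\[
\deg(\widetilde{\P}|_{\mathbb{P}^1}) \;=\; \deg(\mu^*\Theta|_{\mathbb{P}^1}) - \deg(p_1^*\Theta|_{\mathbb{P}^1}) - \deg(p_2^*\Theta|_{\mathbb{P}^1}) \;=\; 1 - 0 - 0 \;=\; 1\,,
\]
as claimed.
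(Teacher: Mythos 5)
Your proposal is correct in outline but takes a genuinely different route from the paper. The paper does not compute $\deg(\mu^*\Theta|_{\mathbb{P}^1})$ geometrically at all: it first reduces via the Torelli map to the compactified Jacobian of an integral one-nodal curve, forms the common refinement $\Jbar_C^b$ of the two Atiyah flops $\Jbar_C^{(2)}$ and $\Jbar_C^{\mu}$, and then expands $\mu^*\Theta - p_1^*\Theta - p_2^*\Theta$ by bilinearity of the Deligne pairing. The heart of that argument is the explicit computation of the piecewise linear correction $\alpha$ in the modular description $\mu(L_1,L_2) = L_1\otimes L_2(\alpha)$, which, via Lemma~\ref{lem:deligne_pl}, contributes $-E$; the remaining term $-c_1\langle L_1, L_2\rangle$ is controlled by the degree already established in Theorem~\ref{thm:Plog=arinkin}~(c). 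Your approach works entirely on the $\X_g'$ side: you use that $\mu$ extends to $\X_g^\mu$ by construction, so the Mumford formula $\mu^*\Theta\otimes p_1^*\Theta^\vee\otimes p_2^*\Theta^\vee$ gives a line bundle on $\X_g^\mu$ restricting to $\P$, and by smoothness plus codimension-$2$ uniqueness this is $\widetilde{\P}$; then $p_i|_{\mathbb{P}^1}$ is constant, and the degree is reduced to $\deg(\mu^*\Theta|_{\mathbb{P}^1})$, which you evaluate by showing $\mu|_{\mathbb{P}^1}$ is the normalization map of a nodal $\mathbb{P}^1$-fiber and invoking $\deg(\Theta|_F)=1$ from the theory of rank-one degenerations of ppav's. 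This is cleaner in that it avoids the Deligne-pairing and piecewise-linear bookkeeping, and it does not pass through Jacobians at all. What it buys at a cost is that two of your steps are genuinely nontrivial and stated without full justification: (i)~for general $g$ the identification of $\mu|_{\mathbb{P}^1}$ as an isomorphism onto a $\mathbb{P}^1$-fiber $F$ of $P\to\X_{g-1}$ requires checking that the image does not wander in the abelian direction, which you assert via the tropical picture but do not carry out; and (ii)~the equality $\deg(\Theta|_F)=1$ is sensitive to which relatively ample extension of the polarization to $\X_g'$ one uses -- different extensions differ by boundary divisors of $\X_g'$ that have nonzero degree on $F$ -- so this step needs to be pinned to the specific extension of \cite{Alex} used in the definition \eqref{eq:Xmumford}, not merely to the positivity of the quadratic form. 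The paper's detour through Jacobians sidesteps both issues.
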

	
	\begin{proof}
		The exceptional locus of $f$ is a $\PP^1$-bundle over the boundary of $\A_g'$, so we check the degree on any given fiber. Under the Torelli map, it is enough to check the degree over a point corresponding to a $\Jbar_C$ where $C$ is an integral curve with one one self-node. Two resolutions $\Jbar_C^{(2)}$ and the resolution $\Jbar^\mu_C$ given by $\X_g^\mu$  of $\Jbar_C\times\Jbar_C$ corresponds to two different Atiyah flops of ordinary double point singularity. To compare two, we form the common refinement $\Jbar^b_C$ of $\Jbar_C^{(2)}$ and $\Jbar_C^\mu$ as follows. The product $\Jbar_C\times\Jbar_C$ is locally modeled on the toric variety
		\[\left \{(\ell_{e_1}',\ell_{e_1}'',\ell_{e_2}',\ell_{e_2}''): \ell_{e_i}' + \ell_{e_i}'' = \ell_e \right\}\]
		and $\Jbar_C^b$ corresponds to the subdivision $\min (\ell_{e_1}',\ell_{e_1}'',\ell_{e_2}',\ell_{e_2}'')$.
				Following Definition \ref{def:Q2} and Definition \ref{def:J2}, the multiplication map on $\Jbar_C^b$ has the following modular description:
				\[
				\mu(\widetilde{C} \to C_i, L_i) = L_1|_{\widetilde{C}} \otimes L_2|_{\widetilde{C}} (\alpha)
				\]
				where $\alpha$ is the unique piecewise linear function up to pullback from the base such that the line bundle $L_1|_{\widetilde{C}} \otimes L_2|_{\widetilde{C}} (\alpha)$ is stable on $\widetilde{C}$. 
				
				
				We compute $\alpha$. Let $\Gamma$ be the tropical curve associated to $C$ which has a single vertex $v$ with a loop, where we fix some orientation. The tropical curves $\Gamma_i$ associated to $C_i$ have one additional vertex $w_i$ at distance $\ell_{e_1}',\ell_{e_2}'$ from $v$ respectively, measured with respect to the given orientation; the tropical divisors associated to $L_1,L_2$ have degree $-1$ at $v$ and $1$ at the exceptional vertex.
				We look first at the cone of $\Jbar_C^b$ where $\ell_{e_2}'$ is minimal. We form the tropical curve which is the subdivision of $\Gamma$ at a vertex $u$ at distance $\ell_{e_1}'+\ell_{e_2}'$ from $v$ in the given orientation. Since $\ell_{e_2}'$ is minimal, $\ell_{e_1}'+\ell_{e_2}'<\ell_{e_1}'+\ell_{e_1}'' = \ell_e$, and hence in the given orientation we have an oriented path $v,w_2,w_1,u,v$. We form the tropical $D_u$ divisor with degree $-1$ at $v$ and $1$ at $u$. Then, we have
				\[
				D_1 + D_2 - D_u = \textup{div}(\alpha)
				\]
				where $\alpha$ has slope $1$ from $v$ to $w_2$, slope $1$ from $u$ to $w_1$, and slope $0$ elsewhere\footnote{We are using here the convention that the multidegree of $\mathcal{O}(\alpha)$ is $-\textup{div}(\alpha)$.}.  
				\if0
				\begin{center}
					\begin{tikzpicture}
						[decoration={markings, 
							mark= at position 0.5 with {\arrow{stealth}},
						}]
						\draw [thick,domain=0:360] plot ({(3/2)*cos(\x)},{sin(\x)});
						
						\coordinate(v1) at (0,-1){};
						\coordinate(v2) at (-1.5,0){};
						\node[left] at (-1.5,0){$-2$};
						\node[draw,circle,inner sep=1pt,fill] at (-1.5,0){};
						\node[above] at (0,1){$1$};
						\node[draw,circle,inner sep=1pt,fill] at (0,1){};
						\node[right] at (1.5,0){$1$};
						\node[draw,circle,inner sep=1pt,fill] at (1.5,0){};
						\coordinate(w2) at (1.5,0){}; 
						
						\node at (2.5,0){$-$};
						
						\draw [thick,domain=0:360] plot ({(3/2)*cos(\x)+5},{sin(\x)});
						
						\node[left] at (3.5,0){$-1$};
						\node[draw,circle,inner sep=1pt,fill] at (3.5,0){};
						\node[below] at (5,-1){$1$};
						\node[draw,circle,inner sep=1pt,fill] at (5,-1){};
						
						\node at (7.2,0){$=$};

						\draw [thick,domain=180:90,postaction={decorate}] plot ({(3/2)*cos(\x)+10},{sin(\x)});
						\draw [thick,domain=90:0] plot ({(3/2)*cos(\x)+10},{sin(\x)});
						\draw [thick,domain=270:360,postaction={decorate}] plot ({(3/2)*cos(\x)+10},{sin(\x)});
						\draw [thick,domain=270:180] plot ({(3/2)*cos(\x)+10},{sin(\x)});
						
						\node[left] at (8.5,0){$-1$};
						\node[draw,circle,inner sep=1pt,fill] at (8.5,0){};
						\node[above] at (10,1){$1$};
						\node[draw,circle,inner sep=1pt,fill] at (10,1){};
						\node[right] at (11.5,0){$1$};
						\node[draw,circle,inner sep=1pt,fill] at (11.5,0){};
						\node[below] at (10,-1){$-1$};
						\node[draw,circle,inner sep=1pt,fill] at (10,-1){};
						\node[above] at (8.94,0.75){\textcolor{red}{$1$}};
						\node[below] at (10.75,-0.85){\textcolor{red}{$1$}};
						
					\end{tikzpicture}
				\end{center}
				\fi
				If we normalize $\alpha$ so that $\alpha(v)=0$, it follows that $\alpha(w_1)=\alpha(w_2)=\ell_{e_2}'$, and $\alpha(u)=0$. A similar calculation at the other cones of $\Jbar^b$ shows that in general $\alpha(w_1)=\alpha(w_2) = \min(\ell_{e_1}',\ell_{e_2}',\ell_{e_1}'',\ell_{e_2}'')$.  
				
				We now compare two models  of extended Poincar\'e line bundles. By the bilinearity of the Deligne pairing, we have
				\begin{equation}\label{eq:MD}
					\mu^*\Theta - p_1^*\Theta - p_2^*\Theta = -c_1(\left \langle L_1,L_2 \right \rangle) - c_1(\left \langle L_1,\mathcal{O}(\alpha) \right \rangle) - c_1(\left \langle L_2, \mathcal{O}(\alpha) \right \rangle) - \frac{1}{2}c_1(\left \langle \mathcal{O}(\alpha),\mathcal{O}(\alpha) \right \rangle)\,.
				\end{equation}
				
				Since $\alpha(w_i)=\min(\ell_{e_1}',\ell_{e_2}',\ell_{e_1}'',\ell_{e_2}'')$, using Lemma \ref{lem:deligne_pl}, the last three terms of \eqref{eq:MD} is
				\[
				-\min(\ell_{e_1}',\ell_{e_2}',\ell_{e_1}'',\ell_{e_2}'')-\min(\ell_{e_1}',\ell_{e_2}',\ell_{e_1}'',\ell_{e_2}'')+\min(\ell_{e_1}',\ell_{e_2}',\ell_{e_1}'',\ell_{e_2}'') = -\min(\ell_{e_1}',\ell_{e_2}',\ell_{e_1}'',\ell_{e_2}'')
				\]
				which is the negative of the class of the exceptional divisor $E$ of $\Jbar_C^b \to \Jbar_C \times \Jbar_C$. The degree of $-\left \langle L_1,L_2 \right \rangle$ has been calculated to be $-1$ on the exceptional $\PP^1$s of $\Jbar_C^{(2)}$ in Theorem \ref{thm:Plog=arinkin}. On the blowup of the ordinary double point the line bundle $\mu^*\Theta-p_1^*\Theta-p_2^*\Theta$ is thus represented by $-E-D$, where $D$ is the strict transform of one of the two Weil divisors that are not blown up in the construction of the flop $\Jbar_C^{(2)} \to \Jbar_C \times \Jbar_C$. If $F$ denotes the strict transform of one of the adjacent Weil divisors to $D$, the divisor $-E-D-F$ is rationally equivalent to $0$\footnote{For instance, if we take $(0,0,1),(1,0,1),(0,1,1),(1,1,1)$ as the four vertices of the square, $D$ the bottom right vertex, $F$ the bottom left, then $-E-D-F$ is the divisor of the character $e_2^*-e_3^*$, where $e_i^*$ are the standard coordinates of $\mathbb{Z}^3$.}; hence we have 
				\[
				c_1(\widetilde{\P}) = -E-D \sim F
				\]
				on the exceptional fibers of $\Jbar^b_C \to \Jbar_C \times \Jbar_C$. The divisor $F$ does not get blown up in the complementary flop $\Jbar^{\mu}_C \to \Jbar_C \times \Jbar_C$, and so descends to a Cartier divisor on $\Jbar^{\mu}_C$, where it has degree $1$. 
				\if0
				\begin{center}
					\begin{tikzpicture}[scale=0.75]
						
						\node[left] at (0,0){};
						\node[draw,circle,inner sep=1pt,fill] at (0,0){};
						\node[right] at (2,0){$-1$};
						\node[draw,circle,inner sep=1pt,fill] at (2,0){};
						\node[left] at (0,2){}; 
						\node[draw,circle,inner sep =1pt,fill] at (0,2){};
						\node[right] at (2,2){}; 
						\node[draw,circle,inner sep=1pt,fill] at (2,2){};
						\draw[thick](0,0)--(2,0)--(2,2)--(0,2)--(0,0);
						\draw[thick,blue](0,0)--(2,2);
						
						\node[left] at (4,0){};
						\node[draw,circle,inner sep=1pt,fill] at (4,0){};
						\node[right] at (6,0){};
						\node[draw,circle,inner sep=1pt,fill] at (6,0){};
						\node[left] at (4,2){}; 
						\node[draw,circle,inner sep =1pt,fill] at (4,2){};
						\node[right] at (6,2){}; 
						\node[draw,circle,inner sep=1pt,fill] at (6,2){};
						\node[above] at (5,1){$-1$};
						\node[draw,circle,inner sep=1pt,fill]at (5,1){};
						\draw[thick](4,0)--(6,0)--(6,2)--(4,2)--(4,0);
						\draw[thick,blue](4,0)--(6,2);
						\draw[thick](6,0)--(4,2);
						
						\node at (3,1){$+$};
						
						\node at (7,1){$=$};
						
						\node[left] at (8,0){};
						\node[draw,circle,inner sep=1pt,fill] at (8,0){};
						\node[right] at (10,0){$-1$};
						\node[draw,circle,inner sep=1pt,fill] at (10,0){};
						\node[left] at (8,2){}; 
						\node[draw,circle,inner sep =1pt,fill] at (8,2){};
						\node[right] at (10,2){}; 
						\node[draw,circle,inner sep=1pt,fill] at (10,2){};
						\node[above] at (9,1){$-1$};
						\node[draw,circle,inner sep=1pt,fill]at (9,1){};
						\draw[thick](8,0)--(10,0)--(10,2)--(8,2)--(8,0);
						\draw[thick,blue](8,0)--(10,2);
						\draw[thick,blue](10,0)--(8,2);
						
						\node at (11,1){$\sim$};
						
						\node[left] at (12,0){$1$};
						\node[draw,circle,inner sep=1pt,fill] at (12,0){};
						\node[right] at (14,0){};
						\node[draw,circle,inner sep=1pt,fill] at (14,0){};
						\node[left] at (12,2){}; 
						\node[draw,circle,inner sep =1pt,fill] at (12,2){};
						\node[right] at (14,2){}; 
						\node[draw,circle,inner sep=1pt,fill] at (14,2){};
						\node[above] at (13,1){};
						\node[draw,circle,inner sep=1pt,fill]at (13,1){};
						\draw[thick](12,0)--(14,0)--(14,2)--(12,2)--(12,0);
						\draw[thick,blue](12,0)--(14,2);
						\draw[thick,blue](14,0)--(12,2);

						\node at (15,1){$=$};
						
						\node[left] at (16,0){$1$};
						\node[draw,circle,inner sep=1pt,fill] at (16,0){};
						\node[right] at (18,0){};
						\node[draw,circle,inner sep=1pt,fill] at (18,0){};
						\node[left] at (16,2){}; 
						\node[draw,circle,inner sep =1pt,fill] at (16,2){};
						\node[right] at (18,2){}; 
						\node[draw,circle,inner sep=1pt,fill] at (18,2){};
						\draw[thick](16,0)--(18,0)--(18,2)--(16,2)--(16,0);
						\draw[thick,blue](18,0)--(16,2);

					\end{tikzpicture}
				\end{center}
				\fi
			\end{proof}
			\begin{proof}[Proof of Theorem \ref{thm:Xdual}]
				We follow Arinkin's the proof of auto-duality in \cite{arinkin2}. The argument in loc. cit. is divided into three parts: (i) computing the the cohomology of the Poincar\'e line bundle $\P$ along the projection $\pi_2:\X'_g\times_{\A'_g}\X^\circ_g\to \X^\circ_g$, (ii) proving that $\P$ admits a flat maximal Cohen-Macaulay extension $\overline{\P}$, and (iii) using the $\delta$-regularity to show that $\overline{\P}$ and $\overline{\P}^{-1}$ are inverse to each other.
                The first part follows from \cite[Theorem 1]{AF16}. By Proposition \ref{prop:mumdeg} and following the argument in Theorem \ref{thm:Plog=arinkin}, the derived pushforward $\Pbar:=Rf_*\widetilde{\P}$ is a maximal Cohen-Macaulay sheaf extending $\P$ which is flat relative to each projections $\pi_i:\X'_g\times_{\A'_g}\X'_g\to\X'_g$. For the semi-abelian scheme $\pi:\X_g^\circ\to\A'_g$, the locus where the torus rank increases clearly has codimension $1$, and therefore forms a $\delta$-regular family. This completes the proof of the theorem.
			\end{proof}

			We adapt \eqref{eq:bdy}. The boundary $\Abarone_g\setminus \A_g$ admits a natural two-to-one cover $i:\X_{g-1}\to \Abarone_g\setminus \A_g$.
			Consider the diagram
			\begin{equation}\label{eq:bdy_Ag}
				\begin{tikzcd}
					\mathcal{X}_{g-1}\times_{\A_{g-1}}\X_{g-1}\ar[r]\ar[dr,"p_1"'] & \Xbarone_g|_{\X_{g-1}} \ar[r]\ar[d] &\Xbarone_{g}\ar[d,"\pi"]\\
					& \X_{g-1}\ar[r,"i"] & \Abarone_g\,,
				\end{tikzcd}
			\end{equation}
			where $p_1$ is the first projection. By \cite{EvG}, the boundary of $\A_g'$ is naturally identified with the dual abelian fibration $\X_{g-1}^\vee$. Here, we identified it with $\X_{g-1}$ using the principal polarization. 
			
			If we restrict $\X_g^\circ\to\A'_g$ to the boundary $i$, then it is a $\Gm$-torsor over $\X_g\times_{\A_{g-1}}\X_{g-1}$. Let $q$ be the projection from the torsor to $\X_g\times_{\A_{g-1}}\X_{g-1}$.
			\begin{proof}[Proof of Theorem \ref{thm:unit_Ag}]
				From Theorem \ref{thm:Xdual}, let $\fm:\CH^*(\X'_g)\xrightarrow{\cong}\CH^*(\X'_g)$ be the Fourier transform and $\fm^{-1}$ be its inverse with the normalization as in \eqref{eq:FM} and \eqref{eq:fminv}.  By the same argument in Proposition \ref{pro:section}, we get $[e]=\fm^{-1}(1)$. Let $\EE:=\pi_*\Omega_{\pi,\log}^1$ be the Hodge bundle.  By \cite{EV02}, $c(\EE)c(\EE^\vee)=1$ still holds. The Todd class of the residue sheaf $\Rels_\pi$ has the same formula as in Proposition \ref{pro:Todd} with the normal bundle of the codimension two stratum calculated in \cite{EvG}.  Proposition \ref{pro:fm1} and Theorem \ref{thm:Apush} applied to the inductive structure in Section \ref{sec:treelike} give the result. 
			\end{proof}
            After pulling back Theorem \ref{thm:unit_Ag} along the unit section, we recover \cite[Theorem 1.1]{EvG}. The formula of the unit section on $\X'_g$ will be discussed in the forthcoming paper \cite{BFLM}.
			
			For more degenerate families $\pi:\Xbar_g\to\Abar_g$, such as Alexeev's compactification \cite{Alex,Olsson}, the singularities of $\Xbar_g\times_{\Abar_g}\Xbar_g$ get more complicated, and $\delta$-regularity breaks down. It remains an interesting question whether we can generalize our construction to such abelian fibrations.
			
			\bibliographystyle{plain}
			\bibliography{main}
			
			\vspace{8pt}
			
			\noindent Department of Mathematics, University of Michigan \\
			\noindent younghan@umich.edu
			
			\vspace{8pt}
			
			\noindent Department of Mathematics, University of Rome ``La Sapienza" \\
			\noindent samouil.molcho@uniroma1.it
			
			\vspace{8pt}
			
			\noindent Department of Mathematics, University of Michigan\\
			\noindent pixton@umich.edu
			
		\end{document}